\documentclass{amsart}

\usepackage[T1]{fontenc}
\usepackage[utf8]{inputenc}
\usepackage[english]{babel}

\usepackage{hyperref}
\usepackage{amsthm, amssymb}
\usepackage{amsmath}
\usepackage{xifthen}
\usepackage{enumitem}
\usepackage{url}
\usepackage{xcolor}
\usepackage{dsfont}

\numberwithin{equation}{section}

\newtheorem{theorem}{Theorem}[section]
\newtheorem{proposition}[theorem]{Proposition}
\newtheorem{lemma}[theorem]{Lemma}
\newtheorem{corollary}[theorem]{Corollary}

\theoremstyle{definition}
\newtheorem{definition}[theorem]{Definition}

\theoremstyle{remark}
\newtheorem{remark}[theorem]{Remark}

\setenumerate{label=\roman*)}

\newenvironment{bigcases}{\left\{\begin{aligned}}{\end{aligned}\right.}
\newcommand{\edintertext}[1]{%
  \noalign{%
    \vskip\belowdisplayshortskip
    \vtop{\hsize=\linewidth#1\par
    \expandafter}%
    \expandafter\prevdepth\the\prevdepth
  }%
}

\newcommand{\wto}{\rightharpoonup}

\newcommand{\sub}{\subset}
\newcommand{\subeq}{\subseteq}

\newcommand{\bfrac}[2]{\frac{\displaystyle#1}{\displaystyle#2}}
\newcommand{\vp}{\varphi}
\newcommand{\ve}{\varepsilon}
\newcommand{\ua}{u_\alpha}
\newcommand{\ben}{\begin{equation}}
\newcommand{\een}{\end{equation}}
\newcommand{\bal}{\begin{aligned}}
\newcommand{\eal}{\end{aligned}}

\newcommand\R{\mathbb{R}}

\newcommand\Sob[1][k]{H^{#1}}
\newcommand{\hSob}[1][k,2]{D^{#1}}

\newcommand\abs[1]{\left\lvert #1 \right\rvert}
\newcommand{\norm}[1]{\left\| #1 \right\|}
\newcommand\Snorm[3][\Omega]{\norm{#3}_{\Sob[#2](#1)}}
\newcommand\Hnorm[3][\R^n]{\norm{#3}_{\hSob[#2](#1)}}
\newcommand{\Lnorm}[3][\R^n]{\norm{#3}_{L^{#2}(#1)}}

\newcommand{\dist}[1]{\operatorname{dist}(#1)}

\DeclareMathOperator{\bigO}{O}
\DeclareMathOperator{\smallo}{o}
\newcommand{\intM}[3][x]{\int_{#2} #3\, d#1}
\newcommand{\inty}[2][\O]{\int_{#1} #2\, dy}
\newcommand{\spanned}[1]{\operatorname{span}\left\{#1\right\}}

\newcommand{\D}{\Delta}

\newcommand{\eps}{\varepsilon}
\renewcommand{\emptyset}{{\text{\Large\o}}}

\renewcommand{\phi}{\varphi}
\renewcommand{\a}{\alpha}
\newcommand{\crit}{{2^\sharp}}
\newcommand{\exc}{\frac{n-2k}{2}}
\newcommand{\sumi}[1][1]{\sum_{i=#1}^N}
\newcommand{\sumij}{\sum_{\substack{i=0,\ldots N\\ j=1,\ldots d_i}}}
\newcommand{\cupj}[1][m]{\bigcup\limits_{j\in \sB_{#1}}}
\newcommand{\sumj}[1][m]{\sum_{j\in \sB_{#1}}}

\renewcommand{\O}{\Omega}
\newcommand{\dO}{\partial\Omega}
\newcommand{\oO}{\overline{\Omega}}
\newcommand{\Dd}{(-\Delta)}
\newcommand{\Va}[1][i]{V^{#1}_\a}
\newcommand{\Ba}[1][i]{B^{#1}_\a}
\newcommand{\xa}[1][i]{x^{#1}_\a}
\newcommand{\ma}[1][i]{\mu^{#1}_\a}
\newcommand{\ta}[1][i]{\theta^{#1}_\a}
\newcommand{\ra}[1][i]{r^{#1}_\a}
\newcommand{\rha}[1][ji]{\rho^{#1}_\a}
\newcommand{\Za}[1][ij]{Z^{#1}_\a}
\newcommand{\la}[1][ij]{\lambda^{#1}_\a}
\newcommand{\sa}[1][ij]{s^{#1}_\a}

\newcommand{\sB}{\mathcal{B}}
\newcommand{\sA}{\mathcal{A}}
\newcommand{\sS}{\mathcal{S}}
\newcommand{\sC}{\mathcal{C}}
\newcommand{\Ker}{\mathcal{K}}
\newcommand{\pBa}[1][i]{\big(\Ba[{#1}](y)\big)}
\newcommand{\Wa}{\mathcal{W}_\a}
\newcommand{\ea}{\eta_\a}
\newcommand{\Oa}[1][m]{\O^{#1}_\a}

\newcommand{\hOa}[1][m]{\hat{\O}^{#1}_\a}

\newcommand{\Ilo}[2][\cap]{I^l_{\O#1 #2}}
\newcommand{\Pa}[1][m]{\Phi^{#1}_\a}
\newcommand{\xmm}{\xa[m]+\ma[m]}

\newcommand{\nss}[2][\eta_\a]{\|#2\|_{\ast\ast, #1}}
\newcommand{\ns}[1]{\|#1\|_{\ast}}

\newcommand{\firstTerm}[1][\a]{(\ea \nss{R_\a} + \eps_{#1} \ns{\phi_\a})\Big(1+\sumi \ta(x)^{-l}\Ba(x) \Big)}

\title[]{A priori bounds for energy-bounded solutions of critical polyharmonic equations}

\date{\today}

\author{Lorenzo Carletti}

\author{Bruno Premoselli}

\address{Universit\'e Libre de Bruxelles, Service d'analyse, CP 218, Boulevard du Triomphe, B-1050 Bruxelles, Belgique.}

\email{bruno.premoselli@ulb.be}

\email{lorenzo.carletti@ulb.be}

\thanks{The first author is supported by the French Community of Belgium as part of the funding of a FRIA grant. The second author was supported by an ARC Avanc\'e 2020 grant and a MIS FNRS grant.}

\begin{document}
\maketitle

\begin{abstract}
We investigate critical polyharmonic equations of the type:
$$ Lu = |u|^{2^\sharp-2} u \quad \text{ in } \O $$ 
with Dirichlet boundary conditions, in a smooth bounded domain $\O$ of $\R^n$. Here $L$ is an elliptic differential operator of integer order $2 \le 2k < n$ whose leading order term is $(-\Delta)^k$ and $2^\sharp = \frac{2n}{n-2k}$ is the critical Sobolev exponent. Our main result establishes, in large dimensions, uniform \emph{a priori} bounds in $C^{2k}(\overline{\Omega})$ for bounded-energy solutions of this problem, that only depend on an upper bound on the energy. We prove this under a coercivity assumption of sorts on the lower-order terms of $L$. Our results are sharp, at least when $k=1$. Our approach uses asymptotic analysis techniques and in the course of the proof we obtain in particular a new global pointwise description of bounded-energy blowing-up solutions for this problem, which is of independent interest. 
\end{abstract}

\section{Introduction}

\subsection{Statement of the results}

Let $\O \sub \R^n, n \ge 3$ be a smooth bounded domain, and let $k$ be an integer satisfying $1 \le k< \frac{n}{2}$. Throughout this paper $L$ will denote an elliptic differential operator of the following form: 
\ben \label{def:L}
Lu= \Dd^k u+ \sum_{l=0}^{p} (-1)^l \nabla^l\big(A_{l}(\nabla^l  u \,)\big)
\een
for a smooth function $u$ in $\Omega$, where $p \in \{0, \cdots, k-1 \}$ denotes a fixed integer and, for $0 \le l \le p$, $A_{l}\in C^0(\overline{\O})$ is a symmetric $(2l,0)$-tensor. In \eqref{def:L} we write classically $\nabla^l u = \big( \partial^{i_1\,\ldots \, i_l}u\big)_{i_1,\ldots, i_l \in \{1,\ldots, n\}}$ for the $(0,l)$-tensor of the partial derivatives of $u$ of order $l\geq 1$; for the precise definition of $L$ by duality we refer to Definition \ref{def:weaksolution} below. In the particular case $k=1$, $L$ as in \eqref{def:L} simply writes as $L = - \Delta + A_0$ for some $A_0 \in C^0(\overline{\Omega})$. We investigate in this paper critical polyharmonic equations for $L$ with Dirichlet boundary conditions in $\O$: 
  \begin{equation}\label{eq:lmmain} 
   \begin{bigcases}
    &Lu = |u|^{\crit-2} u & &\text{in } \O\\
    &|\nabla^l u|=0 & &\text{for } 0 \le l \le k-1 \quad \text{on } \dO,
  \end{bigcases}
  \end{equation}
where $\crit = \frac{2n}{n-2k}$ is the critical exponent for Sobolev embeddings of $H^k(\O)$ into Lebesgue spaces. Problem \eqref{eq:lmmain} has attracted uninterrupted attention since the seminal paper of Br\'ezis--Nirenberg \cite{bn} in the case $k=1$ and has originated a vast literature. For a suitable choice of constant coefficients $A_l$, \eqref{eq:lmmain} includes the case of the Br\'ezis-Nirenberg problem $L = - \Delta- \lambda$ for $\lambda \ge 0$ (when $k=1$) and its higher-order counterparts. For operators $L$ with general non-constant coefficients, existence and multiplicity results for \eqref{eq:lmmain} are in \cite{HebeyVaugon94} ($k=1$) and \cite{GeWeiZhou11} ($k \ge 2$). Existence and multiplicity results for the Br\'ezis-Nirenberg problem when $k=1$ are in \cite{CAP2, CeramiFortunatoStruwe, CeramiSoliminiStruwe, ChenZou, ClappWeth2, DevillanovaSolimini, RoselliWillem, SchechterZou, TavaresYouZou} and the references therein, and for its higher-order counterparts they have been obtained in \cite{BernisGaAzoPeral96, DengWang99, EdFoJa} ($k=2$) and \cite{Grunau95, PucciSerrin90} ($k \ge 3$).

We investigate in this paper compactness and \emph{a priori} bounds results for energy-bounded solutions of \eqref{eq:lmmain}. We let 
\begin{equation} \label{def:A}
 \mathcal{A}_p = \Big \{ \bold{A} = (A_0, \cdots, A_p), A_l \in C^l(\overline{\O}) \text{ if } l \ge 1, A_0 \in C^1(\overline{\O}) \Big \},
 \end{equation}
be the space of coefficients of $L$, where each $A_l$ is a symmetric $(2l,0)$ tensor, that we endow with the norm 
$$ \Vert \bold{A} \Vert_{\mathcal{A}_p} = \sum_{l=1}^p \Vert A_l \Vert_{C^{l}(\overline{\O})} + \Vert A_0\Vert_{C^{1}(\overline{\O})} .$$ 
If $\bold{A} \in \mathcal{A}_p$, $L$ is as in \eqref{def:L}, and $\Lambda >0$ is fixed, we define 
$$\mathcal{S}_{\bold{A}, \Lambda} = \Big \{ u \in H^k_0(\O) \text{ solution of } \eqref{eq:lmmain} \text{ with }  \intM{\O}{ |\nabla^k u|^2} \le \Lambda \Big\}.$$
 Our main result states that, under a non-vanishing assumption on the tensor $A_p$ of highest order, energy-bounded solutions of \eqref{eq:lmmain} possess \emph{a priori} bounds in $C^{2k}(\overline{\Omega})$ that are uniform under perturbations of the coefficients of $L$: 
 
   \begin{theorem} \label{theo:compactness}
  Let $\O$ be a bounded smooth domain of $\R^n, n \ge 3$, and let $1 \le k < \frac{n}{2}$ and $0 \le p \le k-1$ be integers. Let $\bold{A}_* \in \mathcal{A}_p$ be fixed, $\bold{A}_* = (A_0, \cdots, A_p)$, and let $L$ be as in \eqref{def:L}. We assume that $n > 6k - 4p$ and that $\bold{A}_*$ satisfies condition \eqref{cond:signchanging} below. Let $\Lambda >0$. Then there exist positive real numbers $\delta = \delta (n, \Lambda, \bold{A}_*)$ and $C = C(n,\Lambda, \bold{A}_*)$ such that 
$$ \sup_{\Vert \bold{A}_* - \bold{A} \Vert_{\mathcal{A}_p}< \delta} \, \, \sup_{u \in \mathcal{S}_{\bold{A}, \Lambda}} \Vert u \Vert_{C^{2k}(\overline{\O})} \le C. $$ 
  \end{theorem}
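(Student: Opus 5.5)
We argue by contradiction and reduce everything to a contradiction coming from a Pohozaev-type identity. Assume there exist coefficients $\bold{A}_\a \to \bold{A}_*$ in $\mathcal{A}_p$ and solutions $u_\a \in \mathcal{S}_{\bold{A}_\a,\Lambda}$ with $\Vert u_\a\Vert_{C^{2k}(\oO)} \to +\infty$.

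The first step is standard elliptic theory. For higher-order Dirichlet problems on smooth bounded domains we use the $L^p$ and Schauder estimates of Agmon--Douglis--Nirenberg, combined with a bootstrap of Br\'ezis--Kato/Trudinger type. These show that a uniform $L^\infty$ bound on $u_\a$ implies a uniform $C^{2k}(\oO)$ bound, since $A_l \in C^l$ and $A_0\in C^1$ uniformly. Hence $\Vert u_\a\Vert_{L^\infty(\O)}\to\infty$, so $u_\a$ blows up.

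Next we run the Struwe-type $H^k$ decomposition, which is valid because the energy is bounded:
\[
u_\a = u_0 + \sum_{i=1}^N B^i_\a + o(1) \quad \text{in } H^k_0(\O),
\]
with $N\ge1$. Here $u_0$ solves the limit problem for $\bold{A}_*$, and the $B^i_\a$ are rescaled sign-changing solutions of $\Dd^k B = |B|^{\crit-2}B$ in $\R^n$, with centers $\xa$ and scales $\ma\to 0$. Boundary bubbles (half-space limits) are excluded by the Pohozaev identity on the half-space for polyharmonic Dirichlet problems. To the expected degree, the centers stay at distance $\gg \ma$ from $\dO$.

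The core of the argument is a global pointwise description: for all $x\in\O$,
\[
\Big|u_\a(x) - u_0(x) - \sum_i \Ba(x)\Big| \le C\,\eps_\a\Big(1+\sum_i \ta(x)^{-l}\Ba(x)\Big),
\]
with $\ta(x) = \ma + |x-\xa|$. This is obtained by a linear perturbation argument. We write $u_\a = u_0+\sum_i \Ba + \phi_\a$, project away the kernel of the linearized operator at each bubble, and solve for $\phi_\a$ in weighted norms $\ns{\cdot}$, $\nss{\cdot}$ adapted to the bubble tower or cluster geometry. The Green function of $L$ with Dirichlet conditions, which has pointwise bounds $|G(x,y)|\lesssim |x-y|^{2k-n}$, is used to invert. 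This step is the main obstacle. Sign-changing bubbles, towers and clusters with interactions, and the non-positivity of higher-order Green functions all have to be handled, so an induction on the scale ordering of the bubbles (the sets $\sB_m$, regions $\Oa$) is needed to control the remainder uniformly.

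Finally, we apply a localized Pohozaev identity for $L$ on a ball $B(\xa,r_\a)$ around the concentration point with the smallest scale, or the most concentrated cluster. The leading term is
\[
c\,\ma^{2k-2p}\int_{\R^n} A_p(x_0)\big(\nabla^p B,\nabla^p B\big)\,dx,
\]
which is finite precisely when $n>4k-2p$. Condition \eqref{cond:signchanging} makes this term nonzero with a definite sign for any sign-changing bubble $B$. The boundary terms, the interaction terms, and the contributions of $u_0$ and of the other bubbles are bounded by the pointwise estimate by roughly $\ma^{(n-2k)/2}\cdot(\text{interactions})$. These are $o(\ma^{2k-2p})$ when $n>6k-4p$, which is exactly where the dimensional assumption enters. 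The two sides of the identity are then incompatible, a contradiction, which proves the $L^\infty$ and hence the $C^{2k}$ bound, uniformly for $\Vert\bold{A}-\bold{A}_*\Vert<\delta$.
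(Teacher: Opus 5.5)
The first three steps of your outline follow the paper's architecture: reduction to $L^\infty$ blow-up, the Struwe decomposition, and global weighted pointwise estimates via linear theory with an induction on the ordering of scales. The concluding Pohozaev step, however, has two genuine gaps.

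\emph{Localization.} Centering the identity at the most concentrated bubble fails for towers. Suppose $x^i_\alpha$ lies in the core of a lower bubble $V^j_\alpha$, with $\mu^j_\alpha\gg\mu^i_\alpha$. On a sphere of radius $\ll\mu^j_\alpha$ around $x^i_\alpha$, $u_\alpha$ is then dominated by $V^j_\alpha\sim(\mu^j_\alpha)^{-\frac{n-2k}{2}}$. The boundary terms contain cross terms of order $(\mu^i_\alpha/\mu^j_\alpha)^{\frac{n-2k}{2}}$; for $k=1$, for instance, $V^j_\alpha(x^i_\alpha)\int_{\partial B}\partial_\nu V^i_\alpha\,d\sigma$. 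These need not be $o\big((\mu^i_\alpha)^{2(k-p)}\big)$. If instead the ball contains the core of $V^j_\alpha$, the leading term is of order $(\mu^j_\alpha)^{2(k-p)}$, not $(\mu^i_\alpha)^{2(k-p)}$.

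The paper does the opposite. It works in $\Omega\cap B(x^1_\alpha,\rho\sqrt{\mu^1_\alpha})$ around the \emph{least} concentrated bubble, with $\rho$ chosen so that every other center is at distance either $o(\sqrt{\mu^1_\alpha})$ or $\ge 2\rho\sqrt{\mu^1_\alpha}$ from $x^1_\alpha$. Then:
\begin{itemize}
\item every bubble inside has scale $\le\mu^1_\alpha$, and $|\nabla^l u_\alpha|\lesssim(\mu^1_\alpha)^{-l/2}$ on the sphere, so all sphere terms are $O\big((\mu^1_\alpha)^{\frac{n-2k}{2}}\big)$ with no interaction terms;
\item since $u_\alpha$ is an exact solution, the bulk term reduces, up to negligible errors, to $\int A_p(x^1_\alpha)(\nabla^pu_\alpha,\nabla^pu_\alpha)\,dx$;
\item the cross terms in this integral are $o\big((\mu^1_\alpha)^{2(k-p)}\big)$ by Lemma~\ref{prop:lemBiBj}.
\end{itemize}
This gives a bulk term $(C_0+o(1))(\mu^1_\alpha)^{2(k-p)}$ with $C_0=\sum_{i\in\mathcal{S}}C_i\mu_i^{2(k-p)}$. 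Here $\mathcal{S}$ collects the bubbles, interior or boundary, with $|x^i_\alpha-x^1_\alpha|=o(\sqrt{\mu^1_\alpha})$ and $\mu^i_\alpha/\mu^1_\alpha\to\mu_i>0$. Each $C_i$ is the integral of $A_p(\lim x^1_\alpha)(\nabla^pv^i,\nabla^pv^i)$ over $\R^n$ or $\R^n_+$. It is the \emph{same-sign} requirement in \eqref{cond:signchanging}, not merely $C_i\neq 0$, that prevents cancellation in this sum.

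\emph{The boundary of $\Omega$.} Half-space bubbles are not excluded by a Pohozaev identity when $k\ge 2$. With Dirichlet data it only gives $\partial_\nu^k u=0$ on $\partial\R^n_+$, i.e.\ $k+1$ of the $2k$ Cauchy data, which is too few for unique continuation. Existence of sign-changing solutions of \eqref{eq:crithf} is open. This is why $\mathcal{I}_{\mathbf{A}}$ in \eqref{cond:signchanging} contains half-space integrals, and why the paper keeps boundary bubbles throughout.

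More seriously, interior bubbles may have $\mu_\alpha\ll d(x_\alpha,\partial\Omega)\lesssim\sqrt{\mu_\alpha}$. The Pohozaev ball then meets $\partial\Omega$, and the identity contains $\frac{(-1)^k}{2}\int_{\partial\Omega\cap B}(x-\xi_\alpha,\nu)\,|(-\Delta)^{k/2}u_\alpha|^2\,d\sigma$, which no pointwise estimate makes small. Shrinking the ball to stay inside $\Omega$ does not help. On $\partial B(x_\alpha,r)$ the pointwise bounds only give $O\big(\mu_\alpha^{n-2k}r^{2k-n}+\mu_\alpha^{\frac{n-2k}{2}}\big)$. For $r\sim d(x_\alpha,\partial\Omega)$ the first term is the true size of the interaction with the Dirichlet boundary. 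It is not $o(\mu_\alpha^{2(k-p)})$ if, e.g., $d(x_\alpha,\partial\Omega)=\mu_\alpha^{\beta}$ with $\beta<1$ close to $1$.

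The missing idea is to give this term a sign through the choice of the center. Let $\pi$ be the nearest-point projection onto $\partial\Omega$, and take $\xi_\alpha=\pi(x^1_\alpha)+(-1)^k\,\mathrm{sign}(C_0)\,\rho\sqrt{\mu^1_\alpha}\,\nu(\pi(x^1_\alpha))$. Then $(x-\xi_\alpha,\nu(x))=-(-1)^k\mathrm{sign}(C_0)\,\rho\sqrt{\mu^1_\alpha}+O(\mu^1_\alpha)$ on $\partial\Omega\cap B$, so $C_0$ times the $\partial\Omega$-term is nonpositive. Multiplying the identity by $C_0$ yields $C_0^2(\mu^1_\alpha)^{2(k-p)}+o\big((\mu^1_\alpha)^{2(k-p)}\big)\lesssim(\mu^1_\alpha)^{\frac{n-2k}{2}}$. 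This is impossible because $n>6k-4p$ means $\frac{n-2k}{2}>2(k-p)$.
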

 An important consequence of Theorem~\ref{theo:compactness}, which follows from standard elliptic theory, is that the set of energy-bounded solutions of \eqref{eq:lmmain} is compact in strong topologies: 
    \begin{corollary} \label{corol:compactness}
 Let $\O$ be a bounded smooth domain of $\R^n, n \ge 3$, and let $1 \le k < \frac{n}{2}$ and $0 \le p \le k-1$ be integers. Let $\bold{A}_* \in \mathcal{A}_p$ be fixed, $\bold{A}_* = (A_0, \cdots, A_p)$, and let $L$ be as in \eqref{def:L}. We assume that $n > 6k - 4p$ and that $\bold{A}_*$ satisfies condition \eqref{cond:signchanging} below. Let $\Lambda >0$. Then $\mathcal{S}_{\bold{A}_*, \Lambda}$ is a compact subset of $C^{2k}(\overline{\O})$. 
\end{corollary}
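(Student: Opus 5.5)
The plan is to deduce Corollary~\ref{corol:compactness} from Theorem~\ref{theo:compactness}, applied with $\bold{A} = \bold{A}_*$, together with standard elliptic regularity and an Arzel\`a--Ascoli argument.

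\emph{Uniform bound.} Let $(u_m)_m$ be a sequence in $\mathcal{S}_{\bold{A}_*, \Lambda}$. Taking $\bold{A} = \bold{A}_*$ in Theorem~\ref{theo:compactness} gives $\Vert u_m \Vert_{C^{2k}(\overline{\O})} \le C$ for all $m$, with $C = C(n,\Lambda,\bold{A}_*)$.

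\emph{Gaining a little extra regularity.} A $C^{2k}$ bound alone does not give precompactness in $C^{2k}$, so we need a bound in $C^{2k,\theta}(\overline{\O})$ for some $\theta \in (0,1)$. The right-hand side $f_m = |u_m|^{\crit-2}u_m$ is bounded in $C^{0,\gamma}(\overline{\O})$ with $\gamma = \min(1, \crit-1)$, since $t \mapsto |t|^{\crit-2}t$ is H\"older (indeed $C^1$ when $\crit > 2$) on bounded sets. We write
$$\Dd^k u_m = f_m - \sum_{l=0}^p (-1)^l \nabla^l\big(A_l \nabla^l u_m\big),$$
where $u_m$ satisfies the Dirichlet boundary conditions of \eqref{eq:lmmain}.

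The lower-order terms are the main obstacle. Since $A_l \in C^l$ only, the term $\nabla^l(A_l \nabla^l u_m)$ is merely continuous, so Schauder theory cannot be applied to it directly. I would handle this in two steps.
\begin{enumerate}
\item First use $L^q$ theory for the Dirichlet problem for $\Dd^k$ (Agmon--Douglis--Nirenberg). The full right-hand side is bounded in $L^\infty$, which gives a uniform bound on $u_m$ in $W^{2k,q}(\O)$ for every $q<\infty$, hence in $C^{2k-1,\beta}(\overline{\O})$ for every $\beta<1$.
\item This $W^{2k,q}$ bound already yields precompactness in $C^{2k-1}$. To upgrade to $C^{2k}$, pass to a subsequence with $u_m \to u$ in $C^{2k-1}(\overline{\O})$, and note that $u$ is again a solution in $\mathcal{S}_{\bold{A}_*,\Lambda}$.
\end{enumerate}

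\emph{Upgrading to $C^{2k}$ convergence.} The difference $w_m = u_m - u$ solves the Dirichlet problem for $\Dd^k$ with right-hand side
$$g_m = (f_m - f) - \sum_{l=0}^p (-1)^l \nabla^l\big(A_l \nabla^l w_m\big).$$
Here $f_m - f \to 0$ uniformly. In the lower-order terms, the derivatives of $A_l$ multiply derivatives of $w_m$ of order at most $2l \le 2k-2$, which converge uniformly to zero. The only remaining part is $A_l \nabla^{2l} w_m$, which involves at most $2k-2$ derivatives and also tends to zero uniformly. Hence $g_m \to 0$ in $L^\infty$.

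$L^\infty$ data do not by themselves give $C^{2k}$ control. The fix is to use the structure: each of these terms is a $C^{0,\beta}$-bounded function times a derivative of $w_m$ of order at most $2k-2$, and such derivatives are bounded in $C^{1,\beta}$. Combined with $A_l \in C^l$, the product terms are therefore bounded in a H\"older space. Interpolating this H\"older bound with the uniform convergence $g_m \to 0$ gives $g_m \to 0$ in $C^{0,\beta'}$ for some $\beta'>0$. Schauder estimates up to the boundary for the Dirichlet problem for $\Dd^k$ then give $w_m \to 0$ in $C^{2k,\beta'}(\overline{\O})$, so $u_m \to u$ in $C^{2k}(\overline{\O})$.

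\emph{Closedness.} Finally, $\mathcal{S}_{\bold{A}_*,\Lambda}$ is closed in $C^{2k}(\overline{\O})$. Limits in $C^{2k}$ solve \eqref{eq:lmmain} classically, satisfy the boundary conditions, and the energy constraint $\int_\O |\nabla^k u|^2 \le \Lambda$ passes to the limit. Together with the sequential precompactness above, this proves compactness.
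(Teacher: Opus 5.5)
\paragraph{A gap in the upgrade to $C^{2k}$.}
Your overall route is the one the paper intends: apply Theorem~\ref{theo:compactness} with $\mathbf{A}=\mathbf{A}_*$, then use elliptic regularity. The $L^q$ step, the extraction of a $C^{2k-1}$-convergent subsequence and the identification of the limit are all fine. The gap is in the upgrade to $C^{2k}$.

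You assert that every term of $\nabla^l(A_l\nabla^l w_m)$ is a $C^{0,\beta}$-bounded function times a derivative of $w_m$. Expand by Leibniz and look at the term where all $l$ derivatives fall on the coefficient: it is $(\nabla^l A_l)\ast\nabla^l w_m$. For $1\le l\le p$ the hypothesis $A_l\in C^l(\overline{\Omega})$ only makes $\nabla^l A_l$ continuous. This is exactly the obstruction you identified two paragraphs earlier for $u_m$, and subtracting $u$ does not change the coefficient.

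So $g_m\to 0$ in $L^\infty$, but its modulus of continuity is no better than that of $\nabla^l A_l$, which need not be H\"older or even Dini. Interpolation therefore gives nothing, and Schauder estimates do not apply. Uniform smallness of $g_m$ cannot rescue the step either: $(-\Delta)^k w=g$ with $g\in C^0$ does not even imply $w\in C^{2k}$ when $n\ge 2$, let alone sup-norm control of $\nabla^{2k}w$.

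\paragraph{What your argument proves, and what is missing.}
For $p\ge 1$ your argument shows that $\mathcal{S}_{\mathbf{A}_*,\Lambda}$ is compact in $W^{2k,q}(\Omega)$ for every $q<\infty$, since $\|w_m\|_{W^{2k,q}}\lesssim\|g_m\|_{L^q}\to 0$. Hence it is also compact in $C^{2k-1,\beta}(\overline{\Omega})$. Reaching $C^{2k}$ needs an extra input on the top-order derivatives of the coefficients.
\begin{itemize}
\item If $p=0$, your argument closes as written, because $A_0\in C^1$ makes $A_0w_m$ Lipschitz.
\item If $A_l\in C^{l,\theta}(\overline{\Omega})$ for $1\le l\le p$, then $g_m$ is bounded in $C^{0,\min(\theta,\beta)}$. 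Your interpolation-plus-Schauder step then goes through, or you can argue directly from a uniform $C^{2k,\theta'}$ bound and Arzel\`a--Ascoli.
\end{itemize}
The paper's one-line appeal to ``standard elliptic theory'' passes over the same point. Under the stated regularity $A_l\in C^l$, this step needs a strengthened hypothesis, not just a more careful write-up.
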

We point out that we do not assume $\ker L = \{0\}$ in Theorem \ref{theo:compactness}, but solely that  \eqref{cond:signchanging} is satisfied, and that we assume nothing on $\O$ except that it is regular. Our main assumption in  Theorem \ref{theo:compactness}, condition \eqref{cond:signchanging}, is defined and discussed in detail in subsection \ref{subsec:condition} below. It states that, up to a fixed sign, at each point $x_0 \in \overline{\O}$ the quadratic form associated to $A_p(x_0)$ defines a norm on the space of limiting bubbling profiles for \eqref{eq:lmmain}. This condition should then be understood as a weak coercivity of sorts for $A_p$, up to a sign. A notable fact is that, when $k \ge 2$, condition \eqref{cond:signchanging} allows us to apply Theorem \ref{theo:compactness} to operators $L$ which are \emph{not isotropic}. We refer to Remark \ref{rem:anisotropy} below for a detailed illustration of these facts.

\subsection{Application to Br\'ezis-Nirenberg type problems}

Condition \eqref{cond:signchanging} is easy to verify and is satisfied for a large class of differential operators, including polyharmonic Br\'ezis--Nirenberg operators such as  
\begin{equation} \label{L:BN}
 L = (-\Delta)^k + \lambda (-\Delta)^p, 
 \end{equation}
where $0 \le p \le k-1$ and $\lambda \neq 0$ (see subsection \ref{subsec:condition} for an easy proof of this fact). When $k=1$  it suffices to take $A_0 \equiv-\lambda$ in the definition of $L$ to recover the celebrated Br\'ezis-Nirenberg problem. For a general $k \ge 1$ and when $p=0$, \eqref{L:BN} is the so-called Pucci--Serrin problem \cite{PucciSerrin90}, for which nonexistence of solutions when $\lambda > 0$ and $\O$ is starshaped was proven in \cite{PucciSerrin86}. We do not assume that our domains are starshaped in the following so we will not restrict to the case $\lambda \le 0$. For polyharmonic Br\'ezis--Nirenberg problems \eqref{L:BN}, Theorem~\ref{theo:compactness} implies the following: 
  \begin{theorem} \label{corol:BN}
  Let $\O$ be a bounded smooth domain of $\R^n, n \ge 3$, and let $1 \le k < \frac{n}{2}$ and $0 \le p \le k-1$ be integers. Let $\lambda \neq 0$ and $E >0$ be fixed. We assume that $n > 6k - 4p$. There exist positive real numbers $\delta = \delta(\lambda, E)$ and $C = C(\lambda, E)$ such that for any $|\mu-\lambda|< \delta$ and for any solution $u \in H^k_0(\O)$ of 
 \begin{equation*} 
  (-\Delta)^k u + \mu (-\Delta)^p u = |u|^{\crit-2} u \quad \text{ in } \O 
  \end{equation*}
satisfying $\int_{\Omega} |\nabla^k u|^2 dx \le E$,  we have 
$$  \Vert u \Vert_{C^{2k}(\overline{\O})} \le C. $$ 
  \end{theorem}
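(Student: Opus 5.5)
The plan is to obtain Theorem~\ref{corol:BN} as a special case of Theorem~\ref{theo:compactness}. Three things need checking: the operator $(-\Delta)^k + \mu(-\Delta)^p$ fits the form \eqref{def:L}; the coefficient tuple for $\mu=\lambda$ satisfies \eqref{cond:signchanging}; and the perturbation $|\mu-\lambda|<\delta$ becomes a perturbation in $\mathcal{A}_p$ of size comparable to $|\mu-\lambda|$.

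First, write $\mu(-\Delta)^p u = (-1)^p \nabla^p\big(A_p^\mu(\nabla^p u)\big)$ with a constant symmetric $(2p,0)$-tensor $A_p^\mu$. The natural choice is $A_p^\mu = \mu\, \mathcal{I}_p$, where $\mathcal{I}_p$ is the tensor such that $\mathcal{I}_p(\nabla^p u, \nabla^p v) = \langle \nabla^p u, \nabla^p v\rangle$ after integration by parts. This works because $\int_\O \nabla^p u\cdot\nabla^p v\,dx = \int_\O (-\Delta)^p u\, v\,dx$ for $u,v\in H^k_0(\O)$, since $p\le k-1$ and the boundary terms vanish. Alternatively one can use $(\Delta^{m})^2$ or $\nabla\Delta^{m}$ with $p=2m$ or $2m+1$ and symmetrize; either way the weak formulation of Definition~\ref{def:weaksolution} coincides with that of $(-\Delta)^k u+\mu(-\Delta)^pu = |u|^{\crit-2}u$. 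All the other coefficients are set to zero: $A_l=0$ for $l<p$. Then $\bold{A}_\mu := (0,\dots,0,\mu\mathcal{I}_p)\in\mathcal{A}_p$, and because the coefficients are constant,
$$\Vert \bold{A}_\mu-\bold{A}_\lambda\Vert_{\mathcal{A}_p} = |\mu-\lambda|\,\Vert\mathcal{I}_p\Vert_{C^p}= c_{n,p}|\mu-\lambda|.$$

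Second, I verify \eqref{cond:signchanging} for $\bold{A}_* = \bold{A}_\lambda$, as announced in subsection~\ref{subsec:condition}. The condition asks that, up to a fixed sign, the quadratic form $v\mapsto \int_{\R^n} A_p(x_0)(\nabla^p v,\nabla^p v)$ defines a norm on the space of limiting bubbling profiles. Here that form equals $\lambda\int_{\R^n}|\nabla^p v|^2\,dx$ at every $x_0$. This has the fixed sign of $\lambda\neq0$ and vanishes only when $\nabla^p v\equiv0$. Since the profiles decay at infinity (they lie in $D^{k,2}(\R^n)$, and $|\nabla^p v|^2$ is integrable when $n>6k-4p$), $\nabla^p v\equiv 0$ forces $v\equiv0$, so the form is definite and the condition holds. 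This is the only step that needs attention: one must match the exact formulation of \eqref{cond:signchanging}, including which profile space is used and the integrability that the dimension assumption guarantees. That is where I expect the main, albeit mild, obstacle to lie.

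Finally, apply Theorem~\ref{theo:compactness} with $\bold{A}_*=\bold{A}_\lambda$ and $\Lambda=E$. It gives $\delta_0$ and $C$. Set $\delta=\delta_0/c_{n,p}$. Then every $\mu$ with $|\mu-\lambda|<\delta$ satisfies $\Vert\bold{A}_\mu-\bold{A}_\lambda\Vert_{\mathcal{A}_p}<\delta_0$, and any solution with $\int_\O|\nabla^k u|^2\le E$ belongs to $\mathcal{S}_{\bold{A}_\mu,E}$. Hence $\Vert u\Vert_{C^{2k}(\oO)}\le C$. The constants depend on $n,k,p,\O$ as well as on $\lambda$ and $E$; these are fixed data.
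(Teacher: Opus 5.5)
Your proposal is correct and matches the paper's own argument. You take the coefficient tuple $(0,\dots,0,\mu(\cdot,\cdot))\in\mathcal{A}_p$, observe that the integral entering \eqref{cond:signchanging} equals $\lambda\int_U|\nabla^p v|^2\,dx$ (with $U=\R^n$ or $\R^n_+$), which is nonzero with the sign of $\lambda$ for every nonzero profile, identify the operator with $(-\Delta)^k+\mu(-\Delta)^p$ by integration by parts in $H^k_0(\O)$, and apply Theorem~\ref{theo:compactness}. Your explicit conversion of $|\mu-\lambda|<\delta$ into a bound on the $\mathcal{A}_p$-distance just spells out a step the paper leaves implicit; you wrote the sign check only on $\R^n$, so you should also state it for the half-space profiles solving \eqref{eq:crithf}, where the same vanishing argument applies.
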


  \medskip

Theorem \ref{corol:BN} only requires $\lambda \neq 0$ and does not assume a sign on $\lambda$. With the help of Theorem \ref{corol:BN} we are able to answer several open questions related to the existence of blowing-up solutions of bounded energy for polyharmonic Br\'ezis-Nirenberg problems such as \eqref{L:BN} and to classify the possible concentration values in large dimensions. We say that $\bar{\lambda} \in \mathbb{R}$ is a \emph{concentration value} for \eqref{L:BN} if there exists a family $(u_\lambda)_{\{|\lambda-\bar{\lambda}| \ll 1\}}$ of solutions of \eqref{L:BN} satisfying 
$$ \limsup_{ \lambda  \to \bar{\lambda}}\int_{\Omega} |\nabla^k u_\lambda|^2 dx < + \infty \quad \text{ and } \quad \limsup_{ \lambda \to \bar{\lambda}} \Vert u_\lambda \Vert_{C^0(\overline{\O})} = + \infty$$
(we do not restrict to $\bar{\lambda} \ge 0$). Theorem \ref{corol:BN} then immediately implies the following: 

\begin{corollary} \label{corol:BN2}
Let $\O$ be a bounded smooth domain of $\R^n, n \ge 3$, and let $1 \le k < \frac{n}{2}$ and $0 \le p \le k-1$ be integers. Assume that $n > 6k - 4p$. Then the only possible concentration value for \eqref{L:BN} is $\lambda = 0$. 
\end{corollary}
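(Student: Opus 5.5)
Corollary~\ref{corol:BN2} is a direct consequence of Theorem~\ref{corol:BN}, and I will argue by contradiction. Suppose $\bar{\lambda} \neq 0$ is a concentration value for \eqref{L:BN}. By definition there is a family $(u_\lambda)$ of solutions of
\begin{equation*}
(-\Delta)^k u_\lambda + \lambda (-\Delta)^p u_\lambda = |u_\lambda|^{\crit-2} u_\lambda \quad \text{in } \O,
\end{equation*}
with Dirichlet conditions, defined for $\lambda$ near $\bar{\lambda}$. The family satisfies $\limsup_{\lambda\to\bar\lambda}\int_\O |\nabla^k u_\lambda|^2 dx < +\infty$ and $\limsup_{\lambda\to\bar\lambda}\Vert u_\lambda\Vert_{C^0(\overline{\O})} = +\infty$. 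From the first condition, I fix $E>0$ and a neighbourhood $|\lambda - \bar\lambda| < \eta$ on which $\int_\O |\nabla^k u_\lambda|^2 dx \le E$.

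Next I apply Theorem~\ref{corol:BN} with the fixed nonzero value $\bar\lambda$ in the role of $\lambda$, and with this $E$. The hypotheses $n > 6k-4p$ and $\bar\lambda \neq 0$ hold, so the theorem gives $\delta = \delta(\bar\lambda, E) > 0$ and $C = C(\bar\lambda, E)$. For every $\mu$ with $|\mu - \bar\lambda| < \min(\delta,\eta)$, the solution $u_\mu$ then obeys
\begin{equation*}
\Vert u_\mu\Vert_{C^0(\overline{\O})} \le \Vert u_\mu \Vert_{C^{2k}(\overline{\O})} \le C .
\end{equation*}
This bound is uniform on a punctured neighbourhood of $\bar\lambda$, so $\limsup_{\lambda\to\bar\lambda}\Vert u_\lambda\Vert_{C^0(\overline{\O})} \le C < +\infty$. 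That contradicts the second defining property of a concentration value. Hence $\bar\lambda = 0$ is the only possible concentration value.

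The only point needing any care is that Theorem~\ref{corol:BN} is applied at a single centre $\bar\lambda$, with the energy bound fixed beforehand. Both the radius $\delta$ and the constant $C$ therefore depend only on $(\bar\lambda, E)$, not on the particular member of the family. This uniformity is exactly what Theorem~\ref{corol:BN} provides, so I expect no real obstacle: all the analytic difficulty lies in Theorem~\ref{corol:BN}, and through it in Theorem~\ref{theo:compactness}.
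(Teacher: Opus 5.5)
Your proof is correct and is exactly the immediate deduction the paper intends: you fix the energy bound $E$ on a neighbourhood of $\bar\lambda \neq 0$ and apply Theorem~\ref{corol:BN} centred at $\bar\lambda$. This gives a uniform $C^{2k}$ (hence $C^0$) bound for all $\mu$ close to $\bar\lambda$, which contradicts $\limsup_{\lambda\to\bar\lambda}\Vert u_\lambda\Vert_{C^0(\overline{\O})}=+\infty$.
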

Corollary \ref{corol:BN2} provides the first general classification result of possibly sign-changing blowing-up solutions for \eqref{L:BN}. In the special case $k=1$, where $p=0$, it shows that, when $n \ge 7$, the only concentration value for the Br\'ezis-Nirenberg problem is $\lambda=0$: it implies therefore that bubble-tower solutions and boundary concentration for the Br\'ezis-Nirenberg problem may only occur as $\lambda \to 0$. Corollary \ref{corol:BN2} in particular answers many questions raised in \cite{LiVairaWeiWu2}: it negatively answer \cite[Open Question 6]{LiVairaWeiWu2}, proves \cite[Conjecture 1]{LiVairaWeiWu2} and partially answers \cite[Open Question 8]{LiVairaWeiWu2}. 

\subsection{Discussion of our results}

When $k=1$, $L$ is of the form $L= - \Delta + A_0$: the condition $n > 6k - 4p$ simply becomes $n \ge 7$ and \eqref{cond:signchanging} amounts to require that $A_0$ never vanishes in $\overline{\Omega}$ (see subsection \ref{subsec:condition} below). The only previous compactness result on domains for \eqref{eq:lmmain}, analogous to our Theorem \ref{theo:compactness}, had been obtained when $k=1$ in \cite{DevillanovaSolimini} (still when $n \ge 7$), and only covered the case where $A_0$ is a negative constant. When $k=1$, therefore, Theorem \ref{theo:compactness} is new when $A_0$ is either a positive constant or a non-constant function in $\overline{\O}$ --- in particular when $A_0$ is a positive function in $\overline{\Omega}$. As such, it has to be understood as the euclidean counterpart of similar compactness results on closed manifolds \cite{DruetJDG, PremoselliVetois2}. When $k \ge 2$ our results are entirely new and provide the first general compactness result for equations such as \eqref{eq:lmmain}. Prior to this, a blow-up analysis for sign-changing radial solutions in the unit ball had been achieved in \cite{RobertPS} and partial compactness results on closed manifolds had been obtained in \cite{Robert:localisation:bubbling}. Similar results when $k=1$ and $L$ contains additional nonlocal terms very recently appeared in \cite{BBD2026}.

\medskip


When $k=1$, the assumptions of Theorems~\ref{theo:compactness} and~\ref{corol:BN} are sharp. First, \eqref{cond:signchanging} is necessary: families of solutions of \eqref{eq:lmmain} with $L_\lambda = - \Delta - \lambda$ which blow-up as $\lambda \to 0_+$ have for instance been constructed when $n \ge 7$ in \cite{MussoPistoia2, PistoiaRocci, PistoiaVaira2, Premoselli12, Rey}. Second, the dimensional restriction $n \ge 7$ is also necessary: when $4 \le n \le 6$ sign-changing families of solutions of \eqref{eq:lmmain} with $L_\lambda = - \Delta - \lambda$ which blow-up as $\lambda$ approaches some $\lambda_0 >0$ (which is an eigenvalue of $-\Delta$ when $n=4,5$) have been constructed in \cite{LiVairaWeiWu,LiVairaWeiWu2}. As a remark, the solutions in  \cite{LiVairaWeiWu,LiVairaWeiWu2} do not have minimal energy, since sign-changing solutions of \eqref{eq:lmmain} with minimal energy have been recently shown to be unconditionally compact when $n=4,5$ in \cite{CAP1}. When $n=3$, and again for $L_\lambda = - \Delta- \lambda$, it is known that when $\lambda$ approaches some $\lambda_*>0$ defined by a vanishing mass condition blowing-up solutions of \eqref{eq:lmmain} exist  \cite{bn, Druetdim3, MussoSalazar}.  Finally, the energy bound is also necessary. One of the main novelties of Theorem~\ref{theo:compactness}, indeed, is that it covers the general case of sign-changing solutions. If one considers \emph{positive} solutions of \eqref{eq:lmmain} one may remove the energy bound in Theorem \ref{theo:compactness}, at least when $n \ge 4$, provided $A_0$ and all its perturbations are nonpositive, see \cite{LaurainKonig1, LaurainKonig2}. For sign-changing solutions, however, no blow-up analysis is possible to this day without assuming a priori energy bounds, as the set of sign-changing bubbling profiles (i.e.  solutions of \eqref{eq:critRn} below) is very large. We strongly believe that the assumptions of Theorem \ref{theo:compactness} are also sharp when $k \ge 2$, but the results in \cite{LiVairaWeiWu,LiVairaWeiWu2, MussoPistoia2, PistoiaRocci, PistoiaVaira2, Premoselli12, Rey} have not been adapted to the polyharmonic case yet.

\medskip

\subsection{Strategy of proof and organisation of the paper}

Theorem \ref{theo:compactness} was previously proven for $k=1$ and when $L = - \Delta +A_0$ and $A_0$ is a negative constant in \cite{DevillanovaSolimini}. The proof of \cite{DevillanovaSolimini} relied on the observation that, if $u$ solves \eqref{eq:lmmain}, $|u|$ is a weak subsolution to a close variant of \eqref{eq:lmmain}, and used the latter to prove integral bounds for $u$ on annuli corresponding to scales where the solution may concentrate. When $k \ge 2$ this approach trivially fails since the absolute value $|u|$ of a function $u \in H^k_0(\O)$ is not in $H^k_0(\O)$ in general, and $(-\Delta)^k |u|$ does not easily relate to $(-\Delta)^k u$ in a weak sense. In this paper we thus adopt a new and radically different approach than \cite{DevillanovaSolimini}, which allows us to treat simultaneously the case of every $k \ge 1$. Our proof of Theorem \ref{theo:compactness} relies on asymptotic analysis techniques. Equation \eqref{eq:lmmain} has a critical nonlinearity on its right-hand side, which implies that its solutions are likely to \emph{blow-up}. 
We prove by a contradiction argument, that no blow-up may occur when $n > 6k-4p$ and \eqref{cond:signchanging} holds. 

This approach requires us to first prove a very accurate description of the behavior of energy-bounded blowing-up sequences of (perturbations of) \eqref{eq:lmmain}, which is of independent interest. This description is the subject of Theorem \ref{prop:main} below: it focuses most of our efforts in this work and is the main novelty of this paper. Theorem \ref{prop:main} should be understood as a quantitative version of classical $H^k_0(\O)$ compactness results \emph{\`a la} Struwe \cite{Struwe} (see \cite{Maz17} for the polyharmonic case): it improves classical energy decompositions into global pointwise estimates, where the difference between blowing-up solutions and the bubbling profiles that they develop --- which we call a \emph{bubble-tree} (see Section \ref{sec:bulles2} for precise definitions) --- is precisely estimated in strong spaces. The estimates that we prove are valid in $\overline{\O}$ and thus include the boundary, and an important feature of Theorem \ref{prop:main} is that it holds true \emph{regardless of the bubbling configuration}: we do not assume anything on the bubbling profiles that may appear\footnote{There is an infinite number of them since our solutions may change sign, and so may the bubbling profiles} and we allow for bubbles at the boundary, whose existence is not ruled out yet when $k \ge 2$. Theorem \ref{prop:main} is the first instance of pointwise estimates up to the boundary for sign-changing solutions of equations like \eqref{eq:lmmain} in a general bounded domain, and it builds upon very recent developments in the field \cite{CarJLMS, CarRob25, Pre24, Robert:localisation:bubbling, Robert:C0:ordre:k}. The final part of the proof of Theorem \ref{theo:compactness} relies on a polyharmonic Pohozaev identity, and the contradiction is obtained by computing the contribution of lower-order terms in $L$ thanks to assumption \eqref{cond:signchanging}. Estimating each term in the Pohozaev identity requires precise \emph{pointwise} estimates on the solutions, and this is the reason we prove Theorem \ref{prop:main}.

\medskip

The organisation of the paper is as follows. In Section \ref{sec:bulles} we investigate some properties of the bubbling profiles that arise in $\R^n$ or $\R^n_+$. We prove sharp pointwise decay  for these bubbles as well as for solutions of their linearised equations. Section \ref{sec:bulles2} introduces the notion of bubble-tree and analyses the interactions between the different bubbling profiles arising during the blow-up. Sections \ref{sec:lin} and \ref{sec:nonlinear} are devoted to the proof of Theorem \ref{prop:main}. Section \ref{sec:lin} is the core of the analysis of this paper and we prove in it a quantitative invertibility result for the linearisation of \eqref{eq:lmmain} around a fixed bubble-tree configuration. In Section \ref{sec:nonlinear} we apply this result to prove Theorem \ref{prop:main} by a nonlinear perturbative argument. Section \ref{sec:pohozaev} is devoted to an analysis of the Pohozaev identity and concludes the proof of our main theorem. Finally the Appendix contains several technical results that are used throughout this paper, in particular a folklore result for Struwe-type decompositions whose proof for sign-changing solutions we could not find in the literature.

\bigskip

\textbf{Acknowledgements:} the second author is indebted to Saikat Mazumdar for useful discussions concerning Appendix A.

\section{Preliminary results} \label{sec:bulles}

\par Let $U \subseteq \R^n$ be any smooth domain of $\R^n, n \ge 3$, not necessarily bounded. We define $D^{k,2}(U)$ as the closure of $C^\infty_c(U)$ under the following norm
\begin{equation} \label{norm:Dk2}
  \Hnorm[U]{k,2}{u}^2 := \intM{U}{|\nabla^k u|^2}.
\end{equation}
When $U$ is bounded and smooth, \eqref{norm:Dk2} is the classical $H^k_0(U)$ norm and we will denote it by $\Vert \cdot \Vert_{H^k_0(U)}$. We endow $D^{k,2}(U)$ (and $\Sob_0(U)$ when $U$ is bounded) with the scalar product induced by \eqref{norm:Dk2}. We recall the following definition:
\begin{definition} \label{def:weaksolution}
 Let $U \subseteq \R^n$ be a domain with smooth boundary, $0 \le p \le k-1$ be an integer and let $(A_{l})_{0 \le l \le p}$ be continuous tensors in $\overline{U}$. Let $L$ be defined as in \eqref{def:L} and let $u\in D^{k,2}(U)$. We say that $u$ is a weak solution of \eqref{eq:lmmain} if 
  \[  \intM{U}{\big \langle \Dd^{k/2}u, \Dd^{k/2}\phi \big \rangle} + \sum_{l=0}^{p} \intM{U}{A_{l}(\nabla^l u,\nabla^l \phi)} = \intM{U}{|u|^{\crit-2}u\phi}
  \]
  for all $\phi \in C^\infty_c(U)$, where we have let $\Dd^{k/2} = \nabla\Dd^m$ when $k=2m+1$ is odd and where $\big \langle \Dd^{k/2}u, \Dd^{k/2}\phi \big \rangle$ denotes the (pointwise) scalar product in $\R$ or $\R^n$ depending on the parity of $k$.
\end{definition}
Throughout the rest of this paper, the only unbounded example of of open sets that we will investigate will be $\R^n$ and the half-space $\R^n_+$, that we define as  
$$\R^n_+ = \{x_1>0\}.$$

\subsection{Limiting equations and the compactness condition \eqref{cond:signchanging}} \label{subsec:condition}

We study in this subsection two equations that will arise as limiting problems in the analysis of the blow-up behavior of \eqref{eq:lmmain}. They are: 
\begin{equation}\label{eq:critRn}
  \Dd^k u = |u|^{\crit-2}u \quad \text{in }\R^n, \qquad u \in \hSob(\R^n),
\end{equation}
which will correspond to \emph{interior} bubbling solutions, and
\begin{equation}\label{eq:crithf}
\left \{  \begin{aligned}
    \Dd^k u &= |u|^{\crit-2}u \quad  \text{in }\R^n_+\\
    |\nabla^l u|& = 0  \quad \text{on }\partial\R^n_+,  \text{ for } l =0,\ldots, k-1\\
  \end{aligned} \right. ,  \quad u \in \hSob(\R^n_+)
\end{equation}
which will correspond to \emph{boundary} bubbling solutions. We recall that $2^\sharp = \frac{2n}{n-2k}$. Solutions to \eqref{eq:critRn} and \eqref{eq:crithf} are defined in a weak sense as in Definition \ref{def:weaksolution} and regularity theory for critical polyharmonic equations (see \cite{Maz16}) shows that solutions to \eqref{eq:critRn} and \eqref{eq:crithf} are of class $C^{2k}$ in $\R^n$ (resp. $\overline{\R^n_+}$). Concerning \eqref{eq:critRn}, positive solutions are classified and are, up to scaling and translations, the \emph{standard bubble}
  \begin{equation}\label{def:eucBub}
    B(x) = \big(1+ a_{n,k}|x|^2\big)^{-\exc},
  \end{equation}
  where $a_{n,k} = \Big(\Pi_{l=-k}^{k-1}(n+2l)\Big)^{-1/k}$, as proven in \cite{WeiXu99}. The function $B$ achieves the optimal constant for the critical Sobolev embedding $\hSob(\R^n) \subset L^\crit(\R^n)$. By contrast, any sign-changing solution $u$ of \eqref{eq:critRn} satisfies $\intM{\R^n}{|u|^\crit} > 2\intM{\R^n}{|B|^\crit}$ (see \cite[Lemma 5]{GeWeZh11} for the large inequality and \cite[Lemma A.1]{HumbertPetridesPremoselli} for the strict one) and there is a plethora of sign-changing solutions with arbitrarily large energy: see e.g. \cite{Ding, DelPinoMussoPacardPistoia1, DelPinoMussoPacardPistoia2, MedinaMusso, MedinaMussoWei} for the case $k=1$ \cite{BartschSchneiderWeth, GuoLiWei13} when $k \ge 2$. Concerning \eqref{eq:crithf}, if follows from the classical Pohozaev identity \cite{poho} that no non-zero solutions exist when $k=1$, and it was proven in \cite[Theorem 4]{ReiWet09} that when $k\ge 2$ no non-zero bounded non-negative solutions exist. The question of whether non-trivial sign-changing solutions of \eqref{eq:crithf} exist for $k \ge 2$ remains open to this day.   

  \smallskip

 In the next sub-section we will prove the following decay for solutions of \eqref{eq:critRn} and \eqref{eq:crithf}: 
  \begin{proposition}\label{prop:ctlsol}
    Let $u$ be a solution of \eqref{eq:critRn} (resp. \eqref{eq:crithf}). There exists $C>0$ such that for all $x\in \R^n$ (resp. $x \in \overline{\R}^n_+$) and for any $0 \le l \le 2k$,
  \begin{equation}\label{eq:Idcrit}
    |\nabla^l u(x)| \leq C (1+ |x|)^{2k-n-l}.
  \end{equation}
  \end{proposition}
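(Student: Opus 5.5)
Our plan is to combine a Kelvin-transform argument with local elliptic regularity. This reduces the decay estimate \eqref{eq:Idcrit} to a local boundedness statement near a single point.

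\emph{Step 1: reduction via the Kelvin transform.} For $u$ solving \eqref{eq:critRn}, set
$$ \hat u(x) = |x|^{2k-n} u\Big(\frac{x}{|x|^2}\Big).$$
The polyharmonic Kelvin transform satisfies $\Dd^k \hat u (x)= |x|^{-2k-n}\big(\Dd^k u\big)(x/|x|^2)$. Since the nonlinearity is critical, $\hat u$ solves the same equation \eqref{eq:critRn} in $\R^n\setminus\{0\}$. Moreover $\hat u\in D^{k,2}(\R^n)$, with $\|\hat u\|_{D^{k,2}} = \|u\|_{D^{k,2}}$ by conformal invariance. Because a point has zero $H^k$-capacity when $2k<n$, $\hat u$ is a weak solution in all of $\R^n$. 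For the half-space problem \eqref{eq:crithf} we instead invert about a point $x_0=(-1,0,\dots,0)$ outside $\overline{\R^n_+}$, i.e. use $x\mapsto x_0+\frac{x-x_0}{|x-x_0|^2}$. This maps $\R^n_+$ onto a ball $\mathcal{B}$ whose boundary passes through $x_0$. The Dirichlet conditions $|\nabla^l u|=0$ on $\partial\R^n_+$ transfer to Dirichlet conditions on $\partial \mathcal{B}$, because the Kelvin transform preserves vanishing of the $(k-1)$-jet. Hence $\hat u\in H^k_0(\mathcal{B})$ weakly solves the critical equation in $\mathcal{B}$ with Dirichlet data.

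\emph{Step 2: regularity of $\hat u$.} By the regularity theory for critical polyharmonic equations (\cite{Maz16}, already invoked above, which applies to $D^{k,2}$ solutions including Dirichlet problems in smooth bounded domains up to the boundary), $\hat u \in C^{2k}$ near $0$, respectively in $\overline{\mathcal{B}}$ near $x_0$. This is the main step where some care is needed. The first part is the removability of the isolated singularity, which follows from the capacity argument once $\hat u$ is known to lie in $D^{k,2}$. The second is the Brezis--Kato/Moser type bootstrap for the critical term $|\hat u|^{\crit-2}\hat u=V\hat u$ with $V\in L^{n/2k}$. For the bootstrap we would invoke the $L^q$ theory for $\Dd^k$ with Dirichlet conditions (Agmon--Douglis--Nirenberg), exactly as in \cite{Maz16}. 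In particular $|\nabla^l \hat u|$ is bounded near the relevant point for $0\le l\le 2k$.

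\emph{Step 3: inverting back.} Write $u(y)=|y|^{2k-n}\hat u(y/|y|^2)$, with the analogous formula centered at $x_0$ in the half-space case. Differentiating $l$ times by the Leibniz rule, each term is a product of $\nabla^{j}(|y|^{2k-n})=O(|y|^{2k-n-j})$ and $\nabla^{l-j}$ of $\hat u(y/|y|^2)$. By the chain rule and since $\nabla^m(y/|y|^2)=O(|y|^{-1-m})$, the latter is $O(|y|^{-2(l-j)})\cdot\sup_{m\le l-j}|\nabla^m\hat u|$ for $|y|\ge1$; the lowest-order chain terms actually give $O(|y|^{-(l-j)-\text{extra}})$, and in any case at most $O(|y|^{-(l-j)})$. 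Summing over $j$ gives $|\nabla^l u(y)|\le C|y|^{2k-n-l}$ for $|y|\ge 1$. For $|y|\le1$ the bound follows from the $C^{2k}$ regularity of $u$ itself. Together these yield \eqref{eq:Idcrit}. In the half-space case, $|y-x_0|\sim 1+|y|$ on $\overline{\R^n_+}$, so the same computation applies.
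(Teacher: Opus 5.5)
Your proposal is correct and follows the same strategy as the paper: conformally compactify, use regularity at the image of infinity, then transform back. In the half-space case the paper's Cayley map $\Phi$ is exactly an inversion about a point outside $\overline{\R^n_+}$ followed by a translation, and it concludes as in your Step 3. The only difference is in the whole-space case. There the paper uses stereographic projection and the conformal covariance of the GJMS operator on $\mathbb{S}^n$ instead of the Euclidean Kelvin transform, and it obtains the derivative bounds from rescaled local elliptic estimates rather than the chain rule. Incidentally, in your Step 3, for $l-j\ge 1$ the chain-rule factor is in general $O(|y|^{-(l-j)-1})$, not $O(|y|^{-2(l-j)})$; your fallback bound $O(|y|^{-(l-j)})$ is correct and is all that is needed.
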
 
Proposition~\ref{prop:ctlsol} is sharp for \eqref{eq:critRn} since positive solutions given by \eqref{def:eucBub} decay exactly as $|x|^{2k-n}$ at infinity. With Proposition \ref{prop:ctlsol} we may now introduce our main assumption in the statement of Theorem~\ref{theo:compactness}.  Let $\Omega$ be a smooth bounded domain, let $0 \le p \le k-1$ be fixed and $\bold{A} = (A_0, \cdots, A_p) \in \mathcal{A}_p$ be as in \eqref{def:A}. For $x_0 \in \overline{\O}$ fixed we define the following integral: 
  \begin{equation*} 
 I_{\bold{A}}(x_0, u) =
\left \{
\begin{aligned}
 & \intM{\R^n}{A_p(x_0) \big( \nabla^p u, \nabla^p u \big)}  \quad &  \text{ if } u \text{ solves } \eqref{eq:critRn}  \\ 
 & \intM{\R^n_+}{A_p(x_0) \big( \nabla^p u, \nabla^p u \big)} \quad &  \text{ if } u \text{ solves } \eqref{eq:crithf}.   
\end{aligned}
\right. 
\end{equation*}
Here we see $A_p(x_0)$ as a $(2p,0)$-tensor with constant coefficients in $\R^n$. Estimate \eqref{eq:Idcrit} shows that when $n > 4k-2p$ any solution $u$ of \eqref{eq:critRn} (resp. \eqref{eq:crithf}) satisfies $\nabla^p u \in L^2(\R^n)$ (resp. $\nabla^p u \in L^2(\R^n_+)$): in particular, if $n \ge 6k-4p$, $I_A(x_0,u)$ is finite. We consider the set of all possible values for $I_{\bold{A}}(x_0,u)$ as $x_0$ runs through $\overline{\O}$ and $u$ runs over the set of all non-zero finite-energy solutions of \eqref{eq:critRn} and \eqref{eq:crithf}: 
$$ \mathcal{I}_{\bold{A}} = \Big \{ I_{\bold{A}}(x_0, u), \, x_0 \in \overline{\O} \text{ and } u \neq 0 \text{ solves } \eqref{eq:critRn} \text{ or } \eqref{eq:crithf}\Big \} .$$
Our main assumption in Theorem \ref{theo:compactness} that will ensure uniform bounds is the following one: 
 \begin{equation}\label{cond:signchanging}
\begin{aligned}
\text{ Either} \quad  \mathcal{I}_{\bold{A}}  \subseteq (0, + \infty) \quad \text{ or }  \quad \mathcal{I}_{\bold{A}}  \subseteq (- \infty, 0 ).
\end{aligned}
\end{equation}
Thus we assume that, for any $x_0 \in \overline{\O}$ and any non-zero finite-energy solution $u$ of \eqref{eq:critRn} and \eqref{eq:crithf}, all the integrals $I_{\bold{A}}(x_0,u)$ are non-zero and have the same sign. As a first result we observe that \eqref{cond:signchanging} is satisfied for equations of Br\'ezis-Nirenberg type and deduce Theorem \ref{corol:BN}: 

\begin{proof}[Proof of Theorem \ref{corol:BN} assuming Theorem \ref{theo:compactness}]
Let $0 \le p \le k-1$ and $\lambda \neq 0$ be fixed. For $\mu \neq 0$ we let $A_{p,\mu} = \mu (\cdot, \cdot) $, where $(\cdot, \cdot)$ denotes the pointwise scalar product on $p$-symmetric tensors, and we let $\bold{A}_\mu = (0, \cdots, 0, A_{p,\mu}) \in \mathcal{A}_p$. For any $x_0 \in \overline{\O}$ and any non-zero $u$ solving \eqref{eq:critRn} or \eqref{eq:crithf} we have $I_{\bold{A}_\lambda}(x_0,u) = \lambda  \intM{U}{| \nabla^p u|^2}$, where $U= \R^n, \R^n_+$, and thus $\bold{A}_\lambda$ satisfies \eqref{cond:signchanging}. Theorem \ref{theo:compactness} thus applies to $L$ associated to $\bold{A}_\lambda$ as in \eqref{def:L}. Integration by parts in $H^k_0(\O)$ shows that 
$$L = (-\Delta)^k + \lambda (-\Delta)^p, $$
and Theorem \ref{corol:BN} now follows from Theorem \ref{theo:compactness}. 
\end{proof}

\begin{remark} \label{rem:anisotropy}
Condition  \eqref{cond:signchanging} allows $L$ to have anisotropic differential terms of lower order, and Theorem \ref{theo:compactness} does not only apply to operators like \eqref{L:BN}. For simplicity we illustrate this when $k=2$ and $p=1$, but this example easily generalises to any order $k \ge 2$ and $0 \le p \le k-1$. Let $1 \le d <n$ be fixed and let $a_1, \cdots, a_d$ be positive functions in $\overline{\O}$. Let, for $x \in \overline{\O}$,
$$ A_1(x) = \text{Diag}(a_1(x), \cdots, a_d(x), 0, \cdots, 0), $$
let $A_0 \in C^1(\overline{\O})$ be any function and let $\bold{A} = (A_0, A_1) \in \mathcal{A}_1$. Then $L = (-\Delta)^2 - \sum_{i=1}^d \partial_i \big( a_i \partial_i\big) + A_0$ and, for any $x_0 \in \overline{\O}$ we have 
$$ I_{\bold{A}}(x_0,u) = \sum_{i=1}^d a_i(x_0)  \intM{U}{ (\partial_i u)^2} $$
where $U= \R^n$ or $\R^n_+$ depending on whether $u$ solves \eqref{eq:critRn} or \eqref{eq:crithf}. The latter is positive and $\bold{A}$ thus satisfies \eqref{cond:signchanging}. The same argument applies to the case where all the $a_i$ are negative in $\overline{\O}$. 
\end{remark}

\subsection{Proof of Proposition~\ref{prop:ctlsol}}

We prove Proposition~\ref{prop:ctlsol} in what follows. The strategy of proof for the full space $\R^n$ or the half-space $\R^n_+$ is the same: we first transform \eqref{eq:critRn} and \eqref{eq:crithf} into critical equations on spaces of compact closure via a well-chosen conformal map, we then obtain global bounds for solutions on the compact space, and we translate back these bounds into decay on the original space. We will need some definitions in the half-space case $\R^n_+$. Following the arguments in \cite{ReiWet09}, we define a variant of the Kelvin transform that sends the unit ball to the upper half space. In the following we will let $B = B(0,1)$ be the unit ball in $\R^n$ and $e_1=(1,0,\ldots,0)\in \R^n$. We let  
$$   \Phi : \left \{  \begin{aligned}  
   B  & \to \R^n_+\\
    y &\mapsto \frac{y + e_1}{|y+e_1|^2}-\frac{e_1}{2}.
  \end{aligned} \right. $$
  It is easily seen that $\Phi$ is smooth and bijective and sends $\partial B \backslash \{-e_1\}$ to $\{ x_1 = 0 \}  = \partial \R^n_+$. For $y \in B$, straightforward computations show that  $ D \Phi(y) = \frac{1}{|y+e_1|^2} \big( Id - 2 M)$
  where $M = \frac{1}{|y+e_1|^2} (y+e_1) {}^{t}(y+e_1)$ is a rank-one matrix. Thus 
  $$ |\det D \Phi(y)| = |y+e_1|^{-2n} $$ 
  and, if $\xi$ denotes the euclidean metric both in $\R^n_+$ and $B$, and for $y \in B$, we have $ \Phi^* \xi(y) = \frac{1}{|y+e_1|^4} \xi$. For $u\in L^\crit(\R^n_+)$ we define its Cayley transform as follows: 
  \begin{equation}\label{def:Cayley}
    u^*(y) = |\det D \Phi(y)|^{\frac{n-2k}{2n}}\, u\big(\Phi(y)\big) =   |y+e_1|^{2k-n}\, u\big(\Phi(y)\big) \quad \text{ a.e. } y\in B.
  \end{equation}
A simple change of variables shows that  $\Phi$ leaves the critical norm invariant:
\begin{equation}\label{eq:invdeu}
  \int_{B}{|u^*(y)|^\crit}dy =   \intM[x]{\R^n_+}{|u(x)|^\crit}.
\end{equation}
If $U$ denotes either the unit ball $B$ or the half-space $\R^n_+$ we write $G_{U} \in L^1_{loc}(U \times U)$ for the Green's function of the Dirichlet problem 
\[  \begin{cases}
  (-\D)^k u = f & \text{in }U\\
  |\nabla^l u|=0 & \text{on } \partial U, \quad \text{for } l=0,\ldots, k-1 
\end{cases}.
\] 
The arguments in \cite[Section 3]{ReiWet09} show that $G_B$ and $G_{\R^n_+}$ are positive and are respectively given by: 
$$ \begin{aligned}
  G_B(x,y) & = \kappa_{k,n} |x-y|^{2k-n} \int_1^{\big( 1 + \psi(x,y) \big)^{\frac12}} (t^2-1)^{k-1} t^{1-n}dt  \quad  \text{ for }~x\neq y \text{ in } B,\\
 G_{\R^n_+}(x,y) & = \kappa_{k,n} |x-y|^{2k-n} \int_1^{\big( 1 + \psi_\infty(x,y) \big)^{\frac12}} (t^2-1)^{k-1}t^{1-n}dt  \quad  \text{ for }~x\neq y \text{ in }  \R^n_+,
\end{aligned} $$ 
where $\kappa_{k,n}>0$ is some numerical constant depending only on $n$ and $k$ and where 
$$ \psi(x,y) = \frac{(1-|x|^2)(1-|y|^2)}{|x-y|^2} \quad \text{ and } \quad \psi_\infty(x,y) = \frac{4 x_1 y_1}{|x-y|^2} . $$
These formulas can also be derived from the formulas in \cite[Formula (4.7)]{GazGruSw10} or \cite{Bog}. Direct computations using the expression of $\Phi$ show that 
\begin{equation} \label{eq:exprPhi}
 \big| \Phi(x) - \Phi(y) \big| = \frac{|x-y|}{|x+e_1||y+e_1|} \quad \text{ for } x,y \in B ,
 \end{equation}
and hence that 
$$ \psi_\infty \big( \Phi(x), \Phi(y) \big) = \psi (x,y) \quad \text{ for } x,y \in B. $$
The latter, together with the expression of $G_B$ and $G_{\R^n_+}$ and again with \eqref{eq:exprPhi}, shows that for any $x \neq y \in B$ the following holds: 
\begin{equation}\label{eq:GBGR}
  G_B(x,y) = |x+e_1|^{2k-n}|y+e_1|^{2k-n}G_{\R^n_+}(\Phi(x),\Phi(y)). 
  \end{equation}
Let $ v \in C^\infty_c(\R^n_+)$. Since $v$ is supported away from $\partial \R^n_+$, it is easily seen that its Cayley transform satisfies $v^* \in C^\infty_c(B)$. A representation formula for $v^*$ then shows that for any $x \in B$, 
$$ \begin{aligned}
  v^*(x)  & =  \int_{B} G_B(x,y) (-\Delta)^k v^*(y) dy \\
 & =  |x+e_1|^{2k-n}  \int_{\R^n_+} G_{\R^n_+} \big( \Phi(x), z \big) \big| \Phi^{-1}(z) +e_1 \big|^{n+2k} \big[(-\Delta)^k v^* \big]\big( \Phi^{-1}(z)\big) dz, 
 \end{aligned}$$ 
where the last equality follows from \eqref{eq:GBGR} and a change of variables. Identifying with the definition of $v^*$ then yields 
$$ \begin{aligned} 
v \big( \Phi(x) \big) & = \int_{\R^n_+} G_{\R^n_+} \big( \Phi(x), z \big) \big| \Phi^{-1}(z) +e_1 \big|^{n+2k} \big[(-\Delta)^k v^* \big]\big( \Phi^{-1}(z)\big) dz \\
& = \int_{\R^n_+} G_{\R^n_+} \big( \Phi(x), z \big) (-\Delta)^k v(z) dz, 
\end{aligned}  $$ 
where the last equality is simply a representation formula for $v$. Since this equality is true for every $x \in B$ we obtain that for any $y \in B$ and any $ v \in C^\infty_c(\R^n_+)$,
\begin{equation} \label{eq:laplaciens}
(-\Delta)^k v^*(y) = \frac{1}{\big| y +e_1 \big|^{n+2k} }(-\Delta)^k v \big( \Phi(y) \big). 
\end{equation}
A consequence of \eqref{eq:laplaciens} is that the $D^{k,2}$ norm is also preserved via $\Phi$: integrating \eqref{eq:laplaciens} by parts against $v^*$ and using the expression of $v^*$ indeed yields 
\begin{equation}\label{eq:invdeuderivees}
\int_B \big| (- \Delta)^{\frac{k}{2}} v^*(y)\big|^2 dy = \int_{\R^n_+} \big| (- \Delta)^{\frac{k}{2}} v(x)\big|^2 dx. 
\end{equation}
We are now in position to prove Proposition~\ref{prop:ctlsol}.

\begin{proof}[Proof of Proposition~\ref{prop:ctlsol}]
We first consider the entire space case. We adapt the proof in \cite{Pre24} for the case $k=1$. Let  $u \in D^{k,2}(\R^n)$ be a solution of \eqref{eq:critRn}. Let, for $x \in \R^n$, $U(x)^{\frac{4}{n-2k}} = 4 (1+|x|^2)^{-2}$. Let $\pi: \mathbb{S}^n \backslash \{N\} \to \R^n$ be the stereographic projection from the North pole and let, for $y \in \mathbb{S}^n$, $\tilde{u}(y) = \frac{u}{U}(\pi(y))$. If $g_0$ denotes the round metric on the sphere it is well-known that $(\pi^{-1})^{*} g_0 (x) = U(x)^{\frac{4}{n-2k}} \xi$ for every $x \in \R^n$. We recall that $(-\Delta)^k$ is the Graham--Jenne--Mason--Sparling operator \cite{GJMS} for the euclidean metric in $\R^n$ and we denote by $P_{g_0}$ the GJMS operator of order $2k$ on the round sphere $(\mathbb{S}^n, g_0)$.  The conformal invariance of $P_{g_0}$ shows that, for any $y \in \mathbb{S}^n$, $\tilde{u}$ satisfies
\begin{equation} \label{confinvRn}
 P_{g_0} \tilde{u}(y) = U(\pi(y))^{1 - \crit} (-\Delta)^k u (\pi(y)) = |\tilde u(y)|^{\crit -2}  \tilde u(y) \quad \text{ in } \mathbb{S}^n.  
 \end{equation}
$P_{g_0}$ is explicit and writes as $ P_{g_0} = \prod_{j=1}^k \Big( \Delta_{g_0} + \frac{(n+2j-2)(n-2j)}{4}  \Big)$
(see e.g. \cite[Proposition 7.9]{FeffermanGraham}) and is thus coercive in $H^k(\mathbb{S}^n)$. A consequence of the first equality in \eqref{confinvRn} and of the assumption $u \in D^{k,2}(\R^n)$ is that $\tilde u \in H^k(\mathbb{S}^n)$. With \eqref{confinvRn} we may thus apply the regularity theory of \cite[Proposition 8.3]{Maz16} and we obtain that $\tilde u  \in C^{2k}(\mathbb{S}^n)$. Since, for all $x \in \R^n$, $u(x) = U(x)\tilde{u}(\pi^{-1}(x))$, we obtain that $|u(x)| \le C (1+|x|)^{2k-n}$ for some numerical constant $C$ which does not depend on $x$. This proves Proposition~\ref{prop:ctlsol} in the case $l = 0$. The proof for $1 \le l \le 2k$ now follows from local elliptic estimates in $\R^n$. 

\smallskip

We now consider the half-space case.   Let $u\in \hSob(\R^n_+)$ be a weak solution of \eqref{eq:crithf} and let $u^*$ be its Cayley transform. By Sobolev's inequality, $u \in L^{\crit}(\R^n_+)$ and hence \eqref{eq:invdeu} and  \eqref{eq:invdeuderivees} show that $u^* \in H^k(B)$. It is a simple observation that $u^* \in H^k_0(B)$: this follows from the definition of $H^k_0(B)$ as the closure of $C^\infty_c(B)$, the fact that if $ \vp \in C^\infty_c(\R^n_+)$ then $\vp^* \in C^\infty_c(B)$, and \eqref{eq:invdeuderivees}. A change of variable using the weak formulation of \eqref{eq:crithf} and the properties of $\Phi$ shows, by using \eqref{eq:laplaciens}, that $u^*$ satisfies weakly 
  \[  \begin{cases}
  (-\D)^k u^* = |u^*|^{\crit-2}u^* & \text{in }B\\
  |\nabla^l u^*|=0 & \text{for } l=0,\ldots, k-1, \quad \text{on } \partial B
\end{cases}
\] 
in the sense of Definition \ref{def:weaksolution}. Standard elliptic regularity theory then shows that $u^* \in C^{2k}(\overline{B})$ and that there exists $C>0$ such that
\begin{equation} \label{contderustar}
  \Vert \nabla^l u^* \Vert_{L^\infty(B)}\leq C 
\end{equation} 
for every $l=0,\ldots 2k$. Going back to the definition of $u^*$ we have, for any $x\in \R^n_+$, 
\begin{equation}\label{exprustar}
u(x) = \big| \Phi^{-1}(x) + e_1\big|^{n-2k} u^* \big( \Phi^{-1}(x) \big). 
\end{equation}
Using the definition of $\Phi$ it is easily seen that for any $x \in \R^n_+$ we have 
$$ \Phi^{-1}(x) = \frac{x+ \frac{e_1}{2}}{\big| x+ \frac{e_1}{2}\big|^2} - e_1 \quad \text{ and } \quad  \big| \Phi^{-1}(x) + e_1\big| = \frac{1}{\big|x + \frac{e_1}{2}\big|} \le \frac{C}{1+|x|} $$
for some numerical constant $C$ that does not depend on $x$. With \eqref{contderustar} we thus get 
$$|u(x)| \le \frac{C}{(1+|x|)^{n-2k}},$$
which proves Proposition~\ref{prop:ctlsol} in the case $l = 0$. The proof for $1 \le l \le 2k$ now follows from  \eqref{contderustar}, \eqref{exprustar} and the chain rule, since the explicit expression of $\Phi^{-1}$ shows that $|\nabla^l \Phi^{-1}(x)| \le \frac{C_l}{(1+|x|)^{l+1}}$ for every $x \in \R^n_+$. 
\end{proof}

  \subsection{Estimates on solutions of linearised equations}

   In our analysis we will also investigate the linearisations of \eqref{eq:critRn} and \eqref{eq:crithf} at a non-zero solution. If $v \in D^{k,2}(\R^n)$ is a solution of \eqref{eq:critRn} we let 
 \begin{equation} \label{ker:interior}
  \Ker_{v}= \{h\in \hSob(\R^n)\,:\,\Dd^k h = (\crit-1)|v|^{\crit-2}h \text{ in } \R^n\}.
      \end{equation} 
  Similarly, if $v \in D^{k,2}(\R^n_+)$ is a solution of \eqref{eq:crithf} we let  
  \begin{equation} \label{ker:boundary}
\Ker_{v}= \{h\in \hSob(\R^n_+)\,:\,\Dd^k h = (\crit-1)|v|^{\crit-2}h \text{ in } \R^n_+\}.
    \end{equation} 
We prove the following optimal decay for elements in $\Ker_v$: 
  \begin{lemma}\label{prop:ctlsollin}
    Let  $v$ be a solution of \eqref{eq:critRn} (resp. \eqref{eq:crithf}) and let $Z \in \Ker_{v}$, where $\Ker_v$ is as in \eqref{ker:interior} (resp. \eqref{ker:boundary}). There exists $C>0$ such that for all $x\in \R^n$ (resp. $x \in \overline{\R}^n_+$) and for any $0 \le l \le 2k-1$,
  \begin{equation*} 
    |\nabla^l Z(x)| \leq C (1+ |x|)^{2k-n-l}.
  \end{equation*}
  \end{lemma}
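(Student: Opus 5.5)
We mimic the proof of Proposition~\ref{prop:ctlsol}, now for the linear equation $\Dd^k Z = (\crit-1)|v|^{\crit-2}Z$ with potential $V = (\crit-1)|v|^{\crit-2}$. The plan is to transplant this equation to a compact setting by the same conformal maps used there. Because the equation is linear in $Z$ with a conformally covariant potential, the transplanted function solves an equation of the same type. We then obtain uniform bounds on the compact side and translate them back into decay.

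\emph{Interior case.} Set $\tilde Z = \frac{Z}{U}\circ\pi$ and $\tilde v = \frac{v}{U}\circ \pi$ on $\mathbb{S}^n$. By the first equality in \eqref{confinvRn}, applied to $Z$, we get
$$P_{g_0}\tilde Z = U^{1-\crit}\,(\crit-1)|v|^{\crit-2}Z = (\crit-1)|\tilde v|^{\crit-2}\tilde Z,$$
since $U^{1-\crit}|v|^{\crit-2}Z = |v/U|^{\crit-2}(Z/U)$. As in the proof of Proposition~\ref{prop:ctlsol}, $Z\in D^{k,2}(\R^n)$ gives $\tilde Z\in H^k(\mathbb{S}^n)$. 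By that proof, $\tilde v\in C^{2k}(\mathbb{S}^n)$, so the potential $(\crit-1)|\tilde v|^{\crit-2}$ is bounded; it is in fact $C^{0,\alpha}$, since $\crit-2>0$. Standard elliptic regularity for the coercive operator $P_{g_0}$ with a bounded potential then gives $\tilde Z\in W^{2k,q}(\mathbb{S}^n)$ for all $q<\infty$ by bootstrapping, hence $\tilde Z\in C^{2k-1,\beta}(\mathbb{S}^n)$. This limited regularity is why the statement stops at $l\le 2k-1$: $|v|^{\crit-2}$ need not be smooth when $\crit<3$. From $Z=U\,\tilde Z\circ\pi^{-1}$ we get $|Z(x)|\le C(1+|x|)^{2k-n}$.

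\emph{Derivatives in the interior case.} For the derivative bounds we do not use local elliptic estimates on unit balls alone, since those would not give the extra decay. Instead we differentiate $Z = U\cdot(\tilde Z\circ\pi^{-1})$ and use two facts: $|\nabla^j U|\le C(1+|x|)^{2k-n-j}$, and $\pi^{-1}$ has derivatives satisfying $|\nabla^j \pi^{-1}(x)|\le C(1+|x|)^{-j-1}$. These combine, with $\tilde Z\in C^{2k-1}$, via the chain rule to give $|\nabla^l Z|\le C(1+|x|)^{2k-n-l}$. The alternative, rescaled local elliptic estimates on annuli $B(x,|x|/2)$, also works.

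\emph{Half-space case.} Take the Cayley transform $Z^*$ as in \eqref{def:Cayley}. By \eqref{eq:invdeu}, \eqref{eq:invdeuderivees} and density, $Z^*\in H^k_0(B)$. Changing variables in the weak formulation with \eqref{eq:laplaciens}, and using $|y+e_1|^{-(n+2k)}|v\circ\Phi|^{\crit-2}(Z\circ\Phi)=|v^*|^{\crit-2}Z^*$, which is the same homogeneity computation as for $v$, we find that $Z^*$ solves
$$(-\Delta)^kZ^* = (\crit-1)|v^*|^{\crit-2}Z^* \text{ in } B,$$
with Dirichlet conditions. From the proof of Proposition~\ref{prop:ctlsol}, $v^*\in C^{2k}(\overline B)$, so the potential is bounded. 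Dirichlet $L^q$ regularity for $(-\Delta)^k$ on the ball, bootstrapped, then gives $Z^*\in C^{2k-1,\beta}(\overline B)$. The bounds follow from \eqref{exprustar} with $Z$ in place of $u$, the bound $|\Phi^{-1}(x)+e_1|\le C(1+|x|)^{-1}$, and the derivative bounds on $\Phi^{-1}$, exactly as before.

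\emph{Main obstacle.} The delicate step is justifying the weak transplanted equation and the regularity bootstrap from $H^k$ to $C^{2k-1}$ up to the boundary. On the sphere this is straightforward. For the ball it requires Agmon--Douglis--Nirenberg $L^q$ estimates for the polyharmonic Dirichlet problem. These apply because the potential is in $L^\infty$: starting from $Z^*\in H^k_0\subset L^{\crit}$, each step improves the integrability until $q$ can be taken arbitrarily large.
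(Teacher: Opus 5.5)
Your proposal is correct and follows essentially the same route as the paper. You transplant the linearised equation to $\mathbb{S}^n$ via $\tilde Z = (Z/U)\circ\pi$, respectively to $B$ via the Cayley transform. You use that the potential $(2^\sharp-1)|\tilde v|^{2^\sharp-2}$, respectively $(2^\sharp-1)|v^*|^{2^\sharp-2}$, is bounded to get $C^{2k-1}$ regularity, and then transfer back; your explicit chain-rule computation for the derivatives is a valid way to carry out the paper's instruction to ``conclude as in Proposition~\ref{prop:ctlsol}''.

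One harmless inaccuracy in an aside: the restriction $l\le 2k-1$ is not forced by the regularity of the potential. As you note yourself, $|\tilde v|^{2^\sharp-2}$ is H\"older continuous, so Schauder theory would even give $C^{2k,\beta}$. The paper simply stops at $2k-1$ because it only uses $L^\infty$ bounds on the potential.
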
 
  
  \begin{proof}
  The proof follows the same lines and uses the same notations than the proof of Proposition~\ref{prop:ctlsol} and we only sketch the main steps. 
  
Consider first consider the entire space case. Let  $Z \in \Ker_v$ and let, for $y \in \mathbb{S}^n$, $\tilde{Z}(y) = \frac{Z}{U}(\pi(y))$. Then $\tilde Z \in H^k(\mathbb{S}^n)$ and the conformal invariance of $P_{g_0}$ shows that, for any $y \in \mathbb{S}^n$, $\tilde{Z}$ satisfies
\begin{equation*} 
 P_{g_0} \tilde{Z}(y) = a(y) \tilde{Z}(y) \quad \text{ in } \mathbb{S}^n, \quad \text{ where } \quad a(y) = (2^\sharp-1) \Big( \frac{ v(\pi(y))}{U(\pi(y))} \Big)^{2^\sharp -2} .
  \end{equation*}
 By Proposition~\ref{prop:ctlsol}, $a \in L^\infty(\mathbb{S}^n)$, so that by standard elliptic theory $\tilde{Z} \in C^{2k-1}(\mathbb{S}^n)$. We conclude as in the proof of Proposition~\ref{prop:ctlsol}.

We now consider the half-space case. Let $v\in \hSob(\R^n_+)$ be a weak solution of \eqref{eq:crithf}, let $v^*$ be its Cayley transform and let $Z \in \Ker_v$. As before, $Z^* \in H^k_0(B)$, and a change of variable using the weak formulation of \eqref{eq:crithf} and the properties of $\Phi$ shows, by using \eqref{eq:laplaciens}, that $Z^*$ satisfies weakly 
  \[  \begin{cases}
  (-\D)^k Z^* = (2^\sharp-1)|u^*|^{\crit-2}Z^* & \text{in }B\\
  |\nabla^l Z^*|=0 & \text{for } l=0,\ldots, k-1, \quad \text{on } \partial B
\end{cases}.
\] 
As proven in \eqref{contderustar} we have $u^* \in L^\infty(B)$ and hence standard elliptic theory shows that $Z^* \in C^{2k-1}(\overline{B})$. We again conclude as in the proof of Proposition~\ref{prop:ctlsol}.
  \end{proof}

\section{Bubble-tree configurations} \label{sec:bulles2}

In this section we define the bubbling profiles that we will use in this work and we introduce the notion of bubble-tree.

  \subsection{Bubbling profiles and weak $H^k_0(\O)$ compactness}

\par Until the end of this paper $\O$ will denote a smooth bounded domain of $\R^n, n \ge 3$. Smoothness ensures the existence of $r_0 >0$ and, for any $x \in \dO$, of an open set $U_x$ containing $x$ and of a diffeomorphism $\sigma_x : B(0,r_0) \to U_x \subseteq \R^n$ such that $\sigma_x(0) = x$, $(x,z) \mapsto \sigma_x(z)$ is smooth from $\dO \times B(0,r_0)$ to $\oO$, and 
\begin{align*}
  &\sigma_x\big(B(0,r_0) \cap \R^n_+\big) = U_x \cap \O, \quad  &\sigma_x\big(B(0,r_0) \cap \partial\R^n_+) = U_x \cap \dO, \\
  &d\sigma_x(0) : \R^n \to \R^n \text{ is an isometry,  }  \quad & \text{ and } d\sigma_x(0)[-e_1] = \nu(x)
\end{align*}
where $\nu(x)$ is the outer unit normal vector to $\dO$ at $x$. Throughout this paper we will let $\chi \in C^\infty_c(\R^n)$ be a cut-off function, such that $\chi \equiv 1$ in $B(0,\frac12)$ and $\chi\equiv 0$ in $\R^n \setminus B(0,1)$. Following \cite[Definition 2.1]{Maz17} we define

\begin{definition}[Interior bubble]\label{def:bub:int}
We call \emph{interior bubble} a triple  $[(x_\a)_\a, (\mu_\a)_\a, v]$, where $(\mu_\a)_\a$ is a sequence of positive numbers converging to $0$ as $\alpha \to + \infty$,  $(x_\a)_\a$ is a convergent sequence of points in $\O$ such that $\lim_{\a\to \infty} \frac{\dist{x_\a,\dO}}{\mu_\a} = +\infty$, and $v\in \hSob(\R^n)$ is a non-zero solution of \eqref{eq:critRn}. If $[(x_\a)_\a, (\mu_\a)_\a, v]$ is an interior bubble we define, for all $x \in \oO$,
    \[  V_\a(x) = \chi\Bigg(\frac{x-x_\a}{\dist{x_\a,\dO}}\Bigg) \mu_\a^{-\exc} v\Big(\frac{x-x_\a}{\mu_\a}\Big).  \]
 We will sometimes refer to $V_\alpha$ as the bubble. 
\end{definition} 
 
 \begin{definition}[boundary bubble]\label{def:bub:bdr}
We call \emph{boundary bubble} a triple  $[(x_\a)_\a, (\mu_\a)_\a, v]$, where $(\mu_\a)_\a$ is a sequence of positive numbers converging to $0$ as $\alpha \to + \infty$, $(x_\a)_\a$ is a convergent sequence of points in $\partial \O$ and $v\in \hSob(\R^n_+)$ is a non-zero solution of \eqref{eq:crithf}. If $[(x_\a)_\a, (\mu_\a)_\a, v]$ is a boundary bubble we define, for all $x \in \oO$,
    \[  V_\a(x) = \chi\big(\sigma_{x_\a}^{-1}(x)\big) \mu_\a^{-\exc} v\Big(\frac{\sigma_{x_\a}^{-1}(x)}{\mu_\a}\Big).    \]
 We will also sometimes refer to $V_\alpha$ as the bubble.   
        \end{definition}

\noindent We could equivalently define a boundary bubble as a triple $[(x_\a)_\a, (\mu_\a)_\a, v]$, where $\dist{x_\alpha, \partial \O} = O(\mu_\a)$ and $v\in \hSob(\R^n_+)$ solves  \eqref{eq:crithf}. Up to replacing $x_\alpha$ by its projection onto $\partial \O$ (which is uniquely defined since $\mu_\a \to 0$), however, we may assume with no loss of generality that $x_\a \in \partial \O$ for all $\a$, which justifies Definition \ref{def:bub:bdr}. Remark that for an interior bubble as in Definition \ref{def:bub:int} it is perfectly possible that $\lim_{\alpha \to + \infty} x_\alpha \in \partial \O$. Direct computations show that in either case we have $V_\a \in \Sob_0(\O)$, $V_\a \wto 0$ in  $\Sob_0(\O)$ and, as $\alpha \to + \infty$,
\[  \Vert V_\a \Vert_{H^k_0(\Omega)} = \begin{cases}
  \Hnorm{k,2}{v} + \smallo(1) & \text{for an interior bubble,}\\
  \Hnorm[\R^n_+]{k,2}{v} + \smallo(1) & \text{for a boundary bubble. }\\
\end{cases}  
\]  
\noindent Throughout this paper we will investigate sequences of solutions of perturbations of \eqref{eq:lmmain}, where the operator $L$ given by \eqref{def:L} is replaced by a sequence of differential operators $(L_\a)_\a$. The perturbations of $L$ that we will consider are as follows: if $0 \le p \le k-1$ is as in \eqref{def:L} we will let
\begin{equation}\label{def:La}  
  L_\a = \Dd^k + \sum_{l=0}^{p} \nabla^l (-1)^l\big(A_{l,\a}(\nabla^l \,\cdot\,)\big),
\end{equation}
where, for $l=0,\ldots, p$, $A_{l,\a}$ is a smooth symmetric $(2l,0)$-tensor such that $A_{l,\a} \to A_l$ in $C^{l}(\oO)$ as $\a \to \infty$. We will investigate the blow-up behavior of energy-bounded sequences of solutions $(u_\a)_\a$ that satisfy the following critical polyharmonic equations with Dirichlet boundary conditions:
\begin{equation}\label{eq:main}
  \begin{bigcases}
    &L_\a u_\a= |u_\a|^{\crit-2} u_\a & &\text{in }\O\\
    &|\nabla^{l} u_\a| = 0 & &\text{for } l=0,\ldots, k-1, \quad \text{on } \dO.
  \end{bigcases}
\end{equation}
Let $(u_\a)_{\a}$ be a sequence of solutions of \eqref{eq:main}. We assume that $(u_\a)_\a$ is bounded in $H^k_0(\O)$, that is $\limsup_{\a \to + \infty} \Vert u_\a \Vert_{H^k_0(\O)} < + \infty$, and that it blows-up as $\alpha \to + \infty$, that is $\Vert u_\a \Vert_{L^\infty(\O)} = + \infty$. By  \eqref{def:La} such a sequence is a Palais-Smale sequence for the functional over $H^k_0(\O)$ defined by
\begin{equation}\label{def:funcI}  
  I(u) = \frac{1}{2}\intM{\O}{|\Dd^{k/2}u|^2}+ \frac{1}{2} \sum_{l=0}^{p}\intM{\O}{A_l(\nabla^l u,\nabla^l u)} - \frac{1}{\crit}\intM{\O}{|u|^{\crit}}
\end{equation}
for which \eqref{eq:lmmain} is the associated Euler-Lagrange equation. We may thus apply the global compactness result of \cite[Theorem 1.2]{Maz17}, that generalizes the celebrated result of \cite{Struwe} for $k=1$: it shows that there exist $u_0 \in \Sob_0(\O)$ a solution to \eqref{eq:lmmain}, an integer $N\geq 1$ and $N$ bubbles $\Va$ defined as in Definitions \ref{def:bub:int} and \ref{def:bub:bdr} by their triples $[(\xa)_\a,(\ma)_\a,v^i]$, $i=1,\ldots, N$ such that up to a subsequence we have 
  \begin{equation}\label{eq:decHk} 
   u_\a = u_0 + \sumi \Va + \smallo(1) \qquad \text{in } \Sob_0(\O)
  \end{equation} 
  and 
  \begin{equation}\label{eq:decHk:bis} 
  I(u_\a) = I(u_0) + \sumi I_0(v^i) + \smallo(1),
  \end{equation} 
  where we have let 
  \begin{equation} \label{def:I0}
   I_0(v^i) = \left \{ \begin{aligned}
  & \frac{1}{2}\intM{\R^n}{|\Dd^{k/2}v^i|^2} - \frac{1}{\crit}\intM{\R^n}{|v^i |^{\crit}} \\ 
    & \frac{1}{2}\intM{\R^n_+}{|\Dd^{k/2}v^i|^2} - \frac{1}{\crit}\intM{\R^n_+}{|v^i |^{\crit}} 
  \end{aligned} \right. 
  \end{equation}
depending on whether $v^i$ is an interior or a boundary bubble. Moreover, the sequences $(\xa)_\a,(\ma)_\a$ satisfy the fundamental structure relation
  \begin{equation}\label{eq:struc}
    \eps_\a^{ij} = \frac{|\xa-\xa[j]|^2}{\ma\ma[j]} + \frac{\ma}{\ma[j]} + \frac{\ma[j]}{\ma}\to \infty \quad \text{as } \a \to \infty,
  \end{equation} 
  for all $i\neq j$ in $\{1,\ldots,N\}$. Relation \eqref{eq:struc} implies that the bubbles $[(\xa)_\a,(\ma)_\a,v^i]$ do not interact at first-order in $H^k_0(\O)$. Relation \eqref{eq:struc} has been known for positive solutions of equations like \eqref{eq:main} for a long time, where it appeared e.g. \cite{BahriCoron} (see also \cite[Chapter 3]{Heb14} for a proof). For \emph{sign-changing} solutions of equations like \eqref{eq:main}, relation \eqref{eq:struc} has also been known to hold true for a long time (it is e.g. mentioned in \cite[Chapter III, Remark 3.2]{StruweVariationalMethods} in the case $k=1$) and is a folklore-type result but, to the best of our knowledge, no proof seems to have appeared in the literature, not even in the case $k=1$. For the sake of completeness we provide a simple proof of \eqref{eq:struc} in Appendix \ref{structurerelation} below. 

\subsection{Bubble-tree configurations}

To prove Theorem~\ref{theo:compactness} we will show that, for sequences $(u_\a)_\a$ of energy-bounded solutions of \eqref{eq:main}, the decomposition \eqref{eq:decHk} can be improved into strong global pointwise bounds in $\oO$. In order to do this we will analyse the pointwise behavior of any configuration that may arise in \eqref{eq:decHk}. Let $u_0\in C^{2k}(\oO)\cap\Sob_0(\O)$ be a solution to \eqref{eq:lmmain}, $N\geq 1$ be an integer and let $N$ bubbles $\Va$, defined as in Definitions \ref{def:bub:int} and \ref{def:bub:bdr}. We allow indistinctly interior and boundary bubbles, but we assume that \eqref{eq:struc} is satisfied. We let 
    \[ \Ker_0 = \{h\in \Sob_0(\O)\,:\, Lh = (\crit-1)|u_0|^{\crit-2}h \text{ in } \O\},
    \] 
  where $L$ is as in \eqref{def:L}. Since $L$ is elliptic $\Ker_0$ is finite-dimensional: we let  $d_0 = \dim\Ker_0$ and we let $Z^{01},\ldots, Z^{0d_0}$ be an orthonormal basis of $\Ker_0$ in $\Sob_0(\O)$ with respect to the $H^k_0(\O)$ scalar product \eqref{norm:Dk2}. For any $i\in\{1,\ldots,N\}$ we represent the bubble $\Va$ by its triple $[(\xa)_\a,(\ma)_\a,v^i]$. 
  \begin{itemize}
  \item If $\Va$ is an interior bubble we let $\Ker_{v^i}$ be given by \eqref{ker:interior},  
    $d_i = \dim\Ker_{v^i}$, we let $Z^{i1},\ldots,Z^{id_i}$ be an orthonormal basis of $\Ker_{v^i}$ in $\hSob(\R^n)$ and we define, for $j=1,\ldots d_i$ and for $x\in \oO$,
    \[  \Za(x) = \chi\Big(\frac{x-\xa}{ \dist{\xa, \dO}}\Big)(\ma)^{-\exc}Z^{ij}\Big(\frac{x-\xa}{\ma}\Big),
    \]
    where $\chi$ is as in definition \ref{def:bub:int}. 
      
\item If $\Va$ is a boundary bubble we let $\Ker_{v^i}$ be given by \eqref{ker:boundary},
    $d_i = \dim\Ker_{v^i}$, we let $Z^{i1},\ldots,Z^{id_i}$ be an orthonormal basis of $\Ker_{v^i}$ in $\hSob(\R^n_+)$ and we define, for $j=1,\ldots d_i$ and for $x\in \oO$,
    \[  \Za(x) = \chi\big(\sigma_{\xa}^{-1}(x)\big)(\ma)^{-\exc}Z^{ij}\Big(\frac{\sigma_{\xa}^{-1}(x)}{\ma}\Big),
    \]
    where $\chi$ and $\sigma_{\xa}$ are as in definition \ref{def:bub:bdr}.
      \end{itemize}
Simple computations show that $\Za[i j]\in H^k_0(\O)$ and 
\begin{equation}\label{eq:ortZa}
  \intM{\O}{(-\D)^k \Za[i_1j_1]\Za[i_2j_2]} = \delta_{i_1 i_2}\delta_{j_1 j_2} + \smallo(1)
\end{equation}
as $\a \to \infty$, for $i_1,i_2\in \{0,\ldots, N\}$, $j_1 \in \{1,\ldots, d_{i_1}\},\,j_2\in \{1,\ldots, d_{i_2}\}$. For $1 \le i \le N$ we define $\Ker^i_\a = \spanned{\Za,\, j=1,\ldots,d_i} \sub \Sob_0(\O)$. For simplicity we will write $\Za[0j]=Z^{0j}$ even if $Z^{0j}$ does not depend on $\a$. We define 
\begin{equation}\label{def:Ka}
  \Ker_\a = \Ker_0 \oplus \bigoplus_{i=1}^{N} \Ker^i_\a.
\end{equation}
This space will act as an approximate kernel for the linearised equation of \eqref{eq:main} at a bubbling solution. We may now introduce the definition of a bubble-tree configuration that we will use in this paper: 

\begin{definition}[Bubble-tree configuration] \label{def:bubbletree}
Let $u_0\in C^{2k}(\oO)\cap\Sob_0(\O)$ be a solution of \eqref{eq:lmmain}, let $N\geq 1$ be an integer and let $[(\xa)_\a,(\ma)_\a,v^i]$, $i=1,\ldots, N$ be $N$ bubbles as in Definitions \ref{def:bub:int} and \ref{def:bub:bdr}, satisfying \eqref{eq:struc}. For $0 \le i \le N$ and $1 \le j \le d_i$, where $d_i = \dim \Ker^i_\a$, let $(\nu^{ij}_\a)_\a$ be any sequence of real numbers such that $\nu^{ij}_\a \to 0$. For $\alpha \ge 1$ we define 
\begin{equation}\label{def:Wa}
  \Wa = u_0 + \sumi \Va + \sumij \nu^{ij}_\a \Za.
\end{equation}
We say that $\Wa$ is a \emph{bubble-tree} defined by $u_0$ and $[(\xa)_\a,(\ma)_\a,v^i]$, $i=1,\ldots, N$, and $\nu_{\alpha}^{ij}$, $0 \le i \le N$, $1 \le j \le d_i$. 
\end{definition}
Clearly, $\Wa \in C^{2k}(\oO) \cap \Sob_0(\O)$ and $\limsup_{\alpha \to + \infty} \Vert \Wa \Vert_{H^k_0(\O)} < + \infty$. Up to a small perturbation in $H^k_0(\O)$ along $\Ker_\a$, $\Wa$ is modeled on the first-order terms that arise in the Struwe decomposition \eqref{eq:decHk}. We will often compare each bubble $\Va$ with its associated positive bubble, that we define as follows. We let, for $x \in \oO$,
\begin{equation} \label{def:Ba}
 \begin{aligned}
 \Ba[0](x)  & = 1 \quad \text{ and, for every  } 1 \le i \le N,  \\
  \Ba(x)  & =  \left(\frac{\ma}{(\ma)^2 + a_{n,k} |x-\xa|^2}\right)^{\exc} \quad \text{ and } \\ 
   \ta(x)  & = \ma + |x-\xa|
\end{aligned} 
\end{equation}
  where $a_{n,k}$ is as in \eqref{def:eucBub}. We define $ \Ba[0](x) \equiv 1$ in \eqref{def:Ba} for simplicity but also to highlight that we will consider, in our analysis, the weak limit $u_0$ as the zero-th bubbling profile. Note that $u_0$ may be everywhere zero in Definition \ref{def:bubbletree}, but we still define $\Ba[0] \equiv 1$ in that case nonetheless. We introduce the following notation: until the end of the paper, the inequality 
  $$F_\a \lesssim G_\a,$$
  that we will use for real numbers or functions, will mean that there exists a positive constant $C$ independent of $\a$ such that $F_\a \le C G_\a$ for all $\a \ge 1$.  It is easily seen with \eqref{def:Ba} that 
  \[ (\ma)^{\exc} \ta(x)^{2k-n} \lesssim \Ba(x) \lesssim (\ma)^{\exc} \ta(x)^{2k-n} \quad \text{ for all } x \in \overline{\O}.
\] 
By the definition of $\Va$, and by \eqref{eq:Idcrit}, we get that for $0 \le l \le 2k$, for all $x\in \oO$, 
\ben \label{est:bubble}  |\nabla^l \Va(x)| \lesssim \ta(x)^{-l}\Ba(x).\een
Concerning the kernel elements, it follows from elliptic theory that for $j=1,\ldots, d_0$, $Z^{0j} \in C^{2k}(\oO)$. Similarly, it follows from Lemma~\ref{prop:ctlsollin} and from direct computations using the expression of $\Za$ that for $i=1,\ldots,N$, $j=1,\ldots, d_i$, and for $0 \le l \le 2k$ and $x\in \oO$, we have
\begin{equation}\label{eq:IZa}
  |\nabla^l \Za(x)|  \lesssim \ta(x)^{-l}\Ba(x). \\
\end{equation}
Combining the previous estimates gives, for $0 \le l \le 2k-1$ and all $x \in \oO$, 
\begin{equation}\label{eq:IWa}
  |\nabla^l \Wa(x)|\lesssim 1 + \sumi \ta(x)^{-l} \Ba(x) .
\end{equation}
The constant in \eqref{eq:IWa} depends on $n,k,u_0, (v^i)_{1 \le i \le N}$.

\subsection{Radii of influence and bubble-tree computations}\label{sec:bubtre} 
Throughout this subsection we let $\Wa$ be a bubble-tree configuration as given by Definition \ref{def:bubbletree}. The bubbles $\Va$, $1 \le i \le N$, that appear in $\Wa$ may concentrate at different speeds and may take very different pointwise values in $\oO$. In this subsection we decompose $\oO$ in regions where each bubble $\Va$ is dominant at a pointwise level. This information will be crucial to prove sharp global pointwise estimates in Theorem~\ref{prop:main} below by an inductive approach. We rely for this on the notion of \emph{radius of influence}, which was originally introduced in \cite{DruetJDG} for positive bubbles (see also \cite[Chapter 8.2]{Heb14}). As observed in \cite{Pre24}, the notion of radius of influence remains just as useful to obtain pointwise estimates for sign-changing bubbles.

Up to passing to a subsequence for $(\xa)_\a,(\ma)_\a$, and up to renumbering the bubbles in $\Wa$, we can assume that 
\begin{equation}\label{eq:renum}
    \ma[N] \leq \ldots\leq \ma[1]< 1
\end{equation}
for all $\a \ge 1$. Let $i \in \{1,\ldots N\}$ be fixed. We define 
\begin{equation} \label{def:Ai}
  \sA_i = \{j\neq i\,:\, \ma = \bigO(\ma[j])\}  \quad \text{ and } \quad 
  \sA_i^c = \{j \neq i\,:\, \ma[j] = \smallo(\ma)\}. 
\end{equation}
Indices in $\sA_i $ refer to bubbles $V_\a^j$, $j \neq i$, which concentrate at a speed slower than or comparable to $\Va$, while indices in $\sA_i $ refer to bubbles $V_\a^j$, $j \neq i$, which concentrate strictly faster than $\Va$. We will often refer to these bubble, respectively, as to \emph{lower} and \emph{higher} bubbles than $\Va$. Note that $\O$ is a bounded open set here, so the speed of concentration for bubbles is solely measured in terms of the size of $\mu_\alpha^i$, and not by $\eps_\a^{ij}$ in \eqref{eq:struc} (as was e.g. the case in \cite{DengSunWei} in $\R^n$). Remark that by \eqref{eq:renum}, we have $\{1,\ldots, i-1\} \subeq \sA_i$ for $i>1$, and $\sA_i^c\subeq \{i+1, \ldots,N\}$ for $i<N$.

\smallskip

Let $j\in \sA_i$, so that $\ma = \bigO(\ma[j])$ and $\Va[j]$ is lower than $\Va$. We define $\sa$ as follows: 
  \begin{itemize}
      \item If $\ma = o(\ma[j])$, we define
    \[  \big(\sa\big)^2 = \frac{1}{a_{n,k}}\frac{\ma}{\ma[j]}\Big((\ma[j])^2 + a_{n,k} |\xa-\xa[j] |^2 \Big)
    \]
    \item If $\ma[j] = \bigO(\ma)$, that is if we have in addition $i\in \sA_j$, we also have $\frac{|\xa-\xa[j]|}{\ma}\to \infty$ by \eqref{eq:struc}. In this case we let $m_{ij}= \limsup\limits_{\a\to \infty} \Big(\frac{\ma}{\ma[j]}+ \frac{\ma[j]}{\ma}\Big) \in (0, + \infty)$, and we define
    \[  \big(\sa \big)^2 =   \frac{1}{4 a_{n,k}} \frac{1}{ m_{ij}}\frac{\ma}{\ma[j]}\Big((\ma[j])^2 + a_{n,k} |\xa-\xa[j] |^2 \Big),
    \]
  \end{itemize}
   where $a_{n,k}$ is as in \eqref{def:eucBub}. It follows from the definition of $\Ba$ in \eqref{def:Ba} that $ \Ba(x) \ge \Ba[j](\xa)$ for every $x \in B(\xa, \sa)$, whenever $j \in \mathcal{A}_i$, and this is the reason why we include $\frac{1}{a_{n,k}}$ in our definition of $\sa$. Moreover, a simple computation shows that if $j\in \sA_i$, for any $0 \le l \le 2k$ and $x \in B(\xa, \sa)$ we have 
\begin{equation}\label{eq:ctlBj}
  \ta[j](x)^{-l} \Ba[j](x) \lesssim \frac{(\ma)^{\frac{n-2k}{2}}}{(\sa)^{n-2k+l}} \lesssim  \ta(x)^{-l}\Ba(x). 
\end{equation}
Specialising \eqref{eq:ctlBj} to $l = 0$ shows in particular that $\sa$ measures the radius around $\xa$ up to which $\Ba$ is pointwise larger than $\Ba[j]$. In the special case of \emph{comparable} bubbles where $j\in \sA_i$ and $i\in\sA_j$, ie when $\ma \lesssim \ma[j] \lesssim \ma$ for all $\alpha \ge 1$, the definition of $\sa$ shows that $\sa + \sa[ji] \leq \frac{|\xa-\xa[j]|}{2}$ for $\a$ big enough, so that 
\[  B(\xa, \sa) \cap B(\xa[j],\sa[ji]) = \emptyset.
\]
This is the motivation for the additional normalisation factor $\frac{1}{4m_{ij}}$ when $i \in \mathcal{A}_j$.
\begin{definition}
 Let $1 \le i \le N$.  We define the radius of influence of $\Va$ as
  \begin{equation} \label{def:ra}
    \ra = \min\left\{\min_{j\in \sA_i} \sa, \sqrt{\ma}\right\}.
  \end{equation}
\end{definition}
\noindent In view of \eqref{eq:ctlBj} $\ra$ should be understood as the largest radius of a ball centered at $\xa$ inside which the positive bubble $\Ba$ is \emph{pointwise} larger than all the other lower positive bubbles. This also includes the non-concentrated zero-th bubble $\Ba[0]\equiv 1$ since, as can be easily checked, $\Ba(x) \ge (1+ \smallo(1)) a_{n,k}^{k-\frac{n}{2}}$ when $|x-\xa| \le \sqrt{\ma}$. It follows from \eqref{eq:struc} and the definition of $\sa$ that $\frac{\sa}{\ma} \gtrsim \ve_\a^{ij}$ as $\a \to + \infty$ for every $j \in \mathcal{A}_i$. As a consequence, and for any $1 \le i \le N$,
\begin{equation} \label{rasurma}
 \frac{\ra}{\ma} \gtrsim \min\Bigg( \min_{1 \le i \le N} (\ma)^{-\frac12}, \min_{\substack{i=1,\ldots N\\ j\neq i \,:\, \ma \lesssim \ma[j]}} \eps_\a^{ij}\Bigg) \to + \infty 
\end{equation}
as $\a \to + \infty$. We showed that in $B(\xa[i], \ra)$, $\Ba$ pointwise dominates the positive lower bubbles. But inside $B(\xa, \ra)$ there can still be higher bubbles interacting with $\Ba[i]$. They are described by the following set of indices: 
\begin{equation} \label{def:Bi}
  \sB_i = \{j\neq i\,:\, \ma[j]=o(\ma) \text{ and } |\xa -\xa[j]| \leq 2 \ra\}.
\end{equation}
The following result shows that, as long as pointwise estimates on derivatives of the bubble-tree $\Wa$ are considered in $B(\xa, \ra)$, one only has to take into account the $i$-th bubble and higher bubbles belonging to $\sB_i$:
\begin{lemma}
Let $1 \le i \le N$ and $0 \le l \le 2k-1$ be fixed. For any  $x \in B(\xa, \ra)$ we have 
\begin{equation}\label{meaning:ra:0}
\begin{aligned}
& 1+ \sum_{j=1}^N  \ta[j](x)^{-l}\Ba[j](x)\lesssim  \ta(x)^{-l} \Ba(x)  + \sum_{j \in \mathcal{B}_i} \ta[j](x)^{-l}\Ba[j](x) . 
\end{aligned}
 \end{equation}
\end{lemma}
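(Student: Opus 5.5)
We prove \eqref{meaning:ra:0} term by term: for $x \in B(\xa,\ra)$ we bound each summand on the left-hand side by the right-hand side. We split the indices $j\neq i$ into three classes: $j \in \sA_i$ (lower or comparable bubbles), $j \in \sB_i$ (higher bubbles close to $\xa$), and $j \in \sA_i^c\setminus\sB_i$ (higher bubbles far from $\xa$). We also treat separately the constant $1$, which corresponds to $\Ba[0]$, and the term $j=i$, which appears trivially on the right.

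\emph{The constant term.} Since $\ra \le \sqrt{\ma}$, for $x\in B(\xa,\ra)$ we have $\ta(x) \le \ma + \sqrt{\ma} \lesssim \sqrt{\ma}$. Hence
\[
\ta(x)^{-l}\Ba(x) \gtrsim (\ma)^{\exc}\,\ta(x)^{2k-n-l} \gtrsim (\ma)^{\exc - \frac{n-2k+l}{2}} = (\ma)^{-l/2} \ge 1,
\]
because $\ma<1$. This absorbs the constant $1$.

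\emph{Lower bubbles $j\in\sA_i$.} Since $\ra\le\sa$, the point $x$ lies in $B(\xa,\sa)$, and \eqref{eq:ctlBj} gives directly $\ta[j](x)^{-l}\Ba[j](x)\lesssim \ta(x)^{-l}\Ba(x)$.

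\emph{Far higher bubbles $j\in\sA_i^c\setminus\sB_i$.} Here $\ma[j]=\smallo(\ma)$ and $|\xa-\xa[j]|>2\ra$. For $x\in B(\xa,\ra)$ the triangle inequality gives $|x-\xa[j]|\ge |\xa-\xa[j]|/2 \ge \ra \ge |x-\xa|$. Therefore $\ta[j](x) \ge \frac12|\xa-\xa[j]| \ge \frac12(|x-\xa|+\ra)$, and also $\ta[j](x)\gtrsim \ra \ge \ma$ up to constants, using \eqref{rasurma}. Combining these, $\ta[j](x)\gtrsim \ta(x)$. It follows that
\[
\ta[j](x)^{-l}\Ba[j](x)\lesssim (\ma[j])^{\exc}\,\ta[j](x)^{2k-n-l}\lesssim (\ma[j])^{\exc}\,\ta(x)^{2k-n-l}\lesssim \Big(\tfrac{\ma[j]}{\ma}\Big)^{\exc}\ta(x)^{-l}\Ba(x),
\]
and the last expression is $\smallo\big(\ta(x)^{-l}\Ba(x)\big)$.

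\emph{Remaining terms.} The indices $j\in\sB_i$ appear on the right-hand side as they are, so nothing needs to be proved for them.

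Since $N$ is fixed, summing the finitely many estimates above yields \eqref{meaning:ra:0}. The only step that needs care is the third case: the comparison $\ta[j]\gtrsim\ta$ must hold uniformly over the whole ball $B(\xa,\ra)$, and this is exactly what the factor $2$ in the definition \eqref{def:Bi} of $\sB_i$ provides. Every other step is a direct application of \eqref{eq:ctlBj}, \eqref{def:ra} and \eqref{rasurma}.
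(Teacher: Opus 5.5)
Your proof is correct and follows essentially the same argument as the paper. It uses the same three-way split: indices $j \in \mathcal{A}_i$ are handled by \eqref{eq:ctlBj} since $r^i_\alpha \le s^{ij}_\alpha$; higher bubbles outside $\mathcal{B}_i$ are handled because the factor $2$ in \eqref{def:Bi} keeps $x^j_\alpha$ at distance at least $r^i_\alpha$ from the ball, which yields the gain $(\mu^j_\alpha/\mu^i_\alpha)^{\frac{n-2k}{2}}$; and the constant term is absorbed via $r^i_\alpha \le \sqrt{\mu^i_\alpha}$. The only cosmetic difference is in the far-higher-bubble case, where you compare $\theta^j_\alpha \gtrsim \theta^i_\alpha$ pointwise instead of bounding by the supremum $(\mu^i_\alpha)^{\frac{n-2k}{2}}(r^i_\alpha)^{2k-n-l}$ as the paper does.
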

\begin{proof}
Let $j \neq i$. If $j \in \mathcal{A}_i$ and since $\ra \le \sa$ by definition we have $ \ta[j](x)^{-l} \Ba[j](x) \lesssim \ta(x)^{-l}\Ba(x)  $ for any $x \in B(\xa \ra)$ by \eqref{eq:ctlBj}.  If $j \not \in \mathcal{A}_i$ then $\ma[j] = \smallo(\ma)$ as $\alpha \to + \infty$ and two cases may occur: either $j \in \sB_i$ as in \eqref{def:Bi} or $j\in \sA_i^c\setminus \sB_i$. In the latter case we have $\ma[j] = o(\ma)$ and $|\xa-\xa[j]|>2\ra$. Therefore, we have $|x-\xa[j]| \geq \ra$ for all $x\in B(\xa,\ra)$, so that 
  \[  \ta[j](x)^{-l}\Ba[j](x) \leq C \left(\frac{\ma[j]}{\ma}\right)^{\exc}\frac{(\ma)^{\exc}}{(\ra)^{n-2k+l}} = \smallo\big(\ta(x)^{-l}\Ba(x)\big)
\]
as $\a \to + \infty$, where we again used \eqref{eq:ctlBj}. Finally, since $\ra \lesssim \sqrt{\ma}$ we have $ 1 \lesssim  \ta(x)^{-l}\Ba(x)$ for all $x\in B(\xa,\ra)$. This proves \eqref{meaning:ra:0}.
\end{proof}
A consequence of \eqref{meaning:ra:0} and of the definition of $\Wa$ in Definition \ref{def:bubbletree} is that, for any $0 \le l \le 2k-1$,  we have
\begin{equation}\label{meaning:ra:1}
 |\nabla^{l} \Wa(x)| \lesssim   \ta(x)^{-l} \Ba(x) + \sum_{j \in \mathcal{B}_i} \ta[j](x)^{-l}\Ba[j](x)  \quad \text{ for } x \in B(\xa, \ra).
 \end{equation}
Let now $j \in \mathcal{B}_i$ be fixed. It follows easily from the definition of $s_\a^{ji}$ and from the property $\ma[j] = \smallo(\ma)$ that there exists $C >0$ such that if $\Ba[j](x) \ge \frac{1}{C} \Ba(x)$ then $|x-\xa[j]| \le C s_\a^{ji}$. As a consequence, $\Ba[j](x) \le C \Ba(x)$ when $|x-\xa[j]| \ge C s_\a^{ji}$. Since $\ma[j] = \smallo(\ma)$ we have $s_\a^{ji} = \smallo (|\xa - \xa[j]|)$ by definition of $s_\a^{ji}$, and thus direct computations give
$$ \Ba[j](x)  \le C \Ba(\xa[j]) = (1+ \smallo(1))C \Ba(x)  \quad \text{ for } s_\a^{ji} \le |x-\xa[j]| \le C s_\a^{ji}  .$$ 
Specialising \eqref{meaning:ra:1} to $l=0$ with the latter shows in particular that 
\begin{equation}\label{meaning:ra:2}
 |\Wa(x)| \lesssim \Ba(x)  \quad \text{ for } x \in B(\xa, \ra) \backslash \bigcup_{j \in \mathcal{B}_i} B(\xa[j], s_\a^{ji}).
 \end{equation}
 Estimate \eqref{meaning:ra:2} shows that in the region $B(\xa, \ra) \backslash \bigcup_{j \in \mathcal{B}_i} B(\xa[j], s_\a^{ji})$ the $i$-th positive bubble is \emph{pointwise} dominant in the bubble-tree $\Wa$. Partitioning $\O$ in regions where each positive bubble $\Ba, 1 \le i \le N$, is dominant at the pointwise level will provide us with a top-bottom inductive scheme to prove the pointwise estimates of Theorem~\ref{prop:main} below. As it turns out, though,  \eqref{meaning:ra:2} is not enough for our purposes, since we will need precise estimates on all the derivatives of $\Wa$ up to order $2k-1$. In the following we modify the definition of $\ra$ in order to control the sum over $\mathcal{B}_i$ in \eqref{meaning:ra:1} solely by the $i$-th bubble term: 
\begin{definition}
  Let $1 \le i \le N$ and let $j\in \sB_i$. We define 
  \begin{equation}\label{def:rha}
    \rha = 2\left(\frac{\ma[j]}{\ma}\right)^{\frac{n-2k}{2(n-1)}}\big(|\xa-\xa[i]|+\ma\big).
  \end{equation}
\end{definition}
It is simple to check that for any $0 \le l \le 2k-1$ we have 
\begin{equation} \label{forgotten:label}
(\ta[j](x))^{-l}\Ba[j](x) \gtrsim (\ta(x))^{-l}\Ba(x) \quad \text{  for every } \quad x \in B(\xa, \rha),
\end{equation}
that there exists $C >0$ such that 
$$ \text{ if }  \quad (\ta[j](x))^{-l}\Ba[j](x) \ge \frac{1}{C}(\ta(x))^{-l}\Ba(x) \quad \text{ then }  \quad |x-\xa[j]| \le C \rha $$
and that 
\begin{equation} \label{def:rha:plus}
(\ta[j](x))^{-l}\Ba[j](x) \lesssim (\ta(x))^{-l}\Ba(x)\quad \text{ when } |x-\xa[j]| \ge \rha.
\end{equation} 
By analogy with $\sa$ above, $\rha$ is thus defined as the maximum distance from $\xa[j]$ up to which $(\ta[j])^{-l}\Ba[j] $ is pointwise larger than  $(\ta)^{-l}\Ba$. Since $|\nabla^l \Ba[j]|$ is of the order of  $(\ta[j])^{-l}\Ba[j]$ this should be understood as the maximum distance from $\xa[j]$ up to which $|\nabla^l \Ba[j]|$ dominates $|\nabla^l \Ba|$ pointwise.  

By definition of $\mathcal{B}_i$, for every $j \in \mathcal{B}_i$ we have $|\xa -\xa[j]| \leq 2 \ra$ and hence, since $\ma[j] = \smallo(\ma)$, $\rha = o(\ra)$. Since $\frac{n-2k}{2(n-1)}<\frac12$ we also easily have $\sa[ji]=o(\rha)$. Moreover, when $j\in \sB_i$ is such that $\frac{|\xa[j]-\xa|}{\ma}=\bigO(1)$, we get that $\rha = o(\ma)$. For a fixed $1 \le i \le N$ we now define 
  \begin{equation}\label{def:Oa}
    \Oa[i] = \O \cap \Big(B(\xa,\ra)\setminus \bigcup_{j\in \sB_i} B(\xa[j],\rha)\Big).
  \end{equation}
We have the following Lemma: 
\begin{lemma}\label{prop:ctltre}
  Let $i\in \{1,\ldots, N\}$. For any $0\le l \le 2k-1$ and for any $x\in \overline{ \Oa[i]}$ we have 
 $$  1+ \sum_{i=1}^N  \ta[j](x)^{-l}\Ba[j](x) \lesssim  \ta(x)^{-l}\Ba(x). $$
\end{lemma}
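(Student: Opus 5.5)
The plan is to start from the previous lemma, estimate \eqref{meaning:ra:0}. Since $\Oa[i] \subset B(\xa,\ra)$, for every $x \in \overline{\Oa[i]}$ it already gives
$$ 1+ \sum_{j=1}^N  \ta[j](x)^{-l}\Ba[j](x)\lesssim  \ta(x)^{-l} \Ba(x)  + \sum_{j \in \sB_i} \ta[j](x)^{-l}\Ba[j](x) .$$
(The inequality in \eqref{meaning:ra:0} extends from the open ball to its closure by continuity, with the same constants.) So the whole task is to absorb each term $\ta[j]^{-l}\Ba[j]$ with $j \in \sB_i$ into $\ta^{-l}\Ba$ on $\overline{\Oa[i]}$. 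This is exactly what the radii $\rha$ were designed for. By \eqref{def:rha:plus}, $\ta[j](x)^{-l}\Ba[j](x) \lesssim \ta(x)^{-l}\Ba(x)$ as soon as $|x-\xa[j]| \ge \rha$, and every $x \in \overline{\Oa[i]}$ satisfies $|x-\xa[j]|\ge \rha$ for all $j\in\sB_i$ by \eqref{def:Oa}. Since $\sB_i$ is finite and the constants are uniform in $\a$, summing over $j$ closes the estimate.

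The one step I would actually verify, rather than just cite, is \eqref{def:rha:plus}; this is the main obstacle. Set $r=|x-\xa[j]|\ge\rha$ and $d_\a=|\xa-\xa[j]|+\ma$, and note $\ta[j](x)\simeq \ma[j]+r$.

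\textbf{Case $r \ge d_\a/2$ (roughly).} Then $\ta(x)\lesssim \ta[j](x)$. Since $\ma[j]=\smallo(\ma)$, we get
$$\ta[j]^{-l}\Ba[j]\lesssim (\ma[j])^{\exc}\ta[j]^{2k-n-l}\lesssim (\ma[j]/\ma)^{\exc}(\ma)^{\exc}\ta^{2k-n-l},$$
which is $\smallo(\ta^{-l}\Ba)$.

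\textbf{Case $\rha\le r\le d_\a/2$.} Here $\ta(x)\simeq d_\a$, because $\ma+|x-\xa|\simeq \ma+|\xa-\xa[j]|$. Also $r\ge\rha\gg \ma[j]$, since $\rha/\ma[j]\to\infty$ follows from $\frac{n-2k}{2(n-1)}<1$ and $d_\a\ge\ma$. So the target inequality becomes
$$(\ma[j])^{\exc} r^{2k-n-l}\lesssim (\ma)^{\exc} d_\a^{2k-n-l},$$
that is, $r \gtrsim (\ma[j]/\ma)^{\frac{n-2k}{2(n-2k+l)}} d_\a$. As $l\le 2k-1$, the exponent satisfies $\frac{n-2k}{2(n-2k+l)}\ge \frac{n-2k}{2(n-1)}$. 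Since $\ma[j]/\ma\le 1$, the right-hand side is therefore $\le \rha/2$, and the inequality holds.

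This explains the choice of exponent in \eqref{def:rha}: it is the worst case $l=2k-1$. Finally, the lower bound $\ta^{-l}\Ba\gtrsim 1$ on $B(\xa,\ra)$ comes from $\ra\le\sqrt{\ma}$ (since $l\ge0$ and $\ta\lesssim 1$), so the constant term $1$ is also controlled, and the lemma follows.
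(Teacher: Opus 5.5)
Your proof is correct and is essentially the paper's: on $\overline{\Oa[i]}\subset\overline{B(\xa,\ra)}$ you invoke \eqref{meaning:ra:0} and absorb each term with $j\in\sB_i$ using \eqref{def:rha:plus}, since every point of $\overline{\Oa[i]}$ lies outside $B(\xa[j],\rha[ji])$. Your two-case verification of \eqref{def:rha:plus}, which the paper only asserts as simple to check, is sound. You read the factor in \eqref{def:rha} as $|\xa-\xa[j]|+\ma$, which is the intended one (the paper misprints it), and your check shows that the exponent $\frac{n-2k}{2(n-1)}$ is dictated by the worst case $l=2k-1$.
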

A consequence of Lemma \ref{prop:ctltre} and of \eqref{meaning:ra:1} is that, for any $0 \le l \le 2k-1$ and any $x\in \overline{ \Oa[i]}$, we have 
$$  |\nabla^{l} \Wa(x)| \lesssim   \ta(x)^{-l} \Ba(x) .$$ 
Lemma \ref{prop:ctltre} shows that $\Oa[i]$ can be understood as the region in $\O$ where $\Va$, and all its derivatives up to order $2k-1$, are pointwise dominant in the bubble-tree $\Wa$. We will sometimes refer to it as the \emph{region of influence of the $i$-th bubble $\Va$ in $\O$.}
\begin{proof}
  The proof follows from the definitions of $\ra$ and $\Oa[i]$. If $j\in \sB_i$, \eqref{def:rha:plus} shows that 
  \[  \ta[j](x)^{-l}\Ba[j](x) \lesssim \ta(x)^{-l}\Ba(x)
  \] 
  for all $x\in \oO \setminus B(\xa[j],\rha)$. The result then follows from \eqref{meaning:ra:0}.
\end{proof}

\begin{remark}
It is possible to give a subtler definition of $\ra$ in \eqref{def:ra} to take into account e.g. a possible zero weak limit $u_0$. This has been done in \cite{Pre24} for the case $k=1$, but we will not need the same precision than in \cite{Pre24} here. 
\end{remark}

We conclude this subsection by illustrating how the notion of radius of influence allows us to precisely estimate the pointwise interactions between the different bubbling profiles:

\begin{lemma} \label{lemme:interactions:arbre:bulles}
Let $1 \le i \le N$ be fixed, let $\Ba$ be as in \eqref{def:Ba} and let $\Oa[i]$ be as in \eqref{def:Oa}. We have:
$$ \begin{aligned} 
& (\ma)^{\frac{n+2k}{2}} \Bigg| \Bigg|   \sum_{\substack{r,s = 0,\ldots, N\\ r\neq s}} (\Ba[r])^{\crit-2}\Ba[s] \Bigg| \Bigg|_{L^\infty(\overline{\Oa[i]})} \\
& \lesssim \Big(  \max_{\substack{i,j=1,\ldots N\\ j\neq i \,:\, \ma \lesssim \ma[j]}} (\eps_\a^{ij})^{-\frac{1}{2}} \Big)^{\min(n-2k,4k)} + \Big(  \max_{\substack{i=1,\ldots N\\ j\neq i \,:\, \ma[j] = o(\ma)}} \Big(\frac{\ma[j]}{\ma[i]}\Big)^{\frac{2k-1}{2(n-1)}} \Big)^{\min(n-2k,4k)},
\end{aligned} 
 $$
as $\alpha \to + \infty$.  
\end{lemma}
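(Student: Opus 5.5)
We bound each cross term $(\Ba[r])^{\crit-2}\Ba[s]$, $r\neq s$, separately on $\overline{\Oa[i]}$, since there are finitely many of them. The starting point is Lemma~\ref{prop:ctltre} with $l=0$: on $\overline{\Oa[i]}$ we have $\Ba[r]\lesssim \Ba$ for every $r\in\{0,\ldots,N\}$, including $r=0$. Note also that $(\ma)^{\frac{n+2k}{2}}\Ba(x)^{\crit-1}\lesssim (\ma)^{\frac{n+2k}{2}}(\ma)^{-\frac{n+2k}{2}}=O(1)$.

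We first use $\Ba[r]\lesssim\Ba$ to reduce to
$$(\Ba[r])^{\crit-2}\Ba[s]\lesssim \Ba^{\crit-2}\Ba[r]^{\theta}\Ba[s]^{\theta'}\cdots,$$
and more concretely we split according to which of $r,s$ equals $i$. Since $\crit-2=\frac{4k}{n-2k}$, the following elementary inequality holds whenever $a,b\lesssim \Ba$:
$$a^{\crit-2}b\lesssim \Ba^{\crit-1}\Big(\frac{\min(a,b)}{\Ba}\Big)^{\min(1,\crit-2)}.$$
Indeed, if $a\le b$ one uses the exponent $\crit-2$ on $a/\Ba$, and if $b\le a$ the exponent $1$ on $b/\Ba$. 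In every pair $r\neq s$ at least one of the two indices differs from $i$, so it suffices to bound $\sup_{\overline{\Oa[i]}}\Ba[j]/\Ba$ for $j\neq i$ (with $\Ba[0]=1$). Multiplying by $(\ma)^{\frac{n+2k}{2}}\Ba^{\crit-1}\lesssim1$ and raising to the power $\min(1,\frac{4k}{n-2k})$, this produces exactly the exponent $\min(n-2k,4k)$ once the ratio is written as a power $\frac{1}{n-2k}$-homogeneous quantity. The ratios should therefore be estimated in the form $(\text{small})^{n-2k}$.

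The key step is to estimate $\sup_{\overline{\Oa[i]}}\Ba[j]/\Ba$ in three cases.

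\emph{Case $j=0$.} On $B(\xa,\ra)$ with $\ra\le\sqrt{\ma}$ we have $1/\Ba\lesssim (\ma)^{-\exc}\ta^{n-2k}\lesssim (\ma)^{\frac{n-2k}{2}}$, which is a power of $\ma$. This needs to be absorbed into the right-hand side. Taking $j=0$ "formally", i.e. using $\ma\le\ma[1]<1$, we note that $\ma$ is dominated, and interpret it through the first term. It is probably cleanest to observe that the $(\ma)^{1/2}$ contribution is smaller than the displayed quantities up to adjusting; this is the step I would check most carefully.

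\emph{Case $j\in\sA_i$ (lower bubbles).} By \eqref{eq:ctlBj} with $l=0$, on $B(\xa,\sa)$ we have $\Ba[j]\lesssim (\ma)^{\exc}(\sa)^{2k-n}$, and $\Ba\gtrsim (\ma)^{\exc}(\ra)^{2k-n}$. Hence $\Ba[j]/\Ba\lesssim (\ta/\sa)^{n-2k}$, and sharper, at the center, $\Ba[j]/\Ba \approx (\ma/\sa)^{n-2k}\cdot(\sa/\ma)^{\ldots}$. Evaluating $\Ba[j](\xa)(\ma)^{\exc}$ gives $(\ma\ma[j]/((\ma[j])^2+|\xa-\xa[j]|^2))^{\exc}\lesssim (\eps^{ij}_\a)^{-\exc}$, which gives the first term after the exponent bookkeeping.

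\emph{Case $j\in\sA_i^c$ (higher bubbles).} Outside $B(\xa[j],\rha)$ we have $\Ba[j]\lesssim (\ma[j])^{\exc}(\rha)^{2k-n}$, and with \eqref{def:rha} the ratio to $\Ba$ is $\lesssim (\ma[j]/\ma)^{\exc(1-\frac{n-2k}{n-1})}=(\ma[j]/\ma)^{\frac{(n-2k)(2k-1)}{2(n-1)}}$. This is precisely the second term to the power $n-2k$.

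The main obstacle is the exact matching of exponents: this covers combining $\min(1,\crit-2)$ with the $(n-2k)$-th powers, and treating the zero-th bubble $\Ba[0]=1$ as well as the points of $\Oa[i]$ far from $\xa$, where $\ta\sim\ra$.
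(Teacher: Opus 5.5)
There are two genuine gaps.

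\textbf{The central reduction fails.} You bound $(\ma)^{\frac{n+2k}{2}}\Ba(y)^{\crit-1}\lesssim 1$ and then need $\sup_{\overline{\Oa[i]}}\Ba[j]/\Ba$ to be small for every $j\neq i$. That pointwise ratio is only $O(1)$ on $\Oa[i]$:
\begin{itemize}
\item For $j\in\sA_i$ you yourself get $\Ba[j]/\Ba\lesssim(\ta/\sa)^{n-2k}$. The ratio really is of order one at $|y-\xa|=\ra$ when $\ra=\sa$, since this is how $\sa$ is built (take e.g.\ $\xa[j]=\xa$ and $\ma=o(\ma[j])$).
\item For $j=0$ one has $1/\Ba\approx(\ta/\sqrt{\ma})^{n-2k}$, which is of order one near $|y-\xa|=\sqrt{\ma}$. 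Your bound $(\ma)^{\frac{n-2k}{2}}$ there is an arithmetic slip, since $(\ma)^{-\exc}(\sqrt{\ma})^{n-2k}=1$.
\end{itemize}
Taking the two suprema separately therefore only yields $O(1)$.

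The value at the centre that you compute for lower bubbles is the right quantity, but it belongs to a different reduction, which is the paper's. After Lemma~\ref{prop:ctltre} reduces everything to $(\Ba)^{\crit-2}\Ba[r]+\Ba(\Ba[r])^{\crit-2}$ with $r\neq i$, use the global bound $\Ba\le(\ma)^{-\exc}$ and bound $\Ba[r]$ by its supremum on $\overline{\Oa[i]}$. Set $X_r=(\ma)^{\exc}\sup_{\overline{\Oa[i]}}\Ba[r]$; this compares $\Ba[r]$ with the peak value of $\Ba$, not with $\Ba$ pointwise. Since $(\ma)^{2k}=\big((\ma)^{\exc}\big)^{\crit-2}$, the two terms are $\lesssim X_r+X_r^{\crit-2}$. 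Then:
\begin{itemize}
\item \eqref{eq:ctlBj} gives $X_r\lesssim(\ma/\sa[ir])^{n-2k}\lesssim(\eps_\a^{ir})^{-\exc}$ for $r\in\sA_i$;
\item \eqref{def:rha} gives $X_r\lesssim(\ma[r]/\ma)^{\frac{(n-2k)(2k-1)}{2(n-1)}}$ for $r\in\sB_i$.
\end{itemize}
The exponent $\min(n-2k,4k)$ then appears directly, with no interpolation needed. Alternatively, keep the product pointwise: $(\ma)^{\frac{n+2k}{2}}(\Ba)^{\crit-1}\approx(\ma/\ta)^{n+2k}$ decays fast enough to beat the growth of the ratio.

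Also, $\rha$ is only defined for $j\in\sB_i$. For higher bubbles with $|\xa[j]-\xa|>2\ra$ you need a separate argument: $|y-\xa[j]|\ge\ra$ on $B(\xa,\ra)$, so $X_j\lesssim(\ma[j]/\ma)^{\exc}$, which the second term covers.

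\textbf{The $j=0$ term cannot be absorbed.} The step you flagged cannot be repaired that way. Take $N=1$: then $\Oa[1]=\O\cap B(\xa[1],\sqrt{\ma[1]})$. At $y=\xa[1]$ the pairs $(1,0)$ and $(0,1)$ alone contribute $(\ma[1])^{\exc}+(\ma[1])^{2k}$, which is $X_0+X_0^{\crit-2}$ with $X_0=(\ma[1])^{\exc}$. Meanwhile both maxima on the right-hand side run over empty index sets.

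The paper's proof produces exactly $(\ma)^{\exc}+(\ma)^{2k}$ in its case $r=0$ and does not absorb it either. The bound must therefore be read with an additional term $\max_i(\ma)^{\min(\exc,2k)}$ on the right. This is harmless for the only use of the lemma, in \eqref{eq:estRa}, because that term is contained in $\eta_{3,\a}$ from \eqref{eq:lem3}. You should add it explicitly rather than try to hide it in the displayed quantities.
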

This lemma will be crucially used in the next section: the key point here is that the right-hand side goes to $0$ as $\alpha \to + \infty$ and is solely estimated (albeit not in a sharp way) in terms of the parameters that define the bubble-tree $\Wa$.

\begin{proof}
Let $y \in \overline{\Oa[i]}$ be fixed. Using Lemma \ref{prop:ctltre}, we have
  \begin{equation} \label{eq:borne:globale:3}
  \begin{aligned}
     \sum_{\substack{r,s = 0,\ldots, N\\ r\neq s}} \Ba[r](y)^{\crit-2}\Ba[s](y)& \lesssim \sum_{\substack{r=0,\ldots,N\\ r\neq i}}\Ba(y)^{\crit-2}\Ba[r](y)\\
    &+ \sum_{\substack{r=0,\ldots,N\\ r\neq i}}\Ba(y)\Ba[r](y)^{\crit-2}.  
  \end{aligned}
  \end{equation}
 Let $r \in \{1,\ldots,N\} \backslash \{i\}$ be fixed. We estimate the r.h.s of \eqref{eq:borne:globale:3} by distinguishing several cases that depend on the position of the $r$-th bubble with respect to the $i$-th bubble.

Assume first that  $r\in \sA_i$. Then, by \eqref{eq:ctlBj} we have $\Ba[r](y) \lesssim \frac{(\ma)^{\exc}}{(\sa[ir])^{n-2k}}$ for all $y\in \Oa[i]$. Using \eqref{def:Ba} we then obtain 
    \[ \begin{aligned}
     & (\ma)^{\frac{n+2k}{2}}\Big[\Ba(y)^{\crit-2}\Ba[r](y)+\Ba[r](y)^{\crit-2}\Ba(y)\Big] \\
     & \lesssim \left( \frac{\ma}{\sa[ir]} \right)^{\min(n-2k,4k)}  \lesssim \Big(  \max_{\substack{i,j=1,\ldots N\\ j\neq i \,:\, \ma \lesssim \ma[j]}} (\eps_\a^{ij})^{-\frac{1}{2}} \Big)^{\min(n-2k,4k)},
     \end{aligned} \]
where we used the definition of $\sa[ir]$ and \eqref{eq:struc} to write that $\frac{\ma}{\sa[ir]}\lesssim (\eps^{ir}_\a)^{-\frac{1}{2}}$ for $r \in \sA_i$. In particular, the right-hand goes to $0$ as $\a \to + \infty$ by \eqref{eq:struc}.

     Assume now that $r \in \sB_i$. Then, by definition of $\Ba[r]$, we have $\Ba[r](y) \lesssim \frac{(\ma[r])^{\exc}}{(\rha[ri])^{n-2k}}$ for all $y\in \Oa[i]$, so that 
    \[  \begin{aligned}
&      (\ma)^{\frac{n+2k}{2}}\Big[\Ba(y)^{\crit-2}\Ba[r](y)+\Ba[r](y)^{\crit-2}\Ba(y)\Big]  \\
  & \lesssim \left( \frac{\ma\ma[r]}{\rha[ri]}\right)^{\min(2k, \frac{n-2k}{2})}  \lesssim  \Big(  \max_{\substack{i=1,\ldots N\\ j\neq i \,:\, \ma[j] = o(\ma)}} \Big(\frac{\ma[j]}{\ma[i]}\Big)^{\frac{2k-1}{2(n-1)}} \Big)^{\min(4k, n-2k)},
    \end{aligned} \] 
where we used \eqref{def:rha} to write that $\frac{\ma\ma[r]}{\rha[ri]} \lesssim \left(\frac{\ma[r]}{\ma}\right)^{\frac{2k-1}{n-1}}$ for $r \in \sB_i$. Here again, the right-hand side goes to $0$ as $\a \to + \infty$ by definition of $\sB_i$.

 Assume now that $r\not \in \sA_i$ and $r\not \in \sB_i$: this means that $\ma[r] = \smallo(\ma)$ as $\a \to + \infty$ and  $|\xa[r]-\xa|>2\ra$. Then $\Ba[r](y) \lesssim \frac{(\ma[r])^{\exc}}{(\ra)^{n-2k}}$ for all $y\in \Oa[i]$, so that 
$$ \begin{aligned}
     & (\ma)^{\frac{n+2k}{2}}\Big[\Ba(y)^{\crit-2}\Ba[r](y)+\Ba[r](y)^{\crit-2}\Ba(y)\Big] \\
      & \lesssim  \left( \frac{\ma}{\ra} \right)^{\min(n-2k,4k)} \lesssim \Big(  \max_{\substack{i=1,\ldots N\\ j\neq i \,:\, \ma \lesssim \ma[j]}} (\eps_\a^{ij})^{-\frac12} \Big)^{\min(n-2k,4k)},
\end{aligned}
$$
where we again used \eqref{eq:struc} and \eqref{def:ra} for the last line.

Assume finally that $r=0$, that is $\Ba[r] \equiv 1$. Then, for any $y \in \Oa[i]$,
    \begin{align*}
      &(\ma)^{\frac{n+2k}{2}}\Big[\Ba(y)^{\crit-2}\Ba[r](y)+\Ba[r](y)^{\crit-2}\Ba(y)\Big] \\
      & = (\ma)^{\frac{n+2k}{2}}\big(\Ba(y)\big)^{\crit -2} + (\ma)^{\frac{n+2k}{2}}\Ba(y) \lesssim (\ma)^{\exc} + (\ma)^{2k} 
    \end{align*}
as $\alpha \to + \infty$. This concludes the proof. 
\end{proof}

\section{Sharp weighted estimates for linearised equations at a bubble-tree}\label{sec:lin}

This section contains the core of the analysis of the paper. We prove a uniform invertibility result in appropriate weighted spaces for a linearisation of \eqref{eq:main} at any fixed bubble-tree configuration. We keep the notations of Sections \ref{sec:bulles} and \ref{sec:bulles2}. Throughout this section we fix a bubble-tree $(\Wa)_\a$ given by 
\begin{equation*} 
  \Wa = u_0 + \sumi \Va + \sumij \nu^{ij}_\a \Za,
\end{equation*}
where $u_0\in C^{2k}(\oO)\cap\Sob_0(\O)$ solves \eqref{eq:lmmain}, $[(\xa)_\a,(\ma)_\a,v^i]$, $i=1,\ldots, N$ are $N$ bubbles $\Va$ as in Definitions \ref{def:bub:int} and \ref{def:bub:bdr} and satisfying \eqref{eq:struc}, $(\nu^{ij}_\a)_\a$, $0 \le i \le N$, $1 \le j \le d_i$, are sequences of real numbers converging to $0$ and $d_i = \dim \Ker^i_\a$ is as in \eqref{def:Ka}. For $\alpha \ge 1$ and $y\in \oO$ we define 
\begin{equation}\label{def:PSI}
  \Psi_{\a}(y) = \sumi \ta(y)^{2-2k}\Ba(y)+ \sum_{\substack{i,j = 0,\ldots, N\\i\neq j}} \pBa[j]^{\crit-2} \Ba(y),
\end{equation}
where $\Ba$ and $\ta$ are given by \eqref{def:Ba}. Let $\eta >0$ be a positive real number. We introduce the following weighted norms:
  \begin{equation}\label{def:star}
    \begin{aligned}
      \ns{\phi} & = \max_{y \in \oO} \sum_{l=0}^{2k-1}  \frac{|\nabla^l \phi(y)|}{1+ \sumi \ta(y)^{-l} \Ba(y)} \quad \text{ for } \vp \in C^{2k-1}(\oO), \\
      \nss[\eta]{R} & = \max_{y\in \oO} \frac{|R(y)|}{\Psi_{\a}(y) +\eta \sumi[0] \Ba(y)^{\crit-1}} \quad \text{ for } R \in C^{0}(\oO).\\
      \end{aligned}
  \end{equation}
These norms depend on $\a$, but for simplicity we simply denote them by $\ns{\cdot}, \Vert \cdot \Vert_{**, \eta}$. We let $L_\a$ be as in \eqref{def:La}. The main result of this section is as follows:
\begin{theorem}\label{prop:ptinv}
  Let $(\Wa)_\a$ be defined as in \eqref{def:Wa}. There exist a constant $C_0>0$ and a sequence $(\ea)_\a$ of positive numbers converging to zero as $\alpha \to + \infty$ 
  such that the following holds: for any $R \in C^0(\oO)$ and for any $\alpha$ large enough, there exists a unique function $\phi_\a \in \Ker_\a^\perp $ that solves 
  \begin{equation} \label{eq:lincrit}
    L_\a \phi_\a - (\crit-1)|\Wa|^{\crit-2}\phi_\a = R + \sumij \la (-\D)^k \Za \quad \text{ in } \O,
  \end{equation}
for some real numbers $(\la)_\a$, $i=0,\ldots, N$, $j=1,\ldots, d_i$. Moreover, $\phi_\a \in C^{2k-1}(\overline{\Omega})$ and satisfies
  \begin{equation}\label{eq:Iptinv}
    \ns{\phi_\a} \leq C_0 \ea \nss{R}.
  \end{equation}
\end{theorem}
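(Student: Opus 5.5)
We plan to prove Theorem \ref{prop:ptinv} in two stages: first the solvability in $\Ker_\a^\perp$ by a variational Fredholm argument, then the weighted bound \eqref{eq:Iptinv} by a pointwise a priori estimate proved by contradiction and blow-up analysis on the regions of influence $\Oa[i]$ of \eqref{def:Oa}. The sequence $\ea$ will only be fixed at the end, tending to $0$ slowly enough to beat all the interaction quantities of Lemma \ref{lemme:interactions:arbre:bulles} and the $\smallo(1)$ errors coming from $A_{l,\a}\to A_l$ and $\nu^{ij}_\a\to 0$.

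\emph{Existence and uniqueness.} We write \eqref{eq:lincrit} weakly in $H^k_0(\O)$ as $\Pi_\a^\perp(\phi - T_\a\phi) = \Pi_\a^\perp \big((-\D)^{-k}R\big)$. Here $\Pi_\a^\perp$ is the $H^k_0$-orthogonal projection onto $\Ker_\a^\perp$, and the numbers $\la$ are exactly the Lagrange multipliers of that projection, by \eqref{eq:ortZa}. The operator $T_\a = (-\D)^{-k}\big((\crit-1)|\Wa|^{\crit-2}\cdot - \text{lower order terms}\big)$ is compact. The Fredholm alternative then reduces existence and uniqueness to the injectivity estimate
$$\Vert \phi\Vert_{H^k_0} \lesssim \Vert \Pi_\a^\perp(\phi-T_\a\phi)\Vert_{H^k_0}, \qquad \phi\in\Ker_\a^\perp,$$
uniformly in $\a$. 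We would prove this by contradiction with a Struwe-type localisation. Take a normalised sequence and rescale around each $\xa$ at scale $\ma$. The weak limits solve $\Dd^k h = (\crit-1)|v^i|^{\crit-2}h$ in $\R^n$ or $\R^n_+$, and they are orthogonal to $\Ker_{v^i}$, hence zero. Likewise the weak limit at scale $1$ lies in $\Ker_0\cap\Ker_0^\perp=\{0\}$. Using \eqref{eq:struc} to decouple the bubbles, it follows that $\int|\Wa|^{\crit-2}\phi_\a^2\to0$, which contradicts the normalisation. Regularity $\phi_\a\in C^{2k-1}(\oO)$ follows from elliptic theory, since $\Wa\in C^{2k}$.

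\emph{Pointwise estimate.} We argue by contradiction. Assume $\ns{\phi_\a}=1$ and $\ea\nss{R_\a}\to0$ along a subsequence. The main tool is a representation formula $\phi_\a(x)=\int_\O G_\a(x,y)\,\big[R+(\crit-1)|\Wa|^{\crit-2}\phi_\a+\sum\la(-\D)^k\Za\big](y)\,dy$, with uniform Green bounds $|\nabla_x^lG_\a(x,y)|\lesssim|x-y|^{2k-n-l}$ for $0\le l\le 2k-1$. To obtain such a $G_\a$ when $L$ has a kernel, we would replace $L_\a$ by $L_\a+\Pi_{\Ker(L)}$, a finite-rank smooth correction absorbed into $R$.

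The individual terms are handled as follows.
\begin{enumerate}
\item The coefficients $\la$ are estimated by testing \eqref{eq:lincrit} against $\Za$. Using $\phi_\a\perp\Ker_\a$, \eqref{eq:IZa} and the kernel equations, this gives $|\la|\lesssim \ea\nss{R}+\smallo(1)$.
\item For the $R$-term we use standard convolution estimates: $\int|x-y|^{2k-n-l}\ta(y)^{2-2k}\Ba(y)\,dy\lesssim\ta(x)^{2-l}\Ba(x)+\dots$, and $\int|x-y|^{2k-n-l}\Ba^{\crit-1}\lesssim\ta^{-l}\Ba$. The $\Psi_\a$ part and the $\ea\Ba^{\crit-1}$ part of $R$ then contribute at most $\ea\nss{R}$ times a weight bounded by $1+\sumi\ta^{-l}\Ba$. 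The interaction part of $\Psi_\a$ is made small by Lemma \ref{lemme:interactions:arbre:bulles}.
\item The delicate term is $|\Wa|^{\crit-2}\phi_\a$. Splitting $\O$ into the regions $\Oa[i]$ and applying Lemma \ref{prop:ctltre} bounds this term by $\sum_i\Ba^{\crit-1}\,\sup\frac{|\phi_\a|}{\Ba}$ plus interaction terms.
\end{enumerate}
We then run a top-to-bottom induction over the bubbles ordered as in \eqref{eq:renum}. On each $\Oa[i]$ we rescale $\tilde\phi_\a(z)=(\ma)^{\exc}\phi_\a(\xa+\ma z)$. By the bound $\ns{\phi_\a}=1$ and the Green representation, $\tilde\phi_\a$ is bounded by $C(1+|z|)^{2k-n}$ and is equicontinuous. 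It therefore converges in $C^{2k-1}_{loc}$ to some $h\in\Ker_{v^i}$, and orthogonality forces $h=0$. The uniform decay then upgrades this convergence to $\sup_{\Oa[i]}\frac{|\nabla^l\phi_\a|}{\ta^{-l}\Ba}\to0$. Inserting this back into the Green representation gives $\ns{\phi_\a}=\smallo(1)$, a contradiction.

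\emph{Main obstacle.} We expect the hardest step to be making the last argument uniform near the inner boundaries $\partial B(\xa[j],\rha)$ and on the annuli between scales, where no single bubble dominates. There the decay $(1+|z|)^{2k-n}$ must be shown to hold uniformly and not merely locally. We would first try to prove a weighted Harnack/convolution lemma of the form
$$\int_\O |x-y|^{2k-n-l}\,\Big(\sum_{r=0}^N\Ba[r](y)^{\crit-2}\Big)\Big(1+\sum_{j=1}^N\Ba[j](y)\Big)dy\lesssim 1+\sum_{j=1}^N\ta[j](x)^{-l}\Ba[j](x),$$
with the constant made small outside fixed compact rescaled neighbourhoods of each bubble, using the exponents $\frac{n-2k}{2(n-1)}$ built into $\rha$. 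The boundary bubbles require the same argument with the half-space Green function $G_{\R^n_+}$ from Section \ref{sec:bulles}.
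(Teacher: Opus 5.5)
Your strategy differs from the paper's. You normalise $\ns{\phi_\a}=1$ and run a single global blow-up contradiction. The paper instead proves Proposition~\ref{prop:estIl} by reverse induction on the bubbles, controlling on each $\Oa$ the quantity $\Pa=\max_{\overline{\Oa}}|\phi_\a|/(\ta[m]\Ba[m])$, which measures $\phi_\a$ against a weight decaying one power \emph{slower} than $\Ba[m]$. The step your argument hinges on is not justified. From $|\tilde\phi_\a(z)|\lesssim(1+|z|)^{2k-n}$ and $\tilde\phi_\a\to0$ locally you cannot conclude $\sup_{x\in\Oa[i]}|\nabla^l\phi_\a(x)|/(\ta(x)^{-l}\Ba(x))\to0$. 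That bound decays at exactly the rate of the weight, so it gives no smallness for $|z|\ge R$. The supremum can then be attained at $|z_\a|\to\infty$, in the necks $R\ma\ll|x-\xa|\ll\ra$ or near $\partial B(\xa[j],\rha)$, where the blow-up limit sees nothing. Your ``main obstacle'' paragraph misplaces the difficulty: uniform decay is free (it is just $\ns{\phi_\a}=1$ plus Lemma~\ref{prop:ctltre}), and what is missing is smallness relative to the weight. The factor $\ta[m]$ in $\Pa$ is the paper's device for exactly this. Via \eqref{eq:I3phi} and Lemma~\ref{lemme:ordre:deux}, the representation formula improves $|\phi_\a|\le\Pa\ta[m]\Ba[m]$ to $|\phi_\a|\lesssim\ma[m]\Pa\Ba[m]+\dots$. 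This forces the maximiser $z_\a$ to satisfy $|z_\a-\xa[m]|=\bigO(\ma[m])$, where the blow-up argument yields the contradiction.

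Your route can still be repaired, without any sup-upgrade and even without induction, by feeding local smallness directly into the representation formula. Under $\ns{\phi_\a}=1$ and $\ea\nss{R_\a}\to0$, Proposition~\ref{prop:lininv} and Lemma~\ref{prop:lem1} give $\Vert\phi_\a\Vert_{H^k_0}\to0$. Hence every rescaled limit vanishes (no orthogonality is needed), and $|\phi_\a|=\smallo(\Ba)$ on $B(\xa,R\ma)$ minus $\delta\ma$-balls around the points of $\sS_i$. Elsewhere use $|\phi_\a|\le1+\sum_j\Ba[j]$. The potential term is then controlled by two pieces:
\begin{itemize}
\item $\sum_j(\Ba[j])^{\crit-2}$, which contributes $\smallo(1)$ by Lemma~\ref{prop:lemtrou:0};
\item $\sum_j(\Ba[j])^{\crit-1}$ outside $B(\xa[j],R\ma[j])$, which contributes at most $R^{-2k}\ta[j](x)^{-l}\Ba[j](x)$ by Lemma~\ref{prop:lemtrou}; the $i$-th term over the $\delta\ma$-balls adds at most $C\delta\,\ta(x)^{-l}\Ba(x)$.
\end{itemize}
Plugging this into \eqref{eq:I2phi}, which uses the Green function $G_0$ of $L-(\crit-1)|u_0|^{\crit-2}$, gives $1\le C(R^{-2k}+\delta)+\smallo(1)$, a contradiction. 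This also shows how the $u_0$ term must be handled. With your Green function of $L_\a$ plus a finite-rank correction, the term $(\crit-1)|u_0|^{\crit-2}\phi_\a$ stays on the right-hand side and is only $\bigO(\ns{\phi_\a})$. Your splitting into the regions $\Oa[i]$ never treats the region where $u_0$ dominates. Either absorb this term into $G_0$ as the paper does, or prove separately that its contribution is $\smallo(1)$ using $\Vert\phi_\a\Vert_{H^k_0}\to0$.
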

Recall that $\Za$ and $\Ker_\a$ are given by \eqref{def:Ka}, and that the orthogonal is taken with respect to the $H^{k}_0(\O)$ scalar product given by \eqref{norm:Dk2}. 

Some remarks are in order. The operator $\tilde{L}_\a  = L_\a  - (\crit-1)|\Wa|^{\crit-2}$ is, formally, the linearisation of \eqref{eq:main} at $\Wa$ -- even though $\Wa$ is in general only an approximate solution of \eqref{eq:main}. Theorem \ref{prop:ptinv} should thus be understood as a uniform invertibility result for the linearised version of \eqref{eq:main} at a general bubble-tree $\Wa$: it provides sharp pointwise bounds, in weighted norms adapted to the configuration of the bubble-tree (i.e. norms that capture the leading order of size of $\Wa$ at each scale), for solutions of the linearised equation \eqref{eq:lincrit}. Theorem~\ref{prop:ptinv} does not assume that $\Wa$ originates from a Struwe-type decomposition like \eqref{eq:decHk}: $\Wa$ could be any bubble-tree and Theorem~\ref{prop:ptinv} is formally decorrelated from \eqref{eq:main}. Theorem~\ref{prop:ptinv} shows that the only obstruction to the uniform (in $\a$) invertibility of $\tilde{L}_\a$ is the kernel $\Ker_\a$, which simply consists of the contributions of the kernels of each element $u_0, \Va[1], \ldots, \Va[N]$ appearing in the bubble-tree. This fact has long been known at the energy level (see for instance Proposition~\ref{prop:lininv} below), but the real novelty -- and the main analytical difficulty --  in Theorem~\ref{prop:ptinv} is to prove that uniform invertibility \eqref{eq:Iptinv} holds true in the weighted norms given by \eqref{def:star}. In particular, Theorem~\ref{prop:ptinv} remains true for \emph{any} bubble-tree configuration $\Wa$, regardless of the choice of $u_0$ and the bubbles $[(\xa)_\a,(\ma)_\a,v^i]$, provided they satisfy \eqref{eq:struc}. The constant $C_0$ and the sequence $(\eta_\a)_\a$ in \eqref{eq:Iptinv} only depend on $n,k$, $\O$, $\|A_{l,\a}-A_l\|_{C^l(\oO)}$, and $\Wa$ (see \eqref{def:eta} below).  The weighted norms \eqref{def:star} that appear in \eqref{eq:Iptinv} are dictated by the bubble-tree structure. Simple computations show indeed that $\Wa$ satisfies 
$$ \big| L_\a \Wa - |\Wa|^{2^\sharp - 2} \Wa \big| \lesssim \Psi_{\a} +\eta_\a \sumi[0] (\Ba)^{\crit-1} \quad \text{ in } \O$$
for some sequence $\eta_\a \to 0$ of positive numbers (we will prove this in \eqref{eq:eta4} below). The choice of the $\Vert \cdot \Vert_{**, \eta_\a}$ norm is therefore motivated by the setting of Section~\ref{sec:nonlinear}, where we will crucially rely on Theorem~\ref{prop:ptinv} to prove global pointwise estimates for solutions of \eqref{eq:main}. The $\Vert \cdot \Vert_*$ norm is, in turn, imposed by $\Vert \cdot \Vert_{**, \eta_\a}$: for a fixed $0 \le l \le 2k-1$, the weight $1+ \sumi \ta(y)^{-l} \Ba(y)$ canonically appears when integrating a Green's function against $\Psi_\a + \eta_\a \sumi[0] (\Ba)^{\crit-1} $ (see Lemma \ref{prop:lem2} below). Estimate \eqref{eq:Iptinv} therefore shows that, insofar as global pointwise estimates are concerned, solutions of \eqref{eq:lincrit} behave as if they formally satisfied the equation
\[   L_\a \phi_\a = R + \sumij \la (-\D)^k \Za \quad \text{ in } \O. \]
This had already been observed in \cite{Pre24} in the case $k=1$. A similar principle was used by \cite{DengSunWei} to prove the optimal quantitative stability of Struwe's decomposition for positive functions in $\R^n$. To prove Theorem \ref{prop:ptinv} we adapt the strategy of proof of \cite[Theorem 3.2]{Pre24}. The new approach that we develop here and which uses the weighted norms \eqref{def:star} allows us to tackle simultaneously the polyharmonicity of any order and the occurrence of possible boundary bubbles for a general bubble-tree $\Wa$, all the while simplifying the original proof of \cite[Theorem 3.2]{Pre24} even when $k=1$.

\subsection{Preliminary results}

We first state an $H^k_0(\O)$ version of Theorem~\ref{prop:ptinv}: 
\begin{proposition}\label{prop:lininv}
  Let $R \in H^{-k}(\O)$. For any $\a$ large enough there exists a unique $\phi_\a \in \Ker_\a^\perp \sub \Sob_0(\O)$ and unique real numbers  $(\la)_\a$ for $i=0,\ldots, N$, $j=1,\ldots, d_i$ such that 
  \begin{equation*}
    L_\a \phi_\a - (\crit-1)|\Wa|^{\crit-2}\phi_\a = R + \sumij \la (-\D)^k \Za.
  \end{equation*}
  This $\vp_\a$ satisfies in addition
  \begin{equation}\label{eq:Ilininv}
   \Vert \phi_\a \Vert_{H^k_0(\Omega)} \leq C \Snorm{-k}{R}
  \end{equation}
  for some $C>0$ independent of $\alpha$, $R,\phi_\a$. In particular, the map $\Ker_\a^\perp \to \Ker_\a^\perp : \Pi_{\Ker_\a^\perp}\big((-\D)^{-k}R\big) \mapsto \phi_\a$ is a uniformly bicontinuous isomorphism.
\end{proposition}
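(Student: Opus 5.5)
The plan is a Fredholm argument on $\Ker_\a^\perp$ combined with a contradiction/blow-up argument for the uniform bound. Write $T_\a = \Pi_{\Ker_\a^\perp}\circ (-\D)^{-k}$, where $(-\D)^{-k}: H^{-k}(\O)\to H^k_0(\O)$ is the Riesz isomorphism for the scalar product \eqref{norm:Dk2}. Projecting onto $\Ker_\a^\perp$ removes exactly the terms $\sumij \la (-\D)^k\Za$, since $(-\D)^{-k}(-\D)^k \Za = \Za \in \Ker_\a$. The equation is therefore equivalent to
$$\phi_\a + T_\a\Big(\sum_{l=0}^p (-1)^l\nabla^l(A_{l,\a}\nabla^l\phi_\a) - (\crit-1)|\Wa|^{\crit-2}\phi_\a\Big) = T_\a R, \qquad \phi_\a\in\Ker_\a^\perp.$$
The $\la$ are then recovered uniquely by testing against $\Za$; the matrix \eqref{eq:ortZa} is asymptotically the identity, so it is invertible for $\a$ large. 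The operator acting on $\phi_\a$ is identity plus compact on $\Ker_\a^\perp$. The lower-order terms are compact because $p\le k-1$ and $H^k_0\hookrightarrow H^{p}$ compactly. Multiplication by $|\Wa|^{\crit-2}$ is compact for each fixed $\a$ because $\Wa\in C^{2k}(\oO)$ is bounded. Hence existence, uniqueness and \eqref{eq:Ilininv} all reduce to the following a priori estimate: for $\a$ large, every $\phi\in\Ker_\a^\perp$ satisfies $\Vert \phi\Vert_{H^k_0}\le C\Vert T_\a(\tilde L_\a\phi)\Vert_{H^k_0}$, with $C$ independent of $\a$.

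I would prove this estimate by contradiction. Suppose there are $\phi_\a\in\Ker_\a^\perp$ with $\Vert\phi_\a\Vert_{H^k_0}=1$, right-hand sides $R_\a\to0$ in $H^{-k}$, and coefficients $\la$ such that the equation holds. Testing the equation against $\Za$ gives $\la\to0$. This uses $\langle \phi_\a,\Za\rangle_{H^k_0}=0$ together with the fact that $\Za$ is an approximate kernel element: $\tilde L_\a \Za\to0$ in $H^{-k}$. For $i\ge1$ that fact follows from rescaling the bubble equation and from the interaction estimates of Lemma \ref{lemme:interactions:arbre:bulles}. For $i=0$ it follows from $A_{l,\a}\to A_l$ and from the bubbles being small in $L^{\crit}$ away from their concentration points.

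Next I would rescale around each bubble. Set $\tilde\phi^i_\a(y)=(\ma)^{\exc}\phi_\a(\xa+\ma y)$, using the chart $\sigma_{\xa}$ for boundary bubbles. By \eqref{eq:struc} and the definition of the radii of influence, $(\ma)^{2k}$ times the lower-order terms vanishes after rescaling, and $|\Wa|^{\crit-2}$ rescaled converges to $|v^i|^{\crit-2}$ locally, away from finitely many points where higher bubbles concentrate. A weak limit $\tilde\phi^i$ therefore lies in $\Ker_{v^i}$. The orthogonality $\langle\phi_\a,\Za\rangle=0$ passes to the limit, using the $L^{\crit}$-decay of $Z^{ij}$ from Lemma \ref{prop:ctlsollin}, and forces $\tilde\phi^i=0$. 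Likewise the weak limit of $\phi_\a$ in $H^k_0(\O)$ lies in $\Ker_0\cap\Ker_0^\perp=\{0\}$.

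Then I would test the equation against $\phi_\a$ itself:
$$1+\smallo(1)=(\crit-1)\int_\O|\Wa|^{\crit-2}\phi_\a^2 + \smallo(1),$$
where the lower-order terms are $\smallo(1)$ because $\phi_\a\wto0$ and the embedding is compact. The last step is to show that $\int|\Wa|^{\crit-2}\phi_\a^2\to0$. Split $\O$ into the rescaled balls $B(\xa,R\ma)$ minus small balls around the higher bubbles, a region where $u_0$ dominates, and remaining neck regions. On the rescaled balls the integral tends to $0$ because $\tilde\phi^i_\a\to0$ strongly in $L^2_{loc}$. On the $u_0$ region it tends to $0$ by compactness. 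On the neck regions $|\Wa|^{\crit-2}$ has small $L^{n/2k}$ norm as $R\to\infty$, by \eqref{est:bubble} and \eqref{eq:struc}, and Hölder's inequality with Sobolev's inequality then bounds the contribution by $\smallo(1)\Vert\phi_\a\Vert^2$. This gives $1\le\smallo(1)$, a contradiction.

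I expect the main obstacle to be this last decomposition of $|\Wa|^{\crit-2}$ into pieces that are either locally concentrated near a single profile or small in $L^{n/2k}$, uniformly over arbitrary towers, including boundary bubbles and sign-changing profiles. I would organise it with the regions $\Oa[i]$ from \eqref{def:Oa} and Lemma \ref{prop:ctltre}, inducting from the highest bubble down.
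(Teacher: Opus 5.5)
Your proposal is correct and is essentially the standard Fredholm-plus-blow-up contradiction argument that the paper does not write out but delegates to \cite{RobertVetois}, \cite{Pre24} and \cite{CarJLMS}. Its steps are the same: reduction to an identity-plus-compact operator on $\Ker_\a^\perp$, then $\la\to0$, then vanishing of every rescaled weak limit by kernel identification plus orthogonality, and finally a contradiction from testing the equation against $\phi_\a$.

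Two small corrections. First, the smallness of $\tilde L_\a\Za$ in $H^{-k}$ comes from the global $L^{\frac{2n}{n+2k}}$ bound of Lemma \ref{prop:lem1}, not from Lemma \ref{lemme:interactions:arbre:bulles}, whose $L^\infty$ bound only holds on $\Oa[i]$. Second, the step you flag as the main obstacle is easier than you expect. The pointwise bound $|\Wa|^{\crit-2}\lesssim 1+\sumi(\Ba)^{\crit-2}$ lets you treat one bubble at a time. For each bubble, split into $B(\xa,R\ma)$ and its complement:
\begin{itemize}
\item on $B(\xa,R\ma)$, the rescaled $\tilde\phi^i_\a\to0$ in $L^2$ by Rellich, whatever the higher bubbles do;
\item on the complement, $(\Ba)^{\crit-2}$ has small $L^{\frac{n}{2k}}$ norm.
\end{itemize}
So you need neither $\Oa[i]$ nor an induction.
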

\begin{remark}\label{rk:Dinv}
  Here and in the following we denote by $(-\D)^{-k} : \Sob[-k](\O) \to \Sob_0(\O)$ the linear mapping that sends $f\in \Sob[-k](\O)$ to the unique weak solution $h\in \Sob_0(\O)$ of $(-\Delta)^k h = f$ in $\O$. 
\end{remark}
\noindent The proof of Proposition \ref{prop:lininv} follows from standard arguments. We refer for instance to \cite{Pre24} and \cite{RobertVetois} for $k=1$ and to \cite{CarJLMS} for $k \ge 1$ with a single positive bubble. In the rest of this section we show how the weak control \eqref{eq:Ilininv} can be improved into the weighted pointwise control \eqref{eq:Iptinv}. We first state two technical results: 

\begin{lemma}\label{prop:lem1}
 Let $\Psi_\a$ be as in \eqref{def:PSI}. There exists a sequence $\eta_{1,\a} \to 0$ depending only on $[(\xa)_\a,(\ma)_\a,v^i]$  such that 
  \[  \Lnorm[\O]{\frac{2n}{n+2k}}{\Psi_\a} \leq \eta_{1,\a}.
  \]
 \end{lemma}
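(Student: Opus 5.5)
Since $\Psi_\a$ is a finite sum, it suffices by the triangle inequality to show that each summand has $L^{\frac{2n}{n+2k}}(\O)$ norm tending to zero. Throughout, write $q = \frac{2n}{n+2k}$. The summands are of three kinds: the terms $\ta^{2-2k}\Ba$, the mixed products $(\Ba[j])^{\crit-2}\Ba$ with $i \neq j$ both $\ge 1$, and the mixed products in which one index is $0$.

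\textbf{First kind.} Consider $\ta^{2-2k}\Ba$ with $1 \le i \le N$. Recall that $\Ba \lesssim (\ma)^{\exc}\ta^{2k-n}$, so this term is bounded by $(\ma)^{\exc}(\ma+|y-\xa|)^{2-n}$. Scale by $y = \xa + \ma z$ and extend the integration domain to $\R^n$. This gives
\[
\Lnorm[\O]{q}{\ta^{2-2k}\Ba}^q \lesssim (\ma)^{(2k-2)q}\intM[z]{\{|z|\le C/\ma\}}{(1+|z|)^{(2-n)q}}.
\]
Note that $(n-2)q \le n$ exactly when $n \le 2k+2$ or so. Therefore the integral is either bounded, or grows at most like $(\ma)^{-(n-(n-2)q)}$ (with a logarithm in the borderline case). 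A direct exponent count then shows that the total is $\lesssim (\ma)^{\min((2k-2)q,\,2q)}$ up to a logarithm. Indeed, when the integral diverges one uses $(2k-2)q - n + (n-2)q = q(2k-2+n-2) - n$, and this is positive since $q(n+2k-4) - n = \frac{2n(n+2k-4)-n(n+2k)}{n+2k} = \frac{n(n+2k-8)}{n+2k}$. That is only positive for $n+2k>8$. So the exponent count has to be redone carefully: in the divergent regime the domain is $\O$, so the relevant bound is really $(\ma)^{(2k-2)q}(\ma)^{-(n-(n-2)q)}$ multiplied by the diameter factor. I will simply bound it as $(\ma)^{\frac{n+2k}{2}q - n}\cdot$, i.e. the power obtained by estimating $\ta^{2-2k}\Ba\le (\ma)^{\exc}\ta^{2-n}$ over $\O$ directly. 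Taking the $L^q$ norm of $\ta^{2-n}$ over a bounded set, with truncation at $\ma$, yields $(\ma)^{\exc}\big(1+(\ma)^{\frac nq-(n-2)}\big)$. Since $\frac nq = \frac{n+2k}{2}$, this is $(\ma)^{\exc}+(\ma)^{k-1+\frac{2k}{2}\cdot 0}\cdots$; concretely it equals $(\ma)^{\exc}+(\ma)^{\frac{n+2k}{2}-(n-2)+\exc} = (\ma)^{\exc}+(\ma)^{2k-2+... }$. Here $\exc + \frac{n+2k}{2} - n + 2 = 2$, so the bound is $(\ma)^{\exc}+(\ma)^{2}$ up to a log. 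This tends to $0$ for $k\ge1$. (When $k=1$ the term is just $\Ba$ and the same computation applies.)

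\textbf{Second kind.} For the mixed products $(\Ba[j])^{\crit-2}\Ba$ with $i\neq j$ both $\ge1$, I use Hölder with exponents $\frac{\crit}{\crit-2}\cdot$ on the region where $\Ba[j]\ge\Ba$, and symmetrically on the complement. Since $q(\crit-1) = \crit$, the product is controlled on each region by the $L^{\crit}$ norm of the dominant bubble restricted to that region. The key step is that
\[
\intM{\O}{\min(\Ba,\Ba[j])^{\crit}} + \intM{\O}{\big((\Ba[j])^{\crit-2}\Ba\big)^{q}} \to 0
\]
whenever $\eps_\a^{ij}\to\infty$. This is the standard interaction estimate for positive bubbles: after rescaling around the more concentrated bubble, the other bubble becomes either uniformly small or concentrated far away, by \eqref{eq:struc}. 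Dominated convergence on the rescaled integrals gives the decay, and one can even make it quantitative as a power of $\eps_\a^{ij}$. This step is the main obstacle, and I would handle it by splitting into the cases $\ma[j]=o(\ma)$ and the comparable case, using the radii $\sa$ to locate where each bubble dominates.

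\textbf{Third kind.} For terms involving index $0$, we have $\Ba[0]\equiv1$. The product $(\Ba)^{\crit-2}$ has $L^q$ norm bounded by $(\ma)^{2k}\cdot(\ma)^{-\frac{n}{q}\cdot 0}\cdots$; directly, its norm is $\lesssim (\ma)^{\min(2k,\exc)}$ up to a log, by the same scaling as in the first kind. Likewise $\Ba$ itself has $L^q$ norm $\lesssim(\ma)^{\min(\exc,\cdot)}\to0$. Summing all three kinds defines $\eta_{1,\a}$, which depends only on the bubble parameters.
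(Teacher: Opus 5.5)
Your proposal is correct and matches the paper's argument, which is only sketched there as ``direct computations using the explicit form of $\Psi_\a$, relying on \eqref{eq:struc}'': term-by-term $L^{\frac{2n}{n+2k}}$ bounds give $(\ma)^{\min(\exc,2)}$ for the $\ta^{2-2k}\Ba$ terms and $(\ma)^{\min(\exc,2k)}$ for the terms involving $\Ba[0]\equiv 1$ (each up to a logarithm), while the mixed terms are handled by H\"older's inequality plus the standard interaction estimate under \eqref{eq:struc}. Two slips in your write-up should be fixed, although your final bounds are right: in the first kind the rescaled prefactor is $(\ma)^{2q}$, not $(\ma)^{(2k-2)q}$, and the threshold is $n=2k+4$, not $2k+2$; in the second kind, the smallness on each region comes from the $L^{\crit}$ norm of the \emph{smaller} bubble $\min(\Ba,\Ba[j])$ (as in your displayed key step), not from the dominant bubble, whose $L^{\crit}$ norm on its own region stays of order one.
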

\noindent The proof follows from direct computations using the explicit form of $\Psi_\a$, and relies on \eqref{eq:struc}. Similar arguments can be for instance found in \cite[Proposition 3.4]{RobertVetois}.
\begin{lemma}\label{prop:lem2}
  There exists a sequence $\eta_{2,\a} \to 0$  depending only on $[(\xa)_\a,(\ma)_\a,v^i]$ such that, for $0 \le l \le 2k-1$ and for all $x\in \oO$,
  \[  \inty{|x-y|^{2k-n-l}\Psi_\a(y)} \leq \eta_{2,\a} \Big(1 + \sumi \ta(x)^{-l} \Ba(x)\Big).
  \]
\end{lemma}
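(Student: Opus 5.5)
By linearity it suffices to treat each term of $\Psi_\a$ in \eqref{def:PSI} separately. There are two kinds: the \emph{pure bubble} terms $\ta(y)^{2-2k}\Ba(y)$ for $1\le i\le N$, and the \emph{interaction} terms $\Ba[j](y)^{\crit-2}\Ba(y)$ for $i\neq j$ in $\{0,\dots,N\}$. For each term I aim to bound the convolution against the Riesz-type kernel $|x-y|^{2k-n-l}$ by $\eta_{2,\a}(1+\sum_i \ta(x)^{-l}\Ba(x))$. The main tool is the classical Giraud-type estimate: for $0<\beta<n$ and $\gamma>n$,
$$\int_{\R^n}|x-y|^{\beta-n}(\mu+|y-z|)^{-\gamma}\,dy \lesssim \mu^{\beta-\gamma}(\mu+|x-z|)^{-\beta}+(\mu+|x-z|)^{\beta-\gamma},$$
together with its analogue in the regime $\gamma<n$, where the integral over the bounded domain $\O$ is controlled by $(\mu+|x-z|)^{\beta-\gamma}$ plus a constant.

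\textbf{Pure bubble terms.} Write $\ta^{2-2k}\Ba \approx (\ma)^{\exc}\ta^{2-n}$ and apply the estimate above with $\beta=2k-l$ and $\gamma=n-2$.

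If $l\le 2k-3$, then $\beta-\gamma = 2k-l-n+2<0$ and $\beta+\gamma>n$. We get
$$(\ma)^{\exc}\ta(x)^{2k-l-n+2} = \ta(x)^2\cdot \ta(x)^{-l}\Ba(x),$$
which is $o(1)\,\ta^{-l}\Ba$ when $\ta(x)\le \delta$. When $\ta(x)\ge\delta$ the bound is $\lesssim (\ma)^{\exc}\to0$, which is absorbed by the constant $1$. Optimising in $\delta$ produces a sequence $\eta_{2,\a}\to0$. This piece is routine.

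If $l=2k-2$ or $l=2k-1$, then $\gamma+\beta\le n$ and the integral is dominated by the region away from $\xa$. It is therefore bounded by $(\ma)^{\exc}$ times a factor of order $1+|\log\ma|$ or $\ma^{-1}$. Since $\exc-1>0$ because $n>2k+2$ in the relevant dimensions (otherwise one handles the log), this is $o(1)$. Near $\xa$ one still gets the $\ta^{2}\ta^{-l}\Ba$-type bound. I expect these borderline exponents to require care, but they present no conceptual difficulty.

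\textbf{Interaction terms.} These are the main obstacle. I will split $\O$ into the regions of influence $\Oa[m]$ of \eqref{def:Oa}, together with the complement where $\Ba[0]=1$ dominates. On $\Oa[m]$, Lemma~\ref{lemme:interactions:arbre:bulles} gives
$$\sum_{r\neq s}(\Ba[r])^{\crit-2}\Ba[s]\lesssim \kappa_\a\,(\ma[m])^{-\frac{n+2k}{2}},$$
with $\kappa_\a\to0$ explicit. This pointwise bound alone is too crude far from $\xa[m]$. So I will instead use the pairwise shapes: in $\Oa[m]$ the dominant pair is $\Ba[m]^{\crit-2}\Ba[r]$ or $\Ba[m]\Ba[r]^{\crit-2}$ with $r$ lower or higher than $m$, and I estimate
$$\Ba[m]^{\crit-2}\Ba[r]\lesssim \big(\sup_{\Oa[m]}\Ba[r]/\Ba[m]\big)^{\theta}\,\Ba[m]^{\crit-2+1-\theta}\cdots$$
The plan is to extract a small factor (a power of $\ma/\sa[mr]$ or of $\ma[r]/\ma[m]$, as in the proof of Lemma~\ref{lemme:interactions:arbre:bulles}) while keeping a majorant of the form $(\ma)^{\exc}\ta^{-2k-\epsilon'}$ or $(\ma)^{\exc}\ta^{-n-\epsilon'}$. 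Such a majorant convolves, via the Giraud estimate, to at most $\ta^{-l}\Ba$ times a bounded power. Interpolation gives
$$\Ba[m]^{\crit-2}\Ba[r]\le \Ba[m]^{\crit-1-\theta}\Ba[r]^\theta,$$
and for small $\theta>0$ the factor $\Ba[m]^{\crit-1-\theta}\approx(\ma)^{\cdots}\ta^{-n-2k+\theta(n-2k)}$ still decays faster than $\ta^{-2k}$, so its convolution is $\lesssim \ta^{-l}\Ba$ up to powers of $\ma$ that balance correctly. Meanwhile $\Ba[r]^\theta$ restricted to $\Oa[m]$ is controlled by a small multiple of $\Ba[m]^\theta$ on the part of $\Oa[m]$ near the excised balls. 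The terms involving $\Ba[0]=1$ are handled by $\Ba^{\crit-2}$ and $\Ba$, whose convolutions yield $\ma^{\text{positive}}(1+\ta^{-l}\Ba)$ provided $n>4k-2$-type conditions hold; otherwise one borrows a small power as above. Summing over the finitely many regions and pairs then gives the claimed bound with $\eta_{2,\a}$ equal to a maximum of these small factors, which depends only on the bubble parameters.
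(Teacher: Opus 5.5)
The gap is in the interaction terms. The decomposition of $\O$ into the regions of influence $\Oa[m]$ plus ``the complement where $\Ba[0]=1$ dominates'' does not cover $\O$. If $j\in\sB_m$ then $m\in\sA_j$, so $\ra[j]\le \sa[jm]=\smallo(\rha[jm])$. The annulus $\{\ra[j]\le |y-\xa[j]|<\rha[jm]\}$ is therefore excised from $\Oa[m]$ and lies outside $B(\xa[j],\ra[j])\supseteq \Oa[j]$, so in general it belongs to no region of influence.

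Concretely, take $N=2$, $\xa[2]=\xa[1]$, $\ma[2]=\smallo(\ma[1])$. Then $\rha[21]=\smallo(\ma[1])$, so $\Ba[1]\approx(\ma[1])^{-\exc}\gg 1$ on this annulus. At its inner edge $|y-\xa[1]|=\ra[2]=\sa[21]$ one has $\Ba[2]\approx \Ba[1]$, so $(\Ba[2])^{\crit-2}\Ba[1]$ has full size $(\ma[1])^{-\frac{n+2k}{2}}$ there. Your plan gives no estimate on this set.

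The $\Oa[m]$ were built, through $\rha[jm]$, to control derivatives up to order $2k-1$, which is irrelevant for $\Psi_\a$. What you need is a genuine cover of $\O$ by zeroth-order domination sets, e.g.\ $D_t=\{y:\Ba[t](y)=\max_{0\le s\le N}\Ba[s](y)\}$. Alternatively, argue pair by pair on $\{\Ba[r]\ge \Ba[s]\}$ and its complement.

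The paper avoids any covering. It bounds each $\int_\O |x-y|^{2k-n-l}(\Ba[j])^{\crit-2}\Ba\,dy$, $i\ne j\ge 1$, separately according to the configuration of $\ma,\ma[j],|\xa-\xa[j]|$, adapting \cite[Proposition A.1]{Pre24}. It treats the terms with $\Ba[0]\equiv 1$ globally via Lemma~\ref{prop:lemtrou:0}, as you do.

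Even inside $\Oa[m]$, the mechanism you wrote gives no smallness for a lower bubble $r\in\sA_m$. The quantity $\sup_{\Oa[m]}\Ba[r]/\Ba[m]$ is only $\bigO(1)$: one may have $\ra[m]=\sa[mr]$, and then the two bubbles are comparable at $|y-\xa[m]|=\ra[m]$ (take two comparable bubbles at mutual distance $\ll\sqrt{\ma[m]}$). The factor $(\ma[m]/\sa[mr])^{\theta(n-2k)}$ you want comes instead from the absolute bound $\Ba[r]\lesssim(\ma[m])^{\exc}(\sa[mr])^{2k-n}$ of \eqref{eq:ctlBj}. It gives $(\Ba[m])^{\crit-1-\theta}(\Ba[r])^{\theta}\lesssim(\ma[m])^{\frac{n+2k}{2}}(\sa[mr])^{-\theta(n-2k)}(\ta[m])^{\theta(n-2k)-n-2k}$. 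Lemma~\ref{prop:giraud} then applies provided $\theta(n-2k)<2k$; the relevant threshold is decay faster than $(\ta[m])^{-n}$, not $(\ta[m])^{-2k}$. For higher bubbles your sup argument is fine.

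In the pure-bubble part your conclusion, $\ta(x)^2\,\ta(x)^{-l}\Ba(x)+\smallo(1)$, is correct and is the paper's, but your case split is wrong. The only borderline is $2k-l\ge n-2$ (small $l$, $n\le 2k+2$), where the integral is $\lesssim(\ma)^{\exc}(1+|\log\ma|)$. Outside it, and in particular for $l=2k-2,2k-1$, the integral concentrates near $\xa$. It is of order $(\ma)^{\exc}\ta(x)^{2k+2-l-n}$, not $(\ma)^{\exc}$ times $|\log\ma|$ or $(\ma)^{-1}$.
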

\noindent For completeness we prove Lemma \ref{prop:lem2} in Appendix \ref{app:technicalresults} below. In addition to $(\eta_{1,\a})_\a$ and $(\eta_{2,\a})_\a$ given by Lemmas \ref{prop:lem1} and \ref{prop:lem2}, we also define the following sequences that depend on $[(\xa)_\a,(\ma)_\a,v^i]_{\{1 \le i \le N \}}$, $\|A_{l,\a}-A_l\|_{C^l(\oO)}$ and $(\nu^{ij}_\a)_\a$:
\begin{align*}
  \eta_{3,\a}^{(1)} &:= \max_{\substack{i=1,\ldots N\\ j\neq i \,:\, \ma \lesssim \ma[j]}} (\eps_\a^{ij})^{-\frac{1}{2}} & \eta_{3,\a}^{(2)} &:= \max_{\substack{i=1,\ldots N\\ j\neq i \,:\, \ma[j] = o(\ma)}} \Big(\frac{\ma[j]}{\ma[i]}\Big)^{\frac{2k-1}{2(n-1)}},
\end{align*}
where $\eps_\a^{ij}$ is as defined in \eqref{eq:struc}. We may now define two sequences $(\eta_{3,\a})_\a$ and $(\eta_{4,\a})_\a$ by 
\begin{equation}\label{eq:lem3}
  \eta_{3,\a} := (\eta_{3,\a}^{(1)})^{\min(n-2k, 4k)} + (\eta_{3,\a}^{(2)})^{\min(n-2k, 4k)} + \max_{i=1,\ldots, N} (\ma)^{\min(\frac{n-2k}{2},2k,1)},
\end{equation}
and
\begin{equation}\label{eq:lem4}
  \eta_{4,\a} := \max_{\substack{i=0,\ldots,N\\j=1,\ldots,d_i}} |\nu_\a^{ij}| + \sum_{l=0}^{p} \|A_{l,\a}-A_l\|_{C^{l}(\oO)}.
\end{equation}
As we mentioned in the discussion following Theorem~\ref{prop:ptinv} the sequence $\eta_\a$ that we will obtain in \eqref{eq:Iptinv} explicitly depends on $n,k$, $\O$, $\|A_{l,\a}-A_l\|_{C^l(\oO)}$, and $\Wa$. We define it as follows: 
\begin{equation}\label{def:eta}
  \ea = \max\{\eta_{1,\a}, \eta_{2,\a},\eta_{3,\a},\eta_{4,\a}\}.
\end{equation}
Until the end of this section the notation $(\eta_\a)_\a$ will always refer to the sequence defined by \eqref{def:eta}. We define $\eta_\a$ in this way \emph{a posteriori}, so that every sequence that arises in the course of the proof of Theorem \ref{prop:ptinv} can be simply bounded from above by $\eta_\a$. (One can check, precisely, that $\eta_{1,\a}$ appears in \eqref{eq:I1phi}, $\eta_{2,\a}$ in \eqref{eq:eta2}, and $\eta_{3,\a}$ in \eqref{eq:estRa}, while $\eta_{4,\a}$ is used in \eqref{eq:eta4}). We introduce a final piece of notation: we will denote by $(\ve_\a)_\a$ any other sequence of positive numbers, which may change from one line to the other,  such that 
  \begin{equation}\label{def:ea}
    \begin{bigcases}
        &\ve_\a \to 0 \qquad \text{as }\a \to \infty, \\
        &(\ve_\a)_\a \text{ only depends on $n,k$, $\O$, $\|A_{l,\a}-A_l\|_{C^l(\oO)}$, and $\Wa$}.
    \end{bigcases}
  \end{equation} 
Several distinct such sequences will be numbered as $\ve_{1,\a},\ve_{2,\a},\ldots$. 

\subsection{Proof of Theorem \ref{prop:ptinv} assuming a key estimate}\label{sec:ptinv1} In this subsection we show that the proof of Theorem \ref{prop:ptinv} reduces to the proof of a key estimate, given in Proposition \ref{prop:estIl} below. Throughout the rest of this section we let $(R_\a)_\a$ be a sequence of functions in $C^0(\oO)$  and $\phi_\a \in \Ker_\a^\perp$ be the unique solution to \eqref{eq:lincrit} for some $(\la)_\a$, $i=0,\ldots, N,j=1,\ldots,d_i$ given by Proposition \ref{prop:lininv} (which applies since by the usual duality pairing we have $C^0(\overline{\O}) \subset H^{-k}(\O)$). By standard elliptic theory $\vp_\a \in C^{2k-1}(\overline{\O})$, so that  $ \ns{\phi_\a}$ is well-defined. A first simple observation is that, by \eqref{eq:Ilininv} and Lemma \ref{prop:lem1}, we have
\begin{equation}\label{eq:I1phi}
  \Vert \phi_\a \Vert_{H^k_0(\Omega)} \lesssim \Snorm{-k}{R_\a} \lesssim \ea \nss{R_\a},
\end{equation}
where we used the continuous dual Sobolev embedding of $L^{\frac{2n}{n+2k}}(\O)$ into $\Sob[-k](\O)$. As a consequence we claim that for $i=0,\ldots,N$, $j=1,\ldots, d_i$, we have
\begin{equation}\label{eq:Ilama}
  |\la| \lesssim \ea\nss{R_\a}.
\end{equation}
To prove \eqref{eq:Ilama} we integrate \eqref{eq:lincrit} against $\Za[i_0j_0]$ for some $i_0\in \{0,\ldots,N\}$ and $j_0\in \{1,\ldots,d_i\}$: using \eqref{eq:ortZa} and the second inequality in \eqref{eq:I1phi} we have that
\begin{multline*}  
  \intM{\O}{\big[L_\a \Za[i_0j_0] -(\crit-1)|\Wa|^{\crit-2}\Za[i_0j_0]\big]\phi_\a}\\ = \bigO(\ea\nss{R_\a})+ \la[i_0j_0]+ \smallo\Big(\sumij |\la|\Big).
\end{multline*}
Straightforward computations using \eqref{eq:IWa} and the definition of  $\Za[i_0j_0]$ show that
  \begin{equation}\label{eq:IDDZa}
   \begin{aligned} 
       \big| (-\D)^k \Za[0j] \big| & \lesssim 1 \quad \text{ for }1 \le j \le d_0 \text{ and } \\ 
   \big| (-\D)^k \Za \big| &  \lesssim (\Ba)^{\crit-1} + \sum_{l=0}^{2k-1} (\ta)^{-l}\Ba \quad \text{ for }1 \le i \le N \text{ and } 1 \le j \le d_i. \\
\end{aligned} 
  \end{equation}
Using \eqref{eq:IDDZa} it is easily seen that $L_\a \Za[i_0j_0] -(\crit-1)|\Wa|^{\crit-2}\Za[i_0j_0]$ is uniformly bounded in $\Sob[-k](\O)$ as $\a \to + \infty$. Thus, using \eqref{eq:I1phi} and summing over all $i_0,j_0$ proves \eqref{eq:Ilama}. For $0 \le l \le 2k-1$ and $x\in \oO$ we define in what follows
\begin{equation}\label{def:Il}
  I^l_\O(x) = \inty{|x-y|^{2k-n-l}\Big(\sumi \Ba(y)\Big)^{\crit-2}|\phi_\a(y)|}.
\end{equation}
We have the following pointwise estimates on $\phi_\a$ and its derivatives:
\begin{lemma}
 For every $0 \le l \le 2k-1$  and for all $x\in \oO$, 
  \begin{equation}\label{eq:I2phi}
    |\nabla^l \phi_\a(x)| \lesssim 
    \eta_\a(\nss{R_\a} +\ns{\phi_\a})\Big(1+\sumi \ta(x)^{-l}\Ba(x) \Big)+ I^{l}_\O(x). 
  \end{equation}
\end{lemma}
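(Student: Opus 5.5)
The plan is to write $\phi_\a$ via the Green's representation formula for $(-\Delta)^k$ with Dirichlet boundary conditions on $\O$ and estimate each source term pointwise. Let $G_\O$ denote that Green's function. Rearranging \eqref{eq:lincrit} gives
$$ (-\Delta)^k \phi_\a = R_\a + \sumij \la (-\D)^k \Za - \sum_{l=0}^p (-1)^l \nabla^l\big(A_{l,\a}(\nabla^l\phi_\a)\big) + (\crit-1)|\Wa|^{\crit-2}\phi_\a . $$
Hence, for $x\in\oO$ and $0\le l\le 2k-1$, $\nabla^l\phi_\a(x)$ is the integral of $\nabla_x^l G_\O(x,\cdot)$ against this right-hand side. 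The kernel-element contribution is simply $\sumij \la \nabla^l\Za(x)$, since $\Za\in H^k_0(\O)$.

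I will rely on the classical pointwise Green's function bounds on smooth bounded domains for the polyharmonic Dirichlet problem: $|\nabla^l_x G_\O(x,y)|\lesssim |x-y|^{2k-n-l}$ for $0\le l\le 2k-1$. For the lower-order terms I will also use the bounds on $y$-derivatives, $|\nabla_x^l\nabla_y^m G_\O(x,y)|\lesssim|x-y|^{2k-n-l-m}$ for $m\le 2p$. These allow integration by parts in $y$, and no boundary terms arise because of the Dirichlet conditions on both $G_\O(x,\cdot)$ and $\phi_\a$.

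The terms are then estimated one at a time.

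\textbf{Source term.} By definition of $\nss{\cdot}$, $|R_\a|\le \nss{R_\a}\big(\Psi_\a+\eta_\a\sum_{i=0}^N(\Ba)^{\crit-1}\big)$. Lemma~\ref{prop:lem2} bounds the $\Psi_\a$ part by $\eta_\a\nss{R_\a}(1+\sumi\ta^{-l}\Ba)$. For the other part, a standard convolution computation gives $\int_\O|x-y|^{2k-n-l}(\Ba)^{\crit-1}dy\lesssim \ta(x)^{-l}\Ba(x)$, and for $i=0$ the integral is $\lesssim 1$.

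\textbf{Kernel terms.} By \eqref{eq:Ilama} and \eqref{eq:IZa}, $|\la\nabla^l\Za|\lesssim\eta_\a\nss{R_\a}(1+\sumi\ta^{-l}\Ba)$.

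\textbf{Lower-order terms.} After integrating by parts $l'\le p$ times, $|\nabla^{l'}\phi_\a|\le\ns{\phi_\a}(1+\sumi\ta^{-l'}\Ba)$. The resulting integrand is then of the form $|x-y|^{2k-n-l-l'}\ta^{-l'}\Ba$, and since $l'\le k-1$ it is dominated by $\ta^{2-2k}\Ba$ type contributions, i.e. by $\Psi_\a$. Lemma~\ref{prop:lem2} then yields a factor $\eta_\a\ns{\phi_\a}$. Strictly, the kernel singularity is $|x-y|^{2k-n-l-l'}$ rather than $|x-y|^{2k-n-l}$, so I would either redistribute derivatives or rerun the argument of Lemma~\ref{prop:lem2} with the modified exponent.

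I expect this lower-order step to be the main obstacle. The smallness factor is not free: it must come from the fact that $\ta^{-2p}\Ba$ is a strictly lower-order weight than the bubble's own scaling. Any part that is not small (the fixed coefficients $A_l$ acting on $u_0$-scale regions) should instead be absorbed; I would try moving $\sum_l(-1)^l\nabla^l(A_l\nabla^l\cdot)$ into the operator, i.e. using the Green's function of $L$ (or of $L+\Lambda$, which is coercive) in place of that of $(-\Delta)^k$.

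\textbf{Nonlinear term.} Pointwise, $|\Wa|^{\crit-2}\lesssim(\sum_{i=0}^N\Ba)^{\crit-2}$. Expanding, $\big(1+\sumi\Ba\big)^{\crit-2}\lesssim 1+(\sumi\Ba)^{\crit-2}$. The $(\sumi\Ba)^{\crit-2}|\phi_\a|$ part gives exactly $I^l_\O(x)$. The bounded part, $\int|x-y|^{2k-n-l}|\phi_\a|dy$, is controlled using $\|\phi_\a\|_{H^k_0}\lesssim\eta_\a\nss{R_\a}$ from \eqref{eq:I1phi}, Sobolev/Hölder, and a bootstrap if needed. For $\crit<3$ the expansion must be done with care, but the inequality above holds up to constants regardless. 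Summing all contributions gives \eqref{eq:I2phi}.
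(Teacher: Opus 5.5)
Your proposal has a genuine gap, and it is the one you only half-identified. Because you invert only $\Dd^k$, two terms stay on the right-hand side with coefficients of order one: the full lower-order part $\sum_{l=0}^p(-1)^l\nabla^l\big(A_{l,\a}(\nabla^l\phi_\a)\big)$, and the bounded part of $(\crit-1)|\Wa|^{\crit-2}\phi_\a$ (essentially $(\crit-1)|u_0|^{\crit-2}\phi_\a$). The only pointwise information you have is $|\nabla^m\phi_\a|\le\ns{\phi_\a}\big(1+\sumi(\ta)^{-m}\Ba\big)$. The constant part of this weight contributes $\ns{\phi_\a}\int_\O|x-y|^{2k-n-l}\,dy\simeq\ns{\phi_\a}$, with no small factor. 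That is incompatible with \eqref{eq:I2phi}, whose whole content is the factor $\ea$ in front of $\ns{\phi_\a}$; this factor is what allows absorption later.

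You are right that the bubble parts $(\ta)^{-m}\Ba$, $m\le 2k-2$, contribute small terms. The $u_0$-scale part does not; note that $I^l_\O$ only involves $\sumi\Ba$, not $\Ba[0]\equiv 1$. Your remedies do not close this:
\begin{itemize}
\item With $L+\Lambda$, the term $-\Lambda\phi_\a$ reappears on the right.
\item $L$ itself need not be invertible, since $\ker L=\{0\}$ is never assumed.
\item Neither choice absorbs $|u_0|^{\crit-2}\phi_\a$.
\end{itemize}
For that last term your H\"older/Sobolev step fails in general. Pairing $|x-\cdot|^{2k-n-l}$ with $\phi_\a\in L^{\crit}$ requires $|x-\cdot|^{2k-n-l}\in L^{\frac{2n}{n+2k}}$, i.e.\ $n<6k-2l$. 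This fails for $l=2k-1$ as soon as $n\ge 2k+2$. A plain interpolation with the $\ns{\cdot}$ bound only gives a factor $\ea^{s}$ with $s<1$, not the factor $\ea$ in front of $\nss{R_\a}$ that the statement (and then \eqref{eq:Iptinv}) asserts. ``A bootstrap if needed'' is not spelled out. Separately, do not integrate the lower-order terms by parts onto the kernel: for $l=2k-1$ this creates the non-integrable singularity $|x-y|^{2k-n-l-l'}$. It is also unnecessary, since only $\nabla^m\phi_\a$ with $m\le 2p\le 2k-2$ occur.

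The missing idea is to put the whole $\a$-independent, non-small part of the linearised operator into the operator you invert. Rewrite \eqref{eq:lincrit} as
\[ L\phi_\a-(\crit-1)|u_0|^{\crit-2}\phi_\a=R_\a+(L-L_\a)\phi_\a+(\crit-1)\big(|\Wa|^{\crit-2}-|u_0|^{\crit-2}\big)\phi_\a+\sumij\la\Dd^k\Za, \]
and represent $\nabla^l\phi_\a$ with the Dirichlet Green's function $G_0$ of $L-(\crit-1)|u_0|^{\crit-2}$. It satisfies $|\nabla^l_xG_0(x,y)|\lesssim|x-y|^{2k-n-l}$ because $u_0\in C^{2k}(\oO)$. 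A possible kernel of this operator is harmless because $\phi_\a\in\Ker_\a^\perp\subset\Ker_0^\perp$; this is exactly where the orthogonality to $\Ker_0$ is used.

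After this rewriting, every remaining coefficient is either small or singular in the right way. In $(L-L_\a)\phi_\a$ the coefficients have $C^l$ norms bounded by $\eta_{4,\a}\le\ea$. For the nonlinear difference, $\big||\Wa|^{\crit-2}-|u_0|^{\crit-2}\big|\lesssim(\sumi\Ba)^{\crit-2}+\sumi\Ba+\eta_{4,\a}$. The first piece gives exactly $I^l_\O(x)$, and the others are handled by Lemma~\ref{prop:lem2} and Giraud's lemma. Your treatment of the $R_\a$ term and of the kernel terms then goes through. With $G_0$ one uses \eqref{eq:IDDZa} instead of the exact identity $\int_\O\nabla^l_xG_\O\,\Dd^k\Za\,dy=\nabla^l\Za$ that you get from the Green's function of $\Dd^k$.
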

\begin{proof}
 We re-write the equation \eqref{eq:lincrit} satisfied by $\phi_\a$ as
  \begin{multline*}
    L\phi_\a -(\crit-1)|u_0|^{\crit-2}\phi_\a = R_\a + (L-L_\a)\phi_\a\\ + (\crit-1)\Big(|\Wa|^{\crit-2}-|u_0|^{\crit-2}\Big)\phi_\a + \sumij \la (-\D)^k \Za.
  \end{multline*}
  We let $G_0 \in L^1(\O\times\O)$ be the Green's function of the operator $L-(\crit-1)|u_0|^{\crit-2}$ in $\O$ with Dirichlet boundary conditions. Since $u_0$ is a solution of \eqref{eq:lmmain}, $u_0 \in C^{2k}(\oO)$ and it follows from \cite{GazGruSw10} and \cite{GrunauRobert} that there exists $C>0$ such that for $0 \le l \le 2k-1$, for all $x\neq y$ in $\oO$,
  \[  |\nabla^l G_0(x,y)|\leq C|x-y|^{2k-n-l}.
  \]
Let $0 \le l \le 2k-1$ and $\beta$ be a multi-index such that $|\beta| =l$. We write a representation formula for $\phi_\a$ that we differentiate. Since $\phi_\a \in \Ker_\a^\perp \sub \Ker_0^\perp$, it writes as follows: for any $x \in \O$,
\begin{equation} \label{eq:formulederep}
\begin{aligned}
\nabla^\beta \phi_\a(x) & = \int_{\O} \nabla^\beta G_0(x,y) \Bigg[R_\a + (L-L_\a)\phi_\a \\
&+ (\crit-1)\Big(|\Wa|^{\crit-2}-|u_0|^{\crit-2}\Big)\phi_\a  + \sumij \la (-\D)^k \Za  \Bigg] dy.
\end{aligned} 
\end{equation}
We now estimate each term in \eqref{eq:formulederep}. First, using Lemma \ref{prop:lem2} and \eqref{def:eta}, and with Lemma \ref{prop:giraud} in the Appendix below, we have
  \begin{multline}\label{eq:eta2}
    \abs{\inty{\nabla^\beta G_0(x,y) R_\a(y)}}\\
    \begin{aligned}
      &\lesssim  \nss{R_\a}\inty{|x-y|^{2k-n-l}\left[\Psi_\a(y)+ \eta_\a\Big(\sumi[0] \pBa^{\crit-1}\Big) \right]}\\
      &\lesssim  \ea \nss{R_\a} \Big(1+\sumi \ta(x)^{-l}\Ba(x) \Big).
    \end{aligned} 
  \end{multline}
By \eqref{def:La} and \eqref{def:eta} we have, for any $y \in \O$,
$$ |(L-L_\a)\phi_a(y)| \lesssim \left(\sum_{m=0}^{p}\|A_{m,\a}-A_m\|_{C^m(\oO)}\right) \sum_{m = 0}^{2k-2} |\nabla^{m} \phi_\a(y)| \lesssim \eta_\a \sum_{m = 0}^{2k-2} |\nabla^{m} \phi_\a(y)|  .$$
Using the definition of the weighted norm $\Vert \cdot \Vert_*$ in \eqref{def:star} we thus obtain 
 \begin{equation} \label{eq:eta2bis} 
 \begin{aligned} 
 &  \abs{\inty{\nabla^{\beta} G_0(x,y)(L-L_\a)\phi_a(y)}}\\
 &  \lesssim \eta_\a \ns{\phi_\a} \inty{|x-y|^{2k-n-l}\sum_{m=0}^{2k-2}\Big(1+\sumi \ta(y)^{-m}\Ba(y)\Big)}  \\
 & \lesssim \eta_\a \ns{\phi_\a}\Big(1+\sumi \ta(x)^{-l}\Ba(x) \Big).
  \end{aligned} 
  \end{equation}
To obtain the last inequality in \eqref{eq:eta2bis} we observed that for any $0 \le m \le 2k-2$ we have $\ta(y)^{-m}\Ba(y) \lesssim (\ma)^{\frac{n-2k}{2}} \ta(y)^{2-n} \lesssim (\ma)^{\frac{n-2k}{2}} \ta(y)^{1-n}$ and we again applied Lemma \ref{prop:giraud}. Using \eqref{def:Wa}, \eqref{est:bubble} (for $l=0$) and \eqref{def:eta} we now have 
  \[   \Big ||\Wa|^{\crit-2}-|u_0|^{\crit-2}\Big| \lesssim \Big(\sumi \Ba\Big)^{\crit-2} + \sumi \Ba + \eta_{4,\a},
  \]
  where $\eta_{4,\a}$ is as in \eqref{eq:lem4}. Therefore, using again Lemma \ref{prop:giraud} we have 
  \begin{equation} \label{eq:eta2ter}
  \begin{aligned}
    \Bigg| (2^\sharp-1)& \inty{\nabla^\beta G_0(x,y) \Big (|\Wa|^{\crit-2}-|u_0|^{\crit-2}\Big) \phi_\a(y)} \Bigg|\\
    &  \lesssim \eta_\a \ns{\phi_\a} \Big(1+\sumi \ta(x)^{-l}\Ba(x) \Big) + I^{l}_\O(x)
\end{aligned}
  \end{equation}
where $I^{l}_\O$ is defined in \eqref{def:Il}. The last integral in \eqref{eq:formulederep} is estimated using  \eqref{eq:Ilama}  and \eqref{eq:IDDZa}: straightforward computations show that
  \begin{equation}   \label{eq:eta24}
  \begin{aligned}
    \sumij & |\la| \abs{\inty{\nabla^\beta G_0(x,y)(-\D)^k \Za(y)}}\\ 
    &\lesssim \ea \nss{R_\a} \Big(1+\sumi \ta(x)^{-l}\Ba(x) \Big).
\end{aligned} 
  \end{equation}
Combining \eqref{eq:eta2},  \eqref{eq:eta2bis}, \eqref{eq:eta2ter}, \eqref{eq:eta24} into \eqref{eq:formulederep}  proves \eqref{eq:I2phi}.
\end{proof}
We let $\ra$ be the radius of influence of the $i$-th bubble, which is defined in \eqref{def:ra}. The next result estimates, for any $1 \le i \le N$, the integral $I^{l}_{ \O \cap B(\xa, \ra)}(x)$ that arises in \eqref{eq:I2phi} and which is defined in \eqref{def:Il}:

\begin{proposition}\label{prop:estIl}
  There exist a sequence $\eps_\a \to 0$  as in \eqref{def:ea} such that for any $1 \le i \le N$, $0 \le l \le 2k-1$ and  $x\in \oO$ we have 
  \begin{equation}\label{eq:estIl}
\begin{aligned}
 I^{l}_{\O \cap B(\xa, \ra)}(x)    \lesssim \firstTerm,
\end{aligned}
  \end{equation}
  where $\ea$ is as in \eqref{def:eta}, .
\end{proposition}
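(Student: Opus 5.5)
The plan is a top-down induction on the bubbles, ordered by \eqref{eq:renum}, combined with a blow-up argument at the scale of each bubble. Fix $1\le i\le N$. On $\Oa[i]$ we have $\sumi[1] \Ba[j] \lesssim \Ba$ by Lemma \ref{prop:ctltre}. Hence on $\Oa[i]$ the weight in $I^l$ is essentially $(\Ba)^{\crit-2}$. The contributions of the excised balls $B(\xa[j],\rha)$, $j\in\sB_i$, are higher bubbles. For those we assume the estimate already proven, which is why we induct from the highest bubble $i=N$ downward. We split
$$ I^l_{\O\cap B(\xa,\ra)}(x) \le I^l_{\Oa[i]}(x) + \sum_{j\in \sB_i} I^l_{\O\cap B(\xa[j],\rha)}(x). $$
Each term in the sum is contained in $B(\xa[j],\ra[j])$ up to enlarging constants, since $\rha=o(\ra)$ and $\rha \lesssim \ra[j]$ by definition of $s^{ji}_\a$. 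Those terms are thus handled by the induction hypothesis. The core is then the estimate of $I^l_{\Oa[i]}$.

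For $I^l_{\Oa[i]}(x)$ I first use $|\phi_\a(y)|\le \ns{\phi_\a}(1+\sum_j\Ba[j](y))\lesssim \ns{\phi_\a}\Ba(y)$ on $\Oa[i]$. I then split $\Oa[i]$ into the annular region $\{R\ma\le|y-\xa|\}$ and the core $B(\xa,R\ma)$, for a large fixed $R$.

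On the annular region, $(\Ba)^{\crit-2}\Ba\lesssim (\ma)^{\frac{n-2k}{2}}(\ma)^{4k}|y-\xa|^{-n-2k}$. A Giraud-type computation (Lemma \ref{prop:giraud}) gives a bound of $C R^{-\gamma}\ns{\phi_\a}\,\ta(x)^{-l}\Ba(x)$ for some $\gamma>0$, which is small once $R$ is large. Boundary cut-offs are harmless since $\Oa[i]\subset\O$.

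On the core I need $\sup_{B(\xa,R\ma)}|\phi_\a|(\ma)^{\frac{n-2k}{2}}=\smallo(\ns{\phi_\a})+O(\ea\nss{R_\a})$. This is the main obstacle, and I argue by contradiction with a blow-up. Suppose the bound fails along a subsequence. Rescale $\tilde\phi_\a(z)=(\ma)^{\frac{n-2k}{2}}\phi_\a(\xa+\ma z)$, normalised by $\ns{\phi_\a}$ (for boundary bubbles, use the charts $\sigma_{\xa}$). By \eqref{eq:I2phi}, the rescaled functions are bounded in $C^{2k-1}_{loc}$ with decay $(1+|z|)^{2k-n}$. Equation \eqref{eq:lincrit}, together with \eqref{eq:Ilama} and Lemma \ref{lemme:interactions:arbre:bulles} (which controls $|\Wa|^{\crit-2}$ against $|\Va|^{\crit-2}$ on $\Oa[i]$ by $\ea$), shows that a limit $\tilde\phi$ lies in $\Ker_{v^i}$ and in $D^{k,2}$, thanks to the decay. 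The condition $\phi_\a\in\Ker_\a^\perp$ passes to the limit and gives $\tilde\phi\perp\Ker_{v^i}$, hence $\tilde\phi=0$. The delicate points are three. First, the higher bubbles in $\sB_i$ concentrate at points that must be excised: there the rescaled contribution is controlled by the induction hypothesis and vanishes in the limit since $\rha/\ma\to0$ or the center escapes. Second, the pairing $\int(-\Delta)^k\Za\phi_\a$ must converge despite the cut-offs. Third, the contributions $\ea\nss{R_\a}$ must survive the normalisation. I would normalise by $\ns{\phi_\a}+\nss{R_\a}$ to close this last point.

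Collecting the three pieces gives $I^l_{\O\cap B(\xa,\ra)}(x)\lesssim(\ea\nss{R_\a}+\eps_\a\ns{\phi_\a})(1+\sumi\ta(x)^{-l}\Ba(x))$, with $\eps_\a$ obtained by letting $R\to\infty$ slowly. For $x$ far from $B(\xa,\ra)$, the factor $|x-y|^{2k-n-l}$ is simply bounded by $|x-\xa|^{2k-n-l}\lesssim \ta(x)^{-l}\Ba(x)(\ma)^{-\frac{n-2k}{2}}$, times an $L^1$ mass estimate of the same type.
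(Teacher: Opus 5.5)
Your overall architecture matches the paper's: top-down induction, splitting off the balls around the higher bubbles in $\sB_i$, and a blow-up at scale $\ma$ with a kernel/orthogonality argument. But the core step has a genuine gap: it does not produce the factor $\ea$ in front of $\nss{R_\a}$ that the statement requires.

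If you normalise the blow-up by $\ns{\phi_\a}+\nss{R_\a}$, compactness comes for free, and the rescaled right-hand side is indeed $\bigO(\ea\nss{R_\a})$. However, a compactness-contradiction argument only turns this into $\sup_{B(\xa,R\ma)}(\ma)^{\frac{n-2k}{2}}|\phi_\a|=\smallo(\ns{\phi_\a}+\nss{R_\a})$. The $\smallo(1)$ in front of $\nss{R_\a}$ is unquantified, so this is strictly weaker than \eqref{eq:estIl}. The specific $\ea$ matters downstream: in the contraction step of Proposition~\ref{prop:nlinpt} it is exactly what compensates the factor $\ea^{-1}$ produced by the weight $\ea\sum(\Ba)^{\crit-1}$ in $\nss{\cdot}$. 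If instead you normalise by the core supremum to keep the argument linear, you lose compactness. Away from the core you only control $\phi_\a$ through $\ns{\phi_\a}$, which may be much larger.

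There is a second loss of the same kind. A flat sup bound on $B(\xa,R\ma)$, integrated against $|x-y|^{2k-n-l}(\Ba)^{\crit-2}$, costs a factor $R^{n-4k}$ when $n>4k$ (for $|x-\xa|\gtrsim R\ma$). So you cannot send $R\to\infty$ to kill the annular term $R^{-2k}\ns{\phi_\a}$ without spoiling the $\ea\nss{R_\a}$ term.

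The missing idea is a linear, decaying estimate on the whole region of influence. The paper sets $\Pa=\max_{\overline{\Oa}}|\phi_\a|/(\ta[m]\Ba[m])$. By Lemma~\ref{lemme:ordre:deux}, the contribution of $\Oa$ to $I^l$ is then $\lesssim\ma[m]\Pa\,\ta[m](x)^{-l}\Ba[m](x)$, with no splitting in $R$. The representation formula upgrades this to the pointwise bound \eqref{eq:I3phi} on all of $B(\xa[m],\ra[m])$. The contradiction hypothesis \eqref{eq:contpsa} normalises by $\ma[m]\Pa$, the very quantity being estimated, so the argument stays linear. Meanwhile \eqref{eq:I3phi} supplies the uniform decay \eqref{eq:Ipsa} needed for compactness. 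The extra weight $\ta[m]$ forces the maximiser of $|\phi_\a|/(\ta[m]\Ba[m])$ to lie within $\bigO(\ma[m])$ of $\xa[m]$, where the blow-up limit vanishes.

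A smaller, fixable error: your claim $\rha[ji]\lesssim\ra[j]$ is backwards. Since $i\in\sA_j$, one has $\ra[j]\le\sa[ji]=\smallo(\rha[ji])$, so the excised balls are not contained in $B(\xa[j],C\ra[j])$. The annuli $B(\xa[j],\rha[ji])\setminus B(\xa[j],\ra[j])$ must be treated separately using $\ns{\phi_\a}$ and Lemma~\ref{prop:lemtrou}, as the paper does.
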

Proposition~\ref{prop:estIl} is the core of the analysis of this section and the main ingredient in the proof of Theorem~\ref{prop:ptinv}. For the sake of clarity we postpone its proof to subsection~\ref{key:analytical:argument} below. Assuming it we now conclude the proof of Theorem~\ref{prop:ptinv}:

\begin{proof}[Proof of Theorem \ref{prop:ptinv} assuming Proposition~\ref{prop:estIl}]
Let $x \in \overline{\O}$ and $0 \le l \le 2k-1$ be fixed. Using the definition of $I^{l}_{\O}$ in \eqref{def:Il} we easily have 
  \begin{equation*}
    I^{l}_\O(x)      \leq  \sumi I^{l}_{\O \cap B(\xa,\ra)} (x) +  I^{l}_{\O \backslash \cup_{i=1}^N B(\xa,\ra)} (x).
   \end{equation*}
  On the one hand, Proposition \ref{prop:estIl} shows that 
  $$ \sumi I^{l}_{\O \cap B(\xa,\ra)} (x) \lesssim \firstTerm,$$
while on the other hand, using the definition of the $\Vert \cdot \Vert_*$ norm in \eqref{def:star} we have 
 $$   \begin{aligned}
 & I^{l}_{\O \backslash \cup_{i=1}^N B(\xa,\ra)} (x) \\
 & \le \ns{\phi_\a}\inty[\O\setminus\bigcup\limits_{i=1}^N B(\xa,\ra)]{|x-y|^{2k-n-l}\sum_{j=1}^N\Big[\pBa[j]^{\crit-2}+\pBa[j]^{\crit-1}\Big]} \\
 & \le \ve_\a \ns{\phi_\a} \Big(1+\sumi \ta(x)^{-l}\Ba(x) \Big),
 \end{aligned} $$ 
 for some sequence $(\ve_\a)$ satisfying \eqref{def:ea}, where the last inequality follows from \eqref{eq:IIa2} and  Lemma \ref{prop:lemtrou} below, and since for any $1 \le i \le N$ we have $\frac{\ra}{\ma} \to + \infty$ as $\a \to + \infty$ by \eqref{rasurma}. Combining the latter two estimates shows that 
 \begin{equation} \label{est:I:last}
    I^{l}_\O(x) \lesssim \firstTerm
  \end{equation}
 where $\eta_\a$ is given by \eqref{def:eta}. Plugging \eqref{est:I:last} into \eqref{eq:I2phi} now shows that for any $0 \le l  \le 2k-1$ and any $x\in \oO$ we have 
  \begin{equation}\label{eq:I4phi}
    |\nabla^{l} \phi_\a(x)| \lesssim \firstTerm.
  \end{equation}
For any $\alpha \ge 1$ we may now let $y_\a \in \oO$ be a point where 
  \[  \ns{\phi_\a} = \sum_{0 \le l  \le 2k-1} \frac{|\nabla^{l} \phi(y_\a)|}{1+ \sumi \ta(y_\a)^{-l} \Ba(y_\a)},  \]
  whose existence simply follows from the definition of $\Vert \cdot \Vert_*$ in \eqref{def:star}. By evaluating \eqref{eq:I4phi} at $y_\a$ and summing over $l$ we get that there is a constant $C >0$ independent of $\a$ such that
  \[  \ns{\phi_\a} \leq C(\ea \nss{R_\a} + \eps_\a \ns{\phi_\a}).
  \]
Since $\ve_\a \to 0$ as $\alpha \to + \infty$ we conclude that there exists $C_0>0$ such that
  \[  \ns{\phi_\a} \leq C_0\ea\nss{R_\a}
  \]
  for $\alpha$ large enough. This proves \eqref{eq:Iptinv} and concludes the proof of Theorem~\ref{prop:ptinv}. 
\end{proof}

\subsection{Proof of Proposition~\ref{prop:estIl}} \label{key:analytical:argument}

In this subsection we prove Proposition~\ref{prop:estIl}. We crucially use the bubble-tree formalism that we introduced in subsection \ref{sec:bubtre}, and we use the notations from that section. We recall in particular that we assume \eqref{eq:renum}.  The main technical difficulty in the proof of Proposition~\ref{prop:estIl} is to obtain the \emph{weighted} norm $\Vert \phi_\a \Vert_*$ as the error term in the right-hand side of \eqref{eq:estIl}. Estimate \eqref{eq:estIl} with weaker norms would indeed be easily obtained: for instance using the definition of $I^{l}_{\O}$ in \eqref{def:Il} together with Lemma \ref{prop:lemtrou:0} below we have
\[  I^{l}_\O(x) = \smallo\left(\Lnorm[\O]{\infty}{\phi_\a}\Big(1+\sumi\ta(x)^{-l}\Ba(x)\Big)\right)
\]
for all $x\in \oO$ as $\a \to +\infty$. The latter estimate, however, does not yield the desired precision that we need in Theorem~\ref{prop:ptinv}. 

\medskip

For any $1 \le m \le N$ we introduce the following proposition:
  \begin{equation}\label{def:induc}
P_m: \eqref{eq:estIl} \text{ holds true for every } m \le i \le N, x \in \overline{\O} \text{ and } 0 \le l \le 2k-1.
  \end{equation} 
Proving Proposition~\ref{prop:estIl} amounts to proving that $P_1$ holds true. We will prove this by reverse induction in the bubble-counting index $m$, starting from $m=N$. The proof that $P_N$ holds true and that the induction property is satisfied follow the same lines, and we give a single proof that works in both cases. Throughout this subsection we thus fix $1 \le m\le N$. If $m=N$ we do not assume anything, but if $m \le N-1$ we assume that $P_{m+1}$ defined in \eqref{def:induc} is true. In both cases we want to prove that $P_m$ holds true, and it is easily seen that this reduces to proving that \eqref{eq:estIl} holds true when $i=m$, for any $x \in \overline{\O}$ and $0 \le l \le 2k-1$. 

\smallskip

The strategy of proof goes as follows. The main difficulty consists in obtaining sharp estimates on  $I^{l}_{\O \cap B(\xa, \ra)}(x)$. To estimate the latter we will improve the naive control on $\vp_\a$ given by the $\Vert \cdot \Vert_*$ norm and prove that, in $\overline{\Oa}$, $\vp_\a$ is quantitatively smaller than $\Ba[m]$. We measure this by the following local weighted norm: we define
  \begin{equation}\label{def:PHI}
    \Pa = \max_{y\in \overline{\Oa}} \abs{\frac{\phi_\a(y)}{\ta[m](y)\Ba[m](y)}}
  \end{equation}
  where we recall that $\Ba[m]$ and $\ta[m]$ are given by \eqref{def:Ba}.  We will estimate $\Pa$ in terms of $\eta_\a \Vert R_\a \Vert_{**, \eta_\a} + \ve_\a \Vert \vp_\a \Vert_{*}$ and deduce a sharp control on $I^{l}_{\O \cap B(\xa, \ra)}(x)$ by a self-improving argument. Our first result estimates $ I^{l}_{\O \cap B(\xa[m],\ra[m])}(x)$ in terms of $\Vert \phi_\a \Vert_{*}$ and $\Pa$:

\begin{lemma}
There exists a sequence $\eps_{1,\a}\to 0$ satisfying \eqref{def:ea} such that, for any  $0 \le l  \le 2k-1$ and for all $x\in \oO$, we have 
  \begin{multline}\label{eq:I1Il}
    I^{l}_{\O \cap B(\xa[m],\ra[m])}(x)
   \lesssim \firstTerm[1,\a]\\+\ma[m] \Pa \ta[m](x)^{-l}\Ba[m](x).
  \end{multline}
\end{lemma}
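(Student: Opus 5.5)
We split the integration domain $\O \cap B(\xa[m],\ra[m])$ into three pieces: the region of influence $\Oa[m]$, the holes $\O\cap B(\xa[j],\rha[jm])$ for $j \in \sB_m$, and the part of $B(\xa[m],\ra[m])$ not covered by either (empty by definition \eqref{def:Oa}, so only the first two pieces arise). In each piece we bound $\big(\sumi \Ba(y)\big)^{\crit-2}$ and $|\phi_\a(y)|$ separately, using the dominance properties from subsection \ref{sec:bubtre}.

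\textbf{On $\Oa[m]$.} By Lemma \ref{prop:ctltre} (with $l=0$) we have $\sumi \Ba \lesssim \Ba[m]$ there, and by \eqref{def:PHI} we have $|\phi_\a(y)| \le \Pa\, \ta[m](y)\Ba[m](y)$. Hence the contribution is bounded by
$$\Pa \int_{\R^n} |x-y|^{2k-n-l}\, \Ba[m](y)^{\crit-1}\ta[m](y)\, dy .$$
Now $\Ba[m]^{\crit-1}\ta[m] \lesssim (\ma[m])^{\frac{n+2k}{2}}\ta[m]^{-n-2k+1}$ in the far region. A Giraud-type convolution estimate (Lemma \ref{prop:giraud}, or a direct rescaling $y = \xa[m]+\ma[m] z$) gives
$$\int |x-y|^{2k-n-l}\,(\ma[m])^{\frac{n+2k}{2}}\ta[m]^{1-n-2k}\,dy \lesssim \ma[m]\,(\ma[m])^{\frac{n-2k}{2}}\ta[m](x)^{2k-n-l}.$$
This is valid because the exponent $n+2k-1 > n$ ensures integrability at infinity and $2k-n-l+(1-2k) > -n$ locally. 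The right-hand side is $\lesssim \ma[m]\,\ta[m](x)^{-l}\Ba[m](x)$, which is the last term of \eqref{eq:I1Il}.

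\textbf{On the holes $B(\xa[j],\rha[jm])$, $j\in\sB_m$.} Here $j>m$, since $\ma[j]=\smallo(\ma[m])$. We use only $\ns{\phi_\a}$, i.e. $|\phi_\a| \le \ns{\phi_\a}\big(1+\sum_r \Ba[r]\big)$, together with the induction-type information that the relevant bubble in this region is $\Ba[j]$ (and those higher than it). The integrand is then bounded by $\ns{\phi_\a}\big[(\sum_r\Ba[r])^{\crit-2}+(\sum_r\Ba[r])^{\crit-1}\big]$. Within $B(\xa[j],\rha[jm])$, the dominant contribution splits into two parts:
\begin{itemize}
\item The parts where $\Ba[j]$ or higher bubbles dominate produce $\ta[j](x)^{-l}\Ba[j](x)$-type weights. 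These are fine, since the ratio $\rha[jm]/\ma[j]\to\infty$ lets us gain a small factor away from the core, or we bound them directly by the weight $1+\sum_i\ta^{-l}\Ba$.
\item The parts where $\Ba[m]$ is comparable contribute $\Ba[m]^{\crit-1}$ integrated over a ball of radius $\rha[jm]$, which is $\smallo$ of the $\Ba[m]$-weight because $\rha[jm]=\smallo(\ma[m]+|\xa[m]-\xa[j]|)$.
\end{itemize}
To capture the small factor in the first part, we rely on Lemma \ref{prop:lemtrou} and \eqref{eq:IIa2} (the ``hole'' estimates announced earlier) applied to the annular region where $\ta[j]\gtrsim \rha[jm]\gg \ma[j]$. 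This yields $\ve_{1,\a}\ns{\phi_\a}\big(1+\sumi\ta(x)^{-l}\Ba(x)\big)$. The genuine core contributions of higher bubbles should instead be handled by the inductive hypothesis $P_{m+1}$, which bounds $I^l_{\O\cap B(\xa[j],\ra[j])}$ directly for $j>m$. We use this whenever $B(\xa[j],\rha[jm])\subset B(\xa[j],\ra[j])$, and otherwise we cover the hole by balls of influence of bubbles of index $>m$. When $m=N$, $\sB_m$ is empty and this step disappears.

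\textbf{Main obstacle.} The main difficulty is the hole step: checking that $B(\xa[j],\rha[jm])$ is covered by $\bigcup_{r>m}B(\xa[r],\ra[r])$ together with regions where the crude $\ns{\cdot}$ bound gains a factor $\ve_\a$. This requires comparing $\rha[jm]$ with $\ra[j]$ and $s^{mj}_\a$-type radii, and handling bubbles comparable to $\Ba[j]$. The $\eta_\a\nss{R_\a}$ term in \eqref{eq:I1Il} enters only through $P_{m+1}$.
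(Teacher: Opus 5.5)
Your treatment of $\Oa[m]$ is the paper's: it is Lemma \ref{lemme:ordre:deux} combined with \eqref{def:PHI} and Lemma \ref{prop:ctltre}. Your integrability justification $2k-n-l+(1-2k)>-n$ is false for $l\ge 1$, but it is not needed, since $\ta[m]\ge\ma[m]$ and the only singularity is $|x-y|^{2k-n-l}$ with $l\le 2k-1$.

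The genuine gap is the hole step, which you leave as the \emph{main obstacle}, and the geometry you propose for it is wrong. For $j\in\sB_m$ we have $\ma[j]=\smallo(\ma[m])$, so $m\in\sA_j$ and $\ra[j]\le\sa[jm]\lesssim(\ma[j]/\ma[m])^{1/2}(\ma[m]+|\xa[m]-\xa[j]|)$. On the other hand $\rha[jm]=2(\ma[j]/\ma[m])^{\frac{n-2k}{2(n-1)}}(\ma[m]+|\xa[m]-\xa[j]|)$ with $\frac{n-2k}{2(n-1)}<\frac12$. Hence $\ra[j]=\smallo(\rha[jm])$, which has three consequences:
\begin{itemize}
\item The case $B(\xa[j],\rha[jm])\subset B(\xa[j],\ra[j])$, where you invoke $P_{m+1}$, never occurs for large $\a$.
\item Your fallback of covering the hole by balls of influence of bubbles of index $>m$ fails in general. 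For $N=2$, $m=1$, $\sB_1=\{2\}$, the only such ball is $B(\xa[2],\ra[2])$, which misses the annulus $\ra[2]\le|y-\xa[2]|<\rha[21]$.
\item The region $\ta[j]\gtrsim\rha[jm]$ where you apply Lemma \ref{prop:lemtrou} lies outside the hole, so it does not touch that annulus.
\end{itemize}
Moreover, bounding contributions \emph{directly by the weight} $1+\sum_i\ta[i](x)^{-l}\Ba[i](x)$ only gives $\bigO(\ns{\phi_\a})$, not the required $\eps_{1,\a}\ns{\phi_\a}$.

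What is missing is the splitting the paper uses. Since $B(\xa[j],\ra[j])\subset B(\xa[j],\rha[jm])$, remove the cores $B(\xa[j],\ra[j])$, $j\in\sB_m$, from the holes.
\begin{itemize}
\item \textbf{Cores.} The integral over the cores is bounded by $\sum_{j\in\sB_m}I^l_{\O\cap B(\xa[j],\ra[j])}(x)$. Since $\sB_m\subseteq\{m+1,\ldots,N\}$, this is controlled by the induction hypothesis \eqref{def:induc}. This is the only place where $\ea\nss{R_\a}$ enters.
\item \textbf{Remainder.} On what remains of the holes, $1\lesssim\Ba[m]\lesssim\sum_{j\in\sB_m}\Ba[j]$, because $\ra[m]\le\sqrt{\ma[m]}$ and by \eqref{forgotten:label}. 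Using $|\phi_\a|\le\ns{\phi_\a}(1+\sum_i\Ba[i])$ and \eqref{meaning:ra:0}, the contribution is at most $\ns{\phi_\a}\sum_{j\in\sB_m}\int_{\O\setminus B(\xa[j],\ra[j])}|x-y|^{2k-n-l}(\Ba[j](y))^{\crit-1}\,dy$. Lemma \ref{prop:lemtrou} with $M_\a=\ra[j]/\ma[j]$, which tends to $+\infty$ by \eqref{rasurma}, then supplies the small factor $(\ma[j]/\ra[j])^{2k}$.
\end{itemize}
No comparison of $\rha[jm]$ with other radii and no covering argument is needed.
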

Note the additional factor $\ma[m]$ in the last term in \eqref{eq:I1Il}, which is due to the weight $\ta[m]$ in the definition of $\Pa$. 

\begin{proof}
Using  \eqref{meaning:ra:1}, \eqref{forgotten:label},  Lemma \ref{prop:ctltre} and \eqref{def:Il} we have
  \begin{multline}\label{tmp:decIlo}
  I^{l}_{\O \cap B(\xa[m],\ra[m])}(x) \lesssim \inty[\Oa]{|x-y|^{2k-n-l}\Ba[m](y)^{\crit-2}|\phi_\a(y)|}\\ +\inty[{\O\cap\cupj B(\xa[j],\rha[jm])}]{|x-y|^{2k-n-l}\Big(\sumj\Ba[j](y)^{\crit-2} \Big)|\phi_\a(y)|},
  \end{multline}
  where $\Oa$ is as  \eqref{def:Oa} and where $\rho_{\a}^{jm}$ are defined in \eqref{def:rha}. In the particular case $m=N$ we recall that $\sB_N = \emptyset$ by \eqref{def:Bi} and thus that $\Oa[N] = \O \cap B(\xa[N],\ra[N])$ by \eqref{def:Oa}.   Using the definition of $\ns{\cdot}$ in \eqref{def:star}, we get that
 $$ \begin{aligned}
   & \inty[{\O\cap\cupj B(\xa[j],\rha[jm])}]{|x-y|^{2k-n-l}\sumj\Ba[j](y)^{\crit-2}|\phi_\a(y)|} \\
   & \lesssim \sumj  I^{l}_{\O \cap B(\xa[j],\ra[j])}(x) 
    \\  &+ \ns{\phi_\a} \inty[{\cupj \big(B(\xa[j],\rha[jm])\setminus B(\xa[j],\ra[j])\big)}]{|x-y|^{2k-n-l}\sumj \Ba[j](y)^{\crit-1}} \\
     & \lesssim \sumj  I^{l}_{\O \cap B(\xa[j],\ra[j])}(x) 
    \\  &+ \ns{\phi_\a} \sum_{j \in \mathcal{B}_m} \inty[{\O \setminus B(\xa[j],\ra[j])}]{|x-y|^{2k-n-l} \Ba[j](y)^{\crit-1}}. \\
  \end{aligned} $$
By definition of  $\sB_m$ in \eqref{def:Bi} we have $\sB_m\subeq\{m+1,\ldots,N\}$, so that the induction assumption \eqref{def:induc} shows that 
\begin{equation}
\label{forgotten:label:2}
 \sumj  I^{l}_{\O \cap B(\xa[j],\ra[j])}(x)  \lesssim  \firstTerm.
  \end{equation}
Since $\frac{\ra[j]}{\ma[j]} \to + \infty$ for every $1 \le j \le N$ by \eqref{rasurma},  Lemma \ref{prop:lemtrou} below shows that 
$$ \ns{\phi_\a} \sum_{j \in \mathcal{B}_m} \inty[{\O \setminus B(\xa[j],\ra[j])}]{|x-y|^{2k-n-l} \Ba[j](y)^{\crit-1}} \le \ve_a  \ns{\phi_\a}  \Big(1+\sumi\ta(x)^{-l}\Ba(x)\Big) $$
for some sequence $\eps_\a \to 0$ satisfying \eqref{def:ea}, and combining the latter with \eqref{forgotten:label:2} shows that 
  \begin{multline}\label{tmp:onOac}
    \inty[{\O\cap\cupj B(\xa[j],\rha[jm])}]{|x-y|^{2k-n-l}\big(\Ba[m](y)+ \sumj\Ba[j](y)\big)^{\crit-2}|\phi_\a(y)|} \\
    \lesssim \firstTerm,
  \end{multline}
  for all $x\in \oO$ and $l=0,\ldots, 2k-1$, for some sequence $\eps_\a \to 0$ satisfying \eqref{def:ea}. Finally, direct computations using Lemma~\ref{lemme:ordre:deux} below and \eqref{def:PHI} show that for all $x\in \oO$ and $l=0,\ldots, 2k-1$ we have
  \begin{equation}\label{tmp:onOa}  
    \inty[\Oa]{|x-y|^{2k-n-l}\pBa[m]^{\crit-2}|\phi_\a(y)|} \lesssim \ma[m] \Pa \ta(x)^{-l}\Ba[m](x).
  \end{equation}
 Combining \eqref{tmp:onOac} and \eqref{tmp:onOa} in \eqref{tmp:decIlo} proves \eqref{eq:I1Il}. 
 \end{proof}
 
 We now prove sharp estimates on $\phi_\a$ and its derivatives in $\overline{\Oa}$, still depending on $\Vert \phi_\a \Vert_{*}$ and $\Pa$:

\begin{lemma}
There exists a sequence $\eps_{2,\a}\to 0$ as in \eqref{def:ea} such that, for $0 \le l \le 2k-1$ and for all $x \in \oO \cap B(\xa[m],\ra[m])$, we have
  \begin{multline}\label{eq:I3phi}
    |\nabla^{l} \phi_\a(x)| \lesssim \firstTerm[2,\a]\\
    + \ma[m] \Pa \ta[m](x)^{-l}\Ba[m](x) + \frac{(\ma[m])^{\exc}}{(\ra[m])^{n-2k+l}}\ns{\phi_\a}.
  \end{multline}
\end{lemma}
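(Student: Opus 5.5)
The plan is to start from the pointwise estimate \eqref{eq:I2phi} and to control the integral $I^l_\O(x)$ appearing there by splitting the domain of integration. For $x\in \oO\cap B(\xa[m],\ra[m])$ we write
$$ I^l_\O(x) \le \sum_{i=1}^N I^l_{\O\cap B(\xa,\ra)}(x) + I^l_{\O\setminus \cup_{i} B(\xa,\ra)}(x) $$
and estimate each piece separately. For $i=m$ we use \eqref{eq:I1Il} directly; this produces the term $\ma[m]\Pa\ta[m](x)^{-l}\Ba[m](x)$ together with the first term. For $i\ge m+1$ we use the induction hypothesis $P_{m+1}$. For the exterior region we argue as in the proof of Theorem~\ref{prop:ptinv}: bound $|\phi_\a|$ by $\ns{\phi_\a}(1+\sum_j \Ba[j])$, then apply \eqref{eq:IIa2} and Lemma~\ref{prop:lemtrou} together with $\ra/\ma\to+\infty$. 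This gives $\ve_\a\ns{\phi_\a}(1+\sum_i\ta(x)^{-l}\Ba(x))$, which can be absorbed into the first term.

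The remaining and genuinely new contribution comes from the lower bubbles $i\le m-1$, where the induction hypothesis is unavailable. Here we also need to treat $i\in\{1,\dots,m-1\}$ with $\ma=\bigO(\ma[m])$, that is, indices in $\sA_m$.

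\begin{itemize}
\item If $B(\xa,\ra)$ does not meet $B(\xa[m],2\ra[m])$, then $|x-y|\gtrsim \ra[m]$ for $x\in B(\xa[m],\ra[m])$ is not guaranteed. Instead we split the integral over $B(\xa,\ra)$ into the part near $x$, namely $|y-x|\le \ra[m]$, and the part far from $x$.
\item On the far part, $|x-y|^{2k-n-l}\lesssim(\ra[m])^{2k-n-l}$. Using $|\phi_\a|\le\ns{\phi_\a}(1+\sum_j\Ba[j])$ and the fact that $\int_\O(\sum_j\Ba[j])^{\crit-2}(1+\sum_j\Ba[j])\,dy$ is bounded, this part gives $\ns{\phi_\a}(\ra[m])^{2k-n-l}$. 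This is not quite $(\ma[m])^{\exc}(\ra[m])^{2k-n-l}$, so we refine the argument.
\item The near region $B(x,\ra[m])\cap B(\xa,\ra)$ with $i\ne m$ lies, by the definition of $\ra[m]\le s^{mi}_\a$ (see \eqref{eq:ctlBj}), in a zone where $\Ba[i]\lesssim(\ma[m])^{\exc}(\sa[mi])^{2k-n}$. There $(\Ba[i])^{\crit-2}$ is small, and integrating $|x-y|^{2k-n-l}$ against it over a ball of radius $\lesssim\ra[m]$ gives a contribution $\lesssim \ve_\a\ns{\phi_\a}\ta[m](x)^{-l}\Ba[m](x)$.
\end{itemize}

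A cleaner route, which we will actually follow, is to use a cut-off representation. Let $\chi_m$ be a cut-off equal to $1$ on $B(\xa[m],\ra[m])$ and supported in $B(\xa[m],2\ra[m])$. We write $\phi_\a$ via Green's formula applied to $\chi_m\phi_\a$, or equivalently treat all contributions of $\phi_\a$ from $y\notin B(\xa[m],2\ra[m])$ through the smoothness of the kernel. For $|y-\xa[m]|\ge 2\ra[m]$ and $|x-\xa[m]|\le\ra[m]$ we have $|x-y|\gtrsim|y-\xa[m]|$. Hence the outer part of the integral in \eqref{eq:formulederep} is bounded by
$$\ns{\phi_\a}\int_{|y-\xa[m]|\ge2\ra[m]}|y-\xa[m]|^{2k-n-l}\Big(\sum_j \Ba[j]\Big)^{\crit-2}\Big(1+\sum_j\Ba[j]\Big)\,dy.$$

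The crucial observation is that the $(\ma[m])^{\exc}$ factor arises from the structure of the weight. Since $\ra[m]\le\sqrt{\ma[m]}$ and $\ra[m]\le s^{mi}_\a$, the lower-bubble terms evaluated at scale $\ra[m]$ are at most $(\ma[m])^{\exc}(\ra[m])^{2k-n}$ by \eqref{eq:ctlBj}. An application of Lemma~\ref{prop:giraud} then yields $\frac{(\ma[m])^{\exc}}{(\ra[m])^{n-2k+l}}\ns{\phi_\a}$. Terms coming from higher bubbles outside $B(\xa[m],2\ra[m])$ are handled by $P_{m+1}$ and Lemma~\ref{prop:lemtrou} as before.

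The main obstacle is this last step: making the exterior (lower-bubble and $u_0$) contribution come out precisely as $(\ma[m])^{\exc}(\ra[m])^{2k-n-l}\ns{\phi_\a}$ rather than a cruder bound. We will first try bounding $1+\sum_{j\in\sA_m}\ta[j]^{-l'}\Ba[j]$ on $\partial B(\xa[m],\ra[m])$ by $(\ma[m])^{\exc}(\ra[m])^{2k-n}$ using \eqref{eq:ctlBj}. We then use that this quantity is essentially constant across $B(\xa[m],\ra[m])$, so that its Green-potential restricted to the ball inherits the same bound. Combining all the pieces with \eqref{eq:I2phi} yields \eqref{eq:I3phi}.
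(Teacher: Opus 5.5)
Your ingredients are the right ones: Lemma~\ref{prop:giraud} with \eqref{eq:ctlBj} for the lower bubbles, \eqref{eq:IIa2} and Lemma~\ref{prop:lemtrou} elsewhere, and \eqref{eq:I1Il} and $P_{m+1}$ on the balls of the $m$-th and higher bubbles. The gap is that the decomposition you commit to does not close.

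Writing $I^l_\Omega\le\sum_{i=1}^N I^l_{\Omega\cap B(x^i_\alpha,r^i_\alpha)}+I^l_{\Omega\setminus\cup_i B(x^i_\alpha,r^i_\alpha)}$ forces you to bound $I^l_{\Omega\cap B(x^i_\alpha,r^i_\alpha)}(x)$ for $i\le m-1$ using only the crude control $|\phi_\alpha|\le\|\phi_\alpha\|_*(1+\sum_j B^j_\alpha)$. This fails, because the lower balls are not disjoint from the regions where $\phi_\alpha$ is large:
\begin{itemize}
\item If $m\in\mathcal{B}_i$ (e.g. a tower $x^m_\alpha=x^i_\alpha$, $\mu^m_\alpha=o(\mu^i_\alpha)$), then $r^m_\alpha\le s^{mi}_\alpha=o(\mu^i_\alpha+|x^i_\alpha-x^m_\alpha|)$, so $B(x^m_\alpha,r^m_\alpha)\subset B(x^i_\alpha,r^i_\alpha)$. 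The same may happen for the balls $B(x^j_\alpha,r^j_\alpha)$, $j\in\mathcal{B}_m$.
\item On $B(x^m_\alpha,r^m_\alpha)$ the integrand of \eqref{def:Il} contains $(B^m_\alpha)^{2^\sharp-2}|\phi_\alpha|$. The crude bound then only yields $\|\phi_\alpha\|_*\int|x-y|^{2k-n-l}(B^m_\alpha)^{2^\sharp-1}dy\simeq\|\phi_\alpha\|_*\,\theta^m_\alpha(x)^{-l}B^m_\alpha(x)$ with an $O(1)$ constant.
\item That is none of the terms allowed on the right of \eqref{eq:I3phi}; controlling exactly this piece is what $\Phi^m_\alpha$ and \eqref{eq:I1Il} are for.
\end{itemize}
Your near-region bullet misses this because it treats the weight as $(B^i_\alpha)^{2^\sharp-2}$, whereas the weight in \eqref{def:Il} is $(\sum_j B^j_\alpha)^{2^\sharp-2}$.

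The ``cleaner route'' only treats $|y-x^m_\alpha|\ge 2r^m_\alpha$. It leaves unaccounted the annulus $r^m_\alpha\le|y-x^m_\alpha|\le 2r^m_\alpha$, and the overlap of lower balls with $B(x^m_\alpha,2r^m_\alpha)$. That annulus may contain centers $x^j_\alpha$, $j\in\mathcal{B}_m$, arbitrarily close to $x$. There the crude bound again gives $O(1)\|\phi_\alpha\|_*\theta^j_\alpha(x)^{-l}B^j_\alpha(x)$, and $P_{m+1}$ is indispensable.

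The missing idea is that the lower bubbles never need regions of their own; they only enter as weights. Split $I^l_\Omega\le I^l_{\Omega\cap B(x^m_\alpha,r^m_\alpha)}+I^l_{\Omega\setminus B(x^m_\alpha,r^m_\alpha)}$, and cover $\Omega\setminus B(x^m_\alpha,r^m_\alpha)$ by two sets:
\begin{itemize}
\item $\bigcup_{i\ge m+1}B(x^i_\alpha,r^i_\alpha)$, where $P_{m+1}$ applies;
\item $E_\alpha=\Omega\setminus\bigcup_{i\ge m}B(x^i_\alpha,r^i_\alpha)$, where the crude bound gives an integrand $\lesssim\|\phi_\alpha\|_*\sum_j\big[(B^j_\alpha)^{2^\sharp-2}+(B^j_\alpha)^{2^\sharp-1}\big]$.
\end{itemize}
On $E_\alpha$, the terms $(B^j_\alpha)^{2^\sharp-2}$ are handled by \eqref{eq:IIa2}. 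For $j\ge m$ one has $E_\alpha\subset\Omega\setminus B(x^j_\alpha,r^j_\alpha)$, so Lemma~\ref{prop:lemtrou} and $r^j_\alpha/\mu^j_\alpha\to\infty$ give $\varepsilon_\alpha\,\theta^j_\alpha(x)^{-l}B^j_\alpha(x)$. For $j\le m-1$, integrate over all of $\Omega$: Lemma~\ref{prop:giraud} gives $\lesssim\theta^j_\alpha(x)^{-l}B^j_\alpha(x)$. Since $j\in\mathcal{A}_m$ and $r^m_\alpha\le s^{mj}_\alpha$, \eqref{eq:ctlBj} bounds this on $B(x^m_\alpha,r^m_\alpha)$ by $(\mu^m_\alpha)^{\frac{n-2k}{2}}(r^m_\alpha)^{2k-n-l}$, which is exactly the last term in \eqref{eq:I3phi}. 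Combining with \eqref{eq:I2phi}, \eqref{eq:I1Il} and $P_{m+1}$ gives the lemma; this is the paper's argument.
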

\begin{proof}
By definition of $I^{l}_\O$ in \eqref{def:Il} we have, for any $x \in \oO$,
\begin{equation} \label{eq:IplusI}
I^{l}_\O (x) \le   I^{l}_{\O \cap B(\xa[m],\ra[m])}(x)+ I^{l}_{\O \backslash B(\xa[m],\ra[m])}(x). 
\end{equation}
By definition of $\ns{\cdot}$ in \eqref{def:star}, we have 
  \begin{equation} \label{est:der:0}
  \begin{aligned}
  & I^{l}_{\O \backslash B(\xa[m],\ra[m])}(x)\\
  & \lesssim \sumi[m+1]  I^{l}_{\O \cap B(\xa,\ra)}(x)\\ 
  &+ \ns{\phi_\a} \sum_{j=1}^N\inty[\O\setminus \bigcup\limits_{i=m}^{N}B(\xa,\ra)]{|x-y|^{2k-n-l} \Big[ \Ba[j](y)^{\crit-2}+  \Ba[j](y)^{\crit-1}\Big]} \\
  & \lesssim \sumi[m+1]  I^{l}_{\O \cap B(\xa,\ra)}(x) +  \ns{\phi_\a}\sum_{j=1}^N\inty[\O]{|x-y|^{2k-n-l} \Ba[j](y)^{\crit-2}} \\ 
  &+ \ns{\phi_\a} \sum_{j=1}^{m-1}\inty[\O\setminus \bigcup\limits_{i=m}^{N}B(\xa,\ra)]{|x-y|^{2k-n-l} \Ba[j](y)^{\crit-1}} \\
  &+ \ns{\phi_\a} \sum_{j=m}^N\inty[\O\setminus B(x_\a^j,r_\a^j)]{|x-y|^{2k-n-l}   \Ba[j](y)^{\crit-1}}
\end{aligned}
  \end{equation}
    We estimate the terms involving $\Ba[j](y)^{\crit-1}$ that appear in \eqref{est:der:0}. First, if $j \in \{1,\ldots, m-1\}$ we have $j \in \sA_m$ by \eqref{eq:renum} and \eqref{def:Ai}. Thus Lemma \ref{prop:giraud} below and \eqref{eq:ctlBj} show that for $x\in \oO\cap B(\xa[m],\ra[m])$ we have 
  \begin{equation} \label{est:der:1}
     \sum_{j=1}^{m-1}\inty{|x-y|^{2k-n-l}\Ba[j](y)^{\crit-1}} \lesssim \ta[j](x)^{-l}\Ba[j](x) \lesssim \frac{(\ma[m])^{\exc}}{(\ra[m])^{n-2k+l}}.
  \end{equation}
 We now let $j \in \{m, \dots, N\}$. Since $\frac{\ra[j]}{\ma[j]} \to + \infty$  by \eqref{rasurma}, Lemma \ref{prop:lemtrou} below shows that there exists $(\ve_\a)_\a$ satisfying \eqref{def:ea} such that 
  \begin{equation}\label{eq:trouBj}
    \inty[{\O\setminus B(\xa[j],\ra[j])}]{|x-y|^{2k-n-l}\Ba[j](y)^{\crit-1}} \lesssim \ve_\a \ta[j](x)^{-l}\Ba[j](x) 
  \end{equation} 
holds for any $x\in \oO$ as $\a \to + \infty$. Combining \eqref{est:der:1}, \eqref{eq:trouBj} and Lemma~\ref{prop:lemtrou:0} below in \eqref{est:der:0} shows that 
$$ \begin{aligned}
 I^{l}_{\O \backslash B(\xa[m],\ra[m])}(x) &\lesssim \sumi[m+1]  I^{l}_{\O \cap B(\xa,\ra)}(x) + \ve_\a \ns{\phi_\a}\Big(1+\sumi \ta(x)^{-l}\Ba(x) \Big) \\
 &+  \frac{(\ma[m])^{\exc}}{(\ra[m])^{n-2k+l}}\ns{\phi_\a}   
 \end{aligned} $$
for some sequence $\eps_a\to 0$ satisfying \eqref{def:ea}. The proof of \eqref{eq:I3phi} now follows from combining the latter with \eqref{eq:I2phi},  \eqref{eq:I1Il}, \eqref{eq:IplusI} and with the induction assumption \eqref{def:induc}.
 \end{proof}
We now analyse the behavior of $\vp_\alpha$ at distances of order $\ma[m]$ from $\xa[m]$. We introduce for this the following quantities, defined for $\a \ge 1$: 

\begin{itemize}
\item If $\Va[m]$ is an interior bubble: 
\begin{equation}\label{def:psa}
\begin{aligned}
  \hOa & = \frac{1}{\ma[m]}\big(\Oa - \xa[m]\big) \quad  \text{ and, for }   y\in \overline{\hOa}, \\
  \psi_\a(y) & = \frac{(\ma[m])^{\exc}}{\ma[m]\Pa} \phi_\a(\xmm y) 
\end{aligned}
\end{equation}
\item If $\Va[m]$ is a boundary bubble: 
\begin{equation}\label{def:psa:bdr}
\begin{aligned}
  \hOa & = \frac{1}{\ma[m]}\sigma_{\xa[m]}^{-1}\big( \Oa \big) \quad  \text{ and, for } y\in \overline{\hOa}, \\
  \psi_\a(y) & = \frac{(\ma[m])^{\exc}}{\ma[m]\Pa} \phi_\a \big( \sigma_{\xa[m]}(\ma[m] y ) \big). 
\end{aligned}
\end{equation}
\end{itemize}
Recall that $\Oa$ and $\Pa$ are defined in \eqref{def:Oa} and \eqref{def:PHI} and that $\sigma_{\xa[m]}$ is the boundary chart as in Definition \ref{def:bub:bdr}. If $\Va[m]$ is a boundary bubble remark that $\hOa \subset \R^n_+$. The next result quantifies the approximate orthogonality of $\psi_\a$ to the linearised kernel at $v^m$. We recall that the $Z^{mj}$ are the a basis of the kernel $\Ker_{v^m}$ for the $m$-th bubble and are defined in \eqref{ker:interior} and \eqref{ker:boundary}, and that $d_m = \dim \Ker_{v^m}$. 

\begin{lemma}\label{prop:ortpsa}
  There exists a sequence $\eps_{3,\a}\to 0$ as in \eqref{def:ea} such that, for $j=1,\ldots, d_m$ and $\alpha \ge 1$, we have
  \begin{equation}\label{eq:ortpsa}
  \left|  \inty[\hOa]{\langle \nabla^{k}  \psi_\a, \nabla^{k}  Z^{mj}\rangle} \right| \leq  \eps_{3,\a} \frac{\ns{\phi_\a}}{\ma[m]\Pa}.
  \end{equation}
\end{lemma}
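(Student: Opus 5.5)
The starting point is the orthogonality $\phi_\a \in \Ker_\a^\perp$, which in particular gives $\int_\O \langle \nabla^k \phi_\a, \nabla^k \Za[mj]\rangle\,dx = 0$ (the $H^k_0(\O)$ scalar product being the one induced by \eqref{norm:Dk2}). In the interior case I first change variables $x = \xa[m] + \ma[m] y$. Since $\nabla^k$ scales like $(\ma[m])^{-k}$ and the normalisation of $\Za[mj]$ carries the factor $(\ma[m])^{-\exc}$, the identity becomes
$$ 0 = \int_{\frac{1}{\ma[m]}(\O - \xa[m])} \Big\langle \nabla^k \psi_\a , \nabla^k \Big[ \chi\Big(\tfrac{\ma[m] y}{\dist{\xa[m],\dO}}\Big) Z^{mj}(y) \Big] \Big\rangle dy, $$
after dividing by the factor $\frac{1}{\ma[m]\Pa}$ that defines $\psi_\a$. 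The integrand is the rescaled version of $\langle \nabla^k \phi_\a, \nabla^k \Za[mj]\rangle$, with $\psi_\a$ understood through $\phi_\a$ on the whole rescaled domain.

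I split this integral into the part over $\hOa$ and the part over its complement. On $\hOa$ I replace $\chi(\cdot)Z^{mj}$ by $Z^{mj}$. The difference is supported where $|y| \gtrsim \dist{\xa[m],\dO}/\ma[m] \to \infty$, so I control it by $\ns{\phi_\a}$ times a tail of $Z^{mj}$. Concretely, using \eqref{eq:IZa} and the bound $|\nabla^k\phi_\a| \le \ns{\phi_\a}(1+\sum_i \ta^{-k}\Ba)$, the relevant quantity is $\int_{\{|x - \xa[m]|\gtrsim \dist{\xa[m],\dO}\}} |\nabla^k \phi_\a|\,|\nabla^k \Za[mj]|\,dx$.

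The complementary region is $\O \setminus \Oa[m]$. It consists of $\O \setminus B(\xa[m],\ra[m])$ together with the holes $B(\xa[j],\rha[jm])$ for $j\in\sB_m$. In original variables this contribution is
$$\frac{1}{\ma[m]\Pa} \int_{\O\setminus \Oa[m]} |\nabla^k \phi_\a|\,|\nabla^k \Za[mj]|\,dx ,$$
so \eqref{eq:ortpsa} reduces to showing that this integral is at most $\eps_\a \ns{\phi_\a}$. To do this I bound $|\nabla^k \Za[mj]| \lesssim \ta[m]^{-k}\Ba[m]$ by \eqref{eq:IZa}, and $|\nabla^k\phi_\a| \le \ns{\phi_\a}\big(1+\sum_i \ta^{-k}\Ba\big)$ by \eqref{def:star}. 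This is legitimate since $k \le 2k-1$. The cross terms $\int \ta[i]^{-k}\Ba[i]\,\ta[m]^{-k}\Ba[m]$ are then estimated region by region.

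On $\O\setminus B(\xa[m],\ra[m])$ the $m$-th factor alone gives $\int_{|x-\xa[m]|\ge \ra[m]} (\ta[m]^{-k}\Ba[m])^2 \lesssim (\ma[m]/\ra[m])^{n-2k}\to0$ by \eqref{rasurma}. The $i=m$ term and the constant term are handled by Cauchy--Schwarz together with this tail bound; the constant term is fine because $\ta[m]^{-k}\Ba[m]\in L^2$ with vanishing tail, and $\O$ is bounded. For $i\neq m$ I use Cauchy--Schwarz once more, pairing the bounded $L^2$ norm of $\ta[i]^{-k}\Ba[i]$ with the vanishing tail. On the holes $B(\xa[j],\rha[jm])$, $j\in\sB_m$, I use $\rha[jm]=o(\ra[m])$ and $\ta[m]^{-k}\Ba[m]\lesssim (\ma[m])^{\exc}\ta[m]^{-n}$ there. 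The $m$-th factor then has small $L^2$ norm on a ball of radius $\rha[jm]$ in either case of position: if $|\xa[j]-\xa[m]|\gg\ma[m]$ the ball is far from the peak, and otherwise $\rha[jm]=o(\ma[m])$. Either way the measure is negligible relative to the peak profile. Cauchy--Schwarz against the uniformly bounded $\|\ta[i]^{-k}\Ba[i]\|_{L^2}$ then concludes.

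The boundary case follows the same scheme using the chart $\sigma_{\xa[m]}$. Here the pullback metric is not exactly Euclidean, but the error is $O(\ma[m]|y|)$ in the metric coefficients and their derivatives. These errors produce terms like $\ma[m]|y|\,|\nabla^{\le k}\psi_\a|\,|\nabla^{\le k}Z^{mj}|$ after rescaling, which are absorbed in the same way using the decay from Lemma~\ref{prop:ctlsollin}. The main obstacle is the hole contribution, where one must check that the smallness of $\ta[m]^{-k}\Ba[m]$ in $L^2(B(\xa[j],\rha[jm]))$ holds uniformly in the relative position of $\xa[j]$; I would try first the dichotomy described above.
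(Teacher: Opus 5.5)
Your argument is correct up to two fixable slips, and its skeleton is the paper's. Both proofs use the orthogonality $\int_\Omega\langle\nabla^k\phi_\alpha,\nabla^k Z^{mj}_\alpha\rangle\,dx=0$. Both bound the part of this integral over $\Omega\setminus\Omega^m_\alpha$ by $\varepsilon_\alpha\|\phi_\alpha\|_{\ast}$. Both then change variables on $\Omega^m_\alpha$ with a negligible cutoff error.

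Where you differ is the complement. The paper splits it into two pieces:
\begin{itemize}
\item the diagonal term $\|\phi_\alpha\|_{\ast}\int_{\Omega\setminus\Omega^m_\alpha}(\theta^m_\alpha)^{-2k}(B^m_\alpha)^2$;
\item the cross terms $\int_\Omega(\theta^i_\alpha)^{-k}B^i_\alpha(\theta^m_\alpha)^{-k}B^m_\alpha$, $i\neq m$, taken over all of $\Omega$ and killed by the weak-interaction estimate of Lemma~\ref{prop:lemBiBj}.
\end{itemize}
You instead apply Cauchy--Schwarz against the uniformly bounded quantity $\|1+\sum_i(\theta^i_\alpha)^{-k}B^i_\alpha\|_{L^2(\Omega)}$. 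Everything then reduces to $\|(\theta^m_\alpha)^{-k}B^m_\alpha\|_{L^2(\Omega\setminus\Omega^m_\alpha)}\to0$. The paper's diagonal term needs exactly this smallness anyway, including on the holes, which it leaves to ``straightforward computations''. So your route makes Lemma~\ref{prop:lemBiBj} unnecessary at this step and is more self-contained. You also address the chart distortion in the boundary case, which the paper leaves implicit.

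\textbf{Slip 1: the bound on the holes.} The bound $(\theta^m_\alpha)^{-k}B^m_\alpha\lesssim(\mu^m_\alpha)^{\frac{n-2k}{2}}(\theta^m_\alpha)^{-n}$ is too lossy: it throws away the factor $(\theta^m_\alpha)^{k}$, and the resulting $L^2$ bound does not tend to zero in either of your cases. Use $(\mu^m_\alpha)^{\frac{n-2k}{2}}(\theta^m_\alpha)^{k-n}$ instead, and then no dichotomy is needed. Set $D_\alpha=\mu^m_\alpha+|x^j_\alpha-x^m_\alpha|$. Since $\rho^{jm}_\alpha=o(D_\alpha)$, you get $\theta^m_\alpha\ge D_\alpha/2$ on $B(x^j_\alpha,\rho^{jm}_\alpha)$. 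Hence $\|(\theta^m_\alpha)^{-k}B^m_\alpha\|^2_{L^2(B(x^j_\alpha,\rho^{jm}_\alpha))}\lesssim(\mu^m_\alpha/D_\alpha)^{n-2k}(\rho^{jm}_\alpha/D_\alpha)^n\to0$.

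\textbf{Slip 2: the chart error terms in the boundary case.} Only the top-order term carries the factor $O(\mu^m_\alpha|y|)$, coming from the Jacobian and from $D\sigma^{-1}$. The terms produced by higher derivatives of $\sigma_{x^m_\alpha}$ have the form $(\mu^m_\alpha)^{2k-s-t}|\nabla^s\psi_\alpha|\,|\nabla^tZ^{mj}|$, with $1\le s,t\le k$ and $(s,t)\neq(k,k)$. You need these exact powers. A bare factor $\mu^m_\alpha|y|$ in front of low-order products is not small in general over $|y|\lesssim r^m_\alpha/\mu^m_\alpha$ when $n$ is close to $2k$. With the correct powers, the following ingredients make each error term $o(1)\frac{\|\phi_\alpha\|_{\ast}}{\mu^m_\alpha\Phi^m_\alpha}$, as required:
\begin{itemize}
\item the bound $|\nabla^s\psi_\alpha|\lesssim\frac{\|\phi_\alpha\|_{\ast}}{\mu^m_\alpha\Phi^m_\alpha}(1+|y|)^{2k-n-s}$ on $\hat{\Omega}^m_\alpha$, which follows from Lemma~\ref{prop:ctltre};
\item Lemma~\ref{prop:ctlsollin};
\item the facts $r^m_\alpha\to0$ and $r^m_\alpha/\mu^m_\alpha\to\infty$.
\end{itemize}
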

\begin{proof}
  By construction $\phi_\a$ is the unique solution of \eqref{eq:lincrit} in $\Ker_\a^\perp$, where orthogonality is taken with respect to the $H^k_0(\O)$ scalar product \eqref{norm:Dk2}. Since by definition of $\Ker_\a$ in \eqref{def:Ka} we have  $\Ker_\a^\perp \sub (\Ker_\a^m)^\perp$ we have in particular   $\phi_\a \in (\Ker_\a^m)^\perp$ so that 
  \begin{equation} \label{eq:ker:1}
  \inty{\langle \nabla^{k}  \phi_\a, \nabla^{k} \Za[mj] \big\rangle} = 0, \qquad j=1,\ldots, d_m
  \end{equation}
holds for all $\a$. Using the definition of $\Vert \cdot \Vert_*$ in \eqref{def:star} and \eqref{eq:IZa} we get 
  \begin{equation} \label{eq:ker:2}
  \begin{aligned}
    & \abs{\inty[\O\setminus \Oa]{\langle \nabla^{k}  \phi_\a, \nabla^{k}  \Za[mj] \rangle}} \\
    &\lesssim \ns{\phi_\a} \Bigg[\inty[\O\setminus\Oa]{\ta[m](y)^{-2k}\pBa[m]^2}\\
     &\quad+ \sum_{\substack{i=0,\ldots,N\\ i\neq m}} \inty{\ta(y)^{-k}\Ba(y)\ta[m](y)^{-k}\Ba[m](y)}\Bigg]\\
     &\leq \eps_\a \ns{\phi_\a}
  \end{aligned}
 \end{equation}
  for some sequence $\eps_\a \to 0$ satisfying \eqref{def:ea}, where by analogy with \eqref{def:Ba} we have let $\theta_\a^{0}(x) \equiv 1$. The last line of \eqref{eq:ker:2} follows from straightforward computations using \eqref{rasurma} and Lemma~\ref{prop:lemBiBj} below. Independently, by \eqref{def:psa} and the definition of $\Za[mj]$ a change of variable together with straightforward computations and the definition of the $\Vert \cdot \Vert_*$ norm gives 
  \begin{equation} \label{eq:ker:3} 
   \begin{aligned}  
    \inty[\Oa]{&\langle \nabla^{k}   \phi_\a, \nabla^{k}  \Za[mj] \rangle} \\
     & = \ma[m] \Pa \inty[\hOa]{\chi \big( \omega_\a \cdot \big) \langle \nabla^{k}   \psi_\a, \nabla^{k}  Z^{mj}  \rangle}     + \bigO(\omega_\a^{n-k} \Vert \vp_\a \Vert_*)\\
     & = \ma[m] \Pa \inty[\hOa]{ \langle \nabla^{k}   \psi_\a, \nabla^{k}  Z^{mj}  \rangle}     + \bigO(\omega_\a^{n-k} \Vert \vp_\a \Vert_*), \\
  \end{aligned}
  \end{equation}
  where we have let 
  $$ \omega_\alpha  = \left \{
  \begin{aligned}
  & \frac{\ma[m]}{\dist{\xa[m], \partial \O}} & \text{ if } \Va[m] \text{ is an interior bubble } \\
  & \ma[m] &  \text{ if } \Va[m] \text{ is a boundary bubble } 
  \end{aligned} 
  \right \} = \smallo(1) $$
as $\a \to + \infty$.  In particular $\omega_\a^{n-k}$ satisfies \eqref{def:ea}. Combining \eqref{eq:ker:1}, \eqref{eq:ker:2} and \eqref{eq:ker:3} proves the Lemma.
\end{proof}

\smallskip

The following result is the key argument in the proof of Proposition~\ref{prop:estIl}: 
  \begin{proposition}\label{prop:estPHI}
    There exists a sequence $\eps_{\a}\to 0$ as in \eqref{def:ea} such that, for any $\alpha \ge 1$ large enough,
    \begin{equation}\label{eq:estPHI}
      \ma[m]\Pa \lesssim \ea \nss{R_\a} +  \eps_{\a}\ns{\phi_\a}.
    \end{equation}
  \end{proposition}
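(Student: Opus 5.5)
We argue by contradiction, following the blow-up strategy of \cite[Theorem 3.2]{Pre24}. Suppose that \eqref{eq:estPHI} fails for every sequence $\eps_\a \to 0$ as in \eqref{def:ea}. Then, up to a subsequence,
$$ \ea \nss{R_\a} + \ns{\phi_\a} = \smallo\big(\ma[m]\Pa\big) \quad \text{ as } \a \to + \infty. $$
In particular $\Pa>0$, and the rescaled functions $\psi_\a$ defined in \eqref{def:psa} (resp. \eqref{def:psa:bdr}) are well defined.

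\textbf{Uniform bounds on $\psi_\a$.} By the definition \eqref{def:PHI} of $\Pa$ we have, for $y \in \overline{\hOa}$,
$$ |\psi_\a(y)| \le (1+|y|)\, \frac{(\ma[m])^{\exc}\Ba[m]}{\ma[m]} \cdot \ma[m] \lesssim (1+|y|)^{2k+1-n}. $$
Dividing \eqref{eq:I3phi} by $\ma[m]\Pa$ and using the contradiction assumption gives
$$ |\nabla^l\psi_\a(y)| \lesssim (1+|y|)^{2k-n-l} + \smallo(1)\,(1+|y|)^{2k-n-l} + \smallo(1)\Big(\frac{\ma[m]}{\ra[m]}\Big)^{n-2k+l}, $$
locally uniformly on compact subsets of the limit domain. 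The last term is $\smallo(1)$ by \eqref{rasurma}, and here we use Lemma \ref{prop:ctltre} to bound $1+\sum_i\ta^{-l}\Ba$ by $\ta[m]^{-l}\Ba[m]$ on $\Oa$. Since $\ra[m]/\ma[m]\to\infty$ and $\rha[jm]$ shrinks relative to $|\xa[j]-\xa[m]|+\ma[m]$, the sets $\hOa$ exhaust $\R^n$ (resp. $\R^n_+$) minus at most finitely many points $y_j=\lim (\xa[j]-\xa[m])/\ma[m]$ with $j \in \sB_m$. Rescaling \eqref{eq:lincrit}, we see that $\psi_\a$ solves
$$ (-\Delta)^k\psi_\a = (\crit-1)|W_\a^{\#}|^{\crit-2}\psi_\a + \smallo(1) + \text{lower-order terms}, $$
where $W_\a^{\#}$ is the rescaled bubble-tree. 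The error is $\smallo(1)$ because of the $\Vert\cdot\Vert_{**}$ weight, because $\la$ is controlled by \eqref{eq:Ilama}, and because the $(\ma[m])^{2k-2l}A_{l,\a}$ terms vanish in the limit. Moreover $|W_\a^{\#}|^{\crit-2}\to|v^m|^{\crit-2}$ locally uniformly away from the $y_j$, using \eqref{eq:struc} and \eqref{meaning:ra:2}. Elliptic estimates then give convergence $\psi_\a\to\psi_0$ in $C^{2k-1}_{loc}$ away from the $y_j$. The bound $|\psi_0|\lesssim(1+|y|)^{2k+1-n}$ removes the point singularities, since $2k+1-n>2k-n$ and $\psi_0$ is bounded near each $y_j$. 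Together with the boundary conditions in the boundary case, this shows that $\psi_0 \in \Ker_{v^m}$. Lemma \ref{prop:ortpsa} together with the contradiction assumption gives $\int\langle\nabla^k\psi_0,\nabla^kZ^{mj}\rangle=0$ for all $j$; passing to the limit here needs the decay of $\nabla^k\psi_\a$ to be integrable against $\nabla^kZ^{mj}$. Hence $\psi_0\equiv0$.

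\textbf{Contradiction with the normalisation.} Let $y_\a\in\overline{\Oa}$ be a point achieving the maximum in \eqref{def:PHI}, so that $|\psi_\a(\hat y_\a)|=(1+|\hat y_\a|)^{2k+1-n}$ up to the normalisation constants. We must exclude the following three possibilities: $|\hat y_\a|\to\infty$, $y_\a$ approaching $\partial\Oa$ near some $B(\xa[j],\rha[jm])$, and $y_\a$ approaching $\partial\O$. If $\hat y_\a$ stays bounded and away from the $y_j$, local convergence gives $\psi_0(\hat y)\ne0$, a contradiction. If $|\hat y_\a|\to\infty$, we use the representation formula \eqref{eq:I2phi} at $y_\a$ together with \eqref{eq:I1Il}. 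The term $\ma[m]\Pa\ta[m]^{-0}\Ba[m]$ carries no gain, so we refine it: since $\psi_\a\to0$ on compacts and decays like $(1+|y|)^{2k+1-n}$, the integral $\int_{\Oa}|x-y|^{2k-n}\Ba[m]^{\crit-2}|\phi_\a|$ is $\smallo\big(\ma[m]\Pa\,\ta[m](x)\Ba[m](x)\big)$ at such points, by splitting into $|y-\xa[m]|\le R\ma[m]$ and its complement and using Lemma \ref{lemme:ordre:deux}. This contradicts the choice of $y_\a$, where $|\phi_\a(y_\a)|=\Pa\ta[m](y_\a)\Ba[m](y_\a)$. The gain comes from the extra factor $\ta[m]$: at distance $\ta[m]$ from $\xa[m]$, the far-field contribution decays like $\ta[m]^{2k-n}\cdot\ma[m]^{\cdots}$, and this beats $\ta[m]^{2k+1-n}$. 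Points $y_\a$ near the excised balls or near $\partial\O$ are handled by the same representation-formula argument, using \eqref{tmp:onOac} and the induction hypothesis.

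The main obstacle is this last step, namely controlling $\phi_\a$ at maximum points escaping to large scales or to the inner boundaries of $\Oa$. This is exactly where the weight $\ta[m]$ in $\Pa$ is needed to produce the $\smallo(1)$ gain.
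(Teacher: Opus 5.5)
The blow-up skeleton is the same as the paper's: rescale to $\psi_\a$, pass to a limit in $\Ker_{v^m}$, kill the limit by orthogonality via Lemma~\ref{prop:ortpsa}, and get a contradiction at a maximum point. The gap is the hypothesis you start from. It is not the negation of \eqref{eq:estPHI}, and in fact it can never hold. For $y\in\overline{\Oa}$, Lemma~\ref{prop:ctltre} gives $|\phi_\a(y)|\le\ns{\phi_\a}\big(1+\sumi\Ba(y)\big)\lesssim\ns{\phi_\a}\Ba[m](y)$. Since $\ta[m]\ge\ma[m]$, this yields $\ma[m]\Pa\lesssim\ns{\phi_\a}$ always. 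So the assumption ``$\ea\nss{R_\a}+\ns{\phi_\a}=\smallo(\ma[m]\Pa)$'' is false for trivial reasons, and deriving a contradiction from it proves nothing.

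The whole content of \eqref{eq:estPHI} is that $\ns{\phi_\a}$ enters only with a coefficient $\eps_\a\to0$ that depends on the bubble tree alone. This is exactly what allows the absorption at the end of the proof of Theorem~\ref{prop:ptinv}. Every time you call an error $\smallo(1)$ ``by the contradiction assumption'', you are silently using $\ns{\phi_\a}/(\ma[m]\Pa)\to0$, which is not available.

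The missing step is to fix $\eps_\a=\max\{\eps_{1,\a},\eps_{2,\a},\eps_{3,\a},\ma[m]/\ra[m]\}$ in advance and assume $(\ea\nss{R_\a}+\eps_\a\ns{\phi_\a})/(\ma[m]\Pa)\to0$. You then have to check that each $\ns{\phi_\a}$-error really carries a factor bounded by $\eps_\a$:
\begin{itemize}
\item The last term of \eqref{eq:I3phi} rescales to $(\ma[m]/\ra[m])^{n-2k+l}\ns{\phi_\a}/(\ma[m]\Pa)$. This is $\smallo(1)$ only because $n-2k+l\ge1$ and $\ma[m]/\ra[m]\le\eps_\a$; \eqref{rasurma} alone is not enough.
\item The error in \eqref{eq:ortpsa} is $\smallo(1)$ only because $\eps_{3,\a}\le\eps_\a$.
\item At the maximum point $z_\a$, that same term of \eqref{eq:I3phi} is $\lesssim\ns{\phi_\a}/\ra[m]$, which is $\smallo(\Pa)$ again only via $\ma[m]/\ra[m]\le\eps_\a$.
\end{itemize}

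Separately, you say the term $\ma[m]\Pa\Ba[m]$ in \eqref{eq:I3phi} carries no gain, but it does. Divided by $\Pa\ta[m](z_\a)\Ba[m](z_\a)$ it equals $\ma[m]/\ta[m](z_\a)$, which immediately forces $|z_\a-\xa[m]|=\bigO(\ma[m])$. Your refinement via $\psi_\a\to0$ on compacts is therefore unnecessary. The remaining cases, where the limit point lies on $\partial\R^n_+$ or in $\sS_m$, are settled by $C^{2k-1}_{loc}$ convergence up to the boundary together with the uniform gradient bound on $\psi_\a$. No further representation-formula estimates are needed.
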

It is easily seen that Proposition~\ref{prop:estIl} follows from this result:
\begin{proof}[End of the proof of Proposition~\ref{prop:estIl} assuming \eqref{eq:estPHI}]
Assume that \eqref{eq:estPHI} holds. Using \eqref{eq:estPHI} in \eqref{eq:I1Il} then shows that for any  $0 \le l \le 2k-1$ and for all $x\in \oO$, we have 
  \begin{equation*} 
    \Ilo{B(\xa[m],\ra[m])}(x) \lesssim \firstTerm 
  \end{equation*}
  for some sequence $(\ve_\a)_\a$ satisfying \eqref{def:ea}. If $m=N$, this simply proves \eqref{eq:estIl}, ie that $P_N$  defined by  \eqref{def:induc} holds true. If $m \leq N-1$, this proves that \eqref{eq:estIl} also holds true for $i=m$ and thus shows that $P_m$ is true. By induction this proves that \eqref{eq:estIl}  holds for every $1 \le i \le N$, which proves Proposition~\ref{prop:estIl}.
  \end{proof}
  
The end of this subsection is devoted to the proof of \eqref{eq:estPHI}. We adapt the original proof of \cite[Lemma 3.7]{Pre24} for $k=1$ (see also \cite{Pre18, Premoselli12}) to the polyharmonic setting and include arguments recently developed in \cite{CarRob25}. 

\begin{proof}[Proof of Lemma \ref{prop:estPHI}]
  We claim that \eqref{eq:estPHI} holds for 
  \begin{equation}\label{tmp:defe0}
    \eps_{\a} = \max\{\eps_{1,\a},\eps_{2,\a},\eps_{3,\a},\frac{\ma[m]}{\ra[m]}\},
  \end{equation}
  where $\eps_{1,\a}$ is as in \eqref{eq:I1Il}, $\eps_{2,\a}$ is as in \eqref{eq:I3phi}, and $\eps_{3,\a}$ is as in \eqref{eq:ortpsa}. It is clear that such a sequence $(\eps_{\a})_\a$ satisfies \eqref{def:ea}. We prove \eqref{eq:estPHI} by contradiction and we assume that, up to passing to a subsequence, we have
  \begin{equation}\label{eq:contpsa}
    \frac{\eta_\a\nss{R_\a}+\eps_{\a}\ns{\phi_\a}}{\ma[m]\Pa} \to 0 \qquad \text{as } \a \to \infty.
  \end{equation}
  Lemma \ref{prop:ctltre}, together with \eqref{eq:I3phi} and \eqref{eq:contpsa} shows that, for $0 \le l  \le 2k-1$ and $x\in \overline{\hOa}$, we have
  \begin{equation}\label{eq:Ipsa}
    \begin{aligned}
    |\nabla^l \psi_\a|(x) 
    &\lesssim  (1+|x|)^{2k-n-l} + \Big(\frac{\ma[m]}{\ra[m]}\Big)^{n-2k+l}\frac{\ns{\phi_\a}}{ \ma[m]\Pa}\\
    &\lesssim (1+|x|)^{2k-n-l} +\smallo(1),
    \end{aligned}
  \end{equation}
  where $\psi_\a$ is as defined in \eqref{def:psa} and where the last line follows from \eqref{tmp:defe0} and \eqref{eq:contpsa}. Lemma \ref{prop:ortpsa} also gives that, for $j=1,\ldots,d_m$,
  \begin{equation}\label{tmp:ortpsa}
    \inty[\hOa]{\big\langle \nabla^k   \psi_\a, \nabla^k Z^{mj} \big\rangle} = \smallo(1) \qquad \text{as } \a \to \infty.
  \end{equation}

\smallskip

  \paragraph{\textbf{Step 1: A limiting equation for $\psi_\a$.}} 
In the following if $f$ denotes a smooth function defined in $\Omega$ we will let, for $x\in \overline{\hOa}$, 
  \begin{equation*}
  \widehat{f}(x)  = \left \{
  \begin{aligned}
  & f (\xmm x) & \text{ if } \Va[m] \text{ is an interior bubble } \\
  & f \big(\sigma_{\xa[m]} ( \ma[m] x) \big) &  \text{ if } \Va[m] \text{ is a boundary bubble } 
  \end{aligned} 
  \right. . 
  \end{equation*}
Simple computations using \eqref{def:La} yield
  \[  \Dd^k \psi_\a(x) + \sum_{l=0}^{p}(-1)^{l} (\ma[m])^{2(k-l)}\nabla^{l} \big(\widehat{A_{l,\a}}(\nabla^l \psi_\a)\big)(x) = \frac{(\ma[m])^{\frac{n+2k}{2}}}{\ma[m]\Pa} \widehat{L_\a\phi_\a}(x)
  \]
for all $x\in \overline{\hOa}$. The latter, with \eqref{eq:lincrit}, shows that $\psi_\a$ satisfies
 \begin{equation} \label{tmp:eqpsa}
    \begin{aligned}
    \Dd^k \psi_\a &+ \sum_{l=0}^{p} (-1)^{l}(\ma[m])^{2(k-l)}\nabla^{l} \big(\widehat{A_{l,\a}}(\nabla^{l} \psi_\a)\big)\\ 
    &=(\crit-1)(\ma[m])^{2k}|\widehat{\Wa}|^{\crit-2}\psi_\a +
    \frac{(\ma[m])^{\frac{n+2k}{2}}}{\ma[m]\Pa}\widehat{R_\a}\\
   &  +\sumij \frac{\la (\ma[m])^{\frac{n+2k}{2}}}{\ma[m]\Pa}(-\D)^k \widehat{\Za}
  \end{aligned} \end{equation} 
  in $\hOa$. 
    With Lemma \ref{prop:ctltre} we have for all $y\in \overline{\hOa}$
  \begin{equation}\label{tmp:e1}
    (\ma[m])^{2k}|\widehat{\Wa}(y)|^{2^\sharp-2} \lesssim \Big[(\ma[m])^{\exc}\widehat{\Ba[m]}(y)\Big]^{2^\sharp-2}\lesssim 1.
  \end{equation}
We claim that the following holds:
    \begin{equation}\label{eq:estRa}
    \frac{(\ma[m])^{\frac{n+2k}{2}}}{\ma[m]\Pa} \Vert \widehat{R_\a} \Vert_{L^\infty(\hOa)} \lesssim \frac{\ea \nss{R_\a}}{\ma[m] \Pa} = \smallo(1)
  \end{equation}
as $\alpha \to + \infty$, where $\eta_\a$ is the explicit sequence given by \eqref{def:ea} and where we used \eqref{eq:contpsa}. We prove \eqref{eq:estRa}. First, by \eqref{def:star}, for any $y \in \overline{\O}$, 
$$|R_\a(y)| \le \nss{R_\a} \Big( \Psi_\a(y) + \eta_\a  \sumi[0]\Ba(y)^{\crit-1} \Big) .$$
On the one hand, by  Lemma \ref{prop:ctltre} we have 
  \begin{equation*}
    (\ma[m])^{\frac{n+2k}{2}}\big(\widehat{\Ba}(y)\big)^{\crit-1} \lesssim (1+|y|)^{-n-2k}
\end{equation*}  
for any $i=1,\ldots N$ and for any $y\in \overline{\hOa}$.  On the other hand, using the explicit expression of $\Psi_\a$ given by \eqref{def:PSI}, we have, for $y \in \overline{\hOa}$,
 $$\begin{aligned}
 & (\ma[m])^{\frac{n+2k}{2}} \widehat{\Psi_\a}(y) \\
 & = \sumi[0] (\ma[m])^{\frac{n+2k}{2}}\widehat{\ta}(y)^{2-2k}\widehat{\Ba}(y) \\
 & +  (\ma[m])^{\frac{n+2k}{2}} \Big[ \sum_{\substack{i,j = 0,\ldots, N\\ i\neq j}} (\widehat{\Ba})^{\crit-2}\widehat{\Ba[j]} \Big](y)  \\
 & \lesssim (\ma[m])^2 (1+|y|)^{-n+2}  + \eta_\a ,
 \end{aligned} $$
 where we again used Lemma \ref{prop:ctltre} to estimate the first sum and Lemma~\ref{lemme:interactions:arbre:bulles} (together with the explicit expression \eqref{def:ea} of $\eta_\a$) to estimate the second one. Combining these estimates proves \eqref{eq:estRa}. Using similar arguments, and thanks to \eqref{eq:Ilama} and \eqref{eq:IDDZa}, we obtain for $i=0,\ldots,N$, $j=1,\ldots,d_i$, that
  \begin{equation}\label{tmp:e2}
    \frac{|\la|(\ma[m])^{\frac{n+2k}{2}}}{\ma[m] \Pa} \left| \left| (-\D)\widehat{\Za} \right| \right|_{L^\infty(\hOa)} \lesssim \frac{\ea\nss{R_\a}}{\ma[m] \Pa} = \smallo(1).
  \end{equation}
  Combining \eqref{tmp:e1}, \eqref{eq:estRa}, and \eqref{tmp:e2} in  \eqref{tmp:eqpsa} shows that 
  \begin{equation}\label{eq:estDpsa}
    \left| \left| \Dd^k \psi_\a + \sum_{l=0}^{p}(-1)^{l}(\ma[m])^{2(k-l)}\nabla^{l}\big(\widehat{A_{l,\a}}(\nabla^l \psi_\a)\big)\right| \right|_{L^\infty(\hOa)} = \bigO(1) 
  \end{equation}
as $\alpha \to + \infty$. We let in what follows
  \[  \sS_m = \left\{\lim_{\a\to \infty}\frac{\xa[j]-\xa[m]}{\ma[m]}\,:\, j\in \sB_m \text{ and } |\xa[j]-\xa[m]| = \bigO(\ma[m])\right\},
  \]
where the limits are taken up to a subsequence. The set $\sS_m$ consists, after rescaling by a factor $\ma[m]$, of the limits of the centers of the highest bubbles in $\sB_m$ that remain at a distance from $\xa[m]$ of order $\ma[m]$. We also define 
 $$\O_\infty^m = \left\{ 
 \begin{aligned}
  & \R^n & \text{ if } \Va[m] \text{ is an interior bubble} \\
  &\R^n_+=\{x_1> 0\} & \text{ if } \Va[m] \text{ is a boundary bubble } \\
  \end{aligned} 
  \right. ,$$
  where we use the terminology of definitions \ref{def:bub:int} and \ref{def:bub:bdr}. Note that if $\Va[m]$ is a boundary bubble some points in $\sS_m$ may belong to $\partial \R^n_+ = \{ x_1 = 0 \}$. Since the operator 
$$\Dd^k + \sum_{l=0}^{p}(-1)^l(\ma[m])^{2(k-l)}\nabla^l \big(\widehat{A_{l,\a}}(\nabla^{l}\cdot\,)\big)$$ 
is elliptic with uniformly bounded coefficients in $\overline{\hOa}$ and since $\psi_\a$ satisfies Dirichlet boundary conditions in $\partial \hOa  \cap \partial \O$ it follows from \eqref{eq:Ipsa}, \eqref{eq:estDpsa} and standard elliptic theory that there exists $\psi_\infty \in D^{k,2}(\O_\infty^m) \cap C^{2k-1}(\overline{\O_\infty^m} \setminus \sS_m)$ such that $\psi_\a \to \psi_\infty$ in $C^{2k-1}_{loc}(\overline{\O_\infty^m} \setminus \sS_m)$ and $\psi_\a \rightharpoonup \psi_\infty$ in $D^{k,2}(\O_\infty^m)$ as $\a \to \infty$, up to a subsequence.\footnote{As a simple remark, note that if $\Va[m]$ is a boundary bubble, $\overline{\hOa}$ is the intersection of $\overline{\R^n_+}$ with an open set of $\R^n$, thus elliptic theory easily applies.} The bound \eqref{eq:Ipsa} then passes to the limit and $\psi_\infty$ satisfies, for $0 \le l \le 2k-1$ and for all $x\in \overline{\O_\infty^m} \setminus \sS_m$,
  \begin{equation}\label{eq:Ipsi}
    |\nabla^l \psi_\infty(x)| \leq C (1+|x|)^{2k-n-l}.
  \end{equation} 
   We now show that $\psi_\infty$ satisfies a limiting linearised equation with Dirichlet boundary conditions in the whole of $\O_\infty^m$. First, using \eqref{eq:estRa} and \eqref{tmp:e2} we have that 
  \begin{equation} \label{eq:passage:limite:1}
  \begin{aligned}
    &\frac{(\ma[m])^{\frac{n+2k}{2}}}{\ma[m]\Pa}\widehat{R_\a} \to 0\\
    &\frac{\la(\ma[m])^{\frac{n+2k}{2}}}{\ma[m] \Pa} (-\D)\widehat{\Za} \to 0 \qquad i=0,\ldots,N,\,j=1\ldots, d_i\\
  \end{aligned}
  \end{equation}
  in $C^0_{loc}(\overline{\O_\infty^m})$. By the definition of $\Wa$ in  \eqref{def:Wa} and using the same arguments that led to \eqref{eq:estRa}, we have 
    \begin{equation} \label{eq:passage:limite:2}
  (\ma[m])^{2k}|\widehat{\Wa}|^{\crit-2}\psi_\a \to |v^m|^{\crit-2}\psi_\infty \quad \text{in } C^0_{loc}(\overline{\O_\infty^m}\setminus\sS_m)
  \end{equation}
  as $\alpha \to + \infty$. Using \eqref{eq:passage:limite:1} and \eqref{eq:passage:limite:2} to pass \eqref{tmp:eqpsa} to the limit we obtain that $\psi_\infty \in C^{2k-1}(\overline{\O_\infty^m} \setminus\sS_m)$ satisfies 
  \begin{equation}\label{tmp:eqpsi}
    \Dd^k \psi_\infty = (\crit-1)|v^m|^{\crit-2}\psi_\infty 
  \end{equation}
in $\O_\infty^m \setminus\sS_m$ in the distributional sense.

\smallskip
  
    \paragraph{\textbf{Step 2: vanishing of $\psi_\infty$.}} 
We now prove that 
 \begin{equation} \label{eq:psi:ker:0}
 \psi_\infty \equiv 0 \quad \text{ in }  \O_\infty^m. 
 \end{equation}
We first claim that 
 \begin{equation} \label{eq:psi:ker:1}
 \psi_\infty \in \Ker_{v^m}.
 \end{equation}
  Assume first that $\O_\infty^m = \R^n$, i.e. that $\Va[m]$ is an interior bubble. Then, by \eqref{eq:Ipsi}, $\nabla^{l} \psi_\infty \in L^\infty(\R^n)$ for every $0 \le l \le 2k-1$. Together with a simple integration by parts argument this now shows that $\psi_\infty$ satisfies \eqref{tmp:eqpsi}, in a distributional sense, in $\O_\infty^m = \R^n$. Since $\psi_\infty \in D^{k,2}(\R^n)$ we therefore have $\psi_\infty \in \Ker_{v^m}$, where $\Ker_{v^m}$ is as in \eqref{ker:interior}. Assume now that $\O_\infty^m = \R^n_+$ and that $\Va[m]$ is a boundary bubble. As in the previous case, using again \eqref{tmp:eqpsi} and \eqref{eq:Ipsi} we obtain that $\psi_\infty$ satisfies \eqref{tmp:eqpsi} in $\R^n_+$. Moreover, $\psi_\infty \in D^{k,2}(\R^n_+)$ and hence 
$\psi_\infty \in \Ker_{v^m}$ as in \eqref{ker:boundary}. This proves \eqref{eq:psi:ker:1}. 
 
 We now claim that, for $j=1,\ldots, d_m$ we have
  \begin{equation}\label{tmp:ortpsi}
    \inty[\O_\infty^m]{\langle\nabla^{k} \psi_\infty,\nabla^{k} Z^{mj}\rangle} = 0.
  \end{equation}  
   To see this, fix $j\in \{1,\ldots,d_m\}$ and let $R>0$. By its definition in \eqref{def:psa} we have 
   $$\hOa \subseteq B\bigg(0, c \frac{\ra[m]}{\ma[m]} \bigg) \cap \O_\infty^m$$
   for some $c >0$. Using \eqref{eq:Ipsa} and Lemma \ref{prop:ctlsollin} we have
  \begin{align*}
    \abs{\inty[\hOa\setminus B(0,R)]{\langle\nabla^{k} \psi_\a,\nabla^{k} Z^{mj}\rangle}} 
    & \lesssim  \intM[y]{\R^n\setminus B(0,R)}{(1+|y|)^{-2n+2k}}  + \frac{\ns{\phi_\a}}{ \ma[m]\Pa} \left(\frac{\ma[m]}{\ra[m]}\right)^{n-k}\\
      &\lesssim R^{2k-n} + \smallo(1)  ,
  \end{align*}
where we used \eqref{eq:contpsa} for the last inequality. We also have  
  \[ \abs{ \inty[\big(\hOa\cap\bigcap_{x^i\in \sS_m}B(x^i,\frac{1}{R})\big)]{\langle\nabla^{k} \psi_\a,\nabla^{k} Z^{mj}\rangle} }  \lesssim  R^{-n} + \smallo(1) 
  \]
as $\a \to + \infty$, where the constants in $\lesssim$ do not depend on $R$. Combining these two estimates with   \eqref{eq:ortpsa}, and using   \eqref{tmp:defe0},  \eqref{eq:contpsa} therefore shows that 
$$ \abs{ \inty[\big(\hOa\setminus\bigcup_{x^i\in \sS_m}B(x^i,\frac{1}{R})\big)\cap B(0,R)]{\langle\nabla^{k} \psi_\a,\nabla^{k} Z^{mj}\rangle}} \lesssim  \smallo(1) + R^{-n}$$
for all $\a \ge 1$ and for $R \ge 1$. Since $\psi_\a$ converges to $\psi_\infty$ in $C^{2k-1}_{loc}( \overline{\O_\infty^m}\setminus\sS_m)$, the previous estimate passes to the limit as $\a \to + \infty$ and we get
$$    \abs{  \inty[\big(\O_\infty^m\setminus\bigcup_{x^i\in \sS_m}B(x^i,\frac{1}{R})\big)\cap B(0,R)]{\langle\nabla^{k} \psi_\infty,\nabla^{k} Z^{mj}\rangle}}  \lesssim R^{-n}.$$
We may now let $R \to + \infty$: using \eqref{eq:Ipsi} and dominated convergence we obtain \eqref{tmp:ortpsi}. Now, \eqref{tmp:ortpsi} shows that  $\psi_\infty \in \Ker_{v^m}^\perp$, where the orthogonal is taken with respect to the $D^{k,2}( \O_\infty^m)$ scalar product \eqref{norm:Dk2}: together with \eqref{eq:psi:ker:1} it thus shows that $    \psi_\infty \equiv 0$ in $\O_\infty^m$, which proves \eqref{eq:psi:ker:0}.

  \smallskip

  \paragraph{\textbf{Step 3: the contradiction.}} For each $\a \ge 1$, we let $z_\a \in \overline{\Oa}$ be a point where $y\mapsto \frac{|\phi_\a(y)|}{\ta[m](y)\Ba[m](y)}$ reaches its maximum in $\overline{\Oa}$, which is by \eqref{def:PHI} equal to $\Pa$. We apply \eqref{eq:I3phi} for $l=0$ at $z_\a$: together with Lemma~\ref{prop:ctltre} and by the contradiction assumption \eqref{eq:contpsa} we obtain that 
  \begin{equation}\label{tmp:e3}
    \Pa = \frac{|\phi_\a(z_\a)|}{\ta[m](z_\a)\Ba[m](z_\a)} \lesssim   \frac{\ma[m]}{\ta[m](z_\a)}\Pa + \frac{(\ma[m])^{\exc}}{(\ra[m])^{n-2k}}\frac{\ns{\phi_\a}}{\ta[m](z_\a) \Ba[m](z_\a)}.
  \end{equation}
 Since $z_\a \in \overline{\Oa[m]} \subseteq \overline{B(\xa[m],\ra[m])}$ we have $|z_\a - \xa[m]| \le \ra[m]$ and thus $\ta[m](z_\a)\Ba[m](z_\a) \gtrsim \frac{(\ma[m])^{\exc}}{(\ra[m])^{n-2k-1}}$. We thus obtain 
 $$ \frac{(\ma[m])^{\exc}}{(\ra[m])^{n-2k}}\frac{\ns{\phi_\a}}{\ta[m](z_\a)\Ba[m](z_\a)} \lesssim  \frac{\ns{\phi_\a}}{\ra[m]}  = \frac{\ma[m]}{\ra[m]} \frac{\ns{\phi_\a}}{\ma[m]} = \smallo(\Pa) $$
as $\a \to + \infty$, where in the last equality we again used \eqref{eq:contpsa}, together with \eqref{tmp:defe0}. With the latter \eqref{tmp:e3} becomes, after dividing by $\Pa$,
  \begin{equation} \label{tmp:zaxaO}
    1 \lesssim \frac{\ma[m]}{\ma[m] + |z_\a-\xa[m]|}+\smallo(1),
  \end{equation}
  which implies that $|z_a-\xa[m]| = \bigO(\ma[m])$ as $\a \to + \infty$. As a technical remark, we mention that the term $\ta[m]$ in the definition of $\Pa$ in \eqref{def:PHI} was precisely added to obtain decay in \eqref{tmp:zaxaO} as $|z_\a-\xa[m]|$ becomes larger than $\ma[m]$. There is no particular reason to choose precisely $\ta[m]$ however, and it is easily seen from \eqref{def:PHI} that we may choose any weight $(\ta[m])^{\kappa}$ in the definition of $\Pa$ provided $0 < \kappa < \min(2,2k,n-2k)$. Let now, for any $\a \ge 1$, 
  $$\hat{z}_\a = \left\{ 
  \begin{aligned}
&    \frac{z_\a-\xa[m]}{\ma[m]}  & \text{ if } \Va[m] \text{ is an interior bubble }  \\
&  \frac{1}{\ma[m]}\sigma_{\xa[m]}^{-1}(z_\a)  & \text{ if } \Va[m] \text{ is a boundary bubble } 
   \end{aligned} \right\} \in \overline{\hOa} .$$ 
 Then $|\hat{z}_\a|  = \bigO(1)$ and, up to a subsequence, we may  let $\hat{z}_\infty = \lim_{\a \to + \infty} \hat{z}_\a \in \O_\infty^m$. By \eqref{def:Ba}, \eqref{def:psa} and by definition of $\psi_\a$ we have 
  \[  1 = \frac{1}{\Pa}\frac{|\phi_\a(z_\a)|}{\ta[m](z_\a)\Ba[m](y)} \lesssim \abs{\psi_\a (\hat{z}_\a) }\big(1 + |\hat{z}_\a| \big)^{n-2k-1},
  \]
so that
\begin{equation} \label{eq:arg:final:contra}
\liminf_{\a\to \infty} \abs{\psi_\a (\hat{z}_\a)} > 0.
\end{equation}
 Assume first that $\hat{z}_\infty \in \overline{\O_\infty^m} \setminus \sS_m$: then, since $\psi_\a$ converges to $\psi_\infty$ in $C^{2k-1}_{loc}(\overline{\O_\infty^m} \setminus \sS_m)$, we get that $|\psi_\infty(\hat{z}_\infty)|>0$, which is a contradiction with \eqref{eq:psi:ker:0}. Assume now that $\hat{z}_\infty\in \sS_m$, that is $\frac{|z_\a-\xa[j]|}{\ma[m]}\to 0$ for some $j\in \sB_m$. By \eqref{eq:arg:final:contra}, and thanks to the bound on the derivatives of $\psi_\a$ in \eqref{eq:Ipsa}, we can let $(\hat{w}_\a)_\a$ be another sequence of points in $\overline{\hOa}$ which converges to some point $w_\infty \in \overline{\O_\infty^m} \setminus \sS_m$ arbitrarily close to $\hat{z}_\infty$ and which still satisfies $\liminf_{\a\to\infty}|\psi_\a(\hat{w}_\a)| >0$. Using again the $C^{2k-1}_{loc}(\overline{\hOa} \setminus \sS_m)$ convergence we again obtain $|\psi_\infty(w_\infty)| > 0$, which again contradicts \eqref{eq:psi:ker:0}. This concludes the proof of Proposition \ref{prop:estPHI}.
\end{proof}

As we already explained, Proposition~\ref{prop:estPHI} implies Proposition~\ref{prop:estIl} which in turn implies Theorem \ref{prop:ptinv}. The proof of Theorem \ref{prop:ptinv} is thus complete.

\section{Global pointwise estimates for the blow-up} \label{sec:nonlinear}

In this section we prove global pointwise estimates for blowing-up solutions of \eqref{eq:main} with bounded energy. We obtain a sharp asymptotic description of the blow-up which will be the main ingredient in the proof of Theorem~\ref{theo:compactness}. The main result of this section is as follows (we keep the notations from Sections \ref{sec:bulles}, \ref{sec:bulles2} and \ref{sec:lin}): 

\begin{theorem}\label{prop:main}
  Let $\O\sub \R^n$ be a smooth bounded domain and $k\geq 1$ an integer such that $n>2k$. Let $(L_\a)_\a$ be a sequence of polyharmonic operators as in \eqref{def:La} and let $(u_\a)_\a$ be a sequence of solutions of \eqref{eq:main} which satisfies $\limsup_{\a \to + \infty} \Vert u_\a \Vert_{H^k_0(\O)} < + \infty$. Up to passing to a subsequence, there exists $u_0 \in C^{2k}(\oO)$ solving \eqref{eq:lmmain}, there exists an integer $N \ge0$ and there exist $N$ bubbles $(\Va)_{1 \le i \le N}$, defined by triplets $[(\xa)_\a,(\ma)_\a, v^i]$ as in definitions \ref{def:bub:int} and \ref{def:bub:bdr} and satisfying \eqref{eq:struc}, such that for any $0 \le l  \le 2k-1$,
  \begin{equation}\label{eq:main:ctldua}    
    \lim_{\a \to + \infty} \left| \left| \bfrac{ \nabla^{l} \Big( u_\a -  u_0 - \sumi  \Va \Big)}{1 +\sumi (\ta)^{-l}  \Ba} \right| \right|_{C^0(\overline{\O})} = 0.
      \end{equation}
\end{theorem}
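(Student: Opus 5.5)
The plan is a nonlinear perturbative argument built on Theorem \ref{prop:ptinv}. If $(u_\a)_\a$ is bounded in $L^\infty$, elliptic theory gives convergence in $C^{2k-1}(\oO)$ to some $u_0$, and the statement holds with $N=0$. Otherwise, we apply the Struwe-type decomposition \eqref{eq:decHk} from \cite[Theorem 1.2]{Maz17}, together with the structure relation \eqref{eq:struc} from Appendix \ref{structurerelation}. This produces $u_0$ and bubbles $[(\xa)_\a,(\ma)_\a,v^i]$ such that $u_\a - u_0 - \sumi\Va \to 0$ in $H^k_0(\O)$.

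The first step is to adjust the decomposition by kernel elements. We look for real numbers $\nu^{ij}_\a\to0$ such that, with $\Wa$ as in \eqref{def:Wa}, the remainder $\phi_\a := u_\a-\Wa$ lies in $\Ker_\a^\perp$. This is a finite-dimensional linear system whose matrix is almost the identity by \eqref{eq:ortZa}. Since $u_\a-u_0-\sumi\Va\to0$ in $H^k_0$, the solution satisfies $\nu^{ij}_\a = o(1)$ and $\Vert\phi_\a\Vert_{H^k_0}\to0$. The remainder then solves
\begin{equation*}
L_\a\phi_\a-(\crit-1)|\Wa|^{\crit-2}\phi_\a = R_\a + N_\a(\phi_\a),
\end{equation*}
where the two terms on the right are defined as follows:
\begin{itemize}
\item the error term is $R_\a = |\Wa|^{\crit-2}\Wa - L_\a\Wa$;
\item the nonlinear remainder is $N_\a(\phi) = |\Wa+\phi|^{\crit-2}(\Wa+\phi)-|\Wa|^{\crit-2}\Wa-(\crit-1)|\Wa|^{\crit-2}\phi$.
\end{itemize}
Since $\phi_\a$ solves this equation exactly, it coincides with the solution given by Theorem \ref{prop:ptinv} with right-hand side $R_\a+N_\a(\phi_\a)$, and the corresponding multipliers $\la$ vanish.

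The second step is to show that $\nss{R_\a}\lesssim 1$ for the sequence $\eta_\a$ of \eqref{def:eta}, which is the estimate announced after Theorem \ref{prop:ptinv}. We split $R_\a$ into four contributions and bound each by $\Psi_\a+\eta_\a\sumi[0](\Ba)^{\crit-1}$.
\begin{itemize}
\item The lower-order terms of $L_\a$ applied to $\Va$ give $\sum_{l\le 2k-2}\ta^{-l}\Ba\lesssim\ta^{2-2k}\Ba$, by \eqref{est:bubble}.
\item The cut-off errors are supported where $\Ba\lesssim \ma^{\exc}$. They are bounded by $\ta^{2-2k}\Ba$ or, near $\dO$ for boundary bubbles, by chart-error terms of the same size.
\item The interaction term $|\sum \Va+u_0|^{\crit-2}(\cdots)-\sum|\Va|^{\crit-2}\Va - |u_0|^{\crit-2}u_0$ is pointwise $\lesssim\sum_{i\ne j}(\Ba[j])^{\crit-2}\Ba$.
\item The terms containing $\nu^{ij}_\a\Za$ are bounded, using \eqref{eq:IDDZa}, by $|\nu^{ij}_\a|$ times $(\Ba)^{\crit-1}+\ta^{2-2k}\Ba$, and $|\nu^{ij}_\a|\le\eta_{4,\a}$.
\end{itemize}

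The third step bounds the nonlinear term. We have $|N_\a(\phi)|\lesssim |\Wa|^{\crit-3}\phi^2+|\phi|^{\crit-1}$, with the usual modification when $\crit<3$. Using $|\Wa|\lesssim 1+\sum\Ba$ from \eqref{eq:IWa}, this gives $\nss{N_\a(\phi_\a)}\lesssim \eta_\a^{-1}\big(\ns{\phi_\a}^2+\ns{\phi_\a}^{\crit-1}\big)$ for the $(\Ba)^{\crit-1}$ part. Theorem \ref{prop:ptinv} then yields
\begin{equation*}
\ns{\phi_\a}\le C_0\eta_\a\nss{R_\a}+C\big(\ns{\phi_\a}^{2}+\ns{\phi_\a}^{\crit-1}\big),
\end{equation*}
so $\ns{\phi_\a}\lesssim\eta_\a\to0$, provided we already know that $\ns{\phi_\a}$ is small.

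That a priori smallness is the main obstacle, since the bootstrap does not close from $H^k_0$-smallness alone. I would handle it by a fixed-point and uniqueness argument. We solve $\phi = T_\a(\phi)$ by contraction in the ball $\{\ns{\phi}\le 2C_0\eta_\a\nss{R_\a}\}\cap\Ker_\a^\perp$, obtaining a solution $\tilde\phi_\a$ of the projected problem together with multipliers $\la$. This $\tilde\phi_\a$ is small in the weighted norm, hence small in $H^k_0$. Next we compare $\phi_\a$ with $\tilde\phi_\a$ at the $H^k_0$ level, via Proposition \ref{prop:lininv} and Sobolev estimates on $N_\a$. If needed, the parameters $\nu^{ij}_\a$ are chosen, through a finite-dimensional Lyapunov--Schmidt degree argument, so that the multipliers of $\tilde\phi_\a$ match those of $\phi_\a$. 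Uniqueness of small $H^k_0$ solutions of the full equation in $\Ker_\a^\perp$, which follows from the uniform invertibility of Proposition \ref{prop:lininv}, then gives $\phi_\a=\tilde\phi_\a$.

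An alternative to this fixed-point route is a blow-up argument by contradiction. One rescales $\phi_\a/\ns{\phi_\a}$ around the point where the $\ns{\cdot}$ norm is attained, mimicking Steps 1--3 of Proposition \ref{prop:estPHI}. This shows that the quadratic terms are negligible because $\phi_\a\to0$ locally uniformly, which holds by the local $\varepsilon$-regularity of \cite{Maz16} away from the bubble centers. Once $\ns{\phi_\a}\to0$ is established, we expand $\nabla^l(u_\a-u_0-\sumi\Va)=\nabla^l\phi_\a+\sum\nu^{ij}_\a\nabla^l\Za$. Together with \eqref{eq:IZa} and $\nu^{ij}_\a\to0$, this gives \eqref{eq:main:ctldua}.
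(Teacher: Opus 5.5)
Your main route is the paper's proof. You use the Struwe decomposition with \eqref{eq:struc} and absorb $\Pi_{\mathcal{K}_\alpha}$ of the remainder into the $\nu^{ij}_\alpha$, so that $u_\alpha=\mathcal{W}_\alpha+\phi_\alpha$ with $\phi_\alpha\in\mathcal{K}_\alpha^\perp$; you then bound $\|E_\alpha\|_{**,\eta_\alpha}\lesssim 1$, run a contraction in a $\|\cdot\|_*$-ball of radius $O(\eta_\alpha)$ via Theorem~\ref{prop:ptinv} (Proposition~\ref{prop:nlinpt}), identify the fixed point with $\phi_\alpha$ by small-$H^k_0$ uniqueness coming from Proposition~\ref{prop:lininv} (Proposition~\ref{prop:nlinHk}), and conclude with \eqref{eq:IZa}.

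Your third-step bootstrap, the degree argument to match multipliers, and the blow-up alternative are superfluous. Uniqueness holds for the projected equation \eqref{eq:nlinHk}, which both $\phi_\alpha$ and the fixed point satisfy whatever their multipliers, so the fixed point's multipliers vanish a posteriori. In any case, the $\nu^{ij}_\alpha$ are already pinned down by requiring $\phi_\alpha\in\mathcal{K}_\alpha^\perp$ and cannot be re-chosen.
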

The functions $\Ba$ and $\ta$, for $0 \le i \le N$, are as in \eqref{def:Ba}. In the statement of Theorem~\ref{prop:main} $u_0$ is the weak limit of $(u_\a)_\a$ in $H^k_0(\O)$ and, if $N=0$, $u_\a$ strongly converges to $u_0$ in $C^{2k}(\oO)$. An equivalent formulation of \eqref{eq:main:ctldua} is that there is a sequence of positive numbers $(\ve_\a)$ with $\ve_\a \to 0$ as $\a \to + \infty$ such that for any $\a \ge 1$ 
$$ \Big| \nabla^{l} u_\a(x) -  \nabla^{l} u_0(x) - \sumi  \nabla^{l} \Va(x) \Big| \le \ve_\a \Big( 1 + \ta(x)^{-l}\Ba(x) \Big) \quad \text{ for any } x \in \overline{\O}. $$
Estimates such as \eqref{eq:main:ctldua} have been the subject of intense investigation in the last twenty years, especially because of their importance in the proof of compactness results for equations like \eqref{eq:main}. The novelty of Theorem \ref{prop:main} is that it  holds true in the general polyharmonic setting $k \ge 1$, for sign-changing solutions, and that the estimates in \eqref{eq:main:ctldua} are true uniformly up to $\partial \O$. This last fact is new even when $k=1$. Theorem \ref{prop:main} holds true under very general assumptions: we do not assume anything but \eqref{eq:struc} on the bubbles $(\Va)_{\{1 \le i \le N\}}$: in particular the profiles $v^i$ need not be non-degenerate in the sense of \cite{DuyckaertsKenigMerle}. When $k \ge 2$ we allow boundary bubbles to arise in \eqref{eq:main:ctldua} (although their existence is still unknown) and we allow interior concentration points $\xa$ to converge to $\partial \O$ as $\a \to + \infty$, at any speed. When $k=1$ estimates like \eqref{eq:main:ctldua} were first obtained for positive solutions of critical Yamabe-type equations on compact manifolds without boundary in \cite{LiZhu, DruetJDG, DruetYlowdim, Heb14} and for positive solutions of equations like \eqref{eq:main} in domains of $\R^n$ in \cite{DruetLaurain, LaurainKonig2, LaurainKonig1, LiSn1, LiSn2}. For sign-changing solutions the analogue of Theorem~\ref{prop:main}, still when $k=1$, was first obtained on closed manifolds in \cite{Pre24}, and in domains of $\R^n$ for solutions of least-energy in \cite{CAP1, CAP2}. For $k \ge 2$ and sign-changing solutions, an analogue of Theorem~\ref{prop:main} was obtained in \cite{Robert:localisation:bubbling, Robert:C0:ordre:k}, again on closed manifolds. All these results were applied to obtain compactness results, for positive solutions \cite{DruetJDG, LiSn1, LiSn2} and for sign-changing ones \cite{PremoselliRobert, PremoselliVetois3,  PremoselliVetois2, PremoselliVetois4}. 

\smallskip

We briefly describe the strategy of proof of Theorem \ref{prop:main}. Recall that by \cite[Theorem 1.2]{Maz17} (see \eqref{eq:decHk}) a sequence of solutions $u_\a$ as in Theorem~\ref{prop:main} splits as 
  \begin{equation*} 
   u_\a = u_0 + \sumi \Va + \smallo(1) \qquad \text{in } \Sob_0(\O).
  \end{equation*} 
Theorem~\ref{prop:main} therefore states that this energy decomposition can be improved into global pointwise estimates: it can be seen as a quantitative version of Struwe decomposition in strong spaces. To prove it we will show that $u_\a$ is uniquely obtained as a perturbation of the bubble-tree $\Wa$ \emph{via} a nonlinear fixed-point argument in $(C^{2k-1}(\overline{\O}), \Vert \cdot \Vert_{*})$. We prove this by linearising and using the sharp linear estimates of Theorem \ref{prop:ptinv}. We will then show that  $u_\a$ inherits the estimates of Theorem \ref{prop:ptinv}, which will yield \eqref{eq:main:ctldua}. Here again we follow the strategy of proof of \cite{Pre24}. 

\subsection{Nonlinear procedure} The first result that we state claims that any bubble-tree as in \eqref{def:Wa} can be perturbed in $H^k_0(\O)$ into a solution of \eqref{eq:main} up to an obstruction term in $\Ker_\a$:
\begin{proposition}\label{prop:nlinHk}
Let $u_0\in C^{2k}(\oO)\cap\Sob_0(\O)$ be a solution of \eqref{eq:lmmain}, let $N\geq 1$ be an integer and let $[(\xa)_\a,(\ma)_\a,v^i]$, $i=1,\ldots, N$ be $N$ bubbles $\Va$ defined as in Definitions \ref{def:bub:int} and \ref{def:bub:bdr}, satisfying \eqref{eq:struc}. For $0 \le i \le N$ and $1 \le j \le d_i$, where $d_i = \dim \Ker^i_\a$, let $(\nu^{ij}_\a)_\a$ be any sequence of real numbers such that $\nu^{ij}_\a \to 0$, and define $\Wa$ as in \eqref{def:Wa}. There exists $\delta>0$ such that, up to a subsequence, the following holds: there exists a unique function
  \[  \phi_\a \in \Ker_\a^\perp \cap \{\phi\in \Sob_0(\O)\,:\, \Vert \phi \Vert_{H^k_0(\Omega)}\leq \delta\}
  \] 
  that satisfies
  \begin{multline}\label{eq:nlinHk}  
    \Pi_{\Ker_\a^\perp}\Big(\Wa + \phi_\a + (-\D)^{-k} \Big[\sum_{l=0}^{p} (-1)^l\,\nabla^l \big(A_{l,\a}(\nabla^l (\Wa + \phi_\a))\big)\\ -\big|\Wa+\phi_\a\big|^{\crit-2}\big(\Wa+\phi_\a\big)\Big]\Big) = 0,
  \end{multline}
\end{proposition}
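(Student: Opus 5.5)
The plan is a standard Lyapunov--Schmidt fixed-point argument in $\Ker_\a^\perp\subset H^k_0(\O)$, based on the uniform invertibility of Proposition~\ref{prop:lininv}. Write $F_\a(\phi)$ for the left-hand side of \eqref{eq:nlinHk}, viewed as a map on $\Ker_\a^\perp$. Expanding around $\phi=0$ gives
\[
F_\a(\phi)=\Pi_{\Ker_\a^\perp}\Big((-\D)^{-k}\big[\mathcal{E}_\a + \mathcal{L}_\a\phi - \mathcal{N}_\a(\phi)\big]\Big),
\]
with the three pieces defined as follows. The error term is $\mathcal{E}_\a = L_\a\Wa - |\Wa|^{\crit-2}\Wa$. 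The linearised operator is $\mathcal{L}_\a\phi = L_\a\phi-(\crit-1)|\Wa|^{\crit-2}\phi$. The remainder is
\[
\mathcal{N}_\a(\phi)=|\Wa+\phi|^{\crit-2}(\Wa+\phi)-|\Wa|^{\crit-2}\Wa-(\crit-1)|\Wa|^{\crit-2}\phi .
\]
By Proposition~\ref{prop:lininv}, the map $\phi\mapsto \Pi_{\Ker_\a^\perp}(-\D)^{-k}\mathcal{L}_\a\phi$ is an isomorphism of $\Ker_\a^\perp$ whose inverse $T_\a$ is bounded uniformly in $\a$. Solving $F_\a(\phi)=0$ is therefore equivalent to the fixed-point problem
\[
\phi = T_\a\Big(\Pi_{\Ker_\a^\perp}(-\D)^{-k}\big[\mathcal{N}_\a(\phi)-\mathcal{E}_\a\big]\Big)=:\mathcal{T}_\a(\phi).
\]

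The first step is to show that $\Vert \mathcal{E}_\a\Vert_{H^{-k}(\O)}\to 0$. I expand $L_\a\Wa$ using that $u_0$ solves \eqref{eq:lmmain} and that each $v^i$ solves \eqref{eq:critRn} or \eqref{eq:crithf}. This leaves four kinds of terms.
\begin{itemize}
\item The cutoff terms and the boundary-chart distortion terms involving $\sigma_{\xa}$.
\item The lower-order terms $\nabla^l(A_{l,\a}\nabla^l\Va)$, which are pointwise bounded by $\ta^{2-2k}\Ba$-type quantities.
\item The terms $\nu^{ij}_\a\big(L_\a\Za-(\crit-1)|\cdot|^{\crit-2}\Za\big)$ and the coefficient perturbations $A_{l,\a}-A_l$.
\item The interaction terms in $|\Wa|^{\crit-2}\Wa-|u_0|^{\crit-2}u_0-\sum_i|\Va|^{\crit-2}\Va$, which are pointwise bounded by $\sum_{i\ne j}(\Ba[j])^{\crit-2}\Ba$.
\end{itemize}
All of these are controlled by $\Psi_\a+\eta_{4,\a}\sum_i(\Ba)^{\crit-1}$. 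Lemma~\ref{prop:lem1}, the smallness of $\eta_{4,\a}$ and the dual Sobolev embedding $L^{\frac{2n}{n+2k}}\subset H^{-k}$ then give $\Vert\mathcal{E}_\a\Vert_{H^{-k}}\le\ve_\a\to0$. This is essentially the estimate that the paper announces as \eqref{eq:eta4}.

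The second step controls the nonlinearity. I will use the elementary pointwise inequalities
\[
|\mathcal{N}_\a(\phi)|\lesssim |\Wa|^{\crit-3}|\phi|^2+|\phi|^{\crit-1}\quad\text{if }\crit>3,\qquad |\mathcal{N}_\a(\phi)|\lesssim|\phi|^{\crit-1}\quad\text{if }\crit\le3,
\]
together with the analogous Lipschitz bounds for $\mathcal{N}_\a(\phi_1)-\mathcal{N}_\a(\phi_2)$. Combining Hölder's inequality with Sobolev's inequality and the uniform bound $\Vert\Wa\Vert_{L^\crit}\lesssim1$, I get
\begin{align*}
\Vert\mathcal{N}_\a(\phi)\Vert_{H^{-k}}&\lesssim \Vert\phi\Vert^{\min(2,\crit-1)}+\Vert\phi\Vert^{\crit-1},\\
\Vert\mathcal{N}_\a(\phi_1)-\mathcal{N}_\a(\phi_2)\Vert_{H^{-k}}&\lesssim\big(\Vert\phi_1\Vert+\Vert\phi_2\Vert\big)^{\min(1,\crit-2)}\Vert\phi_1-\phi_2\Vert ,
\end{align*}
with norms in $H^k_0(\O)$. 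Since $T_\a$ is uniformly bounded, there is a $\delta>0$, independent of $\a$, such that for $\a$ large $\mathcal{T}_\a$ maps the ball $\{\Vert\phi\Vert\le\delta\}\cap\Ker_\a^\perp$ into itself and is a contraction there. For the self-map property I use $\Vert\mathcal{T}_\a(\phi)\Vert\le C\ve_\a+C(\delta^{\min(2,\crit-1)}+\delta^{\crit-1})\le\delta$. The contraction mapping principle then gives existence and uniqueness of $\phi_\a$ in that ball. A byproduct is $\Vert\phi_\a\Vert\lesssim\ve_\a\to0$.

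I expect the main obstacle to be the first step, namely estimating $\mathcal{E}_\a$ uniformly for an arbitrary bubble-tree. The boundary bubbles are the delicate part: composing with $\sigma_{\xa}$ produces metric-distortion errors of relative size $\ma|y|$, and these must be shown to fit into the $\ta^{2-2k}\Ba$ part of $\Psi_\a$. The interior bubbles whose centres approach $\dO$ are similar, since there the cutoff at scale $\dist{\xa,\dO}$ yields errors of order $(\ma/\dist{\xa,\dO})^{\frac{n-2k}{2}}$. The nonlinearity step is routine once the exponent cases $\crit\gtrless3$ are separated.
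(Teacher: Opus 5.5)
Your proposal is correct and is essentially the argument the paper intends: the paper gives no details and defers to the standard Lyapunov--Schmidt contraction in $\Ker_\a^\perp\subset H^k_0(\O)$, which rests on smallness of $L_\a\Wa-|\Wa|^{\crit-2}\Wa$ in $H^{-k}(\O)$, the uniform invertibility of Proposition~\ref{prop:lininv}, and H\"older--Sobolev bounds on the nonlinear remainder, and this is exactly what you do. One minor correction to your first step: the boundary-chart errors cannot be fitted into $\Psi_\a$, because they are pointwise of order $(\ta)^{1-2k}\Ba$ (the pulled-back metric deviates from the Euclidean one at first order and its Christoffel symbols are only $O(1)$), and the cutoff errors of interior bubbles with $\dist{\xa,\dO}\to 0$ are not pointwise dominated by $\Psi_\a$ either; nevertheless both have $L^{\frac{2n}{n+2k}}(\O)$ norm tending to zero, which is all this proposition requires.
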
   
Recall that $\Pi_{\Ker_\a^\perp}$ is the orthogonal projection onto $\Ker_\a^\perp$ in $\Sob_0(\O)$ for the scalar product \eqref{norm:Dk2}, that $\Ker_\a$ is defined in \eqref{def:Ka}, and  that $(-\D)^{-k}$ is as in Remark \ref{rk:Dinv}. The proof of Proposition \ref{prop:nlinHk} is standard and follows from a \emph{verbatim} adaptation of the arguments in  \cite{EspositoPistoiaVetois} and \cite{RobertVetois} for $k=1$ and \cite{CarJLMS} (with $N=1$) for $k\geq 2$. Proposition \ref{prop:nlinHk} provides us with a local uniqueness result in $H^k_0(\O)$ around $\Wa$ for solutions of \eqref{eq:main}. In the next result, which is the main one in this section, we prove that $\Wa$ can also be uniquely perturbed into a solution of \eqref{eq:main} in $(C^{2k-1}(\overline{\O}), \Vert \cdot \Vert_{*})$, where $\ns{\cdot\,}$ is as in \eqref{def:star}, up to an obstruction in $\Ker_\a$:

\begin{proposition}\label{prop:nlinpt}
 Let $u_0\in C^{2k}(\oO)\cap\Sob_0(\O)$ be a solution of \eqref{eq:lmmain}, let $N\geq 1$ be an integer and let $[(\xa)_\a,(\ma)_\a,v^i]$, $i=1,\ldots, N$ be $N$ bubbles $\Va$ defined as in Definitions \ref{def:bub:int} and \ref{def:bub:bdr}, satisfying \eqref{eq:struc}. For $0 \le i \le N$ and $1 \le j \le d_i$, where $d_i = \dim \Ker^i_\a$, let $(\nu^{ij}_\a)_\a$ be any sequence of real numbers such that $\nu^{ij}_\a \to 0$, and define $\Wa$ as in \eqref{def:Wa}. There exists a sequence $\eta_\a \to0$ such that, up to passing to a subsequence in $\a$, the following holds: there is a unique function 
  \[ \phi_\a \in \Ker_\a^\perp \cap \Big\{\phi\in C^{2k-1}(\oO) \,:\, \ns{\phi_\a} \leq \eta_\a\Big\},
  \]   
which satisfies 
  \begin{equation}\label{eq:nlin}
    L_\a\big(\Wa + \phi_\a\big) = \big|\Wa + \phi_\a\big|^{\crit-2}\big(\Wa+\phi_\a\big) + \sumij \la (-\D)^{k}\Za \quad \text{ in } \O
  \end{equation}
  for some $(\la)_\a$,$i=0,\ldots N$, $j=1,\ldots, d_i$, uniquely determined by $\Wa$. Moreover, this solution satisfies 
  \begin{equation}\label{eq:nlin:to0}
   \Vert \phi_\a \Vert_{H^k_0(\O)} \to 0 \qquad \text{as } \a \to \infty.
  \end{equation}
\end{proposition}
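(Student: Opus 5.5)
We will obtain $\phi_\a$ as a fixed point in the weighted space $(C^{2k-1}(\oO), \ns{\cdot})$, using Theorem~\ref{prop:ptinv} as the linear solver. Equation \eqref{eq:nlin} rewrites as
\[ L_\a\phi_\a - (\crit-1)|\Wa|^{\crit-2}\phi_\a = R_\a + N_\a(\phi_\a) + \sumij \la (-\D)^k\Za, \]
with the error term $R_\a = |\Wa|^{\crit-2}\Wa - L_\a\Wa$ and the nonlinear remainder
\[ N_\a(\phi) = |\Wa+\phi|^{\crit-2}(\Wa+\phi) - |\Wa|^{\crit-2}\Wa - (\crit-1)|\Wa|^{\crit-2}\phi. \]
Let $T_\a$ be the map sending $R\in C^0(\oO)$ to the unique $\phi\in\Ker_\a^\perp$ given by Theorem~\ref{prop:ptinv}. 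By \eqref{eq:Iptinv}, $\ns{T_\a R}\le C_0\ea\nss{R}$. We then look for a fixed point of $\phi\mapsto T_\a(R_\a+N_\a(\phi))$ on the ball $\{\phi\in\Ker_\a^\perp : \ns{\phi}\le \eta'_\a\}$, for a suitable sequence $\eta'_\a\to0$. The $\la$ are then determined by the equation.

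\textbf{Step 1: the error.} First we prove $\nss{R_\a}\lesssim 1$, which is the estimate announced after Theorem~\ref{prop:ptinv}. We expand $L_\a\Wa$ using that $u_0$ solves \eqref{eq:lmmain} and that each $v^i$ solves \eqref{eq:critRn} or \eqref{eq:crithf}. The lower-order terms $A_{l,\a}$ acting on $\Va$ are bounded by $\sum_{l\le 2p}\ta^{-l}\Ba\lesssim \ta^{2-2k}\Ba$, since $2p\le 2k-2$. The cut-off errors (including boundary charts $\sigma_{\xa}$, whose non-flatness gives factors like $\ma$ times derivatives) are of the same or better order. The kernel terms $\nu^{ij}_\a\Za$ contribute $\eta_{4,\a}\Ba^{\crit-1}$ plus $\ta^{2-2k}\Ba$. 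Finally, the difference $|\Wa|^{\crit-2}\Wa-\sum_i |\Va|^{\crit-2}\Va - |u_0|^{\crit-2}u_0$ is bounded by $\sum_{i\ne j}\Ba[j]^{\crit-2}\Ba$. We get $|R_\a|\lesssim \Psi_\a+\ea\sum_{i=0}^N\Ba^{\crit-1}$, so that $\ns{T_\a R_\a}\lesssim \ea$.

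\textbf{Step 2: the nonlinear term.} If $\ns{\phi}\le \eta'_\a\to0$, then $|\phi|\lesssim \eta'_\a\big(1+\sum_i\Ba\big)$. The elementary inequality $|N_\a(\phi)|\lesssim |\Wa|^{\crit-3}\phi^2+|\phi|^{\crit-1}$ when $\crit>3$, and $|N_\a(\phi)|\lesssim|\phi|^{\crit-1}$ otherwise, then gives
\[ |N_\a(\phi)|\lesssim (\eta'_\a)^{\min(2,\crit-1)}\Big(\sum_{i=0}^N\Ba\Big)^{\crit-1}\lesssim(\eta'_\a)^{\min(2,\crit-1)}\Big(\Psi_\a+\sum_{i=0}^N\Ba^{\crit-1}\Big). \]
The same computation gives the Lipschitz bound
\[ |N_\a(\phi_1)-N_\a(\phi_2)|\lesssim(\eta'_\a)^{\min(1,\crit-2)}\,\ns{\phi_1-\phi_2}\,\Big(\sum_{i=0}^N\Ba\Big)^{\crit-1}. \]
There is a subtlety here: $\nss{\cdot}$ carries the factor $\ea$ in front of $\sum_i\Ba^{\crit-1}$, so these bounds give $\nss{N_\a(\phi)}\lesssim (\eta'_\a)^{\min(2,\crit-1)}/\ea$. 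After applying $T_\a$ the factor $\ea$ cancels, and we get
\[ \ns{T_\a N_\a(\phi)}\lesssim(\eta'_\a)^{\min(2,\crit-1)}, \qquad \ns{T_\a(N_\a(\phi_1)-N_\a(\phi_2))}\lesssim(\eta'_\a)^{\min(1,\crit-2)}\ns{\phi_1-\phi_2}. \]
Both are superlinear or small. Choosing $\eta'_\a = M\ea$ with $M$ large makes the map a contraction of the ball for $\a$ large, by Banach's fixed point theorem. This yields existence and uniqueness in the ball, and this $\eta'_\a$ is the sequence $\eta_\a$ of the statement.

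\textbf{Step 3: the $H^k_0$ estimate \eqref{eq:nlin:to0}.} We estimate $\Vert\phi_\a\Vert_{H^k_0(\O)}$ through \eqref{eq:Ilininv}, using Lemma~\ref{prop:lem1} for $\Psi_\a$. We also need $\Vert \ea\sum_i\Ba^{\crit-1}\Vert_{L^{2n/(n+2k)}}\lesssim\ea$, which holds by scale invariance. Together these give $\Vert\phi_\a\Vert_{H^k_0(\O)}\lesssim\ea\nss{R_\a+N_\a(\phi_\a)}\lesssim \ea+\eta'_\a\to0$.

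The main obstacle is Step 1: checking that every error term, especially the cut-offs near $\partial\O$, the boundary-chart distortion for boundary bubbles, and interior bubbles approaching the boundary, fits under $\Psi_\a+\ea\sum_i\Ba^{\crit-1}$. For the chart distortion, we plan to write the equation in the coordinates given by $\sigma_{\xa}$, where the metric error is $O(|z|)$. This produces terms of the form $\ma\,\ta^{1-2k}\Ba\lesssim\ta^{2-2k}\Ba$, since $\ta\gtrsim\ma$.
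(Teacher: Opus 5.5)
Your Steps 2 and 3 are the paper's argument. You use the same splitting into $L_\a\Wa-|\Wa|^{\crit-2}\Wa$ and $N_\a(\phi)$, a ball of radius $\simeq\ea$ in $(\Ker_\a^\perp,\ns{\cdot})$, and the same observation that the loss $\ea^{-1}$ in $\nss{N_\a(\phi)}$ is paid back by the factor $\ea$ in \eqref{eq:Iptinv}; Banach's theorem then concludes. For \eqref{eq:nlin:to0} you go through \eqref{eq:Ilininv} as in \eqref{eq:I1phi}, whereas the paper just uses that $\ns{\phi_\a}\to0$ controls $\Vert\phi_\a\Vert_{H^k_0(\O)}$; both work.

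The gap is Step 1, the bound $|L_\a\Wa-|\Wa|^{\crit-2}\Wa|\lesssim\Psi_\a+\ea\sum_{i=0}^N(\Ba)^{\crit-1}$. The paper only asserts this in \eqref{eq:eta4}, and your justification fails exactly at the two points you flag.

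(a) In a boundary chart the pulled-back metric is $\xi+O(|z|)$, and the $O(|z|)$ cannot be removed when $\dO$ is curved, since it carries the second fundamental form. The chart error is therefore $O(|z|)\nabla^{2k}+O(1)\nabla^{2k-1}+\cdots$ applied to the bubble. Estimate \eqref{eq:Idcrit} only bounds this by $(\ta)^{1-2k}\Ba$, not by $\ma(\ta)^{1-2k}\Ba$: the factor $\ma$ you see in rescaled variables disappears when you scale back. For a single boundary bubble, at $\ta\simeq\sqrt{\ma}$ one has $\Ba\simeq1$ and $(\ta)^{1-2k}\Ba\simeq(\ma)^{\frac12-k}$. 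This beats every term of $\Psi_\a+\ea\sum_i(\Ba)^{\crit-1}$ by at least $(\ma)^{-1/2}$. The fix is to put $(\ta)^{1-2k}\Ba$ into $\Psi_\a$. Lemmas~\ref{prop:lem1} and \ref{prop:lem2} still hold via Lemma~\ref{prop:giraud}, and in \eqref{eq:estRa} the new term only contributes $O(\ma[m])$. So this is repairable, but it has to be done.

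(b) Take an interior bubble with $d_\a=\dist{\xa,\dO}\to0$. The derivatives of $\chi((x-\xa)/d_\a)$ create an error $\simeq(\ma)^{\frac{n-2k}{2}}d_\a^{-n}\simeq(\ta)^{-2k}\Ba$ on $\{|x-\xa|\simeq d_\a\}$. This is $d_\a^{-2}$ times $(\ta)^{2-2k}\Ba$, and if $d_\a\lesssim\sqrt{\ma}$ no other term of the weight absorbs it.

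Point (b) is not a bookkeeping issue. Integrated against the Dirichlet Green function, this cut-off error reproduces to leading order the difference between the cut-off bubble and its $H^k_0(\O)$-projection. That difference is $(\ma)^{\frac{n-2k}{2}}$ times the regular part of the Green function, of size $(\ma)^{\frac{n-2k}{2}}d_\a^{2k-n}$ at $|x-\xa|\simeq d_\a/4$. This is comparable to $1+\Ba$ there as soon as $\ma\ll d_\a\lesssim\sqrt{\ma}$.

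A concrete check: take $k=1$, $L_\a=-\Delta$, $u_0=0$ and a single standard bubble with $d_\a=(\ma[1])^{3/4}$. The $H^1_0$-small solution given by Proposition~\ref{prop:nlinHk} is the only possible candidate, since $\ns{\phi_\a}\to0$ forces $\Vert\phi_\a\Vert_{H^k_0(\O)}\to0$. A direct computation shows that this solution satisfies $\ns{\phi_\a}\not\to0$.

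So in that regime Step 1 cannot be completed with the cut-off bubbles of Definition~\ref{def:bub:int}. You would need either to change the ansatz $\Wa$ (for example, projected interior bubbles) or to exclude interior bubbles that approach $\dO$ too fast. The defect is inherited from the paper's unproved \eqref{eq:eta4}, but as it stands your Step 1 does not go through.
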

Since $\Ker_\a \subset H^k_0(\O)$, $\Wa + \vp_\a$ satisfies Dirichlet boundary conditions. We prove Proposition \ref{prop:nlinpt} by adapting the proofs in \cite[Proposition 4.2]{Pre24} (for $k=1$) and \cite[Proposition 4.11]{CarJLMS} (for a single bubble and $k \ge 2$).

\begin{proof}
  Let $\phi \in C^0(\oO)$. We define 
  \begin{equation*}
    \begin{aligned}
      E_\a & = L_\a \Wa  - |\Wa|^{\crit-2}\Wa \quad \text{ and } \\
      N_\a(\phi) & = \big|\Wa + \phi\big|^{\crit-2}\big(\Wa+\phi\big) - |\Wa|^{\crit-2}\Wa - (\crit-1)|\Wa|^{\crit-2}\phi.
    \end{aligned}
  \end{equation*}
  Since $u_0$ satisfies \eqref{eq:lmmain} we have $u_0\in C^{2k}(\oO)$  by standard elliptic theory. Straightforward computations using \eqref{def:La} then show that 
  \begin{equation}\label{eq:eta4}
  \begin{aligned}
    |E_\a|  & \lesssim \sumi (\ta)^{2-2k}\Ba + \sum_{\substack{i,j=0,\ldots,N\\i\neq j}}(\Ba[j])^{\crit-2}\Ba\\ 
    &+ \Big(\nu_\a + \sum_{l=0}^{p} \|A_{l,\a}-A_l\|_{C^{l}(\oO)}\Big) \sumi[0](\Ba)^{\crit-1}
\end{aligned}
  \end{equation}
   for all $y \in \oO$, where we have let $\nu_\a = \max_{\substack{i=0,\ldots, N\\j=1,\ldots, d_i}}|\nu_\a^{ij}|$. 
  Hence, there exists $C_1 >0$ independent of $\a$ such that 
  \begin{equation} \label{est:norme:Ea}
    \nss{E_\a}\leq C_1,
  \end{equation}
where $\ea$ is the sequence defined in \eqref{def:eta} and where $\nss{\cdot\,}$ is as in \eqref{def:star}. We let, for each $\a \ge 1$, 
  \[  \sC_\a = \Ker_\a^\perp \cap \Big\{\phi \in C^{2k-1}(\oO)\,:\, \ns{\phi}\leq 2C_0C_1\ea\Big\},
  \]
  where $C_0>0$ is given by Theorem \ref{prop:ptinv} and $C_1>0$ by \eqref{est:norme:Ea}. It is easily seen that, for all $\a$, $(\sC_\a,\ns{\cdot})$ is a non-empty complete metric space.
  For $\phi \in \sC_\a$ and $\a \ge 1$ we also define 
  \[  R_\a(\phi) = -E_\a + N_\a(\phi),
  \]
  and we let $T_\a(\phi)\in \Ker_\a^\perp \cap C^{2k-1}(\oO)$ be the unique solution, given by Proposition \ref{prop:lininv}, to
  \begin{equation}\label{eq:Tphi}
    L_\a \big( T_\a(\phi) \big) - (\crit-1)|\Wa|^{\crit-2}T_\a(\phi)= R_\a(\phi)+\sumij \la (-\D)^k \Za,
  \end{equation}
where the real numbers $(\la)_\a$, $i=0,\ldots,N$, $j=1,\ldots, d_i$ are uniquely given by Proposition \ref{prop:lininv}. We will now show that $T_\a : (\sC_\a,\ns{\cdot}) \to (\sC_\a,\ns{\cdot})$ is a contraction provided $\a$ is large enough.

\smallskip

  \paragraph{\textbf{Step 1: $T_\a$ stabilizes $\sC_\a$.}} Let $\phi\in\sC_\a$. We have, for all $y\in \oO$,
  \begin{equation*}
    |N_\a(\phi)(y)| \lesssim |\phi(y)|^{\crit-1}+ \sumi[0]\pBa^{\crit-2-\tau}|\phi(y)|^{1+\tau} 
  \end{equation*}
   for some $0<\tau<\min\{1,\crit-2\}$. By definition of $\Vert \cdot \Vert_*$, since $\phi\in\sC_\a$ and since $\eta_\a \to 0$ as $\a \to + \infty$, the previous inequality ensures that we have 
    \begin{align*}
    |N_\a(\phi)(y)| \lesssim \ea^{1+\tau}\sumi[0]\pBa^{\crit-1} 
  \end{align*}
for any $y \in \oO$, for $\a$ large enough. Together with \eqref{est:norme:Ea} this shows that there exists $C_2>0$ such that
  \begin{equation}\label{tmp:InssRa}
    \nss{R_\a(\phi)} \leq C_1 + C_2\ea^{\tau}.
  \end{equation}
We may now apply  Theorem \ref{prop:ptinv} to equation \eqref{eq:Tphi}: we obtain that, up to a subsequence, 
  \[  \ns{T_\a(\phi)} \leq C_0 \ea\big(C_1 + C_2\ea^{\tau}\big).
  \]
  Since $\ea\to 0$, we have $C_2\ea^{\tau} < C_1$ for $\a$ large enough, and hence $T_\a(\phi) \in  \sC_\a$.
  
  \smallskip
  
  \paragraph{\textbf{Step 2: $T_\a$ is a contraction in $\sC_\a$.}} We now let $\phi_1,\phi_2 \in \sC_\a$. By linearity, $T_\a(\phi_1)-T_\a(\phi_2)$ satisfies
  \begin{equation} \label{eq:contraction:Na}
  \begin{aligned}  
  L_\a \big(T_\a(\phi_1)-T_\a(\phi_2)\big) -(\crit-1)|\Wa|^{\crit-2}\big(T_\a(\phi_1)-T_\a(\phi_2)\big)\\
     = \big(N_\a(\phi_1)-N_\a(\phi_2)\big) +\sumij \Tilde{\lambda}_\a^{ij}(-\D)^{k}\Za
  \end{aligned}
  \end{equation}  
  for some real numbers $\Tilde{\lambda}_\a^{ij}$, $i=0,\ldots, N$, $j=1,\ldots, d_i$. Direct computations and the definition of $\sC_\a$ give 
  \begin{align*}
    \big|N_\a(\phi_1)-N_\a(\phi_2)\big|&\lesssim |\phi_1-\phi_2|\sumi[0](\Ba)^{\crit-2-\tau}\big(|\phi_1|^\tau+|\phi_2|^\tau\big) \\
      &\lesssim \ns{\phi_1-\phi_2} \ea^{\tau} \sumi[0](\Ba)^{\crit-1}
  \end{align*}
  in $\oO$. It thus follows that there exists $C_3>0$ such that $\nss{N_\a(\phi_1)-N_\a(\phi_2)} \leq C_3 \ea^{\tau-1} \ns{\phi_1-\phi_2}$, and therefore Theorem \ref{prop:ptinv} applied to \eqref{eq:contraction:Na} shows that 
  \[  \ns{T_\a(\phi_1)-T_\a(\phi_2)} \leq C_0C_3\ea^{\tau}\ns{\phi_1-\phi_2}.
  \]
 Since $\ea\to 0$, we have $C_0C_3\ea^{\tau}\leq \frac{1}{2}$ provided $\a$ is large enough, and $T_\a$ is a contraction in $\sC_\a$.
 
 \smallskip
 
  \paragraph{\textbf{Conclusion:}} We can now apply Banach's fixed point Theorem to $T_\a$ in $(\sC_\a, \ns{\cdot})$: we obtain that there exists a unique $\phi_\a \in \sC_\a$ such that $T_\a(\phi_\a)= \phi_\a$. With \eqref{eq:Tphi}, $\phi_\a$ then solves 
  \[  L_\a \phi_\a -(\crit-1)|\Wa|^{\crit-2}\phi_\a = R_\a(\phi_\a) + \sumij \la(-\D)^k \Za
  \]
  which is equivalent to \eqref{eq:nlin}. Finally, \eqref{eq:nlin:to0} simply follows from the condition $\Vert \vp_\a \Vert_* \lesssim \eta_\a$ and the fact that $\eta_\a \to 0$ as $\a \to + \infty$.  This concludes the proof of Proposition \ref{prop:nlinpt}. 
\end{proof}

\subsection{Proof of Theorem \ref{prop:main}} 

We are now in position to prove Theorem \ref{prop:main}: 
\begin{proof}[Proof of Theorem \ref{prop:main}]
Let $(L_\a)_\a$ be a sequence of polyharmonic operators as in \eqref{def:La} and let $(u_\a)_\a$ be a sequence of solutions \eqref{eq:main} which satisfies 
$$\limsup_{\a \to + \infty} \Vert u_\a \Vert_{H^k_0(\O)} < + \infty.$$
 We let $u_0$ be the weak limit of $u_\a$ in $H^k_0(\O)$, which  solves  \eqref{eq:lmmain}. Assume first that $\Vert u_\a \Vert_{L^\infty(\O)}$ is uniformly bounded as $\a \to + \infty$: in this case standard elliptic theory shows that $u_\a \to u_0$ in $C^{2k-1}(\oO)$, and \eqref{eq:main:ctldua} follows with $N=0$. 

We may thus assume that $u_\a$ blows up, that it that $\Vert u_\a \Vert_{L^\infty(\O)} \to + \infty$ up to a subsequence. By \eqref{def:La}, it is easily seen that $(u_\a)_\a$ is a Palais-Smale sequence for the functional $I$ on $\Sob_0(\O)$ defined in \eqref{def:funcI}. The result of \cite{Maz17} then shows that there exists an integer $N \ge 1$ and $N$ bubbles $\Va$ as in Definitions \ref{def:bub:int} or \ref{def:bub:bdr}, represented by triplets $[(\xa)_\a,(\ma)_\a, v^i]$ and satisfying \eqref{eq:struc}, such that 
  \[  u_\a = u_0 + \sumi \Va + \phi_\a,
  \]
  where $\phi_\a \to 0$ in $\Sob_0(\O)$. We define
  \begin{equation*}
    \phi_\a^\perp = \Pi_{\Ker_\a^\perp}(\phi_\a) \quad \text{and} \quad  \phi_\a^{\parallel} = \Pi_{\Ker_\a}(\phi_\a),
  \end{equation*}
where the projection is taken for the $H^k_0(\O)$ scalar product \eqref{norm:Dk2}, and where $\Ker_\a$ is as in \eqref{def:Ka}. By definition, there exist $(\nu_\a^{ij})_\a$, $i=0,\ldots, N$, $j=1,\ldots, d_i$ such that 
  \[  \phi_\a^\parallel = \sumij \nu_\a^{ij} \Za, \quad \text{ with } \sumij |\nu_\a^{ij}|\lesssim \Vert \phi_\a \Vert_{H^k_0(\O)} \to 0.
  \]
We may thus define a bubble-tree $\Wa$ associated to $u_\a$ as follows: 
  \[  \Wa = u_0 + \sumi \Va + \sumij \nu_\a^{ij} \Za,
  \]
  so that the above decomposition rewrites as 
  \[  u_\a = \Wa + \phi_\a^\perp \quad \text{ with } \quad \phi_\a^\perp \in \Ker_\a^{\perp}.
  \]
$\Wa$ as above satisfies the assumptions of Propositions \ref{prop:nlinHk} and \ref{prop:nlinpt}. In particular, Proposition \ref{prop:nlinpt} ensures, up to a subsequence, the existence of a unique function 
  \[ \hat{\phi}_\a \in \Ker_\a^\perp \cap \{\phi \in C^{2k-1}(\oO)\,:\, \ns{\phi_\a} \leq \eta_\a\}
  \]
that solves \eqref{eq:nlinHk} and satisfies $\Vert \hat{\phi}_\a \Vert_{H^k_0(\O)}\to 0$   for some sequence of positive numbers $\eta_\a \to 0$. By construction $u_\a = \Wa + \phi_\a^\perp$ with  $\phi_\a^\perp \in \Ker_\a^\perp$ and $\Vert \phi^\perp_\a \Vert_{H^k_0(\O)}\to 0$, and $\phi_\a^{\perp}$ satisfies \eqref{eq:nlinHk} since $\Wa + \phi_\a$ satisfies \eqref{eq:main}. We may thus apply the uniqueness result in Proposition \ref{prop:nlinHk} which shows, up to a subsequence, that $\phi_\a^\perp = \hat{\phi}_\a$. Thus our initial solution $u_\a$ actually satisfies $u_\a = \Wa + \hat{\phi}_\a$, and by definition of $\hat{\phi}_\a$ we have $\Vert u_\a - \Wa \Vert_* = \Vert \hat{\phi}_\a \Vert_*  \le \eta_\a \to 0$ as $\a \to + \infty$. Coming back to the definition of $\Wa$, using  \eqref{eq:IZa} and since $\nu_\a^{ij}  \to 0$ as $\a \to + \infty$ for every $i,j$ we finally get 
  \[  \left\ns{u_\a - u_0 - \sumi \Va\right} \to 0 \qquad \text{as }\a \to \infty
  \]
  up to a subsequence, which is exactly \eqref{eq:main:ctldua}. 
\end{proof}

\subsection{Generalisations of Theorem \ref{prop:main}}

The strategy of proof of Theorem \ref{prop:main} was initially developed in \cite{Pre24} for critical second-order equations on closed manifolds without boundary and we generalise it in this paper to polyharmonic critical equations on domains of $\R^n$ with Dirichlet boundary conditions. The strategy of proof that we follow here is flexible and allows for further generalisations. If one decides to complement the equation $Lu = |u|^{2^\sharp-2}u$ in $\Omega$ (or its perturbations $ L_\a u_\a= |u_\a|^{\crit-2} u_\a$) with boundary conditions that are not Dirichlet --- Navier or Steklov, for instance ---, one can still expect to prove global pointwise estimates on blowing-up solutions and to obtain an analogue of Theorem \ref{prop:main} to this new setting. The final argument in the proof of  Theorem \ref{prop:main} (the nonlinear procedure of Proposition \ref{prop:nlinpt}) is indeed very general and reduces the proof of Theorem \ref{prop:main} to proving the linear invertibility of Theorem \ref{prop:ptinv}. A careful inspection of the proof of Theorem \ref{prop:ptinv} shows that it relies on the following three fundamental ingredients: 
\begin{itemize}
\item The validity of a Struwe-type decomposition such as \eqref{eq:decHk} for energy-bounded sequences of solutions to the problem, measured with respect to a Hilbertian norm
\item Sharp pointwise decay for the interior and boundary bubbles that may arise in the analysis, that is for the finite-energy solutions of the limiting problems \eqref{eq:critRn} and \eqref{eq:crithf} (Proposition \ref{prop:ctlsol} in this paper)
\item Pointwise estimates on the Green's function of the operator $L- (2^\sharp-1) |u_0|^{2^\sharp-2}$ with the chosen boundary conditions (Dirichlet for us here, see e.g. the proof of \eqref{eq:I2phi})
\end{itemize} 
 Provided each of the three points above is satisfied for the new boundary condition considered, it is likely that a step-by-step adaptation of the proof of Theorem \ref{prop:ptinv} will yield a similar result in this new setting. Verifying each one of these three points will of course require an \emph{ad hoc} analysis. Similarly, since the proof of Theorem \ref{theo:compactness} relies on a Pohozaev argument, it is likely that it will adapt to other boundary conditions to yield new compactness results for different boundary conditions. 
Struwe-type decompositions, for instance, are known for biharmonic problems ($k=2$) with Navier boundary conditions by \cite[Lemma 7.74]{GazGruSw10}, and it is therefore likely that Theorem \ref{theo:compactness} possesses a close analogue to the Navier setting, at least when $k=2$. We do not pursue this further in this paper.

\section{Proof of Theorem \ref{theo:compactness}} \label{sec:pohozaev}

In this section we prove Theorem~\ref{theo:compactness}. We rely on the pointwise description given by Theorem~\ref{prop:main} to estimate the contribution of each bubble to the blow-up by means of suitable Pohozaev identities and to obtain a contradiction.

\begin{proof}[Proof of Theorem~\ref{theo:compactness}]
We let $0 \le p \le k-1$ be a fixed integer and $\bold{A}_* \in \mathcal{A}_p$ as in \eqref{def:A}, and we recall that $\bold{A}_* = (A_0, \cdots, A_p)$. We assume that $\bold{A}_*$ satisfies \eqref{cond:signchanging} and that $n > 6k-4p$. We define $L$ as in \eqref{def:L}. We first claim that there exist positive real numbers $\delta = \delta (n, \Lambda, \bold{A}_*)$ and $C = C(n,\Lambda, \bold{A}_*)$ such that 
\begin{equation} \label{contra:finale:0}
 \sup_{\Vert \bold{A}_* - \bold{A} \Vert_{p}< \delta} \sup_{u \in \mathcal{S}_{\bold{A}, \Lambda}} \Vert u \Vert_{C^{0}(\overline{\O})} \le C. 
 \end{equation}
We proceed by contradiction and we assume that there is a sequence $(\bold{A}_\a)_\a \in \mathcal{A}_p$ such that $\Vert \bold{A}_\a - \bold{A}_* \Vert_{\mathcal{A}_p} \to 0$ as $\a \to + \infty$ and that there is a sequence $(u_\a)_\a$ of solutions of \eqref{eq:main} with $L_\a$ given by \eqref{def:La} that satisfies $\Vert u_\a \Vert_{H^k_0(\O)} \le \Lambda$ such that, up to passing to a subsequence,
\begin{equation} \label{contra:finale} 
 \Vert u_\a \Vert_{L^\infty(\O)} \to + \infty \quad \text{ as } \a \to + \infty.
 \end{equation}
The result of \cite{Maz17} again shows that there exist a solution $u_0$ of \eqref{eq:lmmain}, an integer $N \ge 1$ and $N$ bubbles $\Va$ as in Definitions \ref{def:bub:int} or \ref{def:bub:bdr}, represented by triplets $[(\xa)_\a,(\ma)_\a, v^i]$ and satisfying \eqref{eq:struc}, such that \eqref{eq:decHk} holds. In particular, Theorem~\ref{prop:main} applies and shows that \eqref{eq:main:ctldua} holds true. Following \eqref{eq:renum}, up to passing to a subsequence and up to renumbering the bubbles, we  assume that 
\begin{equation}\label{eq:renum:comp}
    \ma[N] \leq \ldots\leq \ma[1]< 1
\end{equation}
for all $\a$. We will perform an asymptotic analysis near $\xa[1]$, that is the center of the less concentrated bubble.  Throughout this proof we will fix a radius $\rho >0$ chosen so that, up to a subsequence, the following holds:
\ben \label{def:Oa:comp}
\O_\a  = \O \cap B \big(\xa[1], \rho \sqrt{\ma[1]} \big)
\een
 is a Lipschitz domain for all $\a \ge 1$, and for any $i \in \{2, \cdots, N \}$ we have 
\ben  \label{def:rho:comp}
\begin{aligned}
 \text{ either } \quad   |\xa - \xa[1]| = \smallo (\sqrt{\ma[1]} ) \quad  \text{ or }  \quad  |\xa - \xa[1]| \ge 2 \rho \sqrt{\ma[1]} . \\
\end{aligned}
\een 
Such a radius $\rho$ is easily seen to exist. In this proof we will also denote by $(\xi_\a)_\a$ a sequence of points in $\R^n$, which will be chosen later (and may not belong to $\Omega$), and which satisfies 
\ben \label{def:xi:comp}
|\xi_\a - \xa[1]| \le 2 \rho \sqrt{\ma[1]}.
\een
We now apply the Pohozaev identity of Proposition ~\ref{prop.poho} below (equation  \eqref{poho.id0}), with $f \equiv 1, p = 2^\sharp$ and $\xi = \xi_\a$ in $\O_\a$: we obtain that 
\begin{equation} \label{poho.id0:comp}
\begin{aligned}
\mathcal{P}_{k}(\O_\a;u_\a) & =\int_{\O_\a}\left(\frac{n-2k}{2}u_\a+(x-\xi_\a)^{a}\partial_{a}u_\a\right)\mathcal{E}(u_\a)\,dx \\
& +\frac{n-2k}{2n}\int_{\partial \O_\a} (x-\xi_\a, \nu)|u_\a|^{2^\sharp}\,d\sigma ,\\
\end{aligned}
\end{equation}
where $\mathcal{E}(u_\a) =  (-\Delta)^k u_\a-|u_\a|^{2^\sharp-2}u_\a$ and where $\mathcal{P}_{k}(\O_\a;u_\a) $ is given by \eqref{poho.id01} below. In \eqref{poho.id0:comp} we used Einstein's convention according to which repeated and raised indices are summed over, so that $(x-\xi_\a)^{a}\partial_{a}u_\a = \sum_{a=1}^n (x- \xi_\a)_a \partial_a u_\a$. We estimate each term in \eqref{poho.id0:comp} separately to obtain a contradiction. We first prove the following technical result: 

\begin{lemma} \label{lem:calculs:integrales}
\textbullet Assume that $k \ge 1$ and let $0 \le l \le k-1$. Then, as $\a \to + \infty$
  $$ \bal
  \int_{\O_\a}   |\nabla^l u_\a|^2 \,dx= \bigO \big( (\ma[1])^{2(k-l)} \big) + \smallo \big( (\ma[1])^{\frac{n-2k}{2}}\big). 
 \eal $$ 
\textbullet Assume that $k=1$. Then as $\a \to + \infty$
  $$ \bal
  \int_{\O_\a}  |x - \xi_\a| |u_\a| |\nabla u_\a| \,dx= \bigO( (\ma[1])^{\frac32}). 
 \eal $$ 
\end{lemma}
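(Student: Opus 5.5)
The plan is to use the global pointwise description of Theorem~\ref{prop:main}. For $0 \le l \le 2k-1$ it gives, in $\overline{\O}$,
$$|\nabla^l u_\a| \lesssim |\nabla^l u_0| + \sumi |\nabla^l \Va| + \ve_\a\Big(1+\sumi \ta^{-l}\Ba\Big) \lesssim 1 + \sumi \ta^{-l}\Ba ,$$
using \eqref{est:bubble} and $u_0 \in C^{2k}(\oO)$. We then integrate the square of this bound over $\O_\a = \O\cap B(\xa[1],\rho\sqrt{\ma[1]})$, treating the constant term and each bubble term separately.

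The constant part contributes $|\O_\a| \lesssim (\ma[1])^{n/2}$. This is $\smallo((\ma[1])^{\frac{n-2k}{2}})$, since $n/2 > (n-2k)/2$.

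For bubble $i$ we compute $\int_{\R^n} \ta^{-2l}\Ba^2\,dx$ by rescaling $x = \xa + \ma y$. This gives
$$(\ma)^{2(k-l)}\int (1+|y|)^{2(2k-n-l)}\,dy \quad \text{truncated at } |y| \lesssim \sqrt{\ma[1]}/\ma .$$
If $n > 4k-2l$ the integral converges and the term is $\bigO((\ma)^{2(k-l)}) = \bigO((\ma[1])^{2(k-l)})$ by \eqref{eq:renum:comp}. Otherwise the truncated integral grows like $(\sqrt{\ma[1]}/\ma)^{4k-2l-n}$, with a logarithm in the equality case. The term is then bounded by
$$(\ma)^{2(k-l)}\Big(\tfrac{\sqrt{\ma[1]}}{\ma}\Big)^{4k-2l-n}\log = (\ma)^{n-2k}(\ma[1])^{\frac{4k-2l-n}{2}}\log \le (\ma[1])^{\frac{n-2l}{2}}\log .$$
This is $\smallo((\ma[1])^{\frac{n-2k}{2}})$ since $l<k$. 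In every case the total is $\bigO((\ma[1])^{2(k-l)}) + \smallo((\ma[1])^{\frac{n-2k}{2}})$.

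Two refinements are needed. First, the rough bound $1 + \sum\ta^{-l}\Ba$ suffices here, but the cross term "$1\times$bubble" must be controlled by Cauchy--Schwarz, which is harmless. Second, the $\ve_\a$ factor in the error is what converts $\bigO$ into $\smallo$ for the constant term; however, $u_0$ itself contributes only $(\ma[1])^{n/2}$, so it is not really needed.

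For the second bullet ($k=1$) we use $|x-\xi_\a| \lesssim \sqrt{\ma[1]}$ on $\O_\a$ by \eqref{def:xi:comp}. By Cauchy--Schwarz it then suffices to bound $\|u_\a\|_{L^2(\O_\a)}\|\nabla u_\a\|_{L^2(\O_\a)}$. The first bullet gives $\|u_\a\|^2_{L^2(\O_\a)} = \bigO(\ma[1]^2)+\smallo(\ma[1]^{\frac{n-2}{2}})$, which is $\bigO(\ma[1]^2)$ when $n\ge 6$. For $n\le 5$ the computation above gives $(\ma[1])^{n/2}\log$ directly, which is still at least as good as needed since the target exponent is only $3/2$. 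For the gradient, $\|\nabla u_\a\|_{L^2}$ is bounded by the energy bound $\Lambda$, so it is $\bigO(1)$. This gives the total $\sqrt{\ma[1]}\cdot\ma[1]\cdot 1 = (\ma[1])^{3/2}$. In low dimensions ($n=3$) $\|u_\a\|_{L^2(\O_\a)}\lesssim (\ma[1])^{3/4}$, so a sharper estimate is needed there. In that case I would estimate $|u_\a||\nabla u_\a| \lesssim \sum_{i,j}\Ba\,\ta[j]^{-1}\Ba[j]$ pointwise and integrate directly, giving $\ma\cdot\sqrt{\ma[1]}$-type contributions.

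The main obstacle is the bookkeeping in the divergent regime, where $n \le 4k-2l$. There one must check that the truncation radius $\sqrt{\ma[1]}$ combined with $\ma\le\ma[1]$ always yields an exponent strictly larger than $\frac{n-2k}{2}$. One also has to handle the constant (zero-th bubble) cross terms uniformly in $i$.
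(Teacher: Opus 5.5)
Your first bullet is the paper's argument. You use the same pointwise bound \eqref{est:int:comp:5} from Theorem~\ref{prop:main}, the constant part contributes $(\mu^1_\alpha)^{n/2}$, and you split into the same three regimes $n>4k-2l$, $n=4k-2l$, $n<4k-2l$, arriving at the same exponents $2(k-l)$ and $\frac n2-l$.

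The one point to make explicit is bubbles centred far from $x^1_\alpha$. Your truncation at $|y|\lesssim\sqrt{\mu^1_\alpha}/\mu^i_\alpha$ is literally correct only when $|x^i_\alpha-x^1_\alpha|=O(\sqrt{\mu^1_\alpha})$. In general it is still a valid upper bound, because $(\theta^i_\alpha)^{-2l}(B^i_\alpha)^2$ is radially decreasing about $x^i_\alpha$. Hence among balls of radius $\rho\sqrt{\mu^1_\alpha}$, the one centred at $x^i_\alpha$ maximises the integral. The paper instead uses the dichotomy \eqref{def:rho:comp} together with $\theta^i_\alpha\gtrsim\sqrt{\mu^1_\alpha}$ on $\Omega_\alpha$ for far bubbles.

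For the second bullet you take a different route from the paper. The paper integrates a pointwise bound on $|u_\alpha||\nabla u_\alpha|$ bubble by bubble. You combine $|x-\xi_\alpha|\lesssim\sqrt{\mu^1_\alpha}$, Cauchy--Schwarz, the first bullet with $l=0$, and the global energy bound $\|\nabla u_\alpha\|_{L^2(\Omega)}^2\le\Lambda$. This is shorter and avoids estimating mixed products of different bubbles. However, it yields $O((\mu^1_\alpha)^{3/2})$ only when $\|u_\alpha\|^2_{L^2(\Omega_\alpha)}=O((\mu^1_\alpha)^2)$, which via the first bullet means $n\ge 6$.

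That is enough, and you should say why. The lemma sits inside the proof of Theorem~\ref{theo:compactness}, where $n>6k-4p$ is assumed, so $k=1$ forces $p=0$ and $n\ge 7$.

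Invoke this hypothesis and delete your low-dimensional paragraph, which is incorrect.
\begin{itemize}
\item For $n=4$ the bubble computation gives only $\|u_\alpha\|^2_{L^2(\Omega_\alpha)}\lesssim(\mu^1_\alpha)^2\log(1/\mu^1_\alpha)$, and this is sharp for a standard bubble. Cauchy--Schwarz therefore leaves an extra factor $\sqrt{\log(1/\mu^1_\alpha)}$. The exponent $\frac32$ is the target for the whole integral, not for $\|u_\alpha\|^2_{L^2}$.
\item For $n=3$ your proposed direct estimate does not give contributions of the form $\mu^i_\alpha\sqrt{\mu^1_\alpha}$. The diagonal term alone is $\mu^1_\alpha\int_{\Omega_\alpha}(\theta^1_\alpha)^{-3}\,dx\sim\mu^1_\alpha\log(1/\mu^1_\alpha)$. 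Once $|x-\xi_\alpha|$ is of order $\sqrt{\mu^1_\alpha}$ near $x^1_\alpha$, this yields $(\mu^1_\alpha)^{3/2}\log(1/\mu^1_\alpha)$.
\end{itemize}
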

\begin{proof}
Let us first observe that using \eqref{eq:main:ctldua} we have, for any $x \in \overline{\O}$ and $0 \le l \le 2k-1$,
\ben \label{est:int:comp:5}
 |\nabla^l u_\a(x)| \lesssim 1 + \sum_{i=1}^N \ta(x)^{-l} \Ba(x). 
 \een
Assume first that $k \ge 1$ and $0 \le l \le p-1$. As a consequence of \eqref{def:Oa:comp}, \eqref{est:int:comp:5} and with \eqref{def:Ba} we have
$$\int_{\O_\a}   |\nabla^l u_\a|^2 \,dx \lesssim (\ma[1])^{\frac{n}{2}} + \sum_{i=1}^N \int_{\O_\a}  \Big( \ta(x)^{-l} \Ba(x) \Big)^2 dx. $$
 Let $i \in \{2, \cdots, N \}$ be fixed. We use \eqref{def:rho:comp}: if $|\xa - \xa[1]| \ge 2 \rho \sqrt{\ma[1]} $ then $\ta(x) \gtrsim \sqrt{\ma[1]}$ for any $x \in \O_\a$, and thus 
 \ben \label{calc:bulle:1}
 \bal 
\int_{\O_\a}  \Big( \ta(x)^{-l} \Ba(x) \Big)^2 dx &\lesssim  (\ma)^{n-2k} (\ma[1])^{-\frac{n}{2} + 2k - l} & \lesssim (\ma[1])^{\frac{n}{2}-l }, 
 \eal 
 \een
  where we used \eqref{eq:renum:comp} for the last inequality. If now  $|\xa - \xa[1]| = \smallo (\sqrt{\ma[1]} )$, by \eqref{def:Oa:comp} we have  $\O_\a \subset \O \cap B \big(\xa, 2\rho \sqrt{\ma[1]} \big)$ for $\a$ large enough and direct computations using \eqref{eq:renum:comp} show that 
   \ben \label{calc:bulle:2}
\int_{\O_\a}  \Big( \ta(x)^{-l} \Ba(x) \Big)^2 dx\lesssim
 \left \{
 \bal 
 & (\ma)^{2k - 2l} & \text{ if } n > 4k - 2l \\
 & (\ma)^{2k - 2l} \log \frac{1}{\ma} & \text{ if } n = 4k - 2l \\
& (\ma)^{n-2k} (\ma[1])^{-\frac{n-2k}{2} + k- l} & \text{ if } n < 4k - 2l \\
 \eal \right \}. 
 \een
Thus we obtain, using again \eqref{eq:renum:comp}, that 
  \begin{equation*}   
  \bal \int_{\O_\a}   |\nabla^l u_\a|^2 \,dx & \lesssim
 \left \{
 \bal 
 & (\ma[1])^{2k - 2l} & \text{ if } n > 4k - 2l \\
 & (\ma[1])^{2k - 2l} \log \frac{1}{\ma[1]} & \text{ if } n = 4k - 2l \\
& (\ma[1])^{\frac{n}{2}- l} & \text{ if } n < 4k - 2l \\
 \eal \right \} +  (\ma[1])^{\frac{n}{2}-l} \\
 &  = \bigO \big( (\ma[1])^{2(k-l)} \big) + \smallo\big( (\ma[1])^{\frac{n-2k}{2}}\big),
 \eal 
 \end{equation*}
where in the last equality we observed that, since $n >2k$ and $0 \le l \le k-1$, we have $\frac{n}{2}-l = \frac{n}{2}-k + k-l > \frac{n-2k}{2}$. This proves the first part of Lemma~\ref{lem:calculs:integrales}.

We now prove the second part. Arguing as in \eqref{est:int:comp:5} we find that 
$$ u_\a(x) |\nabla u_\a(x)| \lesssim 1 + \sum_{i=1}^N \ta(x)^{-1} \Ba(x) \quad \text{ for } x \in \O. $$ 
The proof  then follows from similar computations than in \eqref{calc:bulle:1} and \eqref{calc:bulle:2} and from the observation that, by \eqref{def:xi:comp}, we have $ |x - \xi_\a| \lesssim \sqrt{\ma[1]}$ for every $x \in \O_\a$. 
 \end{proof}

We next estimate the bulk integral in \eqref{poho.id0:comp}: 

\begin{lemma}
We have, as $\a \to + \infty$:
 \ben \label{est:int:comp:6}
 \bal
& \int_{\O_\a}\left(\frac{n-2k}{2}u_\a+(x-\xi_\a)^{a}\partial_{a}u_\a\right)\mathcal{E}(u_\a)\,dx \\ &=  \int_{\O_\a} A_{p} (\xa[1])(\nabla^{p} u_\a, \nabla^{p} u_\a ) dx  
+  \smallo\big( (\ma[1])^{2(k-p)}\big)+ \smallo \big( (\ma[1])^{\frac{n-2k}{2}} \big),
\eal
 \een
 where we recall that $p$ is as in \eqref{def:L}. 
\end{lemma}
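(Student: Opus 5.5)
Since $u_\a$ solves \eqref{eq:main}, we have $\mathcal{E}(u_\a) = -\sum_{l=0}^p (-1)^l \nabla^l\big(A_{l,\a}(\nabla^l u_\a)\big)$. So the bulk integral becomes a sum over $0\le l\le p$ of
$$-\int_{\O_\a}\Big(\tfrac{n-2k}{2}u_\a+(x-\xi_\a)^a\partial_a u_\a\Big)(-1)^l\nabla^l\big(A_{l,\a}(\nabla^l u_\a)\big)\,dx .$$
I will handle each $l$ by integrating by parts $l$ times, moving the $l$ derivatives onto the multiplier $X_\a u_\a := \frac{n-2k}{2}u_\a+(x-\xi_\a)^a\partial_a u_\a$. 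This produces an interior term $\int_{\O_\a} A_{l,\a}\big(\nabla^l u_\a, \nabla^l (X_\a u_\a)\big)$ and boundary terms on $\partial\O_\a$. The commutator structure gives
$$\nabla^l(X_\a u_\a) = X_\a(\nabla^l u_\a) + l\,\nabla^l u_\a .$$
For the piece $(x-\xi_\a)^a\, A_{l,\a}(\nabla^l u_\a, \partial_a\nabla^l u_\a)$, one more integration by parts, using the symmetry of $A_{l,\a}$, turns it into $-\tfrac n2\int A_{l,\a}(\nabla^l u_\a,\nabla^l u_\a)$. This costs a term $-\tfrac12\int (x-\xi_\a)^a(\partial_a A_{l,\a})(\nabla^l u_\a,\nabla^l u_\a)$ plus a boundary term. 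Collecting the coefficients gives
$$\frac{n-2k}{2}+l-\frac n2 = l-k,$$
so for each $l$ the main contribution is $(k-l)\int_{\O_\a}A_{l,\a}(\nabla^l u_\a,\nabla^l u_\a)$, up to the overall sign convention. With a suitable normalisation (I expect the constant $k-p$ gets absorbed, or the statement is read up to this constant) this yields the leading term for $l=p$.

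Next I estimate the errors.

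\textbf{The terms with $l<p$.} By the first bullet of Lemma~\ref{lem:calculs:integrales}, they are $\bigO((\ma[1])^{2(k-l)})+\smallo((\ma[1])^{(n-2k)/2})$, and $2(k-l)>2(k-p)$, so they are $\smallo((\ma[1])^{2(k-p)})$.

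\textbf{The derivative-of-coefficient terms.} Since $A_{l,\a}\in C^l$, with $C^1$ for $A_0$, and $|x-\xi_\a|\lesssim\sqrt{\ma[1]}$ on $\O_\a$ by \eqref{def:xi:comp}, these carry an extra factor $\sqrt{\ma[1]}$. They are therefore small relative to the main terms. When $l\ge1$, integrating by parts produces terms where derivatives fall on $A_{l,\a}$; these are controlled by mixed products $|\nabla^{l'}u_\a||\nabla^{l''}u_\a|$ and treated by Cauchy--Schwarz together with Lemma~\ref{lem:calculs:integrales}. For $k=1$ the second bullet of that lemma handles the $|x-\xi_\a||u_\a||\nabla u_\a|$ term.

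\textbf{Freezing the coefficient.} I replace $A_{p,\a}(x)$ by $A_p(\xa[1])$. The cost is at most
$$\big(\|A_{p,\a}-A_p\|_{C^0}+\sup_{\O_\a}|A_p-A_p(\xa[1])|\big)\int_{\O_\a}|\nabla^p u_\a|^2 ,$$
and the prefactor tends to $0$ because $\O_\a$ shrinks. Combined with the lemma, this gives $\smallo((\ma[1])^{2(k-p)})+\smallo((\ma[1])^{(n-2k)/2})$.

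\textbf{Main obstacle: the boundary terms on $\partial\O_\a$.} This boundary has two parts, $\partial\O\cap B(\xa[1],\rho\sqrt{\ma[1]})$ and $\O\cap\partial B(\xa[1],\rho\sqrt{\ma[1]})$.
On $\partial\O$, the Dirichlet conditions $\nabla^j u_\a=0$ for $j\le k-1$ hold. Since $l\le p\le k-1$, every boundary term contains a factor $\nabla^j u_\a$ with $j\le l\le k-1$, evaluated on $\partial\O$, so it vanishes.
On the sphere part, I use the pointwise bound \eqref{est:int:comp:5}. By the dichotomy \eqref{def:rho:comp}, every bubble centre is either at distance $\ge 2\rho\sqrt{\ma[1]}$ from $\xa[1]$ or at distance $\smallo(\sqrt{\ma[1]})$ from it. Hence on that sphere $\ta\gtrsim\sqrt{\ma[1]}$ for all $i$, and $|\nabla^j u_\a|\lesssim 1+(\ma[1])^{\frac{n-2k}{4}}(\ma[1])^{-\frac{n-2k+j}{2}}$. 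Multiplying these bounds with the surface measure $(\ma[1])^{(n-1)/2}$ and with $|x-\xi_\a|\lesssim\sqrt{\ma[1]}$ shows the sphere terms are $\bigO((\ma[1])^{\frac n2-l})+\bigO((\ma[1])^{(n-2k)/2}\cdot\text{small})$. I need to check exponent by exponent that these are $\smallo((\ma[1])^{(n-2k)/2})$; I expect this bookkeeping to be the delicate step, possibly requiring averaging $\rho$ over an interval to gain an extra factor.
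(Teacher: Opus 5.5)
Your route is the paper's: $l$ integrations by parts using $\nabla^l\big((x-\xi_\a)^a\partial_a u_\a\big)=(x-\xi_\a)^a\partial_a\nabla^l u_\a+l\nabla^l u_\a$, then one more on the Euler term to get $-\frac n2\int_{\O_\a}A_{l,\a}(\nabla^l u_\a,\nabla^l u_\a)\,dx$. The Dirichlet conditions kill everything on $\partial\O\cap B(\xa[1],\rho\sqrt{\ma[1]})$, and Lemma~\ref{lem:calculs:integrales} handles the terms with $l<p$, the $\partial_a A_{l,\a}$ term and the freezing of $A_{p,\a}$. Treating $l=0$ like $l\ge1$ via $A_0\in C^1$ is legitimate and slightly cleaner than the paper's separate freezing of $A_{0,\a}$, and it makes the second bullet of that lemma unnecessary. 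Derivatives of $A_{l,\a}$ only appear in boundary terms and in the single $\partial_a A_{l,\a}$ term, so no Cauchy--Schwarz on mixed products is needed.

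The step you left open, the terms on $S_\a=\O\cap\partial B(\xa[1],\rho\sqrt{\ma[1]})$, does not close with the bound you wrote, but it is routine. On $S_\a$ one has $\ta(x)\gtrsim\sqrt{\ma[1]}$ for every $i$, and $\ma\le\ma[1]$. Hence $\ta(x)^{-j}\Ba(x)\lesssim(\ma)^{\frac{n-2k}{2}}(\ma[1])^{-\frac{n-2k+j}{2}}\le(\ma[1])^{-\frac j2}$, i.e.\ $|\nabla^j u_\a|\lesssim(\ma[1])^{-j/2}$ on $S_\a$, which is \eqref{est:int:comp:2}. The prefactor is $(\ma[1])^{\frac{n-2k}{2}}$, not $(\ma[1])^{\frac{n-2k}{4}}$. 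With your prefactor you would only get $O((\ma[1])^{k-l})$ for the sphere terms, which for $l=p$ is not $\smallo((\ma[1])^{\frac{n-2k}{2}})$ since $n>4k-2p$.

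With the correct bound, each sphere term from the $l$ integrations by parts pairs two factors:
\begin{itemize}
\item $\nabla^m(X_\a u_\a)$ with $m\le l-1$, which is $O((\ma[1])^{-m/2})$ because $|x-\xi_\a|\lesssim\sqrt{\ma[1]}$;
\item $l-1-m$ derivatives of $A_{l,\a}(\nabla^l u_\a)$, which is $O((\ma[1])^{-(2l-1-m)/2})$ because $A_{l,\a}\in C^l$.
\end{itemize}
Integrating over $S_\a$, whose area is $O((\ma[1])^{\frac{n-1}{2}})$, gives $O((\ma[1])^{\frac n2-l})$. The extra term $\frac12\int_{S_\a}(x-\xi_\a,\nu)A_{l,\a}(\nabla^l u_\a,\nabla^l u_\a)\,d\sigma$ is of the same order. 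As $l\le k-1$, this is $O((\ma[1])^{\frac{n-2k}{2}+1})$, so no averaging in $\rho$ is needed.

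On the constant, do not hedge: your coefficient $k-l$ is right. Tracking signs, the bulk integral is $-\sum_{l=0}^p\int_{\O_\a}A_{l,\a}\big(\nabla^l(X_\a u_\a),\nabla^l u_\a\big)\,dx$ plus boundary terms, and $-\big[\big(\frac{n-2k}{2}+l\big)-\frac n2\big]=k-l$. So the leading term is $+(k-p)\int_{\O_\a}A_p(\xa[1])(\nabla^p u_\a,\nabla^p u_\a)\,dx$.

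No normalisation absorbs this factor. For $k=2$, $p=0$, $A_0\equiv-\lambda$, the bulk term is $\lambda\big(\frac{n-4}{2}-\frac n2\big)\int_\O u^2=-2\lambda\int_\O u^2$, as in the classical biharmonic Pohozaev identity. Chaining the paper's own \eqref{est:int:comp:3} and \eqref{est:int:comp:4} gives the same coefficient, so the identity as displayed is missing the factor $k-p$ unless $p=k-1$.

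This is harmless downstream: $k-p\ge1$ only rescales $C_0$ in \eqref{est:int:comp:8bis} without changing its sign. You should state and prove the lemma with the factor $k-p$.
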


\begin{proof}
Using \eqref{def:La} and \eqref{eq:main} we have 
$$ \mathcal{E}(u_\a) = - \sum_{l=0}^{p}(-1)^l \nabla^l \big(A_{l,\a}(\nabla^l u_\a \,)\big) \quad \text{ in } \O, $$
so that 
\ben \label{est:int:comp}
 \bal
 &\int_{\O_\a}\left(\frac{n-2k}{2}u_\a+(x-\xi_\a)^{a}\partial_{a}u_\a\right)\mathcal{E}(u_\a)\,dx \\
 & =  \sum_{l=0}^{p}\int_{\O_\a} (-1)^{l+1} \left(\frac{n-2k}{2}u_\a+(x-\xi_\a)^{a}\partial_{a}u_\a\right)\nabla^{l} \big(A_{l,\a}(\nabla^{l} u_\a \,)\big). \\
 \eal 
 \een

Let $l \in \{0, \cdots, p\}$ be fixed.  Assume first that $k \ge 2$ and $0 \le l \le k-1$. We have 
 $$ \nabla^{l} \big( (x-\xi_\a)^a \partial_a u_\a \big) = (x-\xi_\a)^a \partial_a \nabla^{l} u_\a + l \nabla^{l} u_\a. $$
We now use the latter to integrate the right-hand side of \eqref{est:int:comp} by parts $l$ times. By definition of $\O_\a$ in \eqref{def:Oa:comp}, when integrating by parts two boundary integrals arise: one over $\partial \O \cap B \big(\xa[1], \rho \sqrt{\ma[1]} \big)$ and one over $\O \cap \partial B \big(\xa[1], \rho \sqrt{\ma[1]} \big)$. The boundary integral in $\partial \O \cap B \big(\xa[1], \rho \sqrt{\ma[1]} \big)$ is zero since $u_\a$ satisfies Dirichlet boundary conditions and hence $u_\a$ and $|\nabla u_\a|$ vanish in $\partial \O$ (since $k \ge 2$). To estimate the boundary integrals in $\O \cap \partial B \big(\xa[1], \rho \sqrt{\ma[1]} \big)$ we observe that by \eqref{def:rho:comp}, for any $x \in \O \cap \partial B(\xa[1], \rho \sqrt{\ma[1]})$ and any bubble $i \in \{2, \cdots, N\}$ we have $|\xa - x| \gtrsim \sqrt{\ma[1]}$. The latter with \eqref{eq:renum:comp} and \eqref{est:int:comp:5} then shows that, for $0 \le l \le 2k-1$,
\ben \label{est:int:comp:2}
 |\nabla^l u_\a(x)| \lesssim (\ma[1])^{- \frac{l}{2}} \quad \text{ for any } x \in \O \cap \partial B(\xa[1], \rho \sqrt{\ma[1]}).
 \een
 With these considerations, and since by \eqref{def:xi:comp} we have $|\xi_\a - x| \lesssim \sqrt{\ma[1]}$ for any $x \in \O \cap \partial B(\xa[1], \rho \sqrt{\ma[1]})$, we obtain 
 \ben \label{est:int:comp:3}
 \bal
& \int_{\O_\a} \left(\frac{n-2k}{2}u_\a+(x-\xi_\a)^{a}\partial_{a}u_\a\right)\nabla^{l} \big(A_{l,\a}(\nabla^{l} u_\a \,)\big) dx \\
 & = \int_{\O_\a} (-1)^{l} A_{l,\a}\left( \Big(\frac{n}{2} - k + l \Big)\nabla^l u_\a+(x-\xi_\a)^{a}\partial_{a} \nabla^l u_\a, \nabla^{l} u_\a \right) dx \\
 & + \smallo( (\ma[1])^{\frac{n-2k}{2}}),
 \eal 
 \een
 where we again used that $(\ma[1])^{\frac{n}{2}-l} = \smallo \big( (\ma[1])^{\frac{n-2k}{2}}\big)$ since $l \le k-1$.  Remark also that expression \eqref{est:int:comp:3} remains true even in the case $k=1, l=0$, where it is tautological. Recall that $A_{l,\a}$ is a symmetric continuous $(2l,0)$-tensor in $\overline{\O}$. As a consequence,
 $$ \bal
 A_{l,\a}\big( (x-\xi_\a)^{a}\partial_{a} \nabla^l u_\a, \nabla^{l} u_\a \big) & =  (x-\xi_\a)^{a}A_{l,\a}\big( \partial_{a} \nabla^l u_\a, \nabla^{l} u_\a \big) \\
 &= (x-\xi_\a)^{a}A_{l,\a}\big( \nabla^l u_\a, \partial_{a} \nabla^l u_\a \big).
 \eal$$ 
 Assume first that $l \ge 1$ (this only occurs if $p \ge 1$ and hence $k \ge 2$). Since $A_{l, \a}$ is of class $C^1$ in $\overline{\O}$ we may write that 
 $$ \bal 
 \partial_{a} \Big[A_{l,\a}\big( \nabla^l u_\a , \nabla^{l} u_\a \big)\Big]  &= 2 A_{l,\a}\big( \partial_{a} \nabla^l u_\a, \nabla^{l} u_\a \big) + \big( \partial_a A_{l,\a} \big)\big(\nabla^l u_\a , \nabla^{l} u_\a \big)
\eal  $$
everywhere in $\O$. Integrating the latter expressions by parts, using \eqref{def:xi:comp} and  Lemma ~\ref{lem:calculs:integrales}
thus shows that, for any $l \ge 1$,
\ben \label{est:int:comp:4}
\bal
& \int_{\O_\a} A_{l,\a}\left( (x-\xi_\a)^{a}\partial_{a} \nabla^l u_\a, \nabla^{l} u_\a \right) dx \\
& = -\frac{n}{2} \int_{\O_\a} A_{l,\a}\big( \nabla^l u_\a , \nabla^{l} u_\a \big)  dx + \smallo \big( (\ma[1])^{2(k-p)} \big) + \smallo\big( (\ma[1])^{\frac{n-2k}{2}}\big).\\
\eal 
\een
Remark that to integrate by parts in the first equality of \eqref{est:int:comp:4} we again used \eqref{est:int:comp:2}, the Dirichlet boundary conditions in $\O$ and the assumption $l \le k-1$. 
 
  Assume now that $l=0$. Since $A_{0,\a}$ is uniformly bounded in $C^1(\overline{\O})$ we may write that in $\O_\a$ we have
 $$ \bal
 A_{0,\a}\big( u_\a, \partial_{a} u_\a \big) & =  A_{0,\a}(\xa[1])\big( u_\a, \partial_{a} u_\a \big) + \smallo \big( \sqrt{\ma[1]} u_\a |\nabla u_\a| \big) \\
 & = \frac12 \partial_{a} \Big[A_{0,\a}(\xa[1])\big(u_\a ,  u_\a \big)\Big] + \smallo \big( \sqrt{\ma[1]}u_\a |\nabla u_\a| \big). 
 \eal $$ 
Here again, integrating the latter by parts, using the strong convergence of $A_{0,\alpha}$ towards $A_0$ and using Lemma ~\ref{lem:calculs:integrales} yields 
\ben \label{est:int:comp:4bis}
\bal
& \int_{\O_\a} A_{0,\a}\left( (x-\xi_\a)^{a}\partial_{a} u_\a, u_\a \right) dx \\
& = -\frac{n}{2} \int_{\O_\a} A_{0, \a}(\xa[1])\big( u_\a ,  u_\a \big)  dx  + \smallo( (\ma[1])^2) + \smallo( (\ma[1])^{\frac{n-2k}{2}}).\\
\eal 
\een
If $0 \le l \le k-1$, independently, we have by the strong convergence of $A_{l,\alpha}$ towards $A_l$ and Lemma ~\ref{lem:calculs:integrales}:
\begin{equation} \label{est:int:comp:4bisbis}
\begin{aligned}
\int_{\O_\a} A_{l,\a}\left(\nabla^l u_\a, \nabla^{l} u_\a \right) dx & = \int_{\O_\a} A_{l}(\xa[1])\left(\nabla^l u_\a, \nabla^{l} u_\a \right) dx \\
&+ \smallo \big( (\ma[1])^{2(k-l)} \big)+ \smallo( (\ma[1])^{\frac{n-2k}{2}}).
\end{aligned}
\end{equation}
Combining \eqref{est:int:comp}, \eqref{est:int:comp:3}, \eqref{est:int:comp:4}, \eqref{est:int:comp:4bis} and \eqref{est:int:comp:4bisbis} finally proves \eqref{est:int:comp:6}.
\end{proof}
We now estimate the main integral term appearing in \eqref{est:int:comp:6}: 

\begin{lemma}
There exists a nonzero constant $C_0$ such that, as $\a \to + \infty$,
\ben \label{est:int:comp:8}
\int_{\O_\a} A_{p}(\xa[1]) (\nabla^{p} u_\a, \nabla^{p} u_\a ) dx =\big( C_0 + \smallo(1) \big) (\ma[1])^{2(k-p)}.
\een
\end{lemma}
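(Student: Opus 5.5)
We will split $\O_\a$ according to which bubbles sit inside it, and show that only the bubbles concentrating at the same rate as $\Va[1]$ contribute at order $(\ma[1])^{2(k-p)}$. The leading constant will be $C_0=\sum_{i\in\mathcal{J}} c_i\, I_{\bold{A}_*}(x_0,v^i)$, where $\mathcal{J}=\{i:\ \ma\sim\ma[1] \text{ and } |\xa-\xa[1]|=\smallo(\sqrt{\ma[1]})\}$, $x_0=\lim\xa[1]$, and $c_i=\lim (\ma/\ma[1])^{2(k-p)}>0$ (passing to a subsequence). We have $1\in\mathcal{J}$, so $\mathcal{J}\neq\emptyset$. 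Condition \eqref{cond:signchanging} says that all the numbers $I_{\bold{A}_*}(x_0,v^i)$ are nonzero and of the same sign, so $C_0\neq 0$. Here $I$ is computed on $\R^n$ or $\R^n_+$ according to whether $\Va$ is an interior or a boundary bubble. It is finite because $n>6k-4p>4k-2p$ and Proposition \ref{prop:ctlsol} gives $|\nabla^p v^i|\lesssim(1+|y|)^{2k-n-p}$.

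\textbf{Step 1 (reduction to bubbles).} By Theorem \ref{prop:main}, write $\nabla^p u_\a=\nabla^p u_0+\sum_i\nabla^p\Va+r_\a$ with $|r_\a|\le\ve_\a(1+\sum_i\ta^{-p}\Ba)$. Expanding the quadratic form $A_p(\xa[1])$ gives diagonal terms and cross terms, which we estimate one by one.
\begin{itemize}
\item The $u_0$ term is $\bigO(|\O_\a|)=\bigO((\ma[1])^{n/2})$. This is $\smallo((\ma[1])^{2(k-p)})$ because $n/2>2k-2p$, which follows from $n>6k-4p\ge 4k-4p$.
\item The $r_\a$ terms are $\smallo$ of $\int_{\O_\a}(1+\sum_i\ta^{-p}\Ba)^2$. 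By the computations \eqref{calc:bulle:1}--\eqref{calc:bulle:2} with $l=p$, this integral is $\bigO((\ma[1])^{2(k-p)})$ in the regime $n>4k-2p$.
\item Bubbles with $|\xa-\xa[1]|\ge 2\rho\sqrt{\ma[1]}$ contribute $\bigO((\ma[1])^{n/2-p})=\smallo((\ma[1])^{2(k-p)})$ by \eqref{calc:bulle:1}.
\item Bubbles with $\ma=\smallo(\ma[1])$ contribute $\bigO((\ma)^{2(k-p)})=\smallo((\ma[1])^{2(k-p)})$ by \eqref{calc:bulle:2}.
\item Cross terms $\int\ta^{-p}\Ba\,\ta[j]^{-p}\Ba[j]$ with $i\neq j$ are handled by Cauchy--Schwarz with a splitting at the radii of influence. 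Alternatively, use Lemma \ref{prop:lemBiBj}-type interaction estimates: the $\eps^{ij}_\a\to\infty$ in \eqref{eq:struc} makes the interaction $\smallo$ of the product of the $L^2$ norms. Those norms are each $\bigO((\ma[1])^{k-p})$, so the cross terms are $\smallo((\ma[1])^{2(k-p)})$.
\end{itemize}

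\textbf{Step 2 (scaling).} For $i\in\mathcal{J}$, change variables $x=\xa+\ma y$ (or use the chart $\sigma_{\xa}$ for a boundary bubble). This gives
\[
\int_{\O_\a}A_p(\xa[1])(\nabla^p\Va,\nabla^p\Va)\,dx=(\ma)^{2(k-p)}\int_{\hat\O}A_p(\xa[1])\big(\nabla^p (v^i\chi_\a),\nabla^p (v^i\chi_\a)\big)\,dy,
\]
where $\hat\O$ is the rescaled domain and $\chi_\a$ the rescaled cutoff. The rescaled domains exhaust $\R^n$ (resp. $\R^n_+$) since $\sqrt{\ma[1]}/\ma\to\infty$. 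The charts satisfy $d\sigma_x(0)$ isometric and converge to it in $C^1$ on the relevant scale. The decay $|\nabla^p v^i|^2\lesssim(1+|y|)^{4k-2n-2p}$ is integrable, so dominated convergence applies. Since $A_p(\xa[1])\to A_p(x_0)$, this gives convergence to $I_{\bold{A}_*}(x_0,v^i)$. The interior bubbles with $\dist{\xa,\dO}/\ma\to\infty$ but $\dist{\xa,\dO}\lesssim\sqrt{\ma[1]}$ also converge to the full-space integral, because the cutoff radius grows on the $\ma$ scale.

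\textbf{Main obstacle.} The delicate part is the cross terms for bubbles in $\mathcal{J}$ and for towers. Here we would split $\O_\a$ into the regions of influence $\Oa[i]$ from \eqref{def:Oa} and use Lemma \ref{prop:ctltre}. On $\Oa[i]$ only the $i$-th weight dominates, so a cross term there is bounded by the $L^2$ mass of the other bubble outside its own concentration scale. That mass is $\smallo$ of its main contribution by the tail estimate from the integrable decay.

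The nonvanishing of $C_0$ depends on the same-sign condition: no cancellation can occur between bubbles of the same speed.
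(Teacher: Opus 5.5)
Your proposal is correct and follows essentially the same route as the paper. Both arguments expand $\nabla^p u_\a$ via Theorem \ref{prop:main} and discard the remainder, the $u_0$ part, and the far or faster-concentrating bubbles using \eqref{calc:bulle:1}--\eqref{calc:bulle:2}. Both then control the cross terms through the second estimate of Lemma \ref{prop:lemBiBj}, and rescale the bubbles with $\ma\sim\ma[1]$ and $|\xa-\xa[1]|=\smallo(\sqrt{\ma[1]})$, which gives $C_0=\sum_i \lim(\ma/\ma[1])^{2(k-p)}C_i\neq 0$ by \eqref{cond:signchanging}.

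The region-of-influence splitting you flag as the main obstacle is not needed. Since $n>4k-2p$, Lemma \ref{prop:lemBiBj} already gives $\smallo((\ma\ma[j])^{k-p})$ for every pair $i\neq j$, towers and same-speed bubbles included, and this is exactly how the paper concludes.
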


\begin{proof}
We use \eqref{eq:main:ctldua} to write that, pointwise in $\overline{\O}$,
$$ \bal
 A_{p}(\xa[1]) (\nabla^{p} u_\a, \nabla^{p} u_\a )  &= A_{p}(\xa[1]) \Big(\sum_{i=1}^N \nabla^{p} \Va, \sum_{i=1}^N \nabla^{p} \Va \Big)\\
 & + \bigO(1) + \smallo \left( \sum_{i=1}^N \big( (\ta)^{-p} \Ba \big)^2 \right). 
 \eal $$
Integrating the latter and using \eqref{calc:bulle:1} and \eqref{calc:bulle:2} thus gives
\ben \label{int:L2:bulle:0}
 \bal 
  &\int_{\O_\a} A_{p}(\xa[1]) (\nabla^{p} u_\a, \nabla^{p} u_\a ) dx  \\
 & = \sum_{i,j=1}^N  \int_{\O_\a} A_{p}(\xa[1]) (\nabla^{p} \Va, \nabla^{p} \Va[j] ) dx +  \smallo \big( (\ma[1])^{2(k-p)} \big),
 \eal \een
 where we used that $n>6k - 4p > 4k-2p$ by assumption.  Using \eqref{est:bubble} and Lemma \ref{prop:lemBiBj} below we have, for $i \neq j$, 
$$\int_{\O_\a} A_{p} (\xa[1])(\nabla^{p} \Va, \nabla^{p} \Va[j] ) dx = \smallo \big( (\ma \ma[j])^{k-p} \big) =   \smallo( (\ma[1])^{2(k-p)}).$$
Let now $i \in \{1, \cdots, N\}$ be fixed. If $|\xa - \xa[1]| \ge 2 \rho \sqrt{\ma[1]} $ then \eqref{calc:bulle:1} shows that 
\ben \label{int:L2:bulle} \int_{\O_\a} A_{p} (\xa[1])(\nabla^{p} \Va, \nabla^{p} \Va[i] ) dx  = \bigO \left((\ma[1])^{\frac{n}{2}-l } \right) =  \smallo( (\ma[1])^{2(k-p)}). 
\een
If now  $|\xa - \xa[1]| = \smallo (\sqrt{\ma[1]} )$, and since $\ma \le \ma[1]$, by \eqref{def:Oa:comp} we have  $\O \cap B \big(\xa, \frac12 \rho \sqrt{\ma} \big) \subset \O_\a \subset \O \cap B \big(\xa, 2 \rho \sqrt{\ma[1]} \big)$ for $\a$ large enough. Let $R >0$ be fixed. Using the definition of $\Va$ (see Definitions  \ref{def:bub:int} or \ref{def:bub:bdr}) and \eqref{est:bubble}, straightforward computations with a change of variables give, since $n \ge 6k-4p > 4k-2p$,
$$ \bal 
&  \int_{\O_\a} A_{p} (\xa[1])(\nabla^{p} \Va, \nabla^{p} \Va[i] ) dx  \\
& = (\ma)^{2(k-p)} \Bigg[ \int_{B(0,R) \cap \hat{\O}_\a^i} A_{p}(\xa[1]) (\nabla^{p} v^i, \nabla^{p} v^i ) dx  + \bigO(R^{2k+2-n}) + \smallo(1) \Bigg] 
\eal $$
where $\hat{\O}_\a^i$ is as in \eqref{def:psa} or \eqref{def:psa:bdr}, depending on whether $\Va$ is an interior or a boundary bubble. Passing to a subsequence as $\a \to + \infty$ we have thus proven that for any $i \in \{1, \cdots, N\}$ satisfying $|\xa - \xa[1]| = \smallo (\sqrt{\ma[1]} )$ we have 
\begin{equation} \label{int:L2:bulle:2}
 \int_{\O_\a} A_{p}(\xa[1]) (\nabla^{p} \Va, \nabla^{p} \Va[i] ) dx = C_i (\ma)^{2(k-p)} +  \smallo( (\ma)^{2(k-p)}), 
\end{equation}
where we have let 
\ben \label{comp:def:Ci} C_i = \left \{ 
\bal 
& \int_{\R^n} A_{p}(x^1_\infty) (\nabla^{p} v^i, \nabla^{p} v^i ) dx & \text{ if } \Va \text{ is an interior bubble }, \\
& \int_{\R^n_+} A_{p}(x^1_\infty) (\nabla^{p} v^i, \nabla^{p} v^i ) dx & \text{ if } \Va \text{ is a boundary bubble }, \\
\eal \right. \een
and where, up to a subsequence, $x^1_\infty = \lim_{\a \to + \infty} \xa[1] \in \overline{\O}$. Note that the second case only occurs if $x_1 \in \partial \O$. Up to a subsequence we now let 
$$\mathcal{S} = \Big \{ i \in \{1, \cdots, N\}, |\xa-\xa[1]| = \smallo( \sqrt{\ma[1]}) \quad \text{ and } \quad \lim_{\a \to + \infty} \frac{\ma}{\ma[1]} >0\Big \},$$
and for each $i \in \mathcal{S}$ we let $\mu_i = \lim_{\a \to + \infty} \frac{\ma}{\ma[1]} >0$. Combining \eqref{int:L2:bulle} and \eqref{int:L2:bulle:2} in \eqref{int:L2:bulle:0} finally proves \eqref{est:int:comp:8} with 
\ben \label{comp:def:C0}
 C_0 =   \sum_{i \in \mathcal{S}} C_i (\mu_i)^{2(k-p)}.
\een 
It follows from our main assumption \eqref{cond:signchanging} that $C_i \neq 0$ for every $i \in \{1, \cdots, N\}$, and that all the constants $C_i$ have the same sign. Since $1 \in \mathcal{S}$ we obtain that $C_0 \neq 0$.
\end{proof}
 
 To conclude the proof of Theorem \ref{theo:compactness} we now estimate the boundary terms in \eqref{poho.id0:comp}. First, using \eqref{def:xi:comp}, \eqref{est:int:comp:2},  and since $u_\a$ satisfies Dirichlet boundary conditions in $\partial \O$ we have 
 \ben \label{est:bord:comp:0}
 \bal
  & \frac{n-2k}{2n}\int_{\partial \O_\a} (x-\xi_\a, \nu)|u_\a|^{2^\sharp}\,d\sigma \\
  & =  \frac{n-2k}{2n}\int_{\O \cap \partial B \big(\xa[1], \rho \sqrt{\ma[1]} \big)} (x-\xi_\a, \nu)|u_\a|^{2^\sharp} 
   \,d\sigma
  =  \smallo \big( (\ma[1])^{2(k-p)} \big) . 
\eal 
\een
Combining \eqref{est:int:comp:6}, \eqref{est:int:comp:8} and \eqref{est:bord:comp:0} in \eqref{poho.id0:comp} shows that 
 \ben \label{est:int:comp:8bis}
\mathcal{P}_{k}(\O_\a;u_\a)= \big( C_0+ \smallo(1) \big) (\ma[1])^{2(k-p)} +  \smallo \big( (\ma[1])^{\frac{n-2k}{2}}\big)
\een
as $\a \to + \infty$, where $C_0$ is as in \eqref{comp:def:C0} and is nonzero. We now estimate $\mathcal{P}_{k}(\O_\a;u_\a)$, which is given by \eqref{poho.id01} below. We again have 
 $$\partial \O_\a = \Big(\partial \O \cap B \big(\xa[1], \rho \sqrt{\ma[1]} \big) \Big) \bigcup \Big(\O \cap \partial B \big(\xa[1], \rho \sqrt{\ma[1]} \big)\Big).$$
On the one hand, the contribution over $\O \cap \partial B \big(\xa[1], \rho \sqrt{\ma[1]} \big)$ in $\mathcal{P}_{k}(\O_\a;u_\a)$ is again computed using  \eqref{def:xi:comp} and \eqref{est:int:comp:2} and contributes as $\bigO \big( (\ma[1])^{\frac{n-2k}{2}}\big)$. On the other hand, since $u_\a$ satisfies Dirichlet boundary conditions it is easily seen using \eqref{poho.id01} and \eqref{poho:k:odd} that the contribution over $\partial \O \cap B \big(\xa[1], \rho \sqrt{\ma[1]} \big)$ in $\mathcal{P}_{k}(\O_\a;u_\a)$ reduces to a single integral that depends on the parity of $k$ and which is given by 
\ben \label{est:bord:comp:11}
\bal
\mathcal{P}_{k}(\O_\a;u_\a)  &=  \frac{(-1)^k}{2}\int_{\partial \O \cap B \big(\xa[1], \rho \sqrt{\ma[1]} \big) } (x-\xi_\a, \nu) \big| (-\Delta)^{k/2}u_\a \big|^{2}d\sigma  \\
& + \bigO \big( (\ma[1])^{\frac{n-2k}{2}}\big).\\
\eal
\een
If $\dist{\xa[1], \partial \O} > \rho \sqrt{\ma[1]}$ we have $\partial \O \cap B \big(\xa[1], \rho \sqrt{\ma[1]} \big)  = \emptyset$ and thus 
\ben \label{est:bord:comp:1}
\mathcal{P}_{k}(\O_\a;u_\a) = \bigO \left( (\ma[1])^{\frac{n-2k}{2}} \right)
\een
as $\a \to + \infty$.  We may thus assume from now on that $\dist{\xa[1], \partial \O} \le \rho \sqrt{\ma[1]}$. We claim that the following holds: 
\begin{lemma}
Assume that $\dist{\xa[1], \partial \O} \le \rho \sqrt{\ma[1]}$. We may choose $\xi_\a$ in \eqref{def:xi:comp} so that for every $x \in \partial \O \cap B \big(\xa[1], \rho \sqrt{\ma[1]} \big)$, 
\ben  \label{est:bord:comp:2}
(-1)^k C_0 \big(x-\xi_\a, \nu(x) \big)\le 0.
\een
\end{lemma}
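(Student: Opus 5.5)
We choose $\xi_\a$ by pushing it off a point of $\partial\O$ along the normal, on the side prescribed by the sign of $(-1)^k C_0$. Let $\bar x_\a \in \partial\O$ be a projection of $\xa[1]$ onto $\partial\O$. Since $\dist{\xa[1],\partial\O} \le \rho\sqrt{\ma[1]}$, we have $|\bar x_\a - \xa[1]| \le \rho\sqrt{\ma[1]}$. We set
$$\xi_\a = \bar x_\a + s\,\rho\sqrt{\ma[1]}\,\nu(\bar x_\a), \qquad s = \pm 1,$$
where $\nu$ is the outer unit normal. We take $s=+1$ (outside $\O$) if $(-1)^kC_0>0$, and $s=-1$ (inside $\O$) if $(-1)^kC_0<0$. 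Then $|\xi_\a-\xa[1]| \le 2\rho\sqrt{\ma[1]}$, so \eqref{def:xi:comp} holds. Nothing forces $\xi_\a\in\O$, consistent with the remark after \eqref{def:xi:comp}.

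It remains to check the sign of $(x-\xi_\a,\nu(x))$ for $x \in \partial\O\cap B(\xa[1],\rho\sqrt{\ma[1]})$. Any such $x$ satisfies $|x-\bar x_\a| \le 2\rho\sqrt{\ma[1]} =: 2\delta_\a$, with $\delta_\a\to0$. By smoothness of $\partial\O$ (uniform $C^2$ bounds, curvature bounded by some $K$), we have
$$(x-\bar x_\a,\nu(x)) = O(|x-\bar x_\a|^2) = O(\delta_\a^2), \qquad \nu(x)=\nu(\bar x_\a)+O(\delta_\a).$$
Hence
$$(x-\xi_\a,\nu(x)) = (x-\bar x_\a,\nu(x)) - s\delta_\a\,(\nu(\bar x_\a),\nu(x)) = -s\delta_\a\big(1+O(\delta_\a)\big)+O(\delta_\a^2).$$
For $\a$ large this quantity has the sign of $-s$. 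With our choice of $s$, the product $(-1)^kC_0\,(x-\xi_\a,\nu(x))$ is therefore negative, which gives \eqref{est:bord:comp:2}.

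The only delicate point is making the $O(\delta_\a^2)$ estimates uniform in $x$. We would obtain them from the boundary charts $\sigma_x$ of Section~\ref{sec:bulles2}: their smooth dependence on $x$ bounds the second fundamental form uniformly on $\partial\O$. Because the perturbation is of order $\delta_\a^2$ while the main term is of order $\delta_\a$, the argument works for any fixed $\rho$ once $\a$ is large enough. In the subsequent application, the boundary term \eqref{est:bord:comp:11} then has sign opposite to $C_0$. This contradicts \eqref{est:int:comp:8bis} provided $(\ma[1])^{\frac{n-2k}{2}}=\smallo((\ma[1])^{2(k-p)})$, which holds since $n>6k-4p$.
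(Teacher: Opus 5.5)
Your proposal is correct and is essentially the paper's own proof. The paper likewise takes $\xi_\a = \pi(\xa[1]) + \varepsilon \rho\sqrt{\ma[1]}\,\nu(\pi(\xa[1]))$ with $\varepsilon = (-1)^k\,\text{sign}(C_0)$, which is exactly your $s$, and uses $(x-\pi(\xa[1]),\nu(x)) = \bigO(\ma[1])$ and $|\nu(x)-\nu(\pi(\xa[1]))| \lesssim \sqrt{\ma[1]}$, uniformly by compactness of $\partial\O$, to get $(x-\xi_\a,\nu(x)) = -\varepsilon\rho\sqrt{\ma[1]} + \bigO(\ma[1])$, which has the sign of $-\varepsilon$.
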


\begin{proof}
Since $\dist{\xa[1], \partial \O} \to 0$ as $\a \to + \infty$ we may let $\pi(\xa[1])$ be the unique closest point to $\xa[1]$ in $\partial \O$. Up to a rotation we may assume that the tangent space to $\partial \O$ at $\pi(\xa[1])$ is $ \{0\}  \times \R^{n-1}$ and that there exists $\delta >0$ such that 
$$
\bal 
\partial \O \cap B(\pi(\xa[1]), \delta) &= \pi(\xa[1]) +\big \{  x_1 = h_\a(x') \big \} \cap B(\pi(\xa[1]), \delta) \quad \text{ and } \\
 \O \cap B(\pi(\xa[1]), \delta) &= \pi(\xa[1]) +\big \{  x_1 > h_\a(x') \big \} \cap B(\pi(\xa[1]), \delta), 
\eal  $$
where we have let $x' = (x_2, \cdots, x_{n})$ and where $h_\a$ is a smooth function defined in a neighbourhood of $0$ in $\R^{n-1}$, which satisfies 
$$ h_\a(x') = \sum_{i=1}^{n-1} \kappa_{i,\a} (x'_i)^2 + \bigO( |x'|^3) \quad \text{ as } x' \to 0. $$
The numbers $\kappa_{i,\a}$ are the principal curvatures of $\partial \O$ at $\pi(\xa[1])$ and the constant in the $\bigO( |x'|^3)$ does not depend on $\a$ since $\partial \O$ is compact. 
The exterior unit normal is given by 
$$ \nu \big( h_\a(x'), x' \big) = \frac{1}{\sqrt{1 + |\nabla h_\a(x')|^2}} \big(  -1, \nabla h_\a(x')\big), $$
and straightforward computations show that for $x = (h_\a(x'), x') \in \partial \O$ with $|x - \pi(\xa[1])| \le \delta$ we have 
$$ \big(x-\pi(\xa[1]), \nu(x) \big) =  \bigO \big( |x'|^2\big). $$
For any $x \in \partial \O \cap B \big(\xa[1], \rho \sqrt{\ma[1]} \big)$ we have $|x - \xa[1]| \lesssim \sqrt{\ma[1]}$ and $| \nu(x) - \nu(\pi(\xa[1]))| \lesssim |x'| \lesssim \sqrt{\ma[1]}$. By \eqref{def:xi:comp} and since $\dist{\xa[1], \partial \O} \le \rho \sqrt{\ma[1]}$  we have in addition $|\xi_\a - \pi(\xa[1])| \lesssim \sqrt{\ma[1]}$. Combining the latter estimates we thus have
\ben \label{est:bord:comp:3} \begin{aligned}
\big(x-\xi_\a, \nu(x) \big) & =  \big(x - \pi(\xa[1]) , \nu(x) \big) +  \big(\pi(\xa[1])-\xi_\a, \nu(\pi(\xa[1])) \big) \\
&+  \big(\pi(\xa[1])-\xi_\a, \nu(x) - \nu(\pi(\xa[1])) \big)  \\
& =  \big(\pi(\xa[1])-\xi_\a, \nu(\pi(\xa[1])) \big) +  \bigO \left( \ma[1] \right).
\end{aligned} 
\een
We may therefore choose 
$$ \xi_\a = \pi(\xa[1]) + \varepsilon \rho \sqrt{\ma[1]} \nu \big( \pi(\xa[1]) \big), $$
where $\varepsilon = (-1)^k \text{sign}(C_0)$. It satisfies \eqref{def:xi:comp}, and \eqref{est:bord:comp:2} now follows from \eqref{est:bord:comp:3}. 
\end{proof}

We are now in position to conclude the proof of Theorem~\ref{theo:compactness}. Multiplying \eqref{est:int:comp:8bis} by $C_0$ and using \eqref{est:bord:comp:11}, \eqref{est:bord:comp:1} and \eqref{est:bord:comp:2} shows that, for $\a$ large enough,
$$ C_0^2 (\ma[1])^{2(k-p)} +  \smallo \big( (\ma[1])^{2(k-p)} \big) \lesssim (\ma[1])^{\frac{n-2k}{2}}.$$
Since $n > 6k-4p$ we have $\frac{n-2k}{2} > 2(k-p)$, and the latter becomes 
$$ C_0^2 (\ma[1])^{2(k-p)} +  \smallo \big( (\ma[1])^{2(k-p)} \big) \le 0, $$
as $\a \to + \infty$. Since $C_0 \neq 0$ this is an obvious contradiction and shows that \eqref{contra:finale} cannot hold. 

We have thus proven that \eqref{contra:finale:0} holds true. Standard elliptic theory then improves \eqref{contra:finale:0} into global $C^{2k}$ bounds on $\overline{\Omega}$ and concludes the proof of Theorem~\ref{theo:compactness}. 
\end{proof}

For simplicity we stated Theorem \ref{theo:compactness} by requiring $C^1(\overline{\O})$ bounds for $A_0$ in \eqref{def:A}. The only point in the proof where this appears is \eqref{est:int:comp:4bis} and it is easily seen that $C^{0,\eta}(\overline{\O})$ bounds for some $\frac12 < \eta <1$ would be enough.

\appendix

\section{Polyharmonic Pohozaev identities}

We state in this section a polyharmonic version of the celebrated Pohozaev identity \cite{poho}. The following result is proven in \cite[Proposition A.1]{MazumdarPremoselli} (see also \cite[Proposition 13.1]{Robert:localisation:bubbling}) and generalises similar results in \cite[Proposition 2.2]{LiXiong} (in the case $k=2$):
\begin{proposition}[Proposition A.1 in \cite{MazumdarPremoselli}] \label{prop.poho}
Let $\O$ be a bounded smooth domain of $\R^n, n >2k$ and $p\geq2$. Let $u\in C^{2k}(\overline{\O})$, $f\in C^{1}(\overline{\O})$ and let $\mathcal{E}(u) = (-\Delta)^k u-f|u|^{p-2}u$. Let $\xi \in \R^n$ be fixed. Then we have
\begin{equation} \label{poho.id0}
\begin{aligned}
&\mathcal{P}_{k}(\O;u) =\int_{\O}\left(\frac{n-2k}{2}u+(x-\xi)^{i}\partial_{i}u\right)\mathcal{E}(u)\,dx +\frac{1}{p}\int_{\partial \O}(x-\xi, \nu)f|u|^{p}\,d\sigma \\
&~+\left(\frac{n-2k}{2}-\frac{n}{p}\right) \int_{\O}f\,|u|^{p}\,dx-\frac{1}{p}\int_{\O}(x-\xi)^{i}\partial_{i}f\,|u|^{p}\,dx.\\
\end{aligned}
\end{equation}
Here $\mathcal{P}_{k}(\O;u)$ denotes boundary terms whose expression is given by: 
\begin{equation} \label{poho.id01}
\begin{aligned} 
&\mathcal{P}_{k}(\O;u)  = \mathcal{R}_{k}(\O;u) \\
&+\frac{n-2k}{2}\sum_{i=0}^{[k/2]-1} \int_{\partial \O}\bigg(\partial_{\nu}((-\Delta)^{i}u)\,(-\Delta)^{k-i-1}u\,-(-\Delta)^{i}u~\partial_{\nu}((-\Delta)^{k-i-1}u)\bigg)\,d\sigma \\
&+\sum_{i=0}^{[k/2]-1} \int_{\partial \O}\bigg[\partial_{\nu}((-\Delta)^{i}((x-\xi)^{a}\partial_{a}u))\,(-\Delta)^{k-i-1}u\, \\
& -(-\Delta)^{i}((x-\xi)^{a}\partial_{a}u)\,\partial_{\nu}((-\Delta)^{k-i-1}u)\bigg]\,d\sigma, \\
\end{aligned}
\end{equation}
where we have let
\begin{equation*}
\mathcal{R}_{k}(\O;u)=  \frac{1}{2}\int_{\partial \O} (x-\xi, \nu) \big| (-\Delta)^{\frac{k}{2}}u \big|^{2}d\sigma  \quad  \text{ if } k \text{ is even } \\
\end{equation*}
and 
\begin{equation*} 
\begin{aligned}
  \mathcal{R}_{k}(\O;u) & =  \frac{1}{2}\int_{\partial \O} (x-\xi, \nu)(-\Delta)^{\frac{k+1}{2}}u (-\Delta)^{\frac{k-1}{2}}u\,d\sigma \\
& + \frac{1}{2}\int_{\partial \O}\bigg[(-\Delta)^{\frac{k-1}{2}}u\,\partial_{\nu}((x-\xi)^{a}\partial_{a}((-\Delta)^{\frac{k-1}{2}}u))\,\\
&\quad \quad -\big((x-\xi)^{a}\partial_{a}((-\Delta)^{\frac{k-1}{2}}u)\big)\partial_{\nu} \big((-\Delta)^{\frac{k-1}{2}}u\big)\bigg]\,d\sigma \\
& \quad\quad\quad \quad\quad\quad\quad\quad\quad\quad \quad\quad\quad\quad\quad\quad\quad\quad\text{ if } k \text{ is odd. } 
\end{aligned}  
\end{equation*}
In the previous expressions, $[x]$ denotes the integer part of $x \in \mathbb{R}$.
\end{proposition}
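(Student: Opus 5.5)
This is the polyharmonic Pohozaev identity of Proposition~\ref{prop.poho}. I would prove it by direct integration by parts. The plan is to multiply $\mathcal{E}(u)$ by the conformal multiplier $X(u) := \frac{n-2k}{2}u+(x-\xi)^{i}\partial_{i}u$ and integrate over $\O$, treating the principal part $(-\Delta)^k u$ and the nonlinear part $f|u|^{p-2}u$ separately.

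The nonlinear part is elementary. We have $(x-\xi)^i\partial_i u\,|u|^{p-2}u = \frac1p (x-\xi)^i\partial_i|u|^p$, so one integration by parts gives
\[ -\frac1p\int_{\O}\big(n f+(x-\xi)^i\partial_i f\big)|u|^p\,dx+\frac1p\int_{\partial\O}(x-\xi,\nu)f|u|^p\,d\sigma. \]
Adding $\frac{n-2k}{2}\int_{\O} f|u|^p\,dx$ produces exactly the $f$-terms in \eqref{poho.id0}, with the signs arranged after moving them to the appropriate side.

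For the principal part, I would write $(-\Delta)^k=(-\Delta)^{k-i}(-\Delta)^{i}$ and use Green's second identity $\int_{\O}(-\Delta a)\,b-a(-\Delta b) = \int_{\partial\O}(a\,\partial_\nu b-\partial_\nu a\, b)$ repeatedly. This moves $[k/2]$ Laplacians off $u$ and onto $X(u)$. Each transfer generates the boundary pairs $\partial_\nu((-\Delta)^i X(u))(-\Delta)^{k-i-1}u-(-\Delta)^iX(u)\,\partial_\nu((-\Delta)^{k-i-1}u)$, which split linearly into the two sums appearing in \eqref{poho.id01}. After these transfers we are left with $\int_{\O}(-\Delta)^{k/2}X(u)\,(-\Delta)^{k/2}u$ when $k$ is even. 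When $k$ is odd we instead get a middle term involving $(-\Delta)^{(k-1)/2}$ and one more gradient or Green identity; this extra step is the source of the second integral in $\mathcal{R}_k$ for odd $k$.

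The key algebraic fact is the commutation relation $(-\Delta)^{j}\big((x-\xi)^a\partial_a u\big)=(x-\xi)^a\partial_a(-\Delta)^j u+2j(-\Delta)^j u$, proved by induction from $\Delta(x^a\partial_a v)=x^a\partial_a\Delta v+2\Delta v$. With $w=(-\Delta)^{k/2}u$ ($k$ even), this turns the remaining bulk term into
\[ \int_{\O}\Big(\tfrac{n-2k}{2}+k\Big)w^2+(x-\xi)^a\partial_a\tfrac{w^2}{2}\,dx. \]
By the divergence theorem this equals $\frac12\int_{\partial\O}(x-\xi,\nu)w^2\,d\sigma$, because $\frac{n}{2}-\frac n2=0$; this is exactly the term $\mathcal{R}_k$. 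For odd $k$ the same computation is carried out with $w=(-\Delta)^{(k-1)/2}u$ and its gradient.

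I expect the bookkeeping of the odd case to be the main obstacle, specifically getting signs and the exact boundary expression in $\mathcal{R}_k$ to match the stated form, while the rest is routine. To control this, I would first verify the cases $k=1$ and $k=2$ by hand against the classical Pohozaev identities and then proceed by induction on $k$.
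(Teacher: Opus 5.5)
The paper does not prove this identity itself (it is quoted from Mazumdar--Premoselli). Your route is the standard one: iterated Green's second identity moves $[k/2]$ Laplacians onto $X(u)$, and the commutator $(-\Delta)^j\big((x-\xi)^a\partial_a v\big)=(x-\xi)^a\partial_a(-\Delta)^jv+2j(-\Delta)^jv$ does the rest. This is exactly what produces the boundary pairs in \eqref{poho.id01}. Your treatment of the nonlinear term, of the signs of the transfer terms, and of the even case (where $\frac{n-2k}{2}+k=\frac n2$ kills the bulk term) is correct and complete.

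The one genuine gap is the odd case, which you leave open. The route you hint at (``$w$ and its gradient'', i.e.\ the classical Rellich computation through $|\nabla w|^2$) does not land on the stated $\mathcal{R}_k$. It produces
\[ \tfrac12\int_{\partial\O}(x-\xi,\nu)|\nabla w|^2\,d\sigma-\int_{\partial\O}(x-\xi)^a\partial_a w\,\partial_\nu w\,d\sigma-\tfrac{n-2}{2}\int_{\partial\O}w\,\partial_\nu w\,d\sigma. \]
This agrees with $\mathcal{R}_k$ only after integration over $\partial\O$, not pointwise. You would therefore still owe a separate identity between two boundary functionals.

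The fix is short and needs no induction on $k$. Take $k=2m+1$, $w=(-\Delta)^m u$ and $Y=(x-\xi)^a\partial_a w$. Since $\frac{n-2k}{2}+2m=\frac{n-2}{2}$, the commutator gives $(-\Delta)^mX(u)=\frac{n-2}{2}w+Y$. So the remaining bulk term is $T=\int_\O\big(\frac{n-2}{2}w+Y\big)(-\Delta w)\,dx$, which is the $k=1$ problem for $w$. Now apply Green's identity once more to move $-\Delta$ from $w$ onto $Y$. Then use $-\Delta Y=(x-\xi)^a\partial_a(-\Delta w)+2(-\Delta w)$ and the divergence theorem on $w\,(x-\xi)^a\partial_a(-\Delta w)$, which gives
\[ \int_\O Y(-\Delta w)\,dx=\int_{\partial\O}(x-\xi,\nu)\,w(-\Delta w)\,d\sigma-(n-2)\int_\O w(-\Delta w)\,dx-\int_\O Y(-\Delta w)\,dx+\int_{\partial\O}\big(w\,\partial_\nu Y-Y\,\partial_\nu w\big)\,d\sigma. \]
The term $\int_\O Y(-\Delta w)\,dx$ reappears with the opposite sign. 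Solve for it and add $\frac{n-2}{2}\int_\O w(-\Delta w)\,dx$: this gives $T=\mathcal{R}_k$ exactly. Both factors $\frac12$ in $\mathcal{R}_k$ come from this division by $2$.

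One regularity point: for $k=1$ this step uses third derivatives of $u$, which $u\in C^2(\overline{\O})$ does not provide. Both sides involve only derivatives of order at most two, so conclude by approximating $u$ in $C^2(\overline{\O})$ by smooth functions.
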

A simple observation is that, under the assumptions of Proposition~\ref{prop.poho}, identity \eqref{poho.id0} remains true when $\O$ is replaced by any subdomain $U \subseteq \O$ with piecewise smooth boundary. This is e.g. the case when $U$ is the transverse intersection of $\O$ and of an open ball contained in $\O$. If $u$ in the statement of Proposition \ref{prop.poho} satisfies in addition Dirichlet boundary conditions in $\partial \O$, i.e. if $|\nabla^l u | = 0$ in $\partial \O$ for every $0 \le l \le k-1$, then $ \mathcal{R}_{k}(\O;u)$ greatly simplifies in the case where $k$ is odd. Indeed,  $(-\Delta)^{\frac{k-1}{2}}u \equiv 0$ in $\partial \O$, and as a consequence, $(-\Delta)^{\frac{k}{2}}u  = \nabla (-\Delta)^{\frac{k-1}{2}}u = \partial_\nu (-\Delta)^{\frac{k-1}{2}}u  \cdot \nu$ in $\partial \O$. We then obtain, for any $k$, 
\ben \label{poho:k:odd}
\mathcal{P}_{k}(\O;u) =  \mathcal{R}_{k}(\O;u) =  \frac{(-1)^k}{2} \int_{\partial \O} (x-\xi, \nu) \big|  (-\Delta)^{\frac{k}{2}}u \big|^{2}d\sigma . 
 \een
 A similar formula was proven in \cite[Theorem 7.27]{GazGruSw10}, but in this reference the $(-1)^k$ is missing.

\section{A proof of \eqref{eq:struc}  } \label{structurerelation}

Let $(u_\a)_{\a}$ be a Palais-Smale sequence for the functional $I$ defined in \eqref{def:funcI}. The global compactness result of \cite{Maz17} states that there exist $u_0 \in \Sob_0(\O)$ a solution to \eqref{eq:lmmain}, an integer $N\geq 0$ and $N$ bubbles $\Va$ defined as in Definitions \ref{def:bub:int} and \ref{def:bub:bdr} by their triples $[(\xa)_\a,(\ma)_\a,v^i]$, $i=1,\ldots, N$ such that up to a subsequence 
  \begin{equation} \label{eq:decHk2} 
   u_\a = u_0 + \sumi \Va + \smallo(1) \qquad \text{in } \Sob_0(\O)
  \end{equation} 
    and 
  \begin{equation}\label{eq:decHk:bis2} 
  I(u_\a) = I(u_0) + \sumi I_0(v^i) + \smallo(1)
  \end{equation} 
as $\a \to + \infty$, where $I_0$ is as in \eqref{def:I0}. It is stated in \cite[Chapter III, Remark 3.2]{StruweVariationalMethods} (for $k=1$) and in \cite{Maz17} (for $k \ge 2$) that the bubbles $[(\xa)_\a,(\ma)_\a,v^i]$, $i=1,\ldots, N$ can in addition be assumed to satisfy the following structure relation: 
 \begin{equation}\label{eq:struc2}
    \eps_\a^{ij} = \frac{|\xa-\xa[j]|^2}{\ma\ma[j]} + \frac{\ma}{\ma[j]} + \frac{\ma[j]}{\ma}\to \infty \quad \text{as } \a \to \infty,
  \end{equation} 
  for all $i\neq j$ in $\{1,\ldots,N\}$. Relation \eqref{eq:struc2} states the the bubbles $\Va$ are weakly-interacting in $H^k_0(\O)$ and is proven when $u_\a$ is nonnegative, at least when $k=1$, e.g. in \cite{BahriCoron} and \cite[Chapter 3]{Heb14}. But, to the best of our knowledge, no proof for sign-changing solutions is available in the litterature. In this section we provide a simple proof that \eqref{eq:decHk2} and \eqref{eq:struc2} can be assumed to hold for any Palais-Smale sequence $(u_\a)_\a$ for $I$. The difficulty in proving \eqref{eq:struc2} for sign-changing sequences $(\ua)_\a$ comes from the much richer nature of the profiles $v^i$ that may occur in the bubbling. We illustrate this with the following simple computation. 
  Let $\Va$ and $\Va[j]$ be two bubbles in the sense of Definition \ref{def:bub:int} (we assume that they are both interior for simplicity).  A straightforward computation shows that 
$$
 \text{ If } \eps_\a^{ij} \to + \infty \quad \text{ then } \quad  \langle \Va, \Va[j] \rangle_{H^k_0(\O)} \to 0 
 $$
as $\a \to + \infty$, where we consider the scalar product defined by \eqref{norm:Dk2}. Conversely, assume that $\eps_\a^{ij} \le C$ for all $\alpha$: by the definition of $\eps_\a^{ij} $ in \eqref{eq:struc2}, and up to passing to a subsequence, we can thus assume that
$ \ma = \ma[j](\mu_{j,i} + \smallo(1))$ and $\frac{\xa[j] - \xa}{\ma[j]} = z_{j,i} + \smallo(1)$ for some $\mu_{j,i} > 0$ and $z_{j,i} \in \R^n$. A change of variables then shows that
\begin{equation} \label{struwe5}
\langle \Va, \Va[j] \rangle_{H^k_0(\O)} = (\mu_{j,i})^{\frac{n-2k}{2}} \int_{\R^n} |v^i(x)|^{2^\sharp-2} v^i(x) v^j\big( \mu_{j,i} x + z_{j,i}) dx + \smallo(1). 
 \end{equation}
If $v^i$ and $v^j$ were both positive and equal to $B$ given by \eqref{def:eucBub} the right-hand side of \eqref{struwe5} would have a positive limit, and this would contradict the energy quantification \eqref{eq:decHk:bis2}. But if $v^i$ or $v^j$ are allowed to change sign, and due to abundance of sign-changing solutions of \eqref{eq:critRn} (we refer to subsection \ref{subsec:condition} above for the relevant citations), the existence of $\mu_{j,i}$ and $z_{j,i}$ for which the right-hand side of \eqref{struwe5} vanishes cannot be ruled out \emph{a priori}. When $(\ua)_\alpha$ changes sign we therefore cannot expect to recover \eqref{eq:struc2} solely from the quantification of the energy \eqref{eq:decHk:bis2}.

 \medskip
 
\begin{proof}[Proof that \eqref{eq:struc2} holds.]
We will prove that for any Palais-Smale sequence $\ua$ of $I$, there exists a bubble-tree $u_0, (\Va)_{\{1 \le i \le N \}}$ satisfying \eqref{eq:decHk2} and \eqref{eq:decHk:bis2} for which \eqref{eq:struc2} also holds. This bubble-tree will be chosen to contain a minimal number $N$ of bubbles in \eqref{eq:decHk2}. All the convergences in the proof will take place up to a subsequence. Let $(\ua)_\alpha$ be a Palais-Smale sequence for $I_\alpha$ that weakly converges towards $u_0 \in H^k_0(\O)$. If $\ua \to u_0$ in $H^k_0(\O)$ there is nothing to prove. We thus assume that $\Vert \ua - u_0 \Vert_{H^k_0(\O)} \not \to 0$ as $\alpha \to + \infty$ and we let 
\[ \begin{aligned}
\mathcal{N} = \Big \{& N \ge 1 \textrm{ such that there exist } N \textrm{ bubbles } \Va[1], \dots, \Va[N]  \\
& \textrm{ such that }  \quad  \left \Vert \ua - u_0 - \sum_{i=1}^N \Va \right \Vert_{H^k_0(\O)} \to 0  \Big \}.
\end{aligned}   \]
We allow both interior and boundary bubbles as in Definitions \ref{def:bub:int} and \ref{def:bub:bdr}. By \cite[Theorem 1.2]{Maz17} $\mathcal{N}$ is not empty. Letting $N = \min \mathcal{N}$ we obtain that there exist $N$ bubbles $[(\xa)_\a,(\ma)_\a,v^i]_{\{1 \le i \le N \}}$  satisfying 
\ben \label{struwe5bis} 
 \ua - u_0 - \sum_{i=1}^N  \Va \to 0 
 \een
 in $H^k_0(\O)$. We let $f(s) = |s|^{2^\sharp-2}s$ for all $s \in \R$. Let $i \in \{1, \dots, N\}$ be fixed and let 
\[ E(i) = \Big\{ j \in \{1, \dots, N\} \textrm{ such that }\eps_\a^{ij} = \bigO(1) \Big \} .\]
Clearly $i \in E(i)$. A first simple observation is that $E(i)$ only contains indices of bubbles of the same nature than $\Va$: that is, if $\Va$ is an interior bubble, all the indices in $E(i)$ correspond to other interior bubbles, and if $\Va$ is a boundary bubble all the indices in $E(i)$ correspond to other boundary bubbles. This simply follows from the observation that $\eps_\a^{ij} \to + \infty$ whenever $\Va$ is an interior bubble and $\Va[j]$ is a boundary one, which is a consequence of Definitions \ref{def:bub:int} and \ref{def:bub:bdr}. Indeed, if $\ma = \smallo(\ma[j])$ or $\ma[j] = \smallo(\ma)$ we obviously have $\eps_\a^{ij} \to + \infty$ by definition. And if $\ma \lesssim \ma[j] \lesssim \ma$, and since $\xa[j] \in \partial \O$, we have $|\xa - \xa[j]| \ge \dist{\xa, \partial \O} >> \ma$, hence again $\eps_\a^{ij} \to + \infty$. 

We will prove that $E(i) = \{i\}$ for all $i \in \{1, \dots, N\}$, which will prove that \eqref{eq:struc2} holds. If $j \in E(i)$ we let $\mu_{j,i}$ and $z_{j,i}$ be as in \eqref{struwe5}. In the case where $\Va$ and $\Va[j]$ are both boundary bubbles we have $z_{j,i} \in \partial \R^n_+$. We let 
$$v_{\mu_{j,i}, z_{j,i}}(x) = (\mu_{j,i})^{\frac{n-2k}{2}} v^j\big( \mu_{j,i} x + z_{j,i}),$$
which is defined for $x \in \R^n$ or $\R^n_+$ depending on the nature of $\Va$. We then have the following result:

\begin{lemma} \label{lemme:unebulledemoins}
We let
$$U_i  = \sum_{j \in E(i)} v_{\mu_{j,i}, z_{j,i}} ,$$
which is defined in $\R^n$ or $\overline{\R^n_+}$ depending on the nature of $\Va$. 
Then: 
\begin{itemize}
\item If $\Va$ is an interior bubble, $U_i$ is a solution of \eqref{eq:critRn}.
\item If $\Va$ is a boundary bubble, $U_i$ is a solution of \eqref{eq:crithf}.
\end{itemize}
\end{lemma}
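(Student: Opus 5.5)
The plan is to identify $U_i$ as a weak limit of rescaled $u_\a$ around the $i$-th bubble and to pass the equation to the limit. In the interior case set
$$\tilde u_\a(x) = (\ma)^{\frac{n-2k}{2}} u_\a(\xa + \ma x),$$
defined on $(\O - \xa)/\ma$, which exhausts $\R^n$ because $\dist{\xa,\dO}/\ma \to +\infty$. In the boundary case use the chart instead, $\tilde u_\a(x) = (\ma)^{\frac{n-2k}{2}} u_\a(\sigma_{\xa}(\ma x))$, defined on domains exhausting $\R^n_+$. Extending by zero, $(\tilde u_\a)_\a$ is bounded in $D^{k,2}$, so up to a subsequence it converges weakly to some $\tilde U$.

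The first key step is to compute this weak limit from \eqref{struwe5bis}. The rescaling is an isometry for the $D^{k,2}$ norm, up to an $\smallo(1)$ coming from the chart, so the rescaled remainder tends to $0$ strongly. We then rescale each term of $u_0 + \sum_j \Va[j]$ separately:
\begin{itemize}
\item the rescaling of $u_0$ tends to $0$ weakly, since it is a concentrating profile;
\item for $j \in E(i)$, the rescaling of $\Va[j]$ converges strongly to $v_{\mu_{j,i},z_{j,i}}$, because the cut-off $\chi$ tends to $1$ on compact sets and the scale ratio and centre offsets converge;
\item for $j \notin E(i)$ we have $\eps_\a^{ij} \to \infty$, so the rescaled bubble either concentrates, spreads out, or escapes to infinity, and in all three cases it tends to $0$ weakly in $D^{k,2}$.
\end{itemize}
Hence $\tilde U = U_i$. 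In the boundary case one also notes that $E(i)$ contains only boundary bubbles, as remarked before the lemma, and $z_{j,i} \in \partial\R^n_+$, so each $v_{\mu_{j,i},z_{j,i}}$ lives on $\R^n_+$ with Dirichlet data. It follows that $U_i \in D^{k,2}(\R^n_+)$.

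The second step is to pass the equation to the limit. Fix $\varphi \in C^\infty_c(\R^n)$ (resp. $C^\infty_c(\R^n_+)$) and test the Palais--Smale relation $I'(u_\a) \to 0$ in $H^{-k}$ against $\varphi_\a(y) = (\ma)^{-\frac{n-2k}{2}}\varphi((y-\xa)/\ma)$, or its chart analogue. The $\varphi_\a$ are uniformly bounded in $H^k_0(\O)$, so $I'(u_\a)[\varphi_\a] \to 0$. After the change of variables:
\begin{itemize}
\item the leading term becomes $\int \langle (-\Delta)^{k/2}\tilde u_\a, (-\Delta)^{k/2}\varphi\rangle$ up to $\smallo(1)$ chart corrections, and this converges to the corresponding quantity for $U_i$ by weak convergence;
\item the lower-order terms $A_l$ carry factors $(\ma)^{2(k-l)} \to 0$ and vanish in the limit;
\item the nonlinear term needs $\int f(\tilde u_\a)\varphi \to \int f(U_i)\varphi$.
\end{itemize}

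I expect this last convergence to be the main obstacle, since weak $D^{k,2}$ convergence alone does not pass through $f$. I would handle it with Rellich compactness on $\operatorname{supp}\varphi$: $\tilde u_\a \to U_i$ in $L^q_{loc}$ for $q < \crit$ and almost everywhere. Combined with the uniform bound on $\|f(\tilde u_\a)\|_{L^{\frac{2n}{n+2k}}}$, this gives $f(\tilde u_\a) \rightharpoonup f(U_i)$ weakly in $L^{\frac{2n}{n+2k}}_{loc}$, by the standard a.e.\ plus bounded-norm argument, which suffices to pass to the limit.

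We conclude that $U_i$ is a weak solution of \eqref{eq:critRn} (resp. \eqref{eq:crithf}) with finite energy, which proves the lemma.
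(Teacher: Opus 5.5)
Your proof is correct, but it is organised differently from the paper's.

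The paper never rescales $u_\a$ itself. It tests the Palais--Smale relation against $\vp_{i,\a}$, immediately replaces $u_\a$ by $u_0+\sum_j \Va[j]$ using \eqref{struwe5bis}, and then works term by term:
\begin{itemize}
\item the linear pairings with $\Va[j]$, $j\notin E(i)$, are shown to be $\smallo(1)$;
\item the rescaled $\Va[j]$ with $j\in E(i)$ converge strongly to $v_{\mu_{j,i},z_{j,i}}$;
\item for the nonlinearity it uses the pointwise bounds \eqref{est:bubble} to dominate $f(\sum_{j}\Va[j])-f(\sum_{j\in E(i)}\Va[j])$ by products of positive bubbles $\Ba[j]$, checks that their integrals against $|\vp_{i,\a}|$ vanish, and concludes by a change of variables and H\"older.
\end{itemize}

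You instead identify $U_i$ as the weak $D^{k,2}$ limit of the rescaled solutions. You then pass the nonlinearity to the limit by local compactness: Rellich, a.e.\ convergence, and boundedness of $f(\tilde u_\a)$ in $L^{\frac{2n}{n+2k}}$. This is the classical profile argument. It only uses $v^j\in D^{k,2}$, never the decay of Proposition~\ref{prop:ctlsol}, and it avoids the cross-interaction integrals entirely. The paper's route is more explicit and stays inside the bubble-tree formalism, at the price of relying on \eqref{est:bubble}.

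Two small corrections:
\begin{itemize}
\item In the boundary case $\tilde u_\a$ is only defined on $B(0,r_0/\ma)\cap\R^n_+$. Extending it by zero across the spherical part of that boundary does not give a $D^{k,2}$ function. Multiply by $\chi(\ma\,\cdot)$ as in Definition~\ref{def:bub:bdr}; this keeps the uniform $D^{k,2}$ bound and does not change the weak limit.
\item The rescaled $u_0$ tends weakly to $0$ because it spreads out at scale $1/\ma$, not because it concentrates.
\end{itemize}
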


\begin{proof}[Proof of Lemma \ref{lemme:unebulledemoins}]
Assume first that $\Va$ is an interior bubble. Let $\vp \in C^\infty_c(\R^n)$ and let, for all $x \in \O$,
\[ \vp_{i, \alpha}(x) =  (\ma)^{- \frac{n-2k}{2}} \vp \Big( \frac{x - \xa}{\ma}  \Big). \]
It is easily seen that $(\vp_{i, \alpha})_\alpha$ is bounded in $H^k_0(\O)$ and that $\vp_{i,\alpha} \rightharpoonup 0$ in $H^k_0(\O)$. Since $u_\a$ is a Palais-Smale sequence for $I$ we have 
\begin{equation*} 
 (-\Delta)^k \ua + \sum_{l=0}^{p} (-1)^l \nabla^l\big(A_{l}(\nabla^l\, \ua \,)\big) - |\ua|^{2^{\sharp}-2} \ua = \smallo(1) \quad \textrm{ in } H^{-k}(\O), 
 \end{equation*}
and integrating the latter against $\vp_{i,\alpha}$ gives, together with \eqref{struwe5bis},
\ben \label{struwe6}
 \int_\O \Big \langle \sum_{j=1}^N (- \Delta)^{\frac{k}{2}} \Va[j]  , (- \Delta)^{\frac{k}{2}} \vp_{i, \alpha} \Big \rangle dy = \int_\O f \Big( \sum_{j=1}^N\Va[j] \Big) \vp_{i, \alpha} dy + \smallo(1). 
\een
First, direct computations show that if $j \not \in E(i)$ then 
\begin{equation*} 
  \int_{\O} \langle (- \Delta)^{\frac{k}{2}}  \Va[j]  , (- \Delta)^{\frac{k}{2}}  \vp_{i, \alpha} \rangle dv_g = \smallo(1).
 \end{equation*}
Then, it is easily checked that $(\ma)^{\frac{n-2k}{2}} \Va[j](\xa + \ma \cdot) \to v_{\mu_{j,i}, z_{j,i}}$ in $D^{k,2}(\R^n)$ as $\alpha \to + \infty$. Combining these two elements yields, by a simple change of variables,
 \ben \label{struwe6:bis}
  \bal  &\int_\O  \Big \langle \sum_{j=1}^N (- \Delta)^{\frac{k}{2}} \Va[j]  , (- \Delta)^{\frac{k}{2}} \vp_{i, \alpha} \Big \rangle dy  \\
  & = \int_\O  \Big \langle \sum_{j\in E(i) } (- \Delta)^{\frac{k}{2}} \Va[j]  , (- \Delta)^{\frac{k}{2}} \vp_{i,\alpha} \Big \rangle dy + \smallo(1) \\
& = \int_{\R^n}  \Big \langle (- \Delta)^{\frac{k}{2}} U_i, (- \Delta)^{\frac{k}{2}} \vp \Big \rangle dy + \smallo(1)
\eal 
\een
as $\a \to + \infty$. Independently we have, using \eqref{est:bubble},
\[\bal & \Big| f \Big(\sum_{j=1}^N \Va[j] \Big) - f \Big(\sum_{j \in E(i)} \Va[j] \Big)  \Big|   \lesssim \Big( \sum_{j \not \in E(i)} \Ba[j] \Big)^{2^\sharp-1} + \Big( \sum_{j \in E(i)} \Ba[j] \Big)^{2^\sharp-2}\Big( \sum_{j \not \in E(i)} \Ba[j] \Big). \eal \]
When $j \not \in E(i)$, direct computations show that 
$$ \begin{aligned} 
& \int_\O (\Ba[j])^{2^\sharp-1} |\vp_{i, \alpha}| dv_g  = \smallo(1) \quad \text{ and } \\
& \int_\O \Big( \sum_{k \in E(i)} \Ba[k] \Big)^{2^\sharp-2} (\Ba[j]) |\vp_{i, \alpha}| dv_g  = \smallo(1)  \\
\end{aligned} $$ 
as $\a \to + \infty$, and H\"older's inequality and a change of variables thus show that 
\ben \label{struwe6:ter}  \bal \int_\O f \Big( \sum_{j=1}^N\Va[j] \Big) \vp_{i, \alpha} dy & = \int_\O  f \Big(\sum_{j \in E(i)} \Va[j] \Big) \vp_{i,\alpha} dy + \smallo(1) \\
& =  \int_{\R^n} f (U_i) \vp dy + \smallo(1). \eal 
\een
Combining \eqref{struwe6:bis} and \eqref{struwe6:ter} in \eqref{struwe6} and passing to the limit shows that $U_i$ is a weak, and hence a strong, solution of \eqref{eq:critRn}. This proves Lemma~\ref{lemme:unebulledemoins} if $\Va$ is an interior bubble. The proof when $\Va$ is a boundary bubble is identical by defining $ \vp_{i,\a}(x) =  \mu_\a^{-\exc} \vp\Big(\frac{\sigma_{x_\a}^{-1}(x)}{\mu_\a}\Big)$ and since $U_i \in D^{k,2}(\R^n_+)$. 
\end{proof}
We may now conclude the proof of \eqref{eq:struc2}. Let $x \in \overline{\O}$. If $\Va$ is an interior bubble we let 
\[ U_{i, \alpha}(x) = \chi\Bigg(\frac{x-\xa}{\dist{\xa,\dO}}\Bigg) (\ma)^{-\exc} U_i\Big(\frac{x-\xa}{\ma}\Big),\]
while if $\Va$ is a boundary bubble we let 
\[ U_{i, \alpha}(x) =  \chi\big(\sigma_{\xa}^{-1}(x)\big) (\ma)^{-\exc} U_i\Big(\frac{\sigma_{\xa}^{-1}(x)}{\ma}\Big).\]
It is easily seen that $U_{i,\alpha}  = \sum_{j \in E(i)} \Va[j](x) + \smallo(1)$ in $H^k_0(\O)$, and the initial Struwe decomposition \eqref{struwe5bis} thus becomes
\[ \ua = u_0 + U_{i,\alpha} + \sum_{j \not \in E(i)} \Va[j] + \smallo(1) \quad \textrm{ in } H^k_0(\O). \]
Since $U_i$ is a solution of \eqref{eq:critRn}, this contradicts the minimality of $N$ in the case where $\# E(i) \ge 2$, regardless of whether $U_i \equiv 0$ or not. Hence $E(i) = \{i\}$. This holds true for any $i \in \{1, \dots, N\}$ and proves \eqref{eq:struc2}. 
\end{proof}

\section{Technical results} \label{app:technicalresults}

\subsection{Yet another version of Giraud's Lemma}

We state a version of Giraud's lemma that we frequently used in this paper: 

\begin{lemma}[Giraud's Lemma]\label{prop:giraud}
 Let $\oO$ be a bounded open subset of $\R^n, n \ge 3$. Let $\gamma\in \R$ and $\beta >0$ satisfying $\beta+\gamma<n$ and let $0<\mu<1$. Let $X \in C^0(\oO \times \oO)$, $Y\in C^0(\oO \times \oO \setminus \{(x,x)\,:\,x\in \oO\})$ be such that
  \begin{align*}
    |X(x,y)| \leq (\mu + |x-y|)^{\gamma-n}  \quad \text{ and } \quad 
    |Y(x,y)| \leq |x-y|^{\beta-n} 
  \end{align*}
for all $x \neq y \in \oO$.  Define $Z(x,y) = \intM[z]{\O}{X(x,z)Y(z,y)}$. Then $Z\in C^0(\oO \times \oO)$ and there exists $C>0$ such that 
  \[  |Z(x,y)| \leq C \begin{cases}
    \mu^\gamma (\mu+|x-y|)^{\beta-n} & \text{if } \gamma < 0\\
    (\mu+|x-y|)^{\beta-n} \left(1+ \abs{\log\Big(\frac{\mu+|x-y|}{\mu}\Big)}\right) & \text{if } \gamma = 0\\
    (\mu+|x-y|)^{\beta+\gamma-n} & \text{if } \gamma >0
  \end{cases}.
  \]
\end{lemma}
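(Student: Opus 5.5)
The plan is to fix $x\neq y$ and split $\O$ into three regions adapted to the two singular points: $D_1=\{z: |z-x|\le \frac{1}{2}|x-y|\}$, $D_2=\{z: |z-y|\le \frac12|x-y|\}$ and $D_3=\O\setminus(D_1\cup D_2)$. We bound $|Z(x,y)|\le \int_\O (\mu+|x-z|)^{\gamma-n}|z-y|^{\beta-n}\,dz$ on each region separately. Continuity of $Z$ will follow at the end from dominated convergence. The integrand is dominated by an $L^1$ function locally uniformly in $(x,y)$, because $\beta>0$ and the kernel $X$ is bounded by $\mu^{\gamma-n}$ when $\gamma<n$. The case $\gamma\ge n$ is trivial, since then $X$ is bounded. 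We also use that $\O$ is bounded, so $|x-y|\le \mathrm{diam}(\O)$.

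On $D_1$ we have $|z-y|\sim |x-y|$. The contribution is therefore $\lesssim |x-y|^{\beta-n}\int_{|z-x|\le |x-y|/2}(\mu+|x-z|)^{\gamma-n}dz$, and the inner integral equals, up to constants, $\int_0^{|x-y|/2}(\mu+r)^{\gamma-n}r^{n-1}dr$. This gives:
\begin{itemize}
\item for $\gamma<0$, a bound $\lesssim \mu^\gamma$;
\item for $\gamma=0$, a bound $\lesssim 1+\log\frac{\mu+|x-y|}{\mu}$;
\item for $\gamma>0$, a bound $\lesssim (\mu+|x-y|)^\gamma$.
\end{itemize}

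On $D_2$ we have $|x-z|\sim|x-y|$, so $X\lesssim(\mu+|x-y|)^{\gamma-n}$. Since $\int_{|z-y|\le|x-y|/2}|z-y|^{\beta-n}dz\lesssim |x-y|^\beta$, this contribution is $\lesssim (\mu+|x-y|)^{\gamma-n}|x-y|^\beta$.

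On $D_3$ we have $|z-x|\sim|z-y|\gtrsim |x-y|$, and the integral is bounded by $\int_{|x-y|/2}^{C}(\mu+r)^{\gamma-n}r^{\beta-1}dr$. The hypothesis $\beta+\gamma<n$ makes the tail converge.

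It remains to replace $|x-y|$ by $\mu+|x-y|$ in the final bounds. We distinguish the regimes $|x-y|\ge\mu$ and $|x-y|<\mu$. When $|x-y|\ge \mu$, $|x-y|$ and $\mu+|x-y|$ are comparable, and the three estimates combine directly.

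When $|x-y|<\mu$, the $D_1$ and $D_2$ pieces are easily bounded by $\mu^{\gamma-n}|x-y|^\beta\le \mu^{\beta+\gamma-n}$, using $X\le\mu^{\gamma-n}$ and $\beta>0$. This is consistent with the stated bounds, since $\mu+|x-y|\sim\mu$. For $D_3$ we split the radial integral at $r=\mu$. The part $r<\mu$ gives $\mu^{\gamma-n}\mu^\beta$. The part $r>\mu$ gives $\int_\mu^C r^{\beta+\gamma-n-1}dr\lesssim\mu^{\beta+\gamma-n}$.

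The main obstacle is the bookkeeping on $D_3$ when $|x-y|\ge\mu$, where the exponent $\gamma$ determines which end of the radial integral dominates. Using $(\mu+r)^{\gamma-n}\le r^{\gamma-n}$ for $\gamma<n$, we get $\int_{|x-y|/2}^\infty r^{\beta+\gamma-n-1}dr\lesssim |x-y|^{\beta+\gamma-n}$. For $\gamma>0$ this is the claimed bound. For $\gamma\le0$ it is dominated by $\mu^\gamma|x-y|^{\beta-n}$, because $|x-y|^\gamma\le\mu^\gamma$ when $\gamma<0$, with equality of powers when $\gamma=0$. Collecting the three regions in each case yields the three stated estimates.
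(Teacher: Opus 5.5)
Your three-region splitting (near $x$, near $y$, and the complement where $|z-x|\sim|z-y|$), together with the case analysis in the sign of $\gamma$ and in $|x-y|\gtrless\mu$, is the classical Giraud argument the paper defers to without writing it out. Your estimates are correct, with constants uniform in $\mu\in(0,1)$; note also that $\gamma<n$ follows automatically from $\beta>0$ and $\beta+\gamma<n$, so your case $\gamma\ge n$ is vacuous.

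The one imprecision is in the continuity step. When $\beta<n$ there is no single integrable majorant of $z\mapsto|z-y|^{\beta-n}$ valid uniformly for $y$ near $y_0$, so plain dominated convergence does not apply as you state it. Instead, use the uniform bound $\sup_{y}\int_{\Omega\cap B(y,\varepsilon)}|z-y|^{\beta-n}\,dz\lesssim\varepsilon^{\beta}$, together with the uniform continuity and boundedness of the integrand on $\Omega\setminus B(y_0,2\varepsilon)$. Equivalently, apply Vitali's theorem, or generalized dominated convergence with the $L^1$-convergent majorants $\mu^{\gamma-n}|z-y|^{\beta-n}$.
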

The proof of Lemma \ref{prop:giraud} easily follows from an adaptation of the proof of classical Giraud-type inequalities. See for instance the arguments in \cite[Lemma 7.5]{Heb14} or \cite[Lemma 7.1]{CarRob25}.

\subsection{Some integral estimates} In this subsection we use the notations of Sections \ref{sec:bulles} and  \ref{sec:lin}. We state several bubble-tree estimates. They are all adaptations of well-known results in the case $k=1$ but we gather them here for clarity.

\begin{lemma} \label{lemme:ordre:deux}
Let $i \in \{1, \ldots, N\}$. We have, for any $0 \le l \le 2k-1$,
\begin{equation*} 
  \inty{|x-y|^{2k-n-l} \ta(y) \Ba(y)^{\crit-1}} \lesssim \ma \ta(x)^{-l} \Ba(x) 
\end{equation*}
as $\alpha \to + \infty$, uniformly in $x \in \oO$.
\end{lemma}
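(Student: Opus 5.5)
We reduce the statement to the Giraud-type Lemma \ref{prop:giraud} after rescaling. Write $\ta(y)\Ba(y)^{\crit-1}$ using \eqref{def:Ba}: since $\Ba(y)\approx (\ma)^{\exc}\ta(y)^{2k-n}$, we have
$$\ta(y)\Ba(y)^{\crit-1}\approx (\ma)^{\frac{n+2k}{2}}\ta(y)^{1-n-2k} = (\ma)^{\frac{n+2k}{2}}\big(\ma+|y-\xa|\big)^{1-n-2k}.$$
The integral to estimate is therefore, up to constants,
$$(\ma)^{\frac{n+2k}{2}}\int_{\O}|x-y|^{2k-n-l}(\ma+|y-\xa|)^{(1-2k)-n}\,dy,$$
which is exactly of the form treated by Lemma \ref{prop:giraud}. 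We apply it with $\mu=\ma$, $X(\xa,y)=(\ma+|\xa-y|)^{\gamma-n}$ with $\gamma=1-2k<0$, and $Y(y,x)=|x-y|^{\beta-n}$ with $\beta=2k-l>0$ (recall $l\le 2k-1$). The condition $\beta+\gamma=1-l<n$ holds.

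Strictly, Lemma \ref{prop:giraud} is stated for kernels on $\oO\times\oO$ and $\xa\in\oO$, so the fixed first variable $\xa$ is admissible. One could alternatively do the three-region computation directly: split $\O$ into $|y-\xa|\le \frac12|x-\xa|$, $|y-x|\le\frac12|x-\xa|$, and the complement. The case $\gamma<0$ of Lemma \ref{prop:giraud} gives
$$\int_{\O}|x-y|^{2k-n-l}(\ma+|y-\xa|)^{1-2k-n}dy\lesssim (\ma)^{1-2k}(\ma+|x-\xa|)^{2k-l-n}.$$
Multiplying by $(\ma)^{\frac{n+2k}{2}}$ then yields
$$\ma\cdot(\ma)^{\exc}\ta(x)^{2k-n-l}\approx \ma\,\ta(x)^{-l}\Ba(x),$$
which is the claim, uniformly in $x\in\oO$ and $\a$.

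The main point to check is the regime $\gamma<0$: the $y$-integrand is integrable at infinity in the rescaled variable because $n+2k-1>n$, so the mass of $\ta\Ba^{\crit-1}$ concentrates in $B(\xa,C\ma)$ and is of size $(\ma)^{\frac{n+2k}{2}}(\ma)^{1-2k}=\ma(\ma)^{\exc}$. Near $x$ (when $|x-\xa|\gg\ma$) the singularity $|x-y|^{2k-n-l}$ is integrable since $2k-l\ge1>0$, and the contribution there is $\ta(x)^{2k-l}\ta(x)^{1-n-2k}(\ma)^{\frac{n+2k}{2}}$, which is smaller than the main term by a factor $(\ma/\ta(x))^{2k-1}\lesssim1$. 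The far region decays in the same way. There is no logarithmic case since $\gamma\neq0$, so no real obstacle is expected beyond bookkeeping of exponents.
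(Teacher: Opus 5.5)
Your proposal is correct and matches the paper's argument, which is a one-line direct application of Lemma~\ref{prop:giraud}. You supply the bookkeeping the paper leaves implicit: $\ta\Ba^{\crit-1}\approx(\ma)^{\frac{n+2k}{2}}\ta^{1-n-2k}$, then Giraud in the regime $\gamma=1-2k<0$, $\beta=2k-l>0$, $\mu=\ma<1$, which gives the factor $(\ma)^{1-2k}(\ma)^{\frac{n+2k}{2}}=\ma(\ma)^{\exc}$.
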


\begin{proof}
This is a direct application of Lemma~\ref{prop:giraud}.
\end{proof}

\begin{lemma} \label{prop:lemtrou:0}
Let $i \in \{1, \ldots, N\}$. We have, for any $0 \le l \le 2k-1$,
\begin{equation}\label{eq:IIa2}
  \inty{|x-y|^{2k-n-l} \Ba(y)^{\crit-2}} = \smallo \big(1+\ta(x)^{-l}\Ba(x) \big)
\end{equation}
as $\alpha \to + \infty$, uniformly in $x \in \oO$.
\end{lemma}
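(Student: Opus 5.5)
The estimate follows from Giraud's Lemma (Lemma \ref{prop:giraud}) applied with $X(x,y)$ built from $\Ba(y)^{\crit-2}$, with $\mu=\ma$. The key point is the scaling of this kernel. We have
$$\Ba(y)^{\crit-2}\simeq (\ma)^{2k}\,\ta(y)^{-4k},$$
so it behaves like a kernel $(\ma+|y-\xa|)^{\gamma-n}$ with $\gamma=n-4k$, multiplied by $(\ma)^{2k}$. The case distinction in Lemma \ref{prop:giraud} then becomes a case distinction on the sign of $n-4k$. We take $Y(y,x)=|x-y|^{\beta-n}$ with $\beta=2k-l>0$. The only constraint to check is $\beta+\gamma<n$, which amounts to $n-2k-l<n$ and always holds. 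Lemma \ref{prop:giraud} is stated for two-point kernels, but we only need it with the first variable frozen at $\xa$ (the domain is bounded, so the localised statement applies).

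\textbf{Case $n<4k$ ($\gamma<0$).} Giraud gives
$$\int_\O |x-y|^{2k-n-l}\Ba(y)^{\crit-2}\,dy\lesssim (\ma)^{2k}(\ma)^{n-4k}\,\ta(x)^{2k-l-n}=(\ma)^{n-2k}\,\ta(x)^{2k-n-l}.$$
Since $\Ba\simeq(\ma)^{\frac{n-2k}{2}}\ta^{2k-n}$, this equals $(\ma)^{\frac{n-2k}{2}}\,\ta(x)^{-l}\Ba(x)$, which is $o(\ta(x)^{-l}\Ba(x))$.

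\textbf{Case $n=4k$ ($\gamma=0$).} Giraud gives
$$(\ma)^{2k}\,\ta(x)^{2k-l-n}\Big(1+\log\frac{\ta(x)}{\ma}\Big)=(\ma)^{k}\,\ta(x)^{-l}\Ba(x)\Big(1+\log\frac{\ta(x)}{\ma}\Big).$$
The logarithm is bounded by $\log(C/\ma)$, and $(\ma)^k\log(1/\ma)\to0$, so this is again $o(\ta(x)^{-l}\Ba(x))$.

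\textbf{Case $n>4k$ ($\gamma>0$).} Giraud only gives $(\ma)^{2k}\,\ta(x)^{-2k-l}$. We compare it with $\ta(x)^{-l}\Ba(x)$, whose size is $(\ma)^{\frac{n-2k}{2}}\ta^{2k-n-l}$. The ratio of the two is $(\ma)^{2k-\frac{n-2k}{2}}\ta^{\,n-4k}$.
\begin{itemize}
\item When $\ta(x)\le\sqrt{\ma}$, the ratio is at most $(\ma)^{2k-\frac{n-2k}{2}+\frac{n-4k}{2}}=(\ma)^{k}\to0$.
\item When $\ta(x)\ge\sqrt{\ma}$, we instead bound by the constant term: $(\ma)^{2k}\ta^{-2k-l}\le(\ma)^{2k}(\ma)^{-k-l/2}=(\ma)^{k-l/2}$. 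This tends to $0$ only when $l<2k$, which holds since $l\le 2k-1$.
\end{itemize}
So in this regime the integral is $o(1)$.

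\textbf{Main obstacle.} This last, high-dimensional case is the delicate point: the bound must switch between the bubble weight and the constant $1$ on the right-hand side. The switching threshold $\sqrt{\ma}$ works precisely because $l\le 2k-1$. All the resulting $o(1)$ factors are powers of $\ma$ (possibly with a logarithm), hence uniform in $x\in\oO$, which gives \eqref{eq:IIa2}.
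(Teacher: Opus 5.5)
Your proposal is correct and is essentially the paper's proof: Giraud's lemma applied to $\Ba(y)^{\crit-2}\simeq(\ma)^{2k}\ta(y)^{-4k}$ with $\gamma=n-4k$ and $\beta=2k-l$, the same three-case bounds, and, for $n>4k$, the same split at $\ta(x)=\sqrt{\ma}$ giving the factors $(\ma)^{k}$ and $(\ma)^{k-l/2}$. Your treatment of $n=4k$, which keeps the logarithm and uses $(\ma)^{k}\log(1/\ma)\to 0$, is slightly cleaner than the paper's: there the logarithm is traded for a power of $\ta(x)/\ma$ and the case is only said to be similar.
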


\begin{proof}
A direct computation using Lemma \ref{prop:giraud} shows that for all $x\in \oO$
\begin{equation*}  
  \inty{|x-y|^{2k-n-l} \Ba(y)^{\crit-2}} \lesssim\begin{cases}
  (\ma)^{\exc} \ta(x)^{-l}\Ba(x) & \text{when $n<4k$}\\
  (\ma)^{2k-1} \ta(x)^{1-2k-l} & \text{when $n=4k$}\\
  (\ma)^{2k} \ta(x)^{-2k-l} & \text{when $n>4k$}
\end{cases} .
\end{equation*}
When $n < 4k$ \eqref{eq:IIa2} thus follows. Assume now that $n > 4k$. We have 
\begin{equation*} 
 \begin{aligned}
 (\ma)^{2k} \ta(x)^{-2k-l} & \lesssim (\ma)^{3k - \frac{n}{2}} \ta(x)^{n-4k} \ta(x)^{-l}\Ba(x) \\
 & \lesssim  \left \{ \begin{aligned} & (\ma)^{k} \ta(x)^{-l}\Ba(x) & \text{ if } \ta(x) \le (\ma)^{\frac12} \\ & 
 (\ma)^{k - \frac{l}{2}}   & \text{ if } \ta(x) \ge (\ma)^{\frac12} \end{aligned} \right\} \\
 & \lesssim (\ma)^{\frac12} (1 + \ta(x)^{-l}\Ba(x) ),  
\end{aligned} 
\end{equation*}
which proves \eqref{eq:IIa2}. The case $n=4k$ is similar. 
\end{proof}

\begin{lemma}\label{prop:lemtrou}
Let $i \in \{1, \ldots, N\}$ and $0 \le l \le 2k-1$. Let $(M_\a)_\a$ be a sequence of positive numbers such that $M_\a \geq 1$. Then  for all $x\in \oO$, 
  \[ \inty[\O\setminus B(\xa,M_\a\ma)]{|x-y|^{2k-n-l} \pBa^{\crit -1}} \lesssim \frac{1}{M_\a^{2k}} \ta(x)^{-l} \Ba(x). 
  \] 
\end{lemma}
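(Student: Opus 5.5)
The plan is to reduce everything to a computation in the rescaled variable $z = (y-\xa)/\ma$. I would use $\Ba(y)^{\crit-1} \lesssim (\ma)^{-\frac{n+2k}{2}} (1+|z|)^{-n-2k}$ and split the domain of $x$ according to the size of $|x-\xa|$ relative to $M_\a\ma$. Write $D_\a = \O\setminus B(\xa,M_\a\ma)$. Recall that $\ta(x)^{-l}\Ba(x) \approx (\ma)^{\exc}\ta(x)^{2k-n-l}$. Throughout, $y\in D_\a$ satisfies $|y-\xa|\ge M_\a\ma$, so there $\Ba(y)^{\crit-1}\lesssim (\ma)^{\frac{n+2k}{2}}|y-\xa|^{-n-2k}$.

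\emph{Case 1: $|x-\xa|\le \frac12 M_\a\ma$.} Then $|x-y|\gtrsim |y-\xa|$ for all $y\in D_\a$. The integral is therefore bounded by
$$(\ma)^{\frac{n+2k}{2}}\int_{|y-\xa|\ge M_\a\ma}|y-\xa|^{-2n-l}\,dy \lesssim (\ma)^{\frac{n+2k}{2}}(M_\a\ma)^{-n-l}.$$
In this region $\ta(x)\lesssim M_\a\ma$, so $\ta(x)^{-l}\Ba(x)\gtrsim(\ma)^{\exc}(M_\a\ma)^{2k-n-l}$. The quotient of the two is $\lesssim M_\a^{-2k}$, as desired.

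\emph{Case 2: $|x-\xa|\ge \frac12 M_\a\ma$.} Here $\ta(x)\approx|x-\xa|=:d$. I would split $D_\a$ into three pieces.

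First, $\{|y-x|<d/2\}$. There $|y-\xa|\approx d$, and $\int_{|y-x|<d/2}|x-y|^{2k-n-l}dy\lesssim d^{2k-l}$, which is finite since $l\le 2k-1$. This piece contributes $(\ma)^{\frac{n+2k}{2}}d^{-n-2k}d^{2k-l}=(\ma)^{\exc}d^{2k-n-l}\cdot(\ma/d)^{2k}$.

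Second, $\{|y-\xa|\ge 2d\}$. There $|x-y|\approx|y-\xa|$, and the tail integral gives $(\ma)^{\frac{n+2k}{2}}d^{-n-l}$, which is of the same form.

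Third, the remaining set, where $M_\a\ma\le|y-\xa|\le 2d$ and $|x-y|\ge d/2$. This piece is bounded by
$$d^{2k-n-l}(\ma)^{\frac{n+2k}{2}}\int_{|y-\xa|\ge M_\a\ma}|y-\xa|^{-n-2k}\,dy\lesssim d^{2k-n-l}(\ma)^{\exc}M_\a^{-2k}.$$

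Each of the three contributions is $\lesssim M_\a^{-2k}(\ma)^{\exc}d^{2k-n-l}$, because $\ma/d\lesssim M_\a^{-1}$. This gives the claim.

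Boundedness of $\O$ only truncates the integrals and does no harm, and for boundary bubbles the same definition of $\Ba$ applies unchanged. The only point requiring care is the third piece of Case 2: it is the sole place where the gain $M_\a^{-2k}$ comes from integrability of $\Ba^{\crit-1}$ outside the ball rather than from the geometry. Even there it is routine, since $|y-\xa|^{-n-2k}$ is integrable at infinity with tail $(M_\a\ma)^{-2k}$.
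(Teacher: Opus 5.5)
Your argument is correct. The case split on $|x-\xa|$ versus $\frac12 M_\a\ma$, together with the three-piece decomposition in the second case, bounds every contribution by $M_\a^{-2k}(\ma)^{\exc}\ta(x)^{2k-n-l}\approx M_\a^{-2k}\ta(x)^{-l}\Ba(x)$. It uses only $\Ba^{\crit-1}\lesssim(\ma)^{\frac{n+2k}{2}}|y-\xa|^{-n-2k}$, together with $l\le 2k-1$ and $M_\a\ge 1$.

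The paper gives no details for this lemma; it only says the proof is an adaptation of \cite[Lemma 7.6]{Heb14} from the case $k=1$. Your direct computation supplies that adaptation, so there is no substantive difference in approach.
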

\begin{proof}
The proof is an adaptation of \cite[Lemma 7.6]{Heb14} for the case $k=1$.
\end{proof}

\begin{lemma}\label{prop:lemBiBj}
  Let $i\neq j\in \{1,\ldots N\}$. There exists a sequence $\eps_\a \to 0$ as in \eqref{def:ea} such that 
  \[    (\ma)^{\exc}(\ma[j])^{\exc} \intM{\O}{\ta(y)^{k-n}\ta[j](y)^{k-n}}  \leq \eps_\a. \]
Let $p \in \{0, \cdots, k-1\}$ and assume that $n > 4k-2p$. Then 
  \[ (\ma)^{\exc}(\ma[j])^{\exc} \intM{\O}{\ta(y)^{2k-p-n}\ta[j](y)^{2k-p-n}}   \leq \eps_\a  \big(\ma \ma[j]\big)^{k-p}. \]
\end{lemma}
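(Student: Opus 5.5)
The statement is Lemma~\ref{prop:lemBiBj}. Both estimates are interaction integrals between two weakly interacting bubbles. I would reduce each one to an explicit integral of two radial profiles and then exploit \eqref{eq:struc}.

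By symmetry, and up to a subsequence, assume $\ma[j] \lesssim \ma$. Write $d_\a = |\xa - \xa[j]|$. Extend the integrals to all of $\R^n$, which only increases them since the integrands are positive. I would split $\R^n$ into three regions:
\begin{itemize}
\item $D_1 = B(\xa[j], \frac{1}{2}\max(d_\a,\ma))$;
\item $D_2 = B(\xa, \frac{1}{2}\max(d_\a,\ma))$;
\item the complement $D_3$.
\end{itemize}
On each region one of the two weights is essentially constant, so each piece reduces to a one-bubble integral. These are computed by scaling, or directly via Lemma~\ref{prop:giraud} with $X = \ta^{\gamma-n}$ and $Y = \ta[j]^{\beta-n}$ evaluated at coincident points.

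For the first estimate, with exponents $k-n$:
\begin{itemize}
\item On $D_1$, $\ta \simeq \ma + d_\a$, and $\int_{D_1} \ta[j]^{k-n}\,dy \lesssim (\ma + d_\a)^{k}$. This gives a contribution
\[
(\ma\ma[j])^{\frac{n-2k}{2}}(\ma+d_\a)^{2k-n}.
\]
\item On $D_2$ the contribution is symmetric, with $\ta[j] \simeq \ma[j] + d_\a$ (up to $\ma$). This gives $(\ma\ma[j])^{\frac{n-2k}{2}} (\ma+d_\a)^{k-n}(\ma)^{k}$, which is of the same order.
\item On $D_3$ both weights are $\simeq |y-\xa|$. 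The integral converges at infinity since $2k-2n+n<0$, and again gives $(\ma+d_\a)^{2k-n}$.
\end{itemize}
Altogether the integral is bounded by
\[
\Big(\frac{\ma\ma[j]}{(\ma+d_\a)^2}\Big)^{\frac{n-2k}{2}} \lesssim \Big(\frac{\ma[j]}{\ma} + \frac{\ma\ma[j]}{d_\a^2}\Big)^{\frac{n-2k}{2}} \lesssim (\eps_\a^{ij})^{-\frac{n-2k}{2}},
\]
which tends to $0$ by \eqref{eq:struc}. There is one place where the weight $\ta$ might not be essentially constant: when $d_\a \lesssim \ma$ with $\ma[j]=\smallo(\ma)$. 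In that case $\ta \simeq \ma$ on $B(\xa[j],\ma)$ and the same computation holds. A logarithm could appear only if the exponent $k-n$ were borderline, which it is not since $n>2k$.

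For the second estimate the exponents are $2k-p-n$. Integrability at infinity requires $2(2k-p-n)+n<0$, that is $n > 4k-2p$, which is exactly the hypothesis. The same splitting gives the following contributions:
\begin{itemize}
\item On $D_1$: $\ta \simeq \ma+d_\a$ and $\int_{D_1}\ta[j]^{2k-p-n}\,dy \lesssim (\ma+d_\a)^{2k-p}$, which gives $(\ma+d_\a)^{4k-2p-n}$.
\item On $D_2$: the integral $\int_{D_2}\ta^{2k-p-n}\,dy \lesssim (\ma+d_\a)^{2k-p}$ times $(\ma[j]+d_\a)^{2k-p-n}$, comparable to the above up to the regime $d_\a\lesssim\ma$, where one instead uses $\ta[j]\simeq|y-\xa[j]|$ on $B(\xa,\ma)$ and gets again $(\ma)^{4k-2p-n}$.
\item On $D_3$: a convergent tail of the same order, $(\ma+d_\a)^{4k-2p-n}$.
\end{itemize}
Hence the left-hand side is bounded by
\[
(\ma\ma[j])^{\frac{n-2k}{2}}(\ma+d_\a)^{4k-2p-n} = (\ma\ma[j])^{k-p}\Big(\frac{\ma\ma[j]}{(\ma+d_\a)^2}\Big)^{\frac{n-4k+2p}{2}}.
\]
The last factor is at most $(\eps_\a^{ij})^{-\frac{n-4k+2p}{2}} \to 0$, again using $n>4k-2p$.

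The main obstacle is the bookkeeping of the regions when $\ma[j]=\smallo(\ma)$ and the bubbles are nearly concentric. Here one must check that the inner region $B(\xa[j],\ma)$ around the sharper bubble contributes only $(\ma)^{k-p}$-type factors and no extra growth. I would handle this by applying Lemma~\ref{prop:giraud} in its $\gamma>0$ case: take $\mu=\ma[j]$, $X=\ta[j]^{\gamma-n}$ and $Y\lesssim\ta^{\beta-n}$, with $\gamma$ and $\beta$ chosen so that $\beta+\gamma<n$.
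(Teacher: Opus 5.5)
Your argument is correct in substance and follows the paper's route. For the second estimate the paper assumes $\mu^j_\alpha\le\mu^i_\alpha$, uses $\theta^j_\alpha(y)\ge|y-x^j_\alpha|$, and applies Lemma~\ref{prop:giraud} with $\mu=\mu^i_\alpha$ and $\gamma=\beta=2k-p$ to get $(\mu^i_\alpha+d_\alpha)^{4k-2p-n}$, where $d_\alpha=|x^i_\alpha-x^j_\alpha|$. Your three-region decomposition proves exactly this bound by hand. Running it with $\gamma=\beta=k$ (integrability is then automatic since $n>2k$) gives the first estimate, for which the paper only refers to \cite{Pre24}. There are, however, three local errors to fix.

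First, in the first estimate the chain
\[
\Big(\frac{\mu^i_\alpha\mu^j_\alpha}{(\mu^i_\alpha+d_\alpha)^2}\Big)^{\frac{n-2k}{2}}\lesssim\Big(\frac{\mu^j_\alpha}{\mu^i_\alpha}+\frac{\mu^i_\alpha\mu^j_\alpha}{d_\alpha^2}\Big)^{\frac{n-2k}{2}}\lesssim(\varepsilon^{ij}_\alpha)^{-\frac{n-2k}{2}}
\]
fails at the second step. For $\mu^i_\alpha=\mu^j_\alpha$ and $d_\alpha/\mu^i_\alpha\to+\infty$ the middle quantity is at least $1$, while $\varepsilon^{ij}_\alpha\to+\infty$; you would need a minimum, not a sum. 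Argue directly instead: since $\mu^j_\alpha\le\mu^i_\alpha$, one has $\varepsilon^{ij}_\alpha\le 2\big((\mu^i_\alpha)^2+d_\alpha^2\big)/(\mu^i_\alpha\mu^j_\alpha)\le 2(\mu^i_\alpha+d_\alpha)^2/(\mu^i_\alpha\mu^j_\alpha)$.

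Second, on $D_2$ the integral $\int_{D_2}(\theta^i_\alpha)^{k-n}\,dy$ is of order $(\mu^i_\alpha+d_\alpha)^k$, not $(\mu^i_\alpha)^k$, because its mass sits at the outer radius. The contribution is then $(\mu^i_\alpha+d_\alpha)^{2k-n}$, as on $D_1$, so the conclusion is unaffected.

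Third, and more substantively, the Giraud application in your last paragraph would fail in exactly the nearly concentric regime you single out. Lemma~\ref{prop:giraud} only uses $|Y(z,x^i_\alpha)|\le|z-x^i_\alpha|^{\beta-n}$. So with $\mu=\mu^j_\alpha$ and $X=(\theta^j_\alpha)^{\gamma-n}$ it returns $(\mu^j_\alpha+d_\alpha)^{\beta+\gamma-n}$. When $d_\alpha\lesssim\mu^j_\alpha=\smallo(\mu^i_\alpha)$, this gives $(\mu^i_\alpha/\mu^j_\alpha)^{\frac{n-2k}{2}}\to+\infty$ for the first estimate. The reason is that it discards the lower bound $\theta^i_\alpha\ge\mu^i_\alpha$, which is what makes this region harmless. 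The correct choice is the larger scale $\mu=\mu^i_\alpha$ with $X=(\theta^i_\alpha)^{\gamma-n}$, dropping $\mu^j_\alpha$ instead via $\theta^j_\alpha\ge|\cdot-x^j_\alpha|$, as the paper does. Your explicit region computation, using $\theta^i_\alpha\simeq\mu^i_\alpha$ on $B(x^j_\alpha,C\mu^i_\alpha)$, already handles this regime correctly, so simply drop that closing suggestion.
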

\begin{proof}
The first inequality follows from an adaptation of the proof of estimate (A.6) in \cite[Proposition A.1]{Pre24} for the case $k=1$. We prove the second one. Without loss of generality we may assume that $\ma[j] \le \ma$. Since $\ta[j](y) \ge |\xa[j]-y|$ and since $n >2(2k-p)$ we may apply Lemma \ref{prop:giraud}
to get 
$$ \begin{aligned}
 & (\ma)^{\exc}(\ma[j])^{\exc} \intM{\O}{\ta(y)^{2k-p-n}\ta[j](y)^{2k-p-n}} \\
 &\lesssim \frac{ (\ma)^{\exc}(\ma[j])^{\exc}}{(\ma + |\xa - \xa[j]|)^{n-4k+2p}  } \\
 & \lesssim \left(\frac{\ma \ma[j]}{(\ma)^2 + |\xa - \xa[j]|^2}\right)^{\frac{n-4k+2p}{2}} \big(\ma \ma[j]\big)^{k-p} \lesssim  \big( \eps_\a^{ij} \big)^{ -\frac{n-4k+2p}{2}} \big(\ma \ma[j]\big)^{k-p},
 \end{aligned} $$
and the result follows from \eqref{eq:struc}. 
\end{proof}

\subsection{Proof of Lemma~\ref{prop:lem2}}

We finally give a proof of Lemma \ref{prop:lem2}:

\begin{proof}[Proof of Lemma \ref{prop:lem2}] 
By \eqref{def:Ba} we have $\Ba[0] \equiv 1$. Using \eqref{def:PSI} we thus have
\begin{equation*}
\bal
  \Psi_{\a}(y) & =  \sumi \ta(y)^{2-2k}\Ba(y)+ \sum_{1 \le j \le N} \Big( \Ba[j](y)^{\crit-2} + \Ba[j](y) \Big) \\
  & +  \sum_{\substack{i,j = 1,\ldots, N\\i\neq j}} \Ba[j](y)^{\crit-2} \Ba(y).
\eal
\end{equation*}
We let $0 \le l \le 2k-1$ be fixed and integrate each term separately against $|x-y|^{2k-n-l}$. Let $i\in \{1,\ldots N\}$. First, a direct application of Lemma \ref{prop:giraud} shows that
\[  \begin{aligned} 
\inty{|x-y|^{2k-n-l}\ta(y)^{2-2k}\Ba(y)} &\lesssim 
\big(\ta(x)^{2}\log \ta(x) \big)\ta(x)^{-l} \Ba(x) \\
&= \smallo\big(1+\ta(x)^{-l}\Ba(x)\big)
\end{aligned} 
\]
uniformly for $x\in\oO$. Since $\ta(x)^{2}\log \ta(x) \lesssim \ta(x)$, the last equality follows again from the following observation: 
\begin{equation*} 
 \begin{aligned}
 \ta(x) \ta(x)^{-l} \Ba(x)
 & \lesssim \left \{ \begin{aligned} & (\ma)^{\frac{n-2k}{2(n-2k+l)}} \ta(x)^{-l}\Ba(x) & \text{ if } \ta(x) \le (\ma)^{\frac{n-2k}{2(n-2k+l)}} \\ & (\ma)^{\frac{n-2k}{2(n-2k+l)}}   & \text{ if } \ta(x) \ge (\ma)^{\frac{n-2k}{2(n-2k+l)}}  \end{aligned} \right\} \\
 & \lesssim (\ma)^{\frac{n-2k}{2(n-2k+l)}}  (1 + \ta(x)^{-l}\Ba(x) ).  
\end{aligned} 
\end{equation*}
 Similarly, arguing as in the proof of Lemma~\ref{prop:lemtrou:0} and using Lemma \ref{prop:giraud} we get that 
 \[  \inty{|x-y|^{2k-n-l}\Ba(y)} = \smallo\big(1+\ta(x)^{-l}\Ba(x)\big)
\]
on $\oO$, and the term involving $ \sum_{1 \le j \le N} (\Ba[j](y))^{\crit-2}$ in $\Psi_\a$ is estimated using \eqref{eq:IIa2}. Thus we only need to estimate 
\[  \inty{|x-y|^{2k-n-l} \Ba[j](y)^{\crit-2}\Ba(y)}
\]
for all $i \neq j$ in $\{1,\ldots, N\}$. The conclusion then follows from considering separately every possible configuration of $\ma,\ma[j], |\xa-\xa[j]|$ satisfying \eqref{eq:struc}, by a straightforward adaptation of the proof for the case $k=1$ in \cite[Proposition A.1]{Pre24}. 
\end{proof}

\bibliographystyle{alpha}
\bibliography{reference}

\end{document}